\newtheorem{theorem}{Theorem}[section]
\newtheorem{lemma}[theorem]{Lemma}
\newtheorem{proposition}[theorem]{Proposition}
\newtheorem{corollary}[theorem]{Corollary}
\newtheorem{remark}[theorem]{Remark}
\newtheorem{assumption}[theorem]{Assumption}
\newcommand{\eremk}{\hbox{}\hfill\rule{0.8ex}{0.8ex}}
\numberwithin{equation}{section}
\definecolor{myblue}{rgb}{0,0,0.6}
\newcommand{\supg}{{\sf{supg}}}
\newcommand{\Knsupg}{\hyperref[EQN::LOCAL-SUPG-NORM]{\scaleto{K_n,\sf{\supg}}{5.5pt}}}
\newcommand{\calD}{\mathcal{D}}
\newcommand{\Cinv}{C_{\mathrm{inv}}}
\newcommand{\SO}{\Sigma_0}
\newcommand{\Sn}{\Sigma_n}
\newcommand{\Snmo}{\Sigma_{n-1}}
\newcommand{\ST}{\Sigma_T}
\newcommand{\GD}{\Gamma_{\mathrm{D}}}
\newcommand{\bbeta}{\boldsymbol{\beta}}
\newcommand{\betaKn}{\beta_{\Kn}}
\newcommand{\betaQT}{\bar{\beta}_{\QT}}
\newcommand{\N}{\mathbb{N}}
\newcommand{\R}{\mathbb{R}}
\newcommand{\QT}{Q_T}
\newcommand{\Kn}{K_n}
\newcommand{\sfJ}{{\hyperref[EQN::JUMP-FUNCTIONAL]{\scaleto{\mathsf{J}}{4.5pt}}}}
\newcommand{\dpt}{\partial_t}
\newcommand{\TNorm}[1]{|\!|\!| #1 |\!|\!|_{\scriptstyle{\hyperref[EQN::ENERGY-NORM]{\scaleto{\mathcal{E}}{4.5pt}}}}}
\newcommand{\Norm}[2]{\|#1\|_{#2}}
\newcommand{\jump}[1]{[\![#1]\!]}
\newcommand{\SemiNorm}[2]{|#1|_{#2}}
\newcommand{\Pp}[1]{\mathbb{P}_{#1}}
\newcommand{\Id}{{\sf{Id}}}
\newcommand{\uI}{u_{\mathcal{I}}}
\newcommand{\uIt}{u_{\mathcal{I}}^{\tau}}
\newcommand{\vI}{v_{\mathcal{I}}}
\newcommand{\eIt}{e_{\mathcal{I}}^{\tau}}
\newcommand{\eh}{e_{h\tau}}
\newcommand{\ePiO}{e_{\pi}^0}
\newcommand{\ePiN}{e_{\pi}^{\nabla}}
\newcommand{\PiO}[2]{\Pi_{#1}^{0, #2}}
\newcommand{\gPiO}[1]{\Pi_{#1}^{0}}
\newcommand{\PPiO}[2]{\boldsymbol{\Pi}_{#1}^{0, #2}}
\newcommand{\PiN}[2]{\Pi_{#1}^{\nabla, #2}}
\newcommand{\gPiN}[1]{\Pi_{#1}^{\nabla}}
\newcommand{\ma}{m_{\boldsymbol{\alpha}}}
\newcommand{\mb}{m_{\boldsymbol{\beta}}}
\newcommand{\mc}{m_{\boldsymbol{\gamma}}}
\newcommand{\ba}{\boldsymbol{\alpha}}
\newcommand{\Bk}{\mathbb{B}_k}
\newcommand{\Wk}{\mathbb{W}_k}
\newcommand{\Vkt}{\widetilde{V}_k}
\newcommand{\Vk}{V_k}
\newcommand{\Vh}{\mathcal{V}_h}
\newcommand{\Vht}{\mathcal{V}_h^{\tau}}
\renewcommand{\u}{u}
\newcommand{\uh}{\u_{h, \tau}}
\renewcommand{\v}{v}
\newcommand{\vh}{\v_{h, \tau}}
\newcommand{\wh}{w_h}
\newcommand{\wt}{w_{\tau}}
\renewcommand{\div}{\operatorname{div}}
\newcommand{\hK}{h_K}
\newcommand{\Th}{\Omega_h}
\newcommand{\BK}{B_K}
\newcommand{\Bf}{B_F}
\newcommand{\Omegah}{\Omega_h}
\newcommand{\Tt}{\mathcal{T}_{\tau}}
\newcommand{\bx}{\boldsymbol{x}}
\renewcommand{\d}{\text{d}}
\newcommand{\dx}{\, \mathrm{d}\bx}
\newcommand{\dS}{\, \mathrm{d}S}
\newcommand{\dt}{\, \mathrm{d}t}
\newcommand{\ds}{\, \mathrm{d}s}
\newcommand{\tnmo}{t_{n-1}}
\newcommand{\tn}{t_{n}}
\newcommand{\mht}{m_{h,\tau}^{\partial_t}}
\newcommand{\mh}{m_h}
\newcommand{\mhK}{m_{h}^K}
\newcommand{\smK}{s_m^K}
\newcommand{\aht}{a_{h, \tau}}
\newcommand{\ahK}{a_h^K}
\newcommand{\ahKn}{a_h^{\Kn}}
\newcommand{\ah}{a_h}
\newcommand{\saK}{s_a^{K}}
\newcommand{\bht}{b_{h, \tau}^{\sf skew}}
\newcommand{\bh}{b_h}
\newcommand{\sht}{s_{h, \tau}^{\sf supg}}
\newcommand{\shtKn}{s_{h, \tau}^{K_n, \sf supg}}
\newcommand{\LhtK}{\hyperref[eq:LhtK]{\mathcal{L}_{h, \tau}^{K}}}
\newcommand{\tLhtK}{\hyperref[eq:tLhtK]{\widetilde{\mathcal{L}}_{h, \tau}^K}}
\newcommand{\tLK}{\hyperref[DEF::LK]{\widetilde{\mathcal{L}}^{K}}}
\newcommand{\LK}{\hyperref[DEF::LK]{\mathcal{L}^{K}}}
\newcommand{\lambdaKn}{\lambda_{\Kn}}
\newcommand{\Bht}{\hyperref[DEF::Bht]{\mathcal{B}_{h, \tau}^{\sf supg}}}
\newcommand{\ellht}{\hyperref[DEF::lht]{\ell_{h, \tau}^{\sf supg}}}
\newcommand{\chil}{\chi_{\ell}}
\newcommand{\chiFone}{\chi_{\mathcal{F}}}
\newcommand{\chiUo}{\chi_{\mathcal{U}_0}}
\newcommand{\chia}{\chi_a}
\newcommand{\chimb}{\chi_{m, b}}
\newcommand{\chiV}{\chi_{V}}
\newcommand{\chiVone}{\chi_{V,1}}
\newcommand{\chiVtwo}{\chi_{V,2}}
\newcommand{\chiVthree}{\chi_{V,3}}
\newcommand{\chiVfour}{\chi_{V,4}}
\newcommand{\chiVfive}{\chi_{V, 5}}
\newcommand{\chiT}{\chi_{T}}
\newcommand{\chiJ}{\chi_{J}}
\newcommand{\chisupg}{\chi_{\supg}}
\newcommand{\chisupgone}{\chi_{\supg, 1}}
\newcommand{\chisupgtwo}{\chi_{\supg, 2}}
\newcommand{\chisupgthree}{\chi_{\supg, 3}}
\newcommand{\chisupgfour}{\chi_{\supg, 4}}
\newcommand{\chisupgi}{\chi_{\supg, i}}
\newcommand{\stab}{\texttt{SUPG}}
\newcommand{\nostab}{\texttt{NONE}}
\title{SUPG-stabilized time-DG finite and virtual elements for the time-dependent advection--diffusion equation}
\author{L. Beir\~ao da Veiga\thanks{Department of Mathematics and Applications, University of Milano-Bicocca, 20125 Milan, Italy (\href{mailto:lourenco.beirao@unimib.it}{lourenco.beirao@unimib.it}, \href{mailto:franco.dassi@unimib.it}{franco.dassi@unimib.it}, \href{mailto:sergio.gomezmacias@unimib.it}{sergio.gomezmacias@unimib.it})}\, \thanks{IMATI-CNR, 27100 Pavia, Italy} \and F. Dassi\footnotemark[1] \and S. G\'omez\footnotemark[1]}
\date{}
\begin{document}
\maketitle

% -----------------------------------------------
%               ABSTRACT
% -----------------------------------------------
\begin{abstract}
\noindent We carry out a stability and convergence analysis for the fully discrete scheme obtained by combining a finite or virtual element spatial discretization with the upwind-discontinuous Galerkin time-stepping applied to the time-dependent advection--diffusion equation. A space--time streamline-upwind Petrov--Galerkin term is used to stabilize the method.
More precisely, we show that the method is inf-sup stable with constant independent of the diffusion coefficient, which ensures the robustness of the method in the convection- and diffusion-dominated regimes. Moreover, we prove optimal convergence rates in both regimes for the error in the energy norm.
An important feature of the presented analysis is the control in the full $L^2(0,T;L^2(\Omega))$ norm without the need of introducing an artificial reaction term in the model.
We finally present some numerical experiments in~$(3 + 1)$-dimensions that validate our theoretical results.

\end{abstract}

\paragraph{Keywords.} Finite element method, virtual element method, upwind-discontinuous Galerkin, streamline-upwind Petrov--Galerkin, inf-sup stability, advection--diffusion equation.

\paragraph{Mathematics Subject Classification.}  35K20, 65M12, 65M15, 65M60.

% -----------------------------------------------
%               INTRODUCTION
% -----------------------------------------------
\section{Introduction \label{SECT::INTRODUCTION}}

The present contribution focuses on the classical time-dependent advection--diffusion equations, also thought as a first step towards more complex nonlinear fluid dynamic problems.
More specifically, let the space--time cylinder~$\QT = \Omega \times (0, T)$, where~$\Omega \subset \R^d$ ($d = 2, 3$) {is} an open, bounded polytopic domain with Lipschitz boundary~$\partial \Omega$, and let~$T > 0$ represent the final time.
Then, for given strictly positive diffusion coefficient~$\nu$, transport solenoidal field~$\bbeta$, source term~$f$, and initial datum~$u_0$, we consider the following advection--diffusion IBVP:
\begin{equation}
\label{EQN::MODEL-PROBLEM}
\begin{cases}
\dpt u - \nu \Delta u + \bbeta \cdot \nabla u = f & \text{in }\QT,\\
u = 0 & \text{on } \GD, \\
u = u_0 & \text{on } \SO ,
\end{cases}
\end{equation}
where the surfaces~$\SO := \Omega \times \{0\}$, $\ST := \Omega \times \{T\}$, and~$\GD := \partial \Omega \times (0, T)$.

In the numerical analysis literature, problem \eqref{EQN::MODEL-PROBLEM}, in addition to its specific interest, has often represented an important step towards the study of more complex models, such as those describing incompressible fluid flows at high Reynolds numbers.
This concurs in motivating the very large amount of articles dealing with the so called advection-dominated case, that is in the development and analysis of numerical methods able to deliver accurate and reliable solutions also when
$|\bbeta| >\!\!> \nu$. Indeed, many stabilization techniques have been designed to address the well-known issue of spurious oscillations or instabilities of conforming finite element (FE) discretizations of model~\eqref{EQN::MODEL-PROBLEM} in the advection-dominated regime.
Such techniques include space--time least squares~\cite{Shakib_Hughes:1991}, streamline-upwind Petrov--Galerkin (SUPG) and variants \cite{brooks1982streamline,johnson1986streamline,franca1992stabilized,Bochev_Gunzburger_Shadid:2004,John_Schmeyer:2008}, local projection stabilization (LPS) \cite{Ahmed_Matthies_Tobiska_Xie:2011,Dong_Li:2021}, and other symmetric stabilization terms~\cite{Burman_Fernandez:2009}; see also the analysis in an abstract framework for spatial discretizations based on symmetric stabilization terms with discontinuous Galerkin time stepping in~\cite{Ern_Schieweck:2016}.

Some simplifications are commonly found in the literature, such as omitting the effect of the time discretizations~\cite{Burman_SMith:2011}, restricting to low-order time stepping schemes~\cite{Bochev_Gunzburger_Shadid:2004,John_Novo:2011,Burman_Fernandez:2009,DeFrutos_Garcia_Novo:2016,Zhang_Liu:2022}, and requiring a sufficiently strong reaction term~$\sigma u$ such that (see, e.g., \cite{Ahmed_Matthies_Tobiska_Xie:2011,John_Novo:2011,Ahmed_Matthies:2015,Srivastava_Ganesan:2020})
\begin{equation}\label{eq:ass:puzzle}
\inf_{(x, t) \in \QT} \Big(\sigma - \frac12 \div \bbeta \Big) \geq \sigma_0 > 0.
\end{equation}
Note that the latter condition is relevant to higher order schemes in time, where the standard discrete energy argument only leads to control in $L^2(\Omega)$ at discrete time instants (plus the sum of the time-jumps at the time-mesh nodes) but not in $L^2(0,T;L^2(\Omega))$ and even less so in $L^\infty(0,T;L^2(\Omega))$.
Condition \eqref{eq:ass:puzzle} is typically justified by the fact that the problem for the variable~$w = e^{-t/T} u$ satisfies such an assumption (at least when $\sigma - \frac12 \div \bbeta \ge 0$).
Although this is surely acceptable, we prefer to tackle the more complex (from the theoretical standpoint) case in which such transformation is not assumed so that no data modification is needed in the method, see Remark \ref{rem:ourchoice}.
This choice is also motivated by the possible extension to nonlinear problems, where the above transformation would induce the introduction of time dependent factors also in front of the nonlinear terms.
Similar observations hold for the more recent Virtual Element {(VE)} technology~\cite{volley}, its literature being clearly less rich than the
{FE} one; some articles dealing with the above issue are \cite{SUPG-Berrone,SUPG-virtual-BDLV,li2021local,beirao2024cip}.

The present work concerns the design and analysis of {an} SUPG-stabilized version of the fully discrete scheme obtained by combining a conforming
{FE} or
{VE} spatial discretization with an upwind-DG time stepping; in particular, we focus on the robustness analysis of the method in the advection-dominated regime (i.e., when~$0 \le \nu \ll 1$). Our main contributions are the following:

\begin{itemize}

\item We carry out the first stability and error analysis of a high-order-in-time SUPG-stabilized scheme for the time-dependent advection--diffusion IBVP~\eqref{EQN::MODEL-PROBLEM}, which does not require the transformation of the original problem, and does not rely on the presence of a positive reaction term.
Although stability and optimal convergence of the upwind-discontinuous Galerkin (DG) SUPG-stabilized finite element method (FEM) have been hypothesized~\cite[\S3.3]{Ahmed_Matthies:2016}, a thorough analysis was missing in the literature even for
FEM (and less so for VEM, for which the proposed methodology is novel).

\item We address, in a unified framework, conforming
{FE and VE} spatial discretizations. Our analysis focuses on VE spaces; however, the same ideas apply to conforming FE spaces with some simplifications (as detailed in Section~\ref{SUBSECT::VE-VERSIONS}).

\item We show an inf-sup estimate with stability constant independent of the meshsize, the time step, and the diffusion coefficient~$\nu$.
Such an estimate is used to prove that, in a certain energy norm, the fully discrete solution satisfies: \emph{i)} a continuous dependence on the data of the problem uniformly in~$\nu$, and \emph{ii)} some~\emph{a priori} error bounds, which do not degenerate when the diffusion coefficient~$\nu$ is small, and depend only on the interpolation and the nonconsistency errors.

\item At the end of the article we evaluate the practical performance of the proposed scheme through a set of numerical tests in~{$(3 + 1)$-}dimensions, for different orders of approximation in time and space.

\end{itemize}

The manuscript is organized as follows. In Section~\ref{S1}, we present some basic notation, the variational form of the continuous problem and a well-known stability result. In Section \ref{SECT::METHOD}, we present the proposed VEM, discuss the particular case of FEM and the extension to Serendipity
VEM, and present our main theoretical results.
Section~\ref{SECT::WELL-POSEDNESS} is devoted to the proof of the well-posedness and stability of the scheme.
In Section \ref{SECT::CONVERGENCE}, we develop
the convergence analysis, first addressing the general case of changing meshes (that is, when the spatial mesh can change form one time-slab to the next {one}) and then particularizing to the more favorable case with fixed spatial mesh.
Finally, we present some numerical tests
in Section \ref{sec:num} and make some concluding remarks in Section~\ref{SECT::CONCLUSIONS}.

% ---------------------------------------------------------------------
\section{Basic notation and weak formulation of the problem.}\label{S1}
% ---------------------------------------------------------------------

We start by reviewing some basic notation we will use through the article.
We denote the first-order time derivative operator by~$\dpt$, and the spatial gradient and Laplacian operators by~$\nabla$  and~$\Delta$, respectively.
We will use standard notation for Sobolev spaces, seminorms, and norms~\cite{McLean:2000}. For instance, given an open, bounded domain~$\Upsilon \subset \R^d$ ($d \in \N$), and scalars~$p \in [2, \infty]$ and~$s \in \R$, we denote by~$W^{s, p}(\Upsilon)$ the standard Sobolev space, and its associated seminorm and norm by~$\SemiNorm{\cdot}{W^{s, p}(\Upsilon)}$ and~$\Norm{\cdot}{W^{s, p}(\Upsilon)}$, respectively.
In particular, for~$p = 2$, we use the notation~$H^s(\Upsilon) = W^{s, 2}(\Upsilon)$, and denote its associated seminorm and norm by~$\SemiNorm{\cdot}{H^s(\Upsilon)}$ and~$\Norm{\cdot}{H^s(\Upsilon)}$, respectively. Moreover, the space~$L^2(\Upsilon) = H^0(\Upsilon)$ denotes the space of Lebesgue square integrable functions over~$\Upsilon$ {with its corresponding inner product~$(\cdot, \cdot)_{\Upsilon}$}, and~$H_0^1(\Upsilon)$ denotes the space of functions in~$H^1(\Upsilon)$ with zero trace on~$\partial \Upsilon$.
A superscript~$d$ is used to represent the seminorms and norms of vector fields with~$d$-components.
{In addition}, given a Banach space~$(X, \Norm{\cdot}{X})$, a time interval~$(a, b)$, and a scalar~$s \in \R$, we denote the Bochner--Sobolev space by~$H^s(a, b; X)$.
{Finally, we use the following notation for the algebraic tensor product of two spaces, say~$V$ and~$W$:
\begin{equation*}
V \otimes W := \mathrm{span} \{vw \, :\, v \in V \text{ and } w \in W\}.
\end{equation*}
}
Given~$k \in \N$, we denote the space of polynomials of degree at most~$k$ defined on~$\Upsilon$ by~$\Pp{k}(\Upsilon)$.

\smallskip
For the time being, we assume the following data regularity.
The transport advective field~$\bbeta \in
{W^{1, \infty}(0, T; L^{\infty}(\Omega))}$
with~$\div \bbeta = 0$, the source term~$f \in L^2(\QT)$, and the initial datum~$u_0 \in L^2(\Omega)$.
Then, denoting by~$a : H_0^1(\Omega) \times H_0^1(\Omega) \rightarrow \R$ and~$b : H_0^1(\Omega) \times H_0^1(\Omega) \rightarrow \R$ the following bilinear forms:
\begin{equation}
\label{EQN::BILINEAR-FORMS}
a(u, v) = \int_{\Omega} \nabla u \cdot \nabla v \dx \quad \text{ and } \quad b(u, v) = \int_{\Omega} v \bbeta \cdot \nabla u \dx,
\end{equation}
the continuous week formulation of the IBVP~\eqref{EQN::MODEL-PROBLEM} reads~(see~\cite[\S7.1.1]{Evans:2022}): find~$u \in L^2(0, T; H_0^1(\Omega)) \cap H^1(0, T; H^{-1}(\Omega)) \subset C([0, T]; L^2(\Omega))$ such that~$u = u_0$ on~$\SO$, and for almost all~$t \in (0, T)$, it holds
\begin{equation}
\label{EQN::CONTINUOUS-WEAK-FORMULATION}
\qquad \langle \dpt u, v \rangle + \nu a(u, v) + b(u, v) = (f, v)_{\Omega}  \qquad \forall v \in H_0^1(\Omega),
\end{equation}
where~$\langle \cdot, \cdot \rangle$ denotes the duality between~$H^{-1}(\Omega)$ and~$H_0^1(\Omega)$.

For any~$\tilde{t} \in (0, T]$, integrating in time equation~\eqref{EQN::CONTINUOUS-WEAK-FORMULATION} over~$(0, \tilde{t})$ and using the skew symmetry of the bilinear form~$b(\cdot, \cdot)$ (i.e., $b(u, v) = - b(v, u)$), and the H\"older and the Young inequalities, we obtain the following bound:
\begin{equation}
\label{EQN::AUX-STABILITY-CONTINUOUS-FORMULATION}
\frac12 \Norm{u(\cdot, \tilde{t})}{L^2(\Omega)}^2 + \nu \Norm{u}{L^2(0, \tilde{t}; H^1(\Omega))}^2
\leq \frac12 \Norm{u_0}{L^2(\Omega)}^2 + \Norm{f}{L^1(0, \tilde{t}; L^2(\Omega))}^2 + \frac14 \Norm{u}{L^{\infty}(0, \tilde{t}; L^2(\Omega))}^2.
\end{equation}

Since~$u \in C^0([0, T]; L^2(\Omega))$, we can take the maximum over~$[0, T]$ in~\eqref{EQN::AUX-STABILITY-CONTINUOUS-FORMULATION} and deduce the following stability estimate:
\begin{equation*}
\frac14 \Norm{u}{L^{\infty}(0, T; L^2(\Omega))}^2 + \nu \Norm{u}{L^2(0, T; H^1(\Omega))}^2 \leq \frac12 \Norm{u_0}{L^2(\Omega)}^2 + \Norm{f}{L^1(0, T; L^2(\Omega))}^2.
\end{equation*}

The above inequality shows a uniform-in-$\nu$ continuous dependence of the solution to~\eqref{EQN::CONTINUOUS-WEAK-FORMULATION} on the data of the problem.
Such a property is clearly desirable to be reproduced at the discrete level.

% -----------------------------------------------
%                   VEM
% -----------------------------------------------
\section{Description of the method and main results\label{SECT::METHOD}}

In this section, we describe the proposed {SUPG-stabilized time-DG VEM} for the discretization of model~\eqref{EQN::MODEL-PROBLEM} and present the major theoretical results.
Some notation for tensor-product-in-time meshes is introduced in Section~\ref{SUBSECT::MESHES}.
In Section~\ref{SUBSECT::LOCAL-SPACES-PROJECTIONS}, we recall the definition of the local enhanced VE spaces in two and three dimensions, their corresponding degrees of freedom, and some computable polynomial projections.
In Section~\ref{SUBSECT::GLOBAL-SPACE-BILINEAR-FORMS}, global discrete spaces are defined as the tensor product of the space of piecewise polynomials in time and~$H_0^1(\Omega)$-conforming VE spaces, and we present the discrete bilinear forms in the definition of the {SUPG-stabilized time-DG VEM} in Section~\ref{SUBSECT::VEM--DG}.
Finally the main theoretical results, asserting the well posedness of the discrete problem and its convergence properties, are presented in Section~\ref{SUBSECT::theores}.

% -----------------------------------------------
%                   MESHES
% -----------------------------------------------
\subsection{Space--time mesh notation and assumptions \label{SUBSECT::MESHES}}
Let~$\{\Omegah\}_{h > 0}$ be a family of polytopic partitions of the spatial domain~$\Omega \subset \R^d$ with $d = 2, 3$ (see Remark \ref{rem:mesh-changing} regarding the case with variable spatial meshes).
For each~$K \in \{\Omegah\}_{h>0}$ and each facet~${F}$ of~$K$, we denote by~$\hK$ and~$h_{{F}}$ the diameters of~$K$ and~${F}$, respectively.
{We make the following assumption on the family~$\{\Omegah\}_{h > 0}$.}
\begin{assumption}[Mesh regularity]
\label{ASM::MESH-REGULARITY}
There exists a {strictly} positive constant~$\varrho$ such that the following conditions hold for any element~$K \in \{\Omegah\}_{h>0}$:
\begin{enumerate}[label = \textbf{\emph{(A\arabic*)}}, ref = (A\arabic*)]
\item \label{A1} $K$ is star-shaped with respect to a ball~$\BK$ of radius larger than or equal to~$\varrho \hK$;
\item \label{A2} (if $d=2$) each facet~$F$ has length larger than or equal to~$\varrho \hK$;
\item \label{A3} (if $d=3$) each facet~$F$ is star-shaped with respect to a disk~$\Bf$ of radius larger than or equal to~$\varrho \hK$ and each edge~$e$ of the (polygonal) facet~$F$ has length larger than or equal to~$\varrho \hK$.
\end{enumerate}
In particular, Assumptions~\ref{A1}--\ref{A3} imply the existence of a uniform maximum number of facets for each element~$K$ of~$\{\Omegah\}_{h>0}$.
\end{assumption}
Let~$\Tt$ be a partition of the time interval~$(0, T)$ given by~$0 := t_0 < t_1 < \ldots < t_N := T$.
For~$n = 1, \ldots, N$, we define the time interval~$I_n := (\tnmo, \tn)$, the surface~$\Sn := \Omega \times \{\tn\}$, and the time step~$\tau_n := \tn - \tnmo$. We further define the spatial meshsize~$h := \max_{K \in \Omegah} \hK$, the minimum element diameter~$h_{\min} := \min_{K \in \Omegah} \hK$, and the maximum time step~$\tau := \max_{n = 1, \ldots, N} \tau_n$.

Finally, for each~$K \in \Omegah$ and~$n = 1, \ldots, N$, we define the space--time prism~$\Kn := K \times I_n$.

\begin{remark}[Relaxation of the mesh assumptions]
The above regularity assumptions on the mesh could be partially relaxed, see for instance~\cite[\S2.3]{BLR-stability}, \cite[\S2 and~\S5.1]{Brenner-smallfaces}, and~\cite[\S3]{Chen-anisotropic} for some results in this direction.
\eremk
\end{remark}
% -----------------------------------------------
%           LOCAL SPACES AND PROJECTIONS
% -----------------------------------------------
\subsection{Local virtual element spaces and projections \label{SUBSECT::LOCAL-SPACES-PROJECTIONS}}
Let~$k$ and~$r$ be integer numbers such that~$k \geq 1$ and~$r \geq 0$, which denote the ``degrees of approximation" in space and time, respectively.

Let~$\calD \subset \R^d$ be an open~$(d - j)$-polytope for some~$j \in \{0, \ldots, d - 1\}$. We denote by~$\bx_{\calD}$ the centroid of~$\calD$, and introduce the following scaled and shifted monomial basis for the space~$\Pp{k}(\calD)$:
\begin{equation*}
\mathcal{M}_k(\calD) := \bigcup_{\ell = 0}^{k} \mathbb{M}_{\ell}(\calD) \ \text{ with }\ \mathbb{M}_{\ell}(\calD) := \Big\{\ma = \Big(\frac{\bx - \bx_{\calD}}{\hK}\Big)^{\ba} \ : \ \ba \in \N^d \text{ with } |\ba| = \ell \Big\}.
\end{equation*}
In the VE context, the use of these bases is particularly convenient for implementation.

We denote by~$\PiO{k}{K} : L^2(K) \rightarrow \Pp{k}(K)$ and~$\PPiO{k}{K} : L^2(K)^d \rightarrow \Pp{k}(K)^d$ the~$L^2(K)$-orthogonal projection operators in~$\Pp{k}(K)$ and~$\Pp{k}(K)^d$, respectively. Moreover, we denote by~$\PiN{k}{K}: H^1(K) \rightarrow \Pp{k}(K)$ the~$H^1(K)$-orthogonal projection operator, defined for any~$v \in H^1(K)$ as the solution to the following local problem:
\begin{equation*}
\begin{cases}
\displaystyle \int_K \nabla (\PiN{k}{K}v - v) \cdot  \nabla q_k \dx = 0 & \quad  \forall q_k \in \Pp{k}(K), \\[0.1in]
\displaystyle \int_{\partial K} (\PiN{k}{K} - v) \dS = 0. &
\end{cases}
\end{equation*}

% --------------------------------------------
%              VE SPACES IN 2D
% --------------------------------------------
\paragraph{Two-dimensional virtual element spaces.}
If~$\Omega \subset \R^2$, for each element~$K \in \Omegah$, we define the following local spaces:
\begin{subequations}
\begin{align}
\Bk(\partial K) & := \{v \in C^0(\partial K) \, : \, v_{\mid e} \in \Pp{k}(e) \text{ for any edge~$e$ of~$K$}\}, \\
\Vkt^{\sf 2D}(K) & := \{v \in H^1(K) \, : \, v_{\mid_{\partial K}} \in \Bk(\partial K) \text{ and } \Delta v \in \Pp{k}(K)\}, \\
\label{vem-loc-2d}
\Vk^{\sf 2D}(K) & := \big\{v \in \Vkt^{\sf 2D}(K) \, : \, \int_K (v - \PiN{k}{K} v) \ma \dx = 0 \ \text{ for all } \ma \in \mathbb{M}_{k-1}(K) \cup \mathbb{M}_k(K)\big\},
\end{align}
\end{subequations}
where the latter is the standard local enhanced VE space introduced in~\cite[\S3]{projectors}.
The following linear functionals constitute a set of unisolvent {degrees of freedom} (DoFs) for~$\Vk^{\sf 2D}(K)$
 (see~\cite[Prop.~2]{projectors}):
\begin{enumerate}[topsep = 0.2em, itemsep = 0.1em, label = \textbf{Dv\arabic*)}, ref = Dv\arabic*)]
\item\label{DOF-VERTICES-2D} the values of~$v$ at the vertices of~$K$;
\item\label{DOF-EDGES-2D} (if~$k \geq 2$) the values of~$v$ at~$(k - 1)$ distinct internal points along each edge~$e$ of~$K$;
\item\label{DOF-BULK-2D} (if~$k \geq 2$) the following moments of~$v$ against the elements of~$\mathcal{M}_{k - 2}(K)$:
\begin{equation*}
\frac{1}{|K|} \int_K v \ma \dx \quad \forall \ma \in \mathcal{M}_{k - 2}(K).
\end{equation*}
\end{enumerate}

% --------------------------------------------
%              VE SPACES IN 3D
% --------------------------------------------
\paragraph{Three-dimensional virtual element spaces.}
If~$\Omega \subset \R^3$, for each element~$K \in \Omegah$, we define the following local spaces:
\begin{subequations}
\begin{align}
\label{L-temp-1}
\Wk(\partial K) & := \{v \in C^0(\partial K) \, : \, v_{\mid F} \in \Vk^{\sf 2D}(F) \text{ for any face~$F$ of~$K$}\}, \\
\label{L-temp-2}
\Vkt^{\sf 3D}(K) & := \{v \in H^1(K) \, : \, v_{\mid_{\partial K}} \in \Wk(\partial K) \text{ and } \Delta v \in \Pp{k}(K)\}, \\
\label{vem-loc-3d}
\Vk^{\sf 3D}(K) & := \big\{v \in \Vkt^{\sf 3D}(K) \, : \, \int_K (v - \PiN{k}{K} v) \ma \dx = 0 \ \text{ for all } \ma \in \mathbb{M}_{k-1}(K) \cup \mathbb{M}_k(K)\big\},
\end{align}
\end{subequations}
where we have denoted by~$\Vk^{\sf 2D}(F)$ the local enhanced VE space on the face~$F$,
as~$F$ is contained in a (two-dimensional) plane.
The following linear functionals constitute a set of unisolvent DoFs for~$\Vk^{\sf 3D}(K)$
 (see~\cite[\S4.1]{projectors}):
\begin{enumerate}[topsep = 0.2em, itemsep = 0.1em, label = \textbf{Dv\arabic*${}^{*}$)}, ref = Dv\arabic*${}^{*}$)]
\begin{subequations}
\item\label{DOF-VERTICES-3D} the values of~$v$ at the vertices of~$K$;
\item\label{DOF-EDGES-3D} (if~$k \geq 2$) the following moments of~$v$ on each edge~$e$ of~$K$:
\begin{equation*}
\frac{1}{|e|} \int_e v \mc \ds \qquad \forall \mc \in \mathcal{M}_{k - 2}(e);
\end{equation*}
\item\label{DOF-FACE-3D} (if~$k \geq 2$) the following moments of~$v$ on each face~$F$ of~$K$:
\begin{equation*}
\frac{1}{|F|} \int_K v \mb \dx \qquad \forall \mb \in \mathcal{M}_{k - 2}(F);
\end{equation*}
\item\label{DOF-BULK-3D} (if~$k \geq 2$) the following moments of~$v$ on~$K$:
\begin{equation*}
\frac{1}{|K|} \int_K v \ma \dx \qquad \forall \ma \in \mathcal{M}_{k - 2}(K).
\end{equation*}
\end{subequations}
\end{enumerate}

Henceforth, we will denote by~$\Vk(K)$ the local enhanced VE space, regardless of the spatial dimension~$d$.

For any~$K \in \Omegah$ and~$v \in \Vk(K)$, the following polynomial projections are computable using the DoFs in sets~\ref{DOF-VERTICES-2D}--\ref{DOF-BULK-2D} (if~$d = 2$) or in sets~\ref{DOF-VERTICES-3D}--\ref{DOF-BULK-3D} (if~$d = 3$):
\begin{equation*}
\PiO{k}{K} v, \quad \PPiO{k}{K} \nabla v, \quad \PiN{k}{K} v.
\end{equation*}

% -----------------------------------------------
%           GLOBAL SPACE AND BILINEAR FORMS
% -----------------------------------------------
\subsection{Global space and bilinear forms \label{SUBSECT::GLOBAL-SPACE-BILINEAR-FORMS}}
For~$d = 2$ or~$d = 3$, we define the global VE space
\begin{equation}\label{vem-glob}
\Vh := \{v \in H_0^1(\Omega)\, :\, v_{|_K} \in \Vk(K) \ \forall K \in \Omegah\},
\end{equation}
and the global space--time VE--DG space
\begin{equation*}
\begin{split}
\Vht & := \prod_{n = 1}^N \Pp{r}(I_n) {\otimes} \Vh.
\end{split}
\end{equation*}

For any piecewise scalar function~$w$ and~$m \in \{1, \ldots, N\}$,
we denote by~$w^{(m)}$ the restriction of~$w$ to the time slab~$\Omega \times I_m$. Moreover, for~$n = 1, \ldots, N - 1$, we define the time jump~$(\jump{\cdot}_n)$ of~$w$ as follows:
\begin{equation*}
\jump{w}_n (\bx) := w(\bx, \tn^-) - w(\bx, \tn^+) \quad \forall \bx \in \Omega,
\end{equation*}
where
$$w(\bx, t_n^{-}) := \lim_{\varepsilon \rightarrow 0^+} w^{(n)}(\bx, t_n - \varepsilon) \quad \text{ and } \quad w(\bx, t_n^{+}) := \lim_{\varepsilon \rightarrow 0^+} w^{(n+1)}(\bx, t_n + \varepsilon) .$$

We now introduce the discrete bilinear forms we use in the definition of the space--time VEM--DG formulation in Section~\ref{SUBSECT::VEM--DG} below.
Henceforth, we denote by~$\Id$ the identity operator. In the following the projection operators defined in Section~\ref{SUBSECT::LOCAL-SPACES-PROJECTIONS} are to be understood as applied pointwise in time.
% -----------------------------------------------
%              Discretization of \dpt u
% -----------------------------------------------
\paragraph{The bilinear form~$\mht(\cdot, \cdot)$.}
We define the upwind-DG VE discretization of the first-order time derivative operator~$\dpt(\cdot)$ as follows:
\begin{equation*}
\begin{split}
\mht(\uh, \vh)  & := \sum_{n = 1}^N
\int_{I_n} \mh\big(\dpt \uh(\cdot, t), \vh(\cdot, t)\big) \dt
- \sum_{n = 1}^{N - 1} \mh\big(\jump{\uh}_n, \vh(\cdot, \tn^+)\big) \\
& \quad + \mh\big(\uh(\cdot, 0), \vh(\cdot, 0)\big),
\end{split}
\end{equation*}
where~$\mh : \Vh \times \Vh \rightarrow \R$ is the standard VE discretization of the~$L^2(\Omega)$-inner product, which can be written as
\begin{equation*}
\mh(u_h, v_h) = \sum_{K \in \Th} \mhK(u_h, v_h),
\end{equation*}
with local contributions~$\mhK(\cdot, \cdot)$ given by
\begin{align*}
\nonumber
\mhK(u_h, v_h) := (\PiO{k}{K} u_h, \PiO{k}{K} v_h)_{K} + \smK((\Id - \PiO{k}{K}) u_h, (\Id - \PiO{k}{K}) v_h),
\end{align*}
for some symmetric bilinear form~$\smK(\cdot, \cdot)$ chosen so that the following condition holds:

\begin{itemize}
\item[$\circ$] \textbf{Stability of~$\smK(\cdot, \cdot)$:} there exist positive constants~$\check{\mu}$ and~$\hat{\mu}$ independent of~$h$ and~$K$, but depending on the degree~$k$ and the parameter~$\varrho$ in Assumption~\ref{ASM::MESH-REGULARITY} such that
\begin{equation}
\label{EQN::STAB-smK}
\check{\mu}\Norm{v_h}{L^2(K)}^2 \leq \smK(v_h, v_h) \leq \hat{\mu} \Norm{v_h}{L^2(K)}^2 \quad \forall v_h \in \ker(\PiO{k}{K}) \cap \Vk(K).
\end{equation}
\end{itemize}

Defining
$$
\mu_* := \min\{1, \check{\mu}\} \quad \text{and} \quad \mu^* := \max\{1, \hat{\mu}\},
$$
the stability property~\eqref{EQN::STAB-smK} implies
\begin{equation}
\label{EQN::STAB::mK}
\mu_*\Norm{v_h}{L^2(K)}^2 \leq \mhK(v_h, v_h) \leq \mu^* \Norm{v_h}{L^2(K)}^2 \quad \forall v_h \in \Vk(K).
\end{equation}

\paragraph{The bilinear form~$\aht(\cdot, \cdot)$.} We discretize the spatial Laplacian operator~$(-\Delta)(\cdot)$ as follows:
\begin{equation*}
\aht(\uh, \vh)  := \sum_{n = 1}^{N} \int_{I_n} \ah\big(\uh(\cdot, t), \vh(\cdot, t) \big) \dt,
\end{equation*}
where~$\ah : \Vh \times \Vh$ is the VE discretization of the bilinear form~$a(\cdot, \cdot)$ in~\eqref{EQN::BILINEAR-FORMS}, which can be written as
\begin{align*}
\ah(u_h, v_h) = \sum_{K \in \Th} \ahK(u_h, v_h),
\end{align*}
with local contributions~$\ahK(\cdot, \cdot)$ given by
\begin{equation*}
\ahK(u_h, v_h) := \big(\PPiO{k-1}{K} \nabla u_h, \PPiO{k-1}{K} \nabla v_h\big)_{K} + \saK\big((\Id - \PiN{k}{K})u_h, (\Id - \PiN{k}{K}) v_h\big),
\end{equation*}
for some symmetric bilinear form~$\saK(\cdot, \cdot)$ chosen so that the following condition holds:

\begin{itemize}
\item[$\circ$] \textbf{Stability of~$\saK(\cdot, \cdot)$:} there exist positive constants~$\check{\alpha}$ and~$\hat{\alpha}$ independent of~$h$ and~$K$, but depending on the degree~$k$ and the parameter~$\varrho$ in Assumption~\ref{ASM::MESH-REGULARITY} such that
\begin{equation}
\label{EQN::STAB-saK}
\check{\alpha}\Norm{\nabla v_h}{L^2(K)^d}^2 \leq \saK(v_h, v_h) \leq \hat{\alpha} \Norm{\nabla v_h}{L^2(K)^d}^2 \qquad \forall v_h \in \ker(\PiN{k}{K}) \cap \Vk(K).
\end{equation}
\end{itemize}

Defining
\begin{equation*}
\alpha_* := \min\{1, \check{\alpha}\} \quad \text{and} \quad \alpha^* := \max\{1, \hat{\alpha}\},
\end{equation*}
using the stability property~\eqref{EQN::STAB-saK}, the stability of~$\PPiO{k}{K}$ in the~$L^2(K)^d$-norm, and the fact that~$\nabla \PiN{k}{K} v_h \in \Pp{k-1}(K)^d$, we deduce that
\begin{equation}
\label{EQN::STAB-aK}
\alpha_* \Norm{\nabla v_h}{L^2(K)^d}^2 \leq \ahK(v_h, v_h) \leq \alpha^* \Norm{\nabla v_h}{L^2(K)^d}^2 \qquad \forall v_h \in \Vk(K).
\end{equation}
\paragraph{The bilinear form~$\bht(\cdot, \cdot)$.}

As for the discretization of the advective term~$(\bbeta \cdot \nabla u)$, we introduce the following skew-symmetric bilinear form:

\begin{equation*}
\bht(\uh, \vh) := \frac12 \sum_{n = 1}^{N} \int_{I_n} \Big(\bh\big(\uh(\cdot, t), \vh(\cdot, t)\big) - \bh\big(\vh(\cdot, t), \uh(\cdot, t)\big) \Big) \dt,
\end{equation*}
where~$\bh : \Vh \times \Vh \rightarrow \R$ is given by
\begin{align}
\label{def_bh}
\bh(u_h, v_h) & := \sum_{K \in \Th}
\big(\bbeta \cdot \PPiO{k}{K} \nabla u_h,\, \PiO{k}{K} v_h\big)_{K}.
\end{align}
The above form \eqref{def_bh} corresponds to choice in~\cite[{Eq.~(4.5)}]{SUPG-virtual-BDLV} (see also~\cite[{Eq.~(15)}]{SUPG-Berrone}); in the two-dimensional case, it can be substituted with the alternative choice in~\cite[{Eq.~(4.6)}]{SUPG-virtual-BDLV}.

\paragraph{The bilinear form~$\sht(\cdot, \cdot)$.}

Finally, we introduce the {SUPG}-stability bilinear form
\begin{equation*}
\sht(\uh, \vh) := \sum_{n = 1}^{N} \sum_{K \in \Th} \shtKn\big(\uh, \vh\big),
\end{equation*}
with
\begin{align*}
\nonumber
\shtKn(\uh, \vh) := & \lambdaKn \big(\LhtK \uh, \tLhtK
\vh\big)_{\Kn} \\
& + \betaKn^2 \lambdaKn \int_{I_n}
\saK\big((\Id - \PiN{k}{K})\uh, (\Id - \PiN{k}{K}) \vh \big) \dt,
\end{align*}
for some parameter~$\lambdaKn > 0$ to be specified later, $\betaKn := \max\{\beta_{\varepsilon}, \Norm{\bbeta}{L^{\infty}(\Kn)^d}\}$ for some mesh-independent strictly positive ``safeguard'' constant~$\beta_{\varepsilon}$,
the stability term~$\saK(\cdot, \cdot)$ as in the definition of~$\ah(\cdot, \cdot)$, and the linear operators~$\LhtK$ and~$\tLhtK$ defined as follows:
\begin{subequations}
\begin{align}
\label{eq:tLhtK}
\tLhtK \vh &:= \dpt \PiO{k}{K} \vh + \bbeta \cdot \PPiO{k-1}{K} \nabla \vh, \\
\label{eq:LhtK}
\LhtK \uh &:= \dpt \PiO{k}{K} \uh - \nu \div \PPiO{k-1}{K} \nabla \uh + \bbeta \cdot \PPiO{k-1}{K} \nabla \uh.
\end{align}
\end{subequations}
For convenience, we also define
$$
\betaQT := \Norm{\bbeta}{L^{\infty}(\QT)^d} .
$$
and assume, up to suitable scalings of the data, that $\betaQT \simeq 1$.

\begin{remark}[Stability terms]
There exists a very large literature concerning different choices for the stabilization terms {for VE discretizations} and developing the associated theoretical support. Explicit expressions for the definition of the stability terms~$\smK(\cdot, \cdot)$ and~$\saK(\cdot, \cdot)$ can be found, for instance in \cite{volley,projectors,apollo} and some related proofs for instance in \cite{BLR-stability,Brenner-smallfaces}; see also~\cite{Mascotto:2023} for a recent discussion on the role of the stability terms for virtual element methods.
\eremk
\end{remark}
% -------------------------------------------------------------------------------
\subsection{Finite element and serendipity VE spaces \label{SUBSECT::VE-VERSIONS}}

The proposed method immediately extends to the case of the classical Lagrangian
{FEM}.
If the mesh is simplicial, one can substitute the local spaces \eqref{vem-loc-2d} (and \eqref{vem-loc-3d} in three space dimensions) with
$$
\Vk(K) := \Pp{k}(K)\qquad \forall K \in \Omegah ,
$$
thus obtaining, c.f. \eqref{vem-glob}, the standard Lagrangian
{FE} space.
In such a case, the scheme boils down to a standard {SUPG-stabilized time-DG FEM} approach, as all the polynomial projections appearing in the definition of the discrete forms disappear and, for the same reason, the stability terms vanish.
Therefore, the theoretical results of this article trivially extend to such (simpler) case, yielding new results also for classical FEMs.
Indeed, on our knowledge, results of this kind are missing in the literature, as previous SUPG schemes are low-order accurate in time (see, e.g., \cite{Bochev_Gunzburger_Shadid:2004,John_Novo:2011,DeFrutos_Garcia_Novo:2016,Zhang_Liu:2022}) and the analysis of many techniques rely on the
presence of a reaction term
(see, e.g., \cite{Ahmed_Matthies_Tobiska_Xie:2011,Ahmed_Matthies:2015}, the discussion in the introduction of this contribution and Remark \ref{rem:ourchoice}).

Another variant that can be considered is that of Serendipity Virtual Elements, which is a construction allowing to reduce the number of
DoFs, an asset which is particularly useful for high-order approaches as the present one. We refer to \cite{SERE-1} for a detailed presentation of Serendipity VEM (see also \cite{SERE-2} for the associated interpolation and stability analysis) and here limit ourselves to a very brief review of the construction in three space dimensions.
The idea is to eliminate DoFs that are internal to faces (since those that are internal to elements can be statically condensed) by introducing, for every face~$F$ of the polyhedral mesh, a projection operator
$$
\Pi^{\mathsf{S}}_F \ : \ \Vkt^{\sf 2D}(F) \longrightarrow \Pp{k}(F)
$$
that depends only on the DoFs associated
with the boundary of~$F$ (vertex values and edge pointwise values).
Clearly, such an operator can be constructed only if the~$\Pp{k}$-bubbles space on~$F$ reduces to~$\{0\}$, a condition that depends on the geometry of~$F$
and on the polynomial degree~$k$. Alternatively, one can use an extended construction (see~\cite[{\S3}]{SERE-1}), but in the present brief review we prefer to stick to the simpler case where no such bubbles exist. Once such an operator is available, one can introduce the smaller space
$$
\Vk^{\mathsf{S},\sf 2D}(F) := \big\{v \in \Vkt^{\sf 2D}(F) \, : \, \int_F (v - \Pi^{\mathsf{S}}_F v) \ma \dx = 0 \ \text{ for all } \ma \in \mathbb{M}_k(F)\big\},
$$
whose associated DoFs are only~\ref{DOF-VERTICES-2D} and \ref{DOF-EDGES-2D}.
Afterwards, one follows the same identical {\bf 3D} construction as in Section \ref{SUBSECT::LOCAL-SPACES-PROJECTIONS} but substituting the face spaces in \eqref{L-temp-1} with its Serendipity variant
$$
\Wk^{\mathsf{S}}(\partial K) := \{v \in C^0(\partial K) \, : \, v_{\mid f} \in \Vk^{\mathsf{S},\sf 2D}(f) \text{ for any face~$f$ of~$K$}\},
$$
and using such a boundary space in~\eqref{L-temp-2} instead of~$\Wk(\partial K)$.

The final space has only degrees of freedom of type \ref{DOF-VERTICES-3D}, \ref{DOF-EDGES-3D}, and \ref{DOF-BULK-3D} but retains all the key properties, such as approximation, of the original VE space \cite{SERE-1,SERE-2}.

% -----------------------------------------------
%           VEM--DG VARIATIONAL FORMULATION
% -----------------------------------------------
\subsection{{SUPG-stabilized time-DG VEM}\label{SUBSECT::VEM--DG}}
The proposed
{SUPG-stabilized time-DG VEM} variational formulation is: find~$\uh \in \Vht$ such that
\begin{equation}
\label{EQN::FULLY-DISCRETE-SCHEME}
\begin{split}
\Bht(\uh, \vh) = \ellht(\vh) \quad \forall \vh \in \Vht,
\end{split}
\end{equation}
where
\begin{subequations}
\begin{align}
\label{DEF::Bht}
& \Bht(\uh, \vh) := \mht(\uh, \vh) + \nu \aht(\uh, \vh) + \bht(\uh, \vh) + \sht(\uh, \vh), \end{align}
and
\begin{align}
\label{DEF::lht}
& \ellht(\vh) := \sum_{K \in \Omegah}
\sum_{n = 1}^{N} \Big(f,\, \PiO{k}{K} \vh + \lambdaKn \tLhtK \vh
\Big)_{\Kn}  + \sum_{K \in \Omegah} \big( u_0,\, \PiO{k}{K} \vh(\cdot, 0)\big)_{K}.
\end{align}
\end{subequations}

%%%
\begin{remark}\label{rem:mesh-changing}
The method proposed above, and (unless clearly stated as will happen in Section \ref{sec:no-tau-below}) the stability and convergence analysis here developed apply identically to the case when the spatial mesh $\Omega_h$ changes at every time slab, that is we have a different mesh $\Omega_h^n$ for all $n=0,1,...,N$. Nevertheless, in order to allow {for} a simpler notation and a clearer exposition, we prefer to keep the above simpler setting in the following developments.
\eremk
\end{remark}

% ----------------------------------------------
%               MAIN RESULTS
% ----------------------------------------------
\subsection{Main theoretical results \label{SUBSECT::theores}}

In this section, we present the main theoretical results of the article, namely the well-posedness of the discrete problem~\eqref{EQN::FULLY-DISCRETE-SCHEME} and the associated quasi-robust error bounds. The proofs are postponed to Sections~\ref{SECT::WELL-POSEDNESS} and \ref{SECT::CONVERGENCE}, respectively.

In what follows, we write~$a \lesssim b$ to indicate the existence of a positive constant~$C$ independent of the meshsize~$h$, the time step~$\tau$, and the diffusion coefficient~$\nu$ such that~$a \le C b$. Moreover, we write~$a \simeq b$ meaning that~$a \lesssim b$ and~$b \lesssim a$.

We need to introduce some preliminary quantities. Let the upwind-jump functional
\begin{equation}
\label{EQN::JUMP-FUNCTIONAL}
\SemiNorm{\uh}{\sfJ}^2 := \frac12 \Big(\Norm{\uh}{L^2(\ST)}^2 + \sum_{n = 1}^{N - 1} \Norm{\jump{\uh}_n}{L^2(\Omega)}^2 + \Norm{\uh}{L^2(\SO)}^2 \Big),
\end{equation}
and the SUPG-functional
\begin{equation*}
\SemiNorm{\uh}{\supg}^2 := \sum_{n = 1}^N \sum_{K \in \Th} \SemiNorm{\uh}{\Knsupg}^2,
\end{equation*}
where
\begin{equation}
\label{EQN::LOCAL-SUPG-NORM}
\SemiNorm{\uh}{\Knsupg}^2:= \lambdaKn \Norm{\tLhtK \uh}{L^2(\Kn)}^2 + \lambdaKn \betaKn^2 \Norm{\nabla (\Id - \PiN{k}{K}) \uh}{L^2(\Kn)^d}^2.
\end{equation}

We also define the following norm in~$\Vht$:
\begin{equation}
\label{EQN::ENERGY-NORM}
\TNorm{\uh}^2 := \Norm{\uh}{L^2(\QT)}^2 + \SemiNorm{\uh}{\sfJ}^2 + \nu \Norm{\nabla \uh}{L^2(\QT)^d}^2 + \SemiNorm{\uh}{\supg}^2.
\end{equation}

The following assumption will be adopted in the sequel.

\begin{assumption}\label{TheAssumption}
For any~$K \in \Th$ and~$n = 1, \ldots, N$, let~$\lambdaKn$ be chosen so that
\begin{equation}
\label{EQN::CHOICE-LAMBDA}
\lambdaKn \leq \zeta \min\Big\{\frac{\hK^2}{\nu \Cinv^2}, \frac{\hK}{\betaQT}\Big\},
\end{equation}
with~$\Cinv$ an inverse-estimate constant (c.f. Lemma~\ref{LEMMA::INVERSE-ESTIMATE} below) and some scalar constant~$\zeta$ independent of~$h$, $\tau$, and~$\nu$.
Moreover, let the following mild condition hold for some positive constant~$C_*$ independent of~$h$ and~$\nu$:
\begin{equation}
\label{EQN::TAU-H}
\tau \leq C_* h_{\min}^{\frac12}.
\end{equation}
\end{assumption}

The proposed method satisfies the following inf-sup estimate with stability constant independent of the diffusion coefficient~$\nu$;
as a consequence, method~\eqref{EQN::FULLY-DISCRETE-SCHEME} is well posed and remains stable even in the advection-dominated regime~$(0 < \nu \ll 1)$.
The proof can be found in Section \ref{SECT::WELL-POSEDNESS}.

\begin{theorem}[Inf-sup stability]
\label{THM::FULLY-DISCRETE-STABILITY}
There exist positive constants~$C_*$, $\zeta$, and~$\gamma_I$ independent of~$h$, $\tau$, and~$\nu$ such that, if~$\tau \leq C_* h_{\min}^{\frac12}$, and the stability parameters~$\lambdaKn$ are chosen so that~\eqref{EQN::CHOICE-LAMBDA} is satisfied,
it holds
\begin{equation*}
\TNorm{\uh} \leq \gamma_I \sup_{\vh \in \Vht \setminus \{0\}} \frac{\Bht(\uh, \vh)}{\TNorm{\vh}} \quad \forall \vh \in \Vht.
\end{equation*}
\end{theorem}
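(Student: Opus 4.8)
The plan is to exhibit, for each fixed $\uh \in \Vht$, a suitable test function $\vh \in \Vht$ for which $\Bht(\uh, \vh) \gtrsim \TNorm{\uh}^2$ while $\TNorm{\vh} \lesssim \TNorm{\uh}$; the inf-sup bound then follows immediately. The natural candidate is a combination $\vh = \uh + \delta\, \wh$ of the ``energy'' test function $\uh$ itself and a carefully chosen perturbation $\wh$, with $\delta$ a small parameter to be fixed at the end.

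\emph{Step 1 (the energy identity).} First I would plug $\vh = \uh$ into $\Bht(\uh, \uh)$. The skew-symmetric form $\bht(\uh, \uh)$ vanishes. The time-derivative form $\mht(\uh, \uh)$ is handled by the standard DG-in-time algebraic identity: integrating $\mh(\dpt \uh, \uh)$ on each slab by parts in time and telescoping the jump contributions, one gets
\begin{equation*}
\mht(\uh, \uh) = \tfrac12 \sum_{n=1}^N \Big( \Norm{\uh(\cdot, t_n^-)}{\mh}^2 - \Norm{\uh(\cdot, t_{n-1}^+)}{\mh}^2 \Big) + \sum_{n=1}^{N-1} \Norm{\jump{\uh}_n}{\mh}^2 + \tfrac12 \Norm{\uh(\cdot,0)}{\mh}^2 + \tfrac12 \Norm{\uh(\cdot,0)}{\mh}^2 ,
\end{equation*}
which after telescoping and using the norm equivalence \eqref{EQN::STAB::mK} bounds $\mht(\uh,\uh) \gtrsim \SemiNorm{\uh}{\sfJ}^2$. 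The term $\nu\, \aht(\uh, \uh) \gtrsim \nu \Norm{\nabla \uh}{L^2(\QT)^d}^2$ by \eqref{EQN::STAB-aK}, and the SUPG term $\sht(\uh, \uh)$ is \emph{almost} $\SemiNorm{\uh}{\supg}^2$: the second piece matches exactly, while the first piece is $\lambdaKn(\LhtK \uh, \tLhtK \uh)_{\Kn}$ rather than $\lambdaKn\Norm{\tLhtK \uh}{L^2(\Kn)}^2$. The discrepancy is $\lambdaKn(\LhtK \uh - \tLhtK \uh, \tLhtK \uh)_{\Kn} = -\nu\lambdaKn(\div \PPiO{k-1}{K}\nabla \uh, \tLhtK\uh)_{\Kn}$, which is absorbed using the inverse estimate $\Norm{\div \PPiO{k-1}{K}\nabla\uh}{L^2(K)} \le \Cinv \hK^{-1}\Norm{\nabla \uh}{L^2(K)^d}$, Young's inequality, and the first bound in \eqref{EQN::CHOICE-LAMBDA} ($\lambdaKn \nu \Cinv^2 \hK^{-2} \le \zeta$): this costs a controlled multiple of $\nu\Norm{\nabla\uh}{L^2(\QT)^d}^2$ plus a small fraction of $\SemiNorm{\uh}{\supg}^2$. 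So $\Bht(\uh, \uh) \gtrsim \SemiNorm{\uh}{\sfJ}^2 + \nu\Norm{\nabla\uh}{L^2(\QT)^d}^2 + \SemiNorm{\uh}{\supg}^2$ — everything in $\TNorm{\uh}^2$ except the full $L^2(\QT)$ term.

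\emph{Step 2 (recovering the $L^2(\QT)$ control — the crux).} This is the main obstacle, since without a reaction term the energy argument gives no grip on $\Norm{\uh}{L^2(\QT)}^2$. The device I would use is the classical ``$t\,\uh$'' (or rather $(T-t)\uh$, or a slab-wise linear-weight) test function — or more precisely a discrete surrogate of it living in $\Vht$. Concretely, take $\wh$ to be (a polynomial-in-time projection of) a function built from $\uh$ multiplied by an increasing weight, engineered so that $\mht(\uh, \wh)$ produces a positive multiple of $\Norm{\uh}{L^2(\QT)}^2$ up to boundary and jump terms already controlled in Step 1, while $\bht(\uh, \wh)$, $\nu\aht(\uh,\wh)$ and $\sht(\uh,\wh)$ are all controllable: $\bht$ and $\aht$ by the skew/coercivity structure and Cauchy--Schwarz against $\nu\Norm{\nabla\uh}{}^2$, and the SUPG cross-term by Cauchy--Schwarz in the $\SemiNorm{\cdot}{\supg}$-inner product. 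The condition $\tau \le C_* h_{\min}^{1/2}$ in \eqref{EQN::TAU-H} enters precisely here: it is needed so that an inverse estimate in time (controlling $\dpt$ of the weighted function, or controlling the polynomial-in-time interpolation error of the weight times $\uh$) does not destroy the balance — the time-step must be small enough relative to the spatial scale for the extra terms generated by $\wh$ to be absorbed. One must also verify $\TNorm{\wh} \lesssim \TNorm{\uh}$, which again uses \eqref{EQN::CHOICE-LAMBDA}, \eqref{EQN::TAU-H}, and inverse estimates in time and space; the $\lambdaKn \le \zeta \hK/\betaQT$ bound is what keeps the $\tLhtK \wh$ contribution to $\SemiNorm{\wh}{\supg}$ under control.

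\emph{Step 3 (combine).} Finally I would set $\vh = \uh + \delta \wh$. Then $\Bht(\uh, \vh) = \Bht(\uh,\uh) + \delta \Bht(\uh, \wh) \ge c_1 \big(\SemiNorm{\uh}{\sfJ}^2 + \nu\Norm{\nabla\uh}{}^2 + \SemiNorm{\uh}{\supg}^2\big) + \delta\big( c_2 \Norm{\uh}{L^2(\QT)}^2 - c_3(\SemiNorm{\uh}{\sfJ}^2 + \nu\Norm{\nabla\uh}{}^2 + \SemiNorm{\uh}{\supg}^2)\big)$; choosing $\delta$ small fixes $\Bht(\uh,\vh) \gtrsim \TNorm{\uh}^2$, and since $\TNorm{\vh} \le \TNorm{\uh} + \delta\TNorm{\wh} \lesssim \TNorm{\uh}$, we get $\TNorm{\uh}^2 \lesssim \Bht(\uh, \vh) \le \TNorm{\vh} \sup_{\vh' \ne 0} \Bht(\uh,\vh')/\TNorm{\vh'} \lesssim \TNorm{\uh}\, \sup_{\vh' \ne 0} \Bht(\uh,\vh')/\TNorm{\vh'}$, i.e. the claim with some $\gamma_I$. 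The constants $C_*$, $\zeta$ are pinned down in Steps 1--2 to make all absorptions legitimate. I expect Step 2 — constructing the discrete weighted test function in $\Vht$ and tracking how \eqref{EQN::TAU-H} makes the time-inverse-estimate losses affordable — to be by far the most delicate part.
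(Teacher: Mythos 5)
Your plan is essentially the paper's proof: the authors take the test function $\vh = \Pi_r^t(\varphi \uh) + \theta \uh$ with the \emph{decreasing} exponential weight $\varphi(t) = T e^{(T-t)/T}$ (your $\uh + \delta \wh$ up to the rescaling $\delta = 1/\theta$), obtain the $L^2(\QT)$ control from $-\varphi' \simeq 1$ exactly as in your Step~2, and absorb the time-projection errors of $\varphi\uh$ and the SUPG/advection cross-terms using $\lambdaKn \le \zeta\min\{\hK^2/(\nu\Cinv^2), \hK/\betaQT\}$ and $\tau \le C_* h_{\min}^{1/2}$, together with the bound $\TNorm{\vh} \lesssim \TNorm{\uh}$, as you anticipate. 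The only slip is your phrase ``increasing weight'': the weight must be decreasing (as in your $(T-t)$ variant) so that the term $-\tfrac12\int \varphi'(t)\, \mh(\uh,\uh)\dt$ produced by the time-derivative form is positive.
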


The well-posedness of the scheme is a simple consequence of the above result (see the end of Section \ref{SECT::WELL-POSEDNESS}) and is stated here below.

\begin{corollary}[Well-posedness]\label{COROL:1}
Under the assumptions of Theorem~\ref{THM::FULLY-DISCRETE-STABILITY}, there exists a unique solution~$\uh \in \Vht$ to the discrete variational formulation~\eqref{EQN::FULLY-DISCRETE-SCHEME}.
Moreover, if~$h$ and~$\zeta$ are such that~$\lambdaKn \le 1$ for all~$K \in \Omegah$ and~$n \in \{1, \ldots, N\}$,
the following continuous dependence on the data is satisfied:
\begin{equation*}
    \TNorm{\uh} \leq \sqrt{2} \gamma_I \big(\Norm{u_0}{L^2(\Omega)} + \Norm{f}{L^2(\QT)} \big).
\end{equation*}
\end{corollary}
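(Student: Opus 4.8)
The plan is to deduce both claims from the inf-sup estimate of Theorem~\ref{THM::FULLY-DISCRETE-STABILITY}: existence and uniqueness is a matter of finite-dimensional linear algebra, and the a priori bound follows by testing that estimate against a continuity bound for the functional~$\ellht$.

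First I would settle well-posedness. Since $\Vht$ is finite-dimensional, \eqref{EQN::FULLY-DISCRETE-SCHEME} is a square linear system: $\uh \mapsto \Bht(\uh, \cdot)$ maps $\Vht$ into its dual, which has the same dimension. The inf-sup estimate of Theorem~\ref{THM::FULLY-DISCRETE-STABILITY}, applied to an arbitrary element of $\Vht$, shows this map is injective, because $\Bht(\uh, \vh) = 0$ for all $\vh$ makes the supremum on the right-hand side vanish, hence $\TNorm{\uh} = 0$ and $\uh = 0$. An injective linear endomorphism of a finite-dimensional space is bijective, so~\eqref{EQN::FULLY-DISCRETE-SCHEME} admits a unique solution $\uh \in \Vht$.

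For the a priori bound, I would apply Theorem~\ref{THM::FULLY-DISCRETE-STABILITY} to this solution and use $\Bht(\uh, \vh) = \ellht(\vh)$, reducing the claim to the continuity estimate $|\ellht(\vh)| \le \sqrt{2}\,(\Norm{u_0}{L^2(\Omega)} + \Norm{f}{L^2(\QT)})\,\TNorm{\vh}$ for all $\vh \in \Vht$. The three contributions in~\eqref{DEF::lht} are then bounded by Cauchy--Schwarz: $\sum_{K, n}(f, \PiO{k}{K}\vh)_{\Kn} \le \Norm{f}{L^2(\QT)}\Norm{\vh}{L^2(\QT)}$, using the $L^2$-stability of $\PiO{k}{K}$ (applied pointwise in time) and that $\{\Kn\}_{K, n}$ tiles~$\QT$; $\sum_{K, n}\lambdaKn(f, \tLhtK\vh)_{\Kn} \le \big(\sum_{K, n}\lambdaKn\Norm{f}{L^2(\Kn)}^2\big)^{1/2}\big(\sum_{K, n}\lambdaKn\Norm{\tLhtK\vh}{L^2(\Kn)}^2\big)^{1/2} \le \Norm{f}{L^2(\QT)}\,\SemiNorm{\vh}{\supg}$, the first factor using the hypothesis $\lambdaKn \le 1$ and the second the definition~\eqref{EQN::LOCAL-SUPG-NORM}; and $\sum_K(u_0, \PiO{k}{K}\vh(\cdot, 0))_K \le \Norm{u_0}{L^2(\Omega)}\Norm{\vh(\cdot, 0)}{L^2(\Omega)} \le \sqrt{2}\,\Norm{u_0}{L^2(\Omega)}\,\SemiNorm{\vh}{\sfJ}$ by~\eqref{EQN::JUMP-FUNCTIONAL}. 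One final discrete Cauchy--Schwarz against the vector $(\Norm{\vh}{L^2(\QT)}, \SemiNorm{\vh}{\supg}, \SemiNorm{\vh}{\sfJ})$, whose Euclidean norm is at most $\TNorm{\vh}$ by~\eqref{EQN::ENERGY-NORM}, together with $\sqrt{a^2 + b^2} \le a + b$, produces the constant $\sqrt{2}$; combining with Theorem~\ref{THM::FULLY-DISCRETE-STABILITY} then yields $\TNorm{\uh} \le \sqrt{2}\,\gamma_I(\Norm{u_0}{L^2(\Omega)} + \Norm{f}{L^2(\QT)})$.

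There is no serious obstacle; the only delicate point is keeping the constant sharp. One must combine the three terms through a single Cauchy--Schwarz step rather than bounding $\Norm{\vh}{L^2(\QT)}$, $\SemiNorm{\vh}{\supg}$, and $\SemiNorm{\vh}{\sfJ}$ individually by $\TNorm{\vh}$, and the hypothesis $\lambdaKn \le 1$ is used precisely to discard the SUPG weights against $\Norm{f}{L^2(\QT)}^2$ in the second term.
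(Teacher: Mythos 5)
Your proposal is correct and follows essentially the same route as the paper: the paper likewise deduces uniqueness (hence existence, by finite dimensionality) from the inf-sup estimate of Theorem~\ref{THM::FULLY-DISCRETE-STABILITY} and proves the same continuity bound $|\ellht(\vh)| \le \sqrt{2}\,(\Norm{u_0}{L^2(\Omega)} + \Norm{f}{L^2(\QT)})\,\TNorm{\vh}$, using $\lambdaKn \le 1$ to absorb the SUPG weights into $\SemiNorm{\vh}{\supg}$. The only cosmetic difference is that the paper keeps $\lambdaKn^2$ on the test-function side before invoking $\lambdaKn \le 1$, whereas you split the weight symmetrically; both yield the stated constant.
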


The constant~$1$ appearing above in the condition $\lambdaKn \le 1$ was introduced only for simplicity of exposition. What is actually required for the well-posedness is that all the~$\lambdaKn$ are uniformly bounded (in practice the~$\lambdaKn$ are expected to be ``small'', c.f. \eqref{EQN::CHOICE-LAMBDA}).

\begin{remark}[Stability in $L^2(0,T;L^2(\Omega))$]\label{rem:ourchoice}
Theorem \ref{THM::FULLY-DISCRETE-STABILITY} shows the stability of the fully discrete scheme in a norm including $L^2(0,T;L^2(\Omega))$ without resorting to the exponential transformation $w = e^{-t/T}u$, thus underlining that also the method applied directly to \eqref{EQN::MODEL-PROBLEM} is able to recover the full stability properties of the continuous problem.
\eremk
\end{remark}

We now introduce the main \emph{a priori} error estimates for our method, yielding optimal convergence rates in the energy norm~$\TNorm{\cdot}$ defined in~\eqref{EQN::ENERGY-NORM}. The error estimates hold under the following assumption on the regularity of the data and the exact solution.
%%%%
\begin{assumption}[Data assumption]
\label{ASM::DATA}
For all~$K \in \Omegah$ and~$n \in \{1, \ldots, N\}$, the solution~$u$ to the continuous weak formulation in~\eqref{EQN::CONTINUOUS-WEAK-FORMULATION}, the source term~$f$, and the initial condition~$u_0$ satisfy:
\begin{alignat*}{2}
{u \in H^{q + 1}(I_n; H^1(K)) \cap H^1(I_n; H^{s + 1}(K)),  \
f \in L^2(I_n; H^{s+1}(K)), \ \text{ and } \ u_0 \in H^{s+1}(K),}
\end{alignat*}
and the advective field~$\bbeta$ satisfies,
\begin{equation*}
\bbeta\in {L^{\infty}(0, T; W^{s + 1, \infty}(\Omega)) \cap W^{1, \infty}(0, T; L^{\infty}(\Omega))},
\end{equation*}
for~$0 \le q \le r$ and~$0 \le s \le k$.
\end{assumption}

The following convergence result holds (the proof can be found in Section \ref{SECT::CONVERGENCE}).

\begin{theorem}[\emph{A priori} error estimates]
\label{THM::ERROR-ESTIMATES}
Let Assumption~\ref{ASM::MESH-REGULARITY} (on the mesh-regularity) and Assumption~\ref{TheAssumption} (on the choice of~$\lambdaKn$ and the relation of~$\tau$ and~$h_{\min}$) hold. Let also Assumption~\ref{ASM::DATA} hold with~$q = s = r = k$.
If the time steps~$\{\tau_n\}_{n = 1}^N$ are quasi-uniform and the solution $u$ is sufficiently regular, we have the following error estimates (all bounds being uniform in $\nu$):
\end{theorem}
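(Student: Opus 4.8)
The plan is to follow the standard approach for inf-sup stable discretizations: split the error into an interpolation part and a discrete part, control the discrete part via the inf-sup estimate of Theorem~\ref{THM::FULLY-DISCRETE-STABILITY}, and reduce the resulting residual to interpolation errors plus the polynomial-nonconsistency of the discrete forms. First I would fix a suitable approximant $\uIt \in \Vht$ of the exact solution $u$, namely the tensor product of an $H_0^1(\Omega)$-conforming enhanced-VEM interpolant in space (enjoying the usual quasi-optimal interpolation estimates under Assumption~\ref{ASM::MESH-REGULARITY} and Assumption~\ref{ASM::DATA}) and a projection onto $\Pp{r}(I_n)$ in time chosen so as to reproduce the value $u(\cdot,\tn)$ at the right endpoint of $I_n$, which keeps the upwind time-jumps of $\uIt$ of optimal order $O(\tau^{r+1})$; I would also denote by $u_\pi$ an elementwise-in-space-and-time polynomial best approximation of $u$ of degrees $k$ and $r$, on which the projections entering the discrete forms act trivially or nearly so. Writing
\[
u - \uh = (u - \uIt) + (\uIt - \uh) =: \eIt + \eh ,
\]
the triangle inequality reduces the task to estimating $\TNorm{\eIt}$, which follows directly from interpolation estimates applied term by term in the definition~\eqref{EQN::ENERGY-NORM} of $\TNorm{\cdot}$, and $\TNorm{\eh}$.

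Since $\eh \in \Vht$, Theorem~\ref{THM::FULLY-DISCRETE-STABILITY} gives $\TNorm{\eh} \le \gamma_I \sup_{\vh \in \Vht \setminus \{0\}} \Bht(\eh, \vh)/\TNorm{\vh}$, and using that $\uh$ solves~\eqref{EQN::FULLY-DISCRETE-SCHEME} one has
\[
\Bht(\eh, \vh) = \Bht(\uIt - u_\pi, \vh) + \big(\Bht(u_\pi, \vh) - \ellht(\vh)\big) .
\]
The first term is an approximation term, to be estimated piece by piece against $\TNorm{\vh}$: the $\mht$ contribution (the $\dpt$-integral, the interior time-jumps, and the initial term) against the $L^2(\QT)$- and $\SemiNorm{\cdot}{\sfJ}$-parts of $\TNorm{\vh}$, treated so as to avoid uncontrolled negative powers of $\tau$ (here quasi-uniformity of the time mesh and condition~\eqref{EQN::TAU-H} enter); the $\nu\,\aht$ contribution against $\nu^{1/2}\Norm{\nabla\vh}{L^2(\QT)^d}$; the skew-symmetric $\bht$ contribution against the SUPG part $\SemiNorm{\vh}{\supg}$ (crucially \emph{not} against $\nu^{1/2}\Norm{\nabla\vh}{L^2(\QT)^d}$, so that the estimate remains $\nu$-robust); and the $\sht$ contribution directly against $\SemiNorm{\vh}{\supg}$. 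The second term is the nonconsistency error: subtracting and adding the weak formulation~\eqref{EQN::CONTINUOUS-WEAK-FORMULATION} (tested against $\vh(\cdot,t)$ and integrated in time) together with the elementwise strong form $\dpt u - \nu \Delta u + \bbeta \cdot \nabla u = f$ (licit by the assumed regularity), it collapses to the polynomial-consistency gaps of $\mh$, $\ah$, $\bh$ and of the SUPG operators $\LhtK$, $\tLhtK$ — differences of the type $(\bbeta \cdot \nabla u, \PiO{k}{K}\vh)_{\Kn} - (\bbeta \cdot \PPiO{k-1}{K}\nabla u_\pi, \PiO{k}{K}\vh)_{\Kn}$ and stabilization residuals $\saK((\Id - \PiN{k}{K})u_\pi, \cdot)$ — each controlled via the approximation properties of the projections and the enhancement property of $\Vk(K)$.

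Collecting the bounds and inserting the interpolation estimates for $\eIt$ and $u_\pi$ in the norms $L^2(\Omega)$, $\nu^{1/2}H^1(\Omega)$, the time-jump norm, and the SUPG norm — the last carrying the weight $\lambdaKn^{1/2} \lesssim \min\{\hK \nu^{-1/2},\, \hK^{1/2}\}$ from~\eqref{EQN::CHOICE-LAMBDA} — together with polynomial approximation in time of order $r$, and absorbing the $\TNorm{\vh}$ factor, yields the asserted optimal rates, uniformly in $\nu$ (of the type $h^{k+1/2} + \tau^{r+1/2}$ for the convective/SUPG contribution, with an additional $\nu^{1/2} h^{k}$ from the diffusive one). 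The main obstacle is precisely the combination of $\nu$-robustness with the control of the full $L^2(0,T;L^2(\Omega))$ norm \emph{without} a reaction term: since $\bht$ is skew-symmetric its naive contribution to the energy argument vanishes, so the convective residual cannot be absorbed into $\nu^{1/2}\Norm{\nabla\vh}{L^2(\QT)^d}$ (which would degenerate as $\nu \to 0$ after division by $\nu$); it must instead be paired with the streamline derivative contained in $\tLhtK\vh$, i.e.\ one must exploit that the SUPG-modified test function $\lambdaKn \tLhtK \vh$ appears both in $\ellht$ and inside $\TNorm{\vh}$, tracking the weights $\lambdaKn$ so that the extra negative power of $h$ coming from $\Norm{\bbeta \cdot \nabla(\cdot)}{L^2}$ is compensated by $\lambdaKn^{1/2} \lesssim \hK^{1/2}$. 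Doing this bookkeeping uniformly across the diffusion- and convection-dominated regimes, together with the time-jump terms and the inverse-estimate-in-time factor produced by $\dpt$ inside the SUPG operator (which is why condition~\eqref{EQN::TAU-H} and the quasi-uniformity of $\{\tau_n\}$ are imposed), is the delicate part; the remaining estimates are a by-now-standard assembly of VEM interpolation and polynomial-consistency bounds.
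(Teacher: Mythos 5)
Your proposal follows essentially the same route as the paper: split $u - \uh$ into an interpolation error $\eIt$ and a discrete error $\eh$, bound $\eh$ via the inf-sup estimate of Theorem~\ref{THM::FULLY-DISCRETE-STABILITY}, reduce the residual $\Bht(\eh,\vh)$ to interpolation plus polynomial-nonconsistency contributions by inserting the weak/strong form of the continuous equation, and pair the convective residuals with the SUPG component of $\TNorm{\vh}$ carrying the weight $\lambdaKn^{1/2}\lesssim \hK^{1/2}$ rather than with $\nu^{1/2}\Norm{\nabla \vh}{L^2(\QT)^d}$ --- exactly the mechanism of Proposition~\ref{PROP::A-PRIORI-BOUNDS} and Lemmas~\ref{LEMMA::INTERPOLATION-ERROR-ESTIMATE}--\ref{LEMMA::ESTIMATE-CHIMB}. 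The only (harmless) deviations are your endpoint-matching time interpolant and the explicit polynomial pivot $u_\pi$: the paper instead takes $\uIt=\Pi_r^t\uI$ with the plain $L^2$-in-time projection and compares directly with $u$, handling the resulting time-jump and $\dpt(\Id-\PiO{k}{K})\vh$ terms via the trace and inverse inequalities (the source of the negative powers of $\tau$ in \eqref{eq:fin:conv}--\eqref{eq:fin:diff}, removed only in Section~\ref{sec:no-tau-below} with a nodally exact interpolant similar in spirit to the one you propose).
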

\begin{itemize}
\item Advection-dominated regime $(\nu \ll \hK)$: $\lambdaKn = \zeta \frac{\hK}{\betaQT} \simeq \hK$
\begin{equation}\label{eq:fin:conv}
\TNorm{u - \uh}^2 \lesssim \tau^{2k+1} + h^{2k+1} + h^2 \tau^{2k} + h^{2k+1} \big( h/\tau + h^2/\tau^2) + \tau^{2k+2}/h_{\textrm{min}}.
\end{equation}
\item Diffusion-dominated regime $(\nu \simeq 1, \, \hK \lesssim \nu)$: $\lambdaKn = \zeta \frac{\hK^2}{\nu \Cinv^2} \simeq \hK^2 $
\begin{equation}\label{eq:fin:diff}
\TNorm{u - \uh}^2 \lesssim \tau^{2k+1} + h^{2k} + h^2 \tau^{2k} + h^{2k+2}/\tau^2 + \tau^{2k+2}/h_{\textrm{min}}^2.
\end{equation}
\end{itemize}

The quantities with negative powers of $h_{\textrm{min}}$ in \eqref{eq:fin:conv}--\eqref{eq:fin:diff} suggest assuming quasi-uniformity also of the spatial mesh.
In such a case, whenever the meshsize~$h$ and the time step~$\tau$ are orthotropic (i.e.,~$\tau \simeq h$) we immediately observe that the error satisfies $\TNorm{u - \uh} \lesssim h^k$ in diffusion dominated cases and~$\TNorm{u - \uh} \lesssim h^{k+1/2}$ in advection dominated cases, which are the optimal behaviours expected for quasi-robust schemes.
Another important observation is that, since reducing $\tau$ is computationally cheaper than reducing $h$, one may be interested in the case $\tau \ll h$, representing the situation in which the time mesh is substantially finer than the spatial mesh. The presence of terms of the kind~$h/\tau$ in the error estimate above are detrimental in this respect: we investigate the possibility of eliminating such terms in Section \ref{sec:no-tau-below}.

% ----------------------------------------------
%               STABILITY
% ----------------------------------------------
\section{Well-posedness of the method\label{SECT::WELL-POSEDNESS}}

This section is devoted to prove Theorem~\ref{THM::FULLY-DISCRETE-STABILITY} and Corollary \ref{COROL:1}.

\subsection{Some useful tools\label{SUBSECT::TOOLS-STABILITY}}
In the proof of the inf-sup stability estimate in Theorem~\ref{THM::FULLY-DISCRETE-STABILITY} we make use of the following auxiliary exponential weight function:
\begin{equation}
\label{DEF::VARPHI}
\varphi = \varphi(t) := T\exp((T-t)/T),
\end{equation}
which satisfies the following two important uniform bounds:
\begin{subequations}
\label{EQN::BOUNDS-VARPHI}
\begin{align}
\label{EQN::BOUNDS-VARPHI-1}
T \leq & \varphi(t) \leq eT  \quad \quad\ \forall t \in [0, T], \\
\label{EQN::BOUNDS-VARPHI-2}
-1  \leq & \varphi'(t) \leq -e^{-1}  \quad \forall t \in [0, T].
\end{align}
\end{subequations}
We denote by~$\Pi_r^t : L^2(0, T) \rightarrow \Pp{k}(\Tt)$ the~$L^2(0, T)$-orthogonal projection operator in~$\Pp{r}(\Tt)$.
In what follows, the operator~$\Pi_r^t$ is to be understood as applied pointwise in space.

We start by some inverse estimates for VE functions and polynomials.
\begin{lemma}[Local inverse estimates]
\label{LEMMA::INVERSE-ESTIMATE}
Let~$\Th$ satisfy Assumption~\ref{ASM::MESH-REGULARITY}. Then, for all~$K \in \Th$ and~$n = 1, \ldots, N$, the following bounds hold:
\begin{subequations}
\begin{align}
\label{EQN::INVERSE-ESTIMATE-SPACE}
\Norm{\nabla {w_{r,k}}}{L^2(I_n; L^2(K)^d)} & \leq \Cinv \hK^{-1} \Norm{{w_{r,k}}}{L^2(I_n; L^2(K))} && \forall {w_{r, k}} \in \Pp{r}(I_n) \otimes \Vk(K), \\
\label{EQN::INVERSE-ESTIMATE-TIME}
\Norm{\dpt w_{r}}{L^2(I_n; L^2(K))} & \leq \Cinv \tau_n^{-1} \Norm{w_{r}}{L^2(I_n; L^2(K))} && \forall w_{r} \in \Pp{r}(I_n) \otimes L^2(K), \\
\label{EQN::INVERSE-ESTIMATE-DIVERGENCE}
\Norm{\div \mathbf{p}_{r,k}}{L^2(I_n; L^2(K))} & \leq \Cinv \hK^{-1} \Norm{\mathbf{p}_{r, k}}{L^2(I_n; L^2(K)^d)} && \forall \mathbf{p}_{r, k} \in \Pp{r}(I_n) \otimes \Pp{k}(K)^d,
\end{align}
\end{subequations}
for some positive constant~$\Cinv$ independent of~$K$, $\hK$, and~$\tau_n$.
\end{lemma}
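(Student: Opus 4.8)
The plan is to establish each of the three bounds by a standard scaling argument, reducing to a reference configuration where the estimate is a finite-dimensional equivalence of norms, and then transporting back the constant using the mesh-regularity Assumption~\ref{ASM::MESH-REGULARITY}. Because the three estimates are of slightly different nature (the first involves virtual element functions in space and polynomials in time; the second is purely in time; the third is purely polynomial in space), I would handle them in increasing order of subtlety: first \eqref{EQN::INVERSE-ESTIMATE-TIME}, then \eqref{EQN::INVERSE-ESTIMATE-DIVERGENCE}, and finally \eqref{EQN::INVERSE-ESTIMATE-SPACE}.

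For \eqref{EQN::INVERSE-ESTIMATE-TIME}: since $w_r \in \Pp{r}(I_n) \otimes L^2(K)$, I would fix $\bx \in K$, write $w_r(\bx, \cdot) \in \Pp{r}(I_n)$, map $I_n$ affinely to the reference interval $\widehat{I} = (0,1)$, and use the classical one-dimensional polynomial inverse inequality $\Norm{\dpt \widehat{w}}{L^2(\widehat I)} \le C_r \Norm{\widehat w}{L^2(\widehat I)}$ (an equivalence of norms on the finite-dimensional space $\Pp{r}(\widehat I)$), with $C_r$ depending only on $r$. The affine change of variables introduces the factor $\tau_n^{-1}$; squaring and integrating over $\bx \in K$ (Fubini) gives the claim. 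For \eqref{EQN::INVERSE-ESTIMATE-DIVERGENCE}: since $\mathbf p_{r,k}$ is polynomial in both variables, I would tensorize — map $K$ to a reference element $\widehat K$ of unit diameter (using \ref{A1}, which controls the star-shapedness and hence the regularity of the affine, or piecewise-affine, map) and $I_n$ to $\widehat I$, apply the finite-dimensional inverse estimate $\Norm{\div \widehat{\mathbf p}}{L^2(\widehat K)} \le C_{r,k} \Norm{\widehat{\mathbf p}}{L^2(\widehat K)^d}$ on $\Pp{r}(\widehat I) \otimes \Pp{k}(\widehat K)^d$, and track the scaling: the spatial derivative produces $\hK^{-1}$, the time variable plays no role since no time derivative appears, so the $\tau_n$-dependence cancels. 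Here I must be a little careful that the reference element is \emph{not} fixed (polytopes vary), so strictly the constant $C_{r,k}$ should be obtained either through a reference simplex covering/partition argument or via the known scaled polynomial inverse inequalities on star-shaped domains; in either case Assumption~\ref{ASM::MESH-REGULARITY} is exactly what makes the constant uniform.

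The main obstacle is \eqref{EQN::INVERSE-ESTIMATE-SPACE}, because $w_{r,k}$ restricted to a fixed time is a \emph{virtual} element function, not a polynomial, so no naive finite-dimensional argument applies directly on the whole space $\Vk(K)$. The standard route — which I would follow — is the inverse inequality for enhanced VE spaces established under Assumption~\ref{ASM::MESH-REGULARITY} (see e.g.\ \cite{BLR-stability,Brenner-smallfaces}): for any $v_h \in \Vk(K)$ one has $\Norm{\nabla v_h}{L^2(K)^d} \le \Cinv \hK^{-1} \Norm{v_h}{L^2(K)}$, a nontrivial fact proved by splitting $v_h = \PiN{k}{K} v_h + (v_h - \PiN{k}{K} v_h)$, bounding the polynomial part by a polynomial inverse estimate and the remainder via the stability of the VE stabilization together with a norm equivalence on $\ker(\PiN{k}{K}) \cap \Vk(K)$. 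Granting this pointwise-in-time estimate, I would simply integrate its square over $t \in I_n$: writing $w_{r,k}(\cdot, t) \in \Vk(K)$ for a.e.\ $t$, Fubini gives
\[
\Norm{\nabla w_{r,k}}{L^2(I_n; L^2(K)^d)}^2 = \int_{I_n} \Norm{\nabla w_{r,k}(\cdot, t)}{L^2(K)^d}^2 \dt \le \Cinv^2 \hK^{-2} \int_{I_n} \Norm{w_{r,k}(\cdot, t)}{L^2(K)}^2 \dt,
\]
which is the desired bound. It is worth stressing that the spatial inverse estimate for VE functions is the one genuinely technical ingredient; the time-dependence throughout is handled trivially by Fubini since the spatial operators act pointwise in time, and no inverse estimate mixing the two variables is needed. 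Finally, I would remark that the constant $\Cinv$ can be taken as the maximum of the (finitely many, $k$- and $r$-dependent) constants arising in the three arguments, so that a single $\Cinv$, independent of $K$, $\hK$, and $\tau_n$, serves for all three bounds, consistently with the statement and with its use in Assumption~\ref{TheAssumption}.
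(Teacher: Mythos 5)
Your proposal is correct and takes essentially the same route as the paper: the time and divergence bounds are standard polynomial inverse estimates obtained by scaling, and the spatial bound reduces, via the tensor-product structure and Fubini in time, to the known (pointwise-in-time) inverse inequality for virtual element functions, which the paper simply cites (from \cite{Vacca:2018} for $d=2$ and \cite{Huang_Yu:2023} for $d=3$) rather than re-derives.
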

\begin{proof}
The inverse estimate in~\eqref{EQN::INVERSE-ESTIMATE-SPACE} follows from the tensor-product structure of the element~$K \times I_n$ and the space~$\Pp{r}(I_n) {\otimes} \Vk(K)$, and the inverse estimates for VE functions in~\cite[Lemma~6.1]{Vacca:2018} (for~$d = 2$) and~\cite[Thm.~3.4]{Huang_Yu:2023} (for~$d = 3$).
The inverse estimates~\eqref{EQN::INVERSE-ESTIMATE-TIME} and~\eqref{EQN::INVERSE-ESTIMATE-DIVERGENCE} are standard; see e.g., \cite[\S4.5]{Brenner-Scott:book}.
\end{proof}

In next lemma, we recall some approximation properties of~$\Pi_r^t$ from~\cite[Lemma~4.3]{Wang_Rhebergen:2023}.
\begin{lemma}
\label{LEMMA::APPROXIMATION-I-PI}
There exits a positive constant~$C_S$ independent of~$h$ and~$\tau$ such that, for~$n = 1, \ldots, N$ and $K \in \Omegah$, the following bounds hold:
\begin{subequations}
\begin{alignat}{3}
\label{EQN::APPROX-I-PI-1}
\Norm{(\Id - \Pi_r^t) (\varphi {\wt})}{L^2(\Kn)} & \leq C_S \tau_n \Norm{{\wt}}{L^2(\Kn)} & & \qquad \forall \wt \in {\Pp{r}(I_n) \otimes L^2(K)},\\
\label{EQN::APPROX-I-PI-Time-Der}
\Norm{\dpt (\Id - \Pi_r^t) (\varphi {\wt})}{L^2(\Kn)} & \leq C_S \Norm{{\wt}}{L^2(\Kn)} & & \qquad \forall \wt \in {\Pp{r}(I_n) \otimes L^2(K)},\\
\label{EQN::APPROX-I-PI-2}
\Norm{(\Id - \Pi_r^t) (\varphi \wt)}{L^2(\Snmo)} & \leq C_S \tau_{n}^{\frac12} \Norm{\wt}{L^2(I_n; L^2(\Omega))}
& & \qquad \forall \wt \in {\Pp{r}(I_n) \otimes L^2(\Omega)},\\
\label{EQN::APPROX-I-PI-3}
\Norm{(\Id - \Pi_r^t) (\varphi \wt)}{L^2(\Sn)} & \leq C_S \tau_n^{\frac12} \Norm{\wt}{L^2(I_n; L^2(\Omega))}
& & \qquad \forall {\wt \in \Pp{r}(I_n) \otimes L^2(\Omega)}.
\end{alignat}
\end{subequations}
\end{lemma}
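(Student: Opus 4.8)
The plan is to reduce everything to a single elementary observation: since $w_\tau$ is already a polynomial of degree $r$ in time on $I_n$, the operator $\Pi_r^t$ fixes the product of $w_\tau$ with any $t$-independent factor. Fixing $n$ and setting $\bar\varphi_n := \varphi(t_{n-1})$, we have $\bar\varphi_n w_\tau \in \Pp{r}(I_n) \otimes L^2(K)$, hence $\Pi_r^t(\bar\varphi_n w_\tau) = \bar\varphi_n w_\tau$, so that
\begin{equation*}
(\Id - \Pi_r^t)(\varphi \wt) = (\Id - \Pi_r^t)\big((\varphi - \bar\varphi_n) \wt\big).
\end{equation*}
From \eqref{EQN::BOUNDS-VARPHI-2}, $\Norm{\varphi - \bar\varphi_n}{L^\infty(I_n)} = \sup_{t \in I_n}\big|\int_{t_{n-1}}^{t}\varphi'(\sigma)\,\mathrm{d}\sigma\big| \le \tau_n$. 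Combining this with the $L^2$-stability of $\Id - \Pi_r^t$ yields \eqref{EQN::APPROX-I-PI-1} immediately (with $C_S = 1$).

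For \eqref{EQN::APPROX-I-PI-Time-Der} I would set $g := (\varphi - \bar\varphi_n)\wt$ and split $\dpt(\Id - \Pi_r^t)g = \dpt g - \dpt \Pi_r^t g$. The product rule gives $\dpt g = \varphi' \wt + (\varphi - \bar\varphi_n)\dpt \wt$; bounding $\varphi'$ by $1$ via \eqref{EQN::BOUNDS-VARPHI-2} and using the time inverse estimate \eqref{EQN::INVERSE-ESTIMATE-TIME} on the polynomial $\wt$ to control $\tau_n\Norm{\dpt \wt}{L^2(\Kn)} \le \Cinv\Norm{\wt}{L^2(\Kn)}$, we get $\Norm{\dpt g}{L^2(\Kn)} \lesssim \Norm{\wt}{L^2(\Kn)}$. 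For the second term, $\Pi_r^t g \in \Pp{r}(I_n)\otimes L^2(K)$, so \eqref{EQN::INVERSE-ESTIMATE-TIME} applies to it as well, and together with $L^2$-stability of $\Pi_r^t$ and $\Norm{\varphi - \bar\varphi_n}{L^\infty(I_n)} \le \tau_n$ it gives $\Norm{\dpt \Pi_r^t g}{L^2(\Kn)} \le \Cinv\tau_n^{-1}\Norm{g}{L^2(\Kn)} \lesssim \Norm{\wt}{L^2(\Kn)}$. This proves \eqref{EQN::APPROX-I-PI-Time-Der} with $C_S$ depending only on $\Cinv$ (and hence on $r$).

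For \eqref{EQN::APPROX-I-PI-2}--\eqref{EQN::APPROX-I-PI-3} I would use the elementary trace inequality in time: for $\psi \in H^1(I_n; L^2(\Omega))$ and any $s \in \overline{I_n}$,
\begin{equation*}
\Norm{\psi(\cdot, s)}{L^2(\Omega)}^2 \le \tau_n^{-1}\Norm{\psi}{L^2(I_n; L^2(\Omega))}^2 + 2\Norm{\psi}{L^2(I_n; L^2(\Omega))}\Norm{\dpt\psi}{L^2(I_n; L^2(\Omega))},
\end{equation*}
obtained from $\psi(\cdot,s)^2 = \psi(\cdot,t)^2 + 2\int_t^s \psi\,\dpt\psi\,\mathrm{d}\sigma$ by averaging over $t \in I_n$ and then integrating over $\Omega$ (Cauchy--Schwarz on the cross term). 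Applying it to $\psi := (\Id - \Pi_r^t)(\varphi \wt)$, which lies in $H^1(I_n; L^2(\Omega))$, and inserting the already established bounds \eqref{EQN::APPROX-I-PI-1} and \eqref{EQN::APPROX-I-PI-Time-Der}—now read with $\Omega$ in place of $K$, which is legitimate since both are proved elementwise and then summed over $K \in \Omegah$—gives $\Norm{\psi(\cdot,s)}{L^2(\Omega)}^2 \lesssim \tau_n\Norm{\wt}{L^2(I_n;L^2(\Omega))}^2$ for $s \in \{t_{n-1}, t_n\}$, which is the claimed $\tau_n^{1/2}$ rate.

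The computations are routine; the one genuine idea is the reduction to $\varphi - \bar\varphi_n$, which converts an apparently $O(1)$ quantity into one carrying the gain $\tau_n$. The only spot that needs a little attention is the time-derivative estimate \eqref{EQN::APPROX-I-PI-Time-Der}, since $\dpt$ and $\Pi_r^t$ do not commute; there the point is that, after the reduction, $\Pi_r^t g$ is a genuine polynomial in time, so the inverse estimate \eqref{EQN::INVERSE-ESTIMATE-TIME} can be used to absorb the loss coming from differentiating the projection.
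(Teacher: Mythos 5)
Your proof is correct and follows essentially the same route as the paper: subtract a constant-in-time approximation of $\varphi$ (you use the endpoint value $\varphi(t_{n-1})$, the paper uses $\Pi_0^t\varphi$, which is immaterial), exploit that $\Pi_r^t$ fixes the resulting polynomial-times-constant term, and then combine the bounds $\Norm{\varphi - \bar\varphi_n}{L^\infty(I_n)} \lesssim \tau_n$ and $|\varphi'|\le 1$ with the time inverse estimate and the $L^2$-stability of $\Pi_r^t$. The paper only details \eqref{EQN::APPROX-I-PI-Time-Der} and declares the rest ``very similar''; your trace-inequality argument for \eqref{EQN::APPROX-I-PI-2}--\eqref{EQN::APPROX-I-PI-3} is the natural instantiation of that remark and is sound.
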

\begin{proof}
We show only the proof of \eqref{EQN::APPROX-I-PI-Time-Der}; the other bounds can be derived with very similar arguments.
We start by introducing $\overline{\varphi}:=\Pi_0^t \varphi$ and applying some simple steps
\begin{equation}\label{L:eq:new}
\begin{aligned}
& \Norm{\dpt (\Id - \Pi_r^t) (\varphi \wt)}{L^2(\Kn)} = \Norm{\dpt (\Id - \Pi_r^t) \big( (\varphi-\overline{\varphi} \big) \wt)}{L^2(\Kn)}
\le  \Norm{\dpt {((\varphi-\overline{\varphi}) \wt) }}{L^2(\Kn)} \\
& \quad + \Norm{\dpt \Pi_r^t ((\varphi-\overline{\varphi}) \wt)}{L^2(\Kn)} \le
\Norm{\varphi' \wt}{L^2(\Kn)} + \Norm{(\varphi-\overline{\varphi}) \dpt \wt}{L^2(\Kn)}
+ \Norm{\dpt \Pi_r^t ((\varphi-\overline{\varphi}) \wt)}{L^2(\Kn)} .
\end{aligned}
\end{equation}
The first term on the right-hand side is bounded trivially by $\Norm{\wt}{L^2(\Kn)}$ using \eqref{EQN::BOUNDS-VARPHI-2}.
The second term is bounded first by standard approximation properties of constant polynomials, afterwards by recalling \eqref{EQN::BOUNDS-VARPHI-2} and \eqref{EQN::INVERSE-ESTIMATE-TIME}:
$$
\Norm{(\varphi-\overline{\varphi}) \dpt \wt}{L^2(\Kn)} \le
\Norm{\varphi-\overline{\varphi}}{L^\infty(I_n)} \Norm{\dpt \wt}{L^2(\Kn)}
\le {C} \tau_n \Norm{\varphi'}{L^\infty(I_n)} \Norm{\dpt \wt}{L^2(\Kn)}
\le C \Norm{\wt}{L^2(\Kn)},
$$
{where~$C$ is a generic constant independent of~$\tau_n$.}

The last term in \eqref{L:eq:new} is bounded similarly. Since the function is polynomial in time we can apply an inverse estimate, afterwards use the continuity of the projection operator, and finally deploy again standard approximation properties of constant polynomials. We obtain
$$
\Norm{\dpt \Pi_r^t ((\varphi-\overline{\varphi}) \wt)}{L^2(\Kn)} \le
C \tau_n^{-1} \Norm{\Pi_r^t ((\varphi-\overline{\varphi}) \wt)}{L^2(\Kn)}
\le C \tau_n^{-1} \Norm{(\varphi-\overline{\varphi}) \wt}{L^2(\Kn)}
\le C \Norm{\wt}{L^2(\Kn)},
$$
{which completes the proof}.
\end{proof}

Furthermore, the following estimate follows from~\cite[Lemma~A.1(e)]{Diening_Storn_Tscherpel:2023} and the equivalence of~$\Pi_0^t$ and the averaged Taylor polynomial~$T^0$ defined in~\cite[Eq.~(A.1)]{Diening_Storn_Tscherpel:2023}:
for all~$K \in \Th$ and~$n = 1, \ldots, N$, it holds
\begin{equation}
\label{EQN::BETA-ESTIMATE}
\Norm{\bbeta - \Pi_0^t \bbeta}{L^{\infty}(I_n; L^q(K))} \leq C_0 \tau_n \Norm{\dpt \bbeta}{L^{\infty}(I_n; L^q(K))} \quad \forall q \in [1, \infty],
\end{equation}
for some positive constant~$C_0$ independent of~$h$ and~$\tau$.

We finally prove a simple orthogonality property for generic bilinear forms~$s(\cdot, \cdot)$ on $\Vk(K)$.
\begin{lemma}
\label{LEMMA::ORTHOGONALITY-STAB}
Let~$\varphi$ be defined as in~\eqref{DEF::VARPHI} and $s(\cdot,\cdot)$ be a bilinear form on $\Vk(K)$. For any~$K \in \Th$ and~$n = 1, \ldots, N$, it holds
\begin{subequations}
\label{EQN::ORTHOGONALITY-STAB}
\begin{align}
\label{EQN::ORTHOGONALITY-STAB-1}
\int_{I_n} s(\uh, (\Id - \Pi_r^t)(\varphi \vh)) \dt & = 0  \qquad \forall \uh, \vh \in \Pp{r}(I_n) {\otimes} \Vk(K).
\end{align}
\end{subequations}
\end{lemma}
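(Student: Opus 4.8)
The idea is that $s(\cdot,\cdot)$ acts only on the spatial variable, while $\varphi$ and $\Pi_r^t$ act only on the temporal one, so that after expanding in a tensor-product basis the integrand becomes a finite linear combination of terms of the form (a polynomial in $\Pp{r}(I_n)$) times (the $L^2(I_n)$-projection complement of another function), each of which integrates to zero over $I_n$.

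Concretely, I would first pick a basis $\{\phi_i\}_i$ of $\Vk(K)$ and write $\uh(\bx,t)=\sum_i p_i(t)\,\phi_i(\bx)$ and $\vh(\bx,t)=\sum_j q_j(t)\,\phi_j(\bx)$ with $p_i,q_j\in\Pp{r}(I_n)$; this is legitimate precisely because the space in the statement is $\Pp{r}(I_n)\otimes\Vk(K)$. Since $\varphi=\varphi(t)$ depends only on time (see~\eqref{DEF::VARPHI}) and $\Pi_r^t$ is applied pointwise in space, one has
\[
(\Id-\Pi_r^t)(\varphi\,\vh)(\bx,t)=\sum_j\big[(\Id-\Pi_r^t)(\varphi\,q_j)\big](t)\,\phi_j(\bx).
\]
Using the bilinearity of $s(\cdot,\cdot)$ and the fact that its two arguments are evaluated pointwise in $t$ inside the integral, this yields
\[
\int_{I_n}s\big(\uh(\cdot,t),(\Id-\Pi_r^t)(\varphi\,\vh)(\cdot,t)\big)\dt
=\sum_{i,j}s(\phi_i,\phi_j)\int_{I_n}p_i(t)\,\big[(\Id-\Pi_r^t)(\varphi\,q_j)\big](t)\dt,
\]
the coefficients $s(\phi_i,\phi_j)$ being constant in $t$ and hence pulled out of the time integral.

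The conclusion is then immediate: on each $I_n$ the operator $\Pi_r^t$ is the $L^2(I_n)$-orthogonal projection onto $\Pp{r}(I_n)$, so $(\Id-\Pi_r^t)(\varphi\,q_j)$ is $L^2(I_n)$-orthogonal to every element of $\Pp{r}(I_n)$, in particular to $p_i$; each summand therefore vanishes and the whole expression is $0$. No property of $s$ (symmetry, stability, positivity) is used, which is why the lemma may be stated for a generic bilinear form. I do not expect any genuine obstacle here; the only point that requires a little care is keeping track of the ``pointwise in space'' versus ``pointwise in time'' conventions for $\Pi_r^t$ (and for the polynomial projections that may enter the definition of $s$), since it is exactly that separation of variables which makes the factorization of the space--time functions into finite sums of products valid.
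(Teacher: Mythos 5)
Your proposal is correct and coincides with the paper's own argument: both expand $\uh$ and $\vh$ in a spatial basis of $\Vk(K)$ with coefficients in $\Pp{r}(I_n)$, pull the constants $s(\phi_i,\phi_j)$ out of the time integral, and conclude by the $L^2(I_n)$-orthogonality of $(\Id-\Pi_r^t)(\varphi q_j)$ to $\Pp{r}(I_n)$. No further comments are needed.
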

\begin{proof}
Let~$K \in \Th$, $n \in \{1, \ldots, N\}$, and~$\uh, \vh \in \Pp{r}(I_n) \otimes \Vk(K)$.
Moreover, let~$\{\phi_\ell\}_{\ell = 1}^{\dim(\Vk(K))}$ be a basis for the space~$\Vk(K)$.
Then, there exist polynomials~$\{\alpha_{\ell}\}_{\ell = 1}^{\dim(\Vk(K))} \subset \Pp{r}(I_n)$ and~$\{\beta_{m}\}_{m = 1}^{\dim(\Vk(K))} \subset \Pp{r}(I_n)$ such that
\begin{align*}
\uh(x, t) & = \sum_{\ell = 1}^{\dim(\Vk(K))} \alpha_{\ell} (t) \phi_{\ell}(x)  & & \forall (x, t) \in \Kn, \\
(\Id - \Pi_r^t)(\varphi \vh)(x, t) & = \sum_{m = 1}^{\dim(\Vk(K))} (\Id - \Pi_r^t)\big(\varphi(t) \beta_{m} (t)\big) \phi_m(x)  & & \forall (x, t) \in \Kn.
\end{align*}
Therefore,
\begin{align*}
\int_{I_n} s(\uh, (\Id - \Pi_r^t)(\varphi \vh)) \dt = \sum_{\ell = 1}^{\dim(\Vk(K))} \sum_{m = 1}^{\dim(\Vk(K))} s(\phi_{\ell}, \phi_{m}) \int_{I_n} \alpha_{\ell}(t) (\Id - \Pi_r^t)(\varphi(t) \beta_m(t)) \dt = 0,
\end{align*}
which completes the proof.
\end{proof}

%---------------------------------------------
%           INF-SUP STABILITY
%---------------------------------------------
\subsection{Inf-sup stability \label{SUBSECT::INF-SUP-STABILITY}}

For the sake of clarity, in next Lemmas we show some bounds that will be used to prove the inf-sup stability estimate in Theorem~\ref{THM::FULLY-DISCRETE-STABILITY}.

%%%%%%%%%%%%%%%%%%%%%%%%%%%%%%%%%%%%%%
\begin{lemma}
\label{LEMMA::BOUND-VH-UH}
Let Assumption \ref{TheAssumption} hold.
Then, under the same notation, for any~$\uh \in \Vht$ and~$\vh:= \Pi_r^t(\varphi \uh) + \theta \uh \in \Vht$, with~$\varphi$ defined in~\eqref{DEF::VARPHI} and some positive constant~$\theta$, the following bound holds:
\begin{equation}
\label{EQN::BOUND-VH-UH}
    \TNorm{\vh} \leq
    \sqrt{2} \Big[2(eT)^2 + 2C_S^2 \tau + 4 \zeta \big((1 + C_S^2)\betaQT^{-1} h + \delta^2 \betaQT C_*^2 \big) + \theta^2\Big]^{\frac12} \TNorm{\uh},
\end{equation}
with~$\delta = C_S \Cinv$.
\end{lemma}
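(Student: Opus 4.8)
The plan is to bound $\TNorm{\vh}^2$ term by term using the definition of the energy norm in \eqref{EQN::ENERGY-NORM}, exploiting the decomposition $\vh = \Pi_r^t(\varphi \uh) + \theta \uh$. Since the energy norm is built from (squared) seminorms obeying the triangle inequality, it suffices to estimate $\TNorm{\Pi_r^t(\varphi \uh)}$ and $\TNorm{\theta \uh} = \theta \TNorm{\uh}$ separately; the $\sqrt 2$ prefactor and the $+\theta^2$ term in \eqref{EQN::BOUND-VH-UH} come from $(a+b)^2 \le 2a^2 + 2b^2$. So the bulk of the work is to show
\begin{equation*}
\TNorm{\Pi_r^t(\varphi \uh)}^2 \lesssim \big[(eT)^2 + C_S^2 \tau + 2\zeta\big((1+C_S^2)\betaQT^{-1} h + \delta^2 \betaQT C_*^2\big)\big]\,\TNorm{\uh}^2 ,
\end{equation*}
and then combine.

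First I would treat the ``easy'' pieces of $\TNorm{\Pi_r^t(\varphi \uh)}^2$. For the $L^2(\QT)$ part and the $\nu\|\nabla\cdot\|^2$ part: write $\Pi_r^t(\varphi\uh) = \varphi\uh - (\Id - \Pi_r^t)(\varphi\uh)$, bound $\varphi$ pointwise by $eT$ via \eqref{EQN::BOUNDS-VARPHI-1}, and control the $(\Id-\Pi_r^t)$-remainder by Lemma~\ref{LEMMA::APPROXIMATION-I-PI}, estimate \eqref{EQN::APPROX-I-PI-1}, which gives a $C_S\tau_n$ factor; the spatial gradient commutes with $\Pi_r^t$ (which acts pointwise in space), so $\nabla\Pi_r^t(\varphi\uh) = \Pi_r^t(\varphi\nabla\uh)$ and the same argument applies. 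These produce the $(eT)^2$ and $C_S^2\tau$ contributions. For the jump functional $\SemiNorm{\cdot}{\sfJ}^2$, I would use \eqref{EQN::APPROX-I-PI-2}–\eqref{EQN::APPROX-I-PI-3} at time nodes together with $\varphi \le eT$; one must be a little careful since $\Pi_r^t(\varphi\uh)$ evaluated at $\tn^\pm$ needs controlling, but again splitting off the $(\Id-\Pi_r^t)$ remainder reduces everything to $\varphi$-times-nodal-values of $\uh$ plus $\tau_n^{1/2}$-weighted interior $L^2$ norms, and the quasi-uniformity/$\tau \le C_* h_{\min}^{1/2}$ hypothesis lets one absorb these.

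The main obstacle is the SUPG seminorm $\SemiNorm{\Pi_r^t(\varphi\uh)}{\supg}^2$, which is where the $\zeta$-dependent terms and the constant $\delta = C_S\Cinv$ arise. Here I would expand $\tLhtK\big(\Pi_r^t(\varphi\uh)\big) = \dpt\PiO{k}{K}\Pi_r^t(\varphi\uh) + \bbeta\cdot\PPiO{k-1}{K}\nabla\Pi_r^t(\varphi\uh)$ and handle the two summands. For the second, $\bbeta\cdot\PPiO{k-1}{K}\nabla\Pi_r^t(\varphi\uh)$: write $\varphi = \overline\varphi + (\varphi - \overline\varphi)$ with $\overline\varphi := \Pi_0^t\varphi$; the $\overline\varphi$-part essentially reproduces $\overline\varphi\,\bbeta\cdot\PPiO{k-1}{K}\nabla\uh$ up to $\Pi_r^t$-commutation and is bounded by $eT$ times the corresponding $\uh$-quantity, while the $(\varphi-\overline\varphi)$-part carries a $\tau_n$ factor (since $|\varphi - \overline\varphi| \lesssim \tau_n\|\varphi'\|_\infty$) which, combined with the inverse estimate \eqref{EQN::INVERSE-ESTIMATE-SPACE} to trade a spatial gradient for $\hK^{-1}$ and the bound $\lambdaKn \le \zeta \hK/\betaQT$, yields a $\zeta\betaQT^{-1}h$-type contribution; the $(1+C_S^2)$ factor tracks the split $\varphi = \overline\varphi + (\varphi-\overline\varphi)$ plus the $(\Id-\Pi_r^t)$ correction. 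For the time-derivative summand $\dpt\PiO{k}{K}\Pi_r^t(\varphi\uh)$, the key identity is that $\dpt\Pi_r^t(\varphi\uh)$ can be rewritten using $\dpt(\varphi\uh) = \varphi'\uh + \varphi\dpt\uh$ and the fact that $\dpt\uh$ is itself in $\Pp{r-1}(I_n)\otimes\Vk(K)$; one then regroups $\varphi\dpt\PiO{k}{K}\uh = \varphi(\tLhtK\uh - \bbeta\cdot\PPiO{k-1}{K}\nabla\uh)$ to produce a $\varphi\tLhtK\uh$ term (giving the leading $(eT)^2$ inside the SUPG part) with lower-order corrections bounded via \eqref{EQN::APPROX-I-PI-Time-Der} and the inverse estimates; the stray $\div\PPiO{k-1}{K}\nabla$ discrepancy between $\tLhtK$ and $\LhtK$ is controlled using \eqref{EQN::INVERSE-ESTIMATE-DIVERGENCE} together with the other bound $\lambdaKn \le \zeta\hK^2/(\nu\Cinv^2)$ in \eqref{EQN::CHOICE-LAMBDA}, which is exactly where the factor $\delta^2 = C_S^2\Cinv^2$ and the $\betaQT C_*^2$ (via $\tau \le C_* h_{\min}^{1/2}$, used to absorb a $\tau^2/h_{\min}$) enter. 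Finally, the stability-seminorm piece $\lambdaKn\betaKn^2\|\nabla(\Id-\PiN{k}{K})\Pi_r^t(\varphi\uh)\|^2$ is handled by the same $\varphi = \overline\varphi + (\varphi-\overline\varphi)$ split (the $\overline\varphi$-part commutes with $\PiN{k}{K}$ up to the $\Pi_r^t$ action and reproduces the $\uh$-seminorm, the remainder gains $\tau_n$), noting $\betaKn \le \betaQT$ so $\lambdaKn\betaKn^2 \lesssim \zeta\betaQT h$. Collecting all contributions and applying $(a+b)^2 \le 2a^2+2b^2$ at each split gives the stated constant; I expect the bookkeeping of how many times the $\sqrt 2$ (i.e. factor-$2$ in the square) is applied to be the fiddly part, but it is routine once the structural decomposition above is in place.
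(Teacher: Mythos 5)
Your proposal is correct and follows essentially the same route as the paper: split $\vh$ by the triangle inequality (giving the $\sqrt{2}$ and $\theta^2$), use the $L^2$-stability of $\Pi_r^t$, its commutation with $\nabla$, and the bounds $T\le\varphi\le eT$, $|\varphi'|\le 1$ for the easy pieces and the jump, and for the SUPG seminorm combine the identity $\tLhtK(\varphi\uh)=\varphi'\PiO{k}{K}\uh+\varphi\tLhtK\uh$ with Lemma~\ref{LEMMA::APPROXIMATION-I-PI}, the inverse estimate \eqref{EQN::INVERSE-ESTIMATE-SPACE}, the choice \eqref{EQN::CHOICE-LAMBDA}, and $\tau\le C_*h_{\min}^{1/2}$. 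One small correction: the seminorm \eqref{EQN::LOCAL-SUPG-NORM} involves only $\tLhtK$, which contains no diffusion term, so there is no $\div\PPiO{k-1}{K}\nabla$ discrepancy to control here and \eqref{EQN::INVERSE-ESTIMATE-DIVERGENCE} with the $\nu$-branch of \eqref{EQN::CHOICE-LAMBDA} is not needed; the factor $\delta^2\betaQT C_*^2$ arises instead from the advective remainder $\bbeta\cdot(\Id-\Pi_r^t)(\varphi\PPiO{k-1}{K}\nabla\uh)$ via \eqref{EQN::APPROX-I-PI-1}, \eqref{EQN::INVERSE-ESTIMATE-SPACE}, $\lambdaKn\le\zeta\hK/\betaQT$, and $\tau_n^2\le C_*^2\hK$.
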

\begin{proof}
Let~$\uh \in \Vht$. Using the triangle inequality and the definition of~$\vh$, we get
\begin{equation}
\label{EQN::TRIANGLE-INEQ-VH}
\TNorm{\vh}^2 \leq 2 \TNorm{\Pi_r^t(\varphi \uh)}^2 + 2 \theta^2 \TNorm{\uh}^2.
\end{equation}
Hence, it only remains to bound the first term on the right-hand side of~\eqref{EQN::TRIANGLE-INEQ-VH}.

\begin{subequations}
The following estimates follow immediately from the stability properties of~$\Pi_r^t$, the commutativity of the spatial gradient~$\nabla$ and the operator~$\Pi_r^t$, and bound~\eqref{EQN::BOUNDS-VARPHI-1} {for~$\varphi$}:
\begin{align}
\label{EQN::ESTIMATE-VH-1}
\Norm{\Pi_r^t(\varphi \uh)}{L^2(\QT)}^2 & \leq (eT)^2 \Norm{\uh}{L^2(\QT)}^2, \\
\label{EQN::ESTIMATE-VH-2}
\nu \Norm{\nabla (\Pi_r^t(\varphi \uh))}{L^2(\QT)^d}^2 & \leq \nu (eT)^2\Norm{\nabla \uh}{L^2(\QT)^d}^2, \\
\label{EQN::ESTIMATE-VH-AUX}
\SemiNorm{\varphi \uh}{\sfJ}^2 & \leq (eT)^2 \SemiNorm{\uh}{\sfJ}^2.
\end{align}
\end{subequations}

By using the triangle inequality, estimates~\eqref{EQN::APPROX-I-PI-2} and~\eqref{EQN::APPROX-I-PI-3}, bound~\eqref{EQN::BOUNDS-VARPHI-1} for~$\varphi$,
and estimate~\eqref{EQN::ESTIMATE-VH-AUX}, we obtain
\begin{equation}
\label{EQN::ESTIMATE-VH-3}
\SemiNorm{\Pi_r^t(\varphi \uh)}{\sfJ}^2 \leq 2\SemiNorm{(\Id - \Pi_r^t)(\varphi \uh)}{\sfJ}^2 + 2\SemiNorm{\varphi \uh}{\sfJ}^2 \leq 2C_S^2 \tau\Norm{\uh}{L^2(\QT)}^2 + 2(eT)^2 \SemiNorm{\uh}{\sfJ}^2.
\end{equation}

We now bound the SUPG-seminorm of~$\Pi_r^t (\varphi \uh)$. For all~$K \in \Th$ and~$n = 1, \ldots, N$, we have
\begin{equation}
\label{EQN::AUX-SUPG}
\begin{split}
\SemiNorm{\Pi_r^t(\varphi \uh)}{\Knsupg}^2 & = \lambdaKn \Norm{\tLhtK \big( \Pi_r^t(\varphi \uh)\big) }{L^2(\Kn)}^2 + \lambdaKn \betaKn^2 \Norm{\Pi_r^t\big(\varphi \nabla (\Id - \PiN{k}{K}) \uh\big)}{L^2(\Kn)^d}^2.
\end{split}
\end{equation}
We consider the first term on the right-hand side of~\eqref{EQN::AUX-SUPG}. Using the triangle inequality,
the orthogonality properties of~$\PiO{k}{K}$ and~$\PPiO{k-1}{K}$,
estimates~\eqref{EQN::APPROX-I-PI-1} and~\eqref{EQN::APPROX-I-PI-Time-Der},
the inverse estimate~\eqref{EQN::INVERSE-ESTIMATE-SPACE},
bounds~\eqref{EQN::BOUNDS-VARPHI-1} and~\eqref{EQN::BOUNDS-VARPHI-2}, assumption~\eqref{EQN::CHOICE-LAMBDA} on the choice of~$\lambdaKn$, and the mild condition~\eqref{EQN::TAU-H}, we get
\begin{align}
\nonumber
\lambdaKn & \Norm{\tLhtK\big(\Pi_r^t(\varphi \uh)\big)}{L^2(\Kn)}^2\\
\nonumber
& \leq 2 \lambdaKn \Norm{\tLhtK (\varphi \uh)}{L^2(\Kn)}^2 + 2\lambdaKn\Norm{\tLhtK (\Id - \Pi_r^t)(\varphi \uh)}{L^2(\Kn)}^2 \\
\nonumber
& \leq 2 \lambdaKn \Norm{\dpt(\varphi \PiO{k}{K} \uh) + \varphi \bbeta \cdot \PPiO{k-1}{K} \nabla \uh}{L^2(\Kn)}^2 + 4\lambdaKn\Norm{\dpt (\Id - \Pi_r^t)(\varphi \PiO{k}{K} \uh)}{L^2(\Kn)}^2 \\
\nonumber
& \quad + 4 \lambdaKn\Norm{\bbeta \cdot (\Id - \Pi_r^t)(\varphi \PPiO{k-1}{K} \nabla \uh)}{L^2(\Kn)}^2 \\
\nonumber
& \leq 4 \lambdaKn \Norm{\varphi' \PiO{k}{K} \uh}{L^2(\Kn)}^2 + 4\lambdaKn \Norm{\varphi \tLhtK \uh}{L^2(\Kn)}^2 + 4\lambdaKn \Norm{\dpt (\Id - \Pi_r^t)(\varphi \PiO{k}{K} \uh)}{L^2(\Kn)}^2 \\
\nonumber
& \quad + 4 \lambdaKn\Norm{\bbeta \cdot (\Id - \Pi_r^t)(\varphi \PPiO{k-1}{K} \nabla \uh)}{L^2(\Kn)}^2 \\
\nonumber
& \leq 4(1 + C_S^2) \lambdaKn \Norm{\uh}{L^2(\Kn)}^2 + 4 (eT)^2 \lambdaKn \Norm{\tLhtK \uh}{L^2(\Kn)}^2 + 4 \betaQT^2 C_S^2 \lambdaKn \tau_n^2 \Norm{\nabla \uh}{L^2(\Kn)^d}^2 \\
\nonumber
& \leq 4 \zeta (1 + C_S^2)\betaQT^{-1} \hK \Norm{\uh}{L^2(\Kn)}^2 + 4(eT)^2 \lambdaKn \Norm{\tLhtK \uh}{L^2(\Kn)}^2 + 4 \betaQT C_S^2 \Cinv^2 \zeta \tau_n^2 \hK^{-1} \Norm{\uh}{L^2(\Kn)}^2 \\
\label{EQN::AUX-SUPG-FIRST-TERM}
& \leq 4\zeta \big((1 + C_S^2) \betaQT^{-1} \hK + \delta^2 \betaQT C_*^2 \big) \Norm{\uh}{L^2(\Kn)}^2 + 4(eT)^2 \lambdaKn \Norm{\tLhtK \uh}{L^2(\Kn)}^2,
\end{align}
where~$\delta = C_S \Cinv$.

Moreover, using the stability properties of~$\Pi_r^t$ and bound~$\eqref{EQN::BOUNDS-VARPHI-1}$, the second term on the right-hand side of~\eqref{EQN::AUX-SUPG} can be bounded as follows:
\begin{equation*}
\lambdaKn \betaKn^2 \Norm{\Pi_r^t\big(\varphi \nabla (\Id - \PiN{k}{K}) \uh\big)}{L^2(\Kn)^d}^2 \leq (eT)^2 \lambdaKn \betaKn^2 \Norm{\nabla (\Id - \PiN{k}{K}) \uh}{L^2(\Kn)^d}^2,
\end{equation*}
which, combined with~\eqref{EQN::AUX-SUPG-FIRST-TERM} and summing up for all~$n$, gives
\begin{equation}
\label{EQN::ESTIMATE-VH-4}
\SemiNorm{\Pi_r^t(\varphi \uh)}{\supg}^2 \leq 4(eT)^2 \SemiNorm{\uh}{\supg}^2 + 4\zeta \big((1 + C_S^2) \betaQT^{-1} h + \delta^2 \betaQT C_*^2 \big)\Norm{\uh}{L^2(\QT)}^2.
\end{equation}

Therefore, combining bound~\eqref{EQN::TRIANGLE-INEQ-VH} with estimates~\eqref{EQN::ESTIMATE-VH-1}, \eqref{EQN::ESTIMATE-VH-2}, \eqref{EQN::ESTIMATE-VH-3}, and~\eqref{EQN::ESTIMATE-VH-4}, we get the desired result.
\end{proof}

%%%%%%%%%%%%%%%%%%%%%%%%%%%%%%%%%%%%%%
\begin{lemma}
\label{LEMMA::BOUND-AT}
Under the notation and assumptions of Lemma~\ref{LEMMA::BOUND-VH-UH}, the following bound holds:
\begin{equation*}
\nu \aht(\uh, \vh) \geq \nu \alpha_* (T + \theta) \Norm{\nabla \uh}{L^2(\QT)^d}^2,
\end{equation*}
where~$\alpha_*$ is the stability constant in~\eqref{EQN::STAB-aK}.
\end{lemma}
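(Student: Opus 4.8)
The plan is to use the splitting $\Pi_r^t(\varphi \uh) = \varphi \uh - (\Id - \Pi_r^t)(\varphi \uh)$ (which is merely $\Pi_r^t + (\Id - \Pi_r^t) = \Id$), so that, by linearity of $\aht(\uh, \cdot)$ in its second argument,
\begin{equation*}
\aht(\uh, \vh) = \aht(\uh, \varphi \uh) - \aht\big(\uh, (\Id - \Pi_r^t)(\varphi \uh)\big) + \theta\, \aht(\uh, \uh),
\end{equation*}
and to estimate the three terms separately.

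For the last term, the coercivity-type lower bound in~\eqref{EQN::STAB-aK} gives $\ahK(\uh(\cdot,t), \uh(\cdot,t)) \ge \alpha_* \Norm{\nabla \uh(\cdot,t)}{L^2(K)^d}^2$ for each $K \in \Th$; summing over $K$ and integrating over each $I_n$ yields $\aht(\uh, \uh) \ge \alpha_* \Norm{\nabla \uh}{L^2(\QT)^d}^2$. For the first term, since $\varphi = \varphi(t)$ is scalar-valued and $\ah(\cdot,\cdot)$ is bilinear, one has $\ah(\uh(\cdot,t), \varphi(t)\uh(\cdot,t)) = \varphi(t)\, \ah(\uh(\cdot,t), \uh(\cdot,t))$; combining this with the same local lower bound and the pointwise bound $\varphi(t) \ge T$ from~\eqref{EQN::BOUNDS-VARPHI-1}, I obtain
\begin{equation*}
\aht(\uh, \varphi \uh) \ge \alpha_* \sum_{n=1}^N \int_{I_n} \varphi(t)\, \Norm{\nabla \uh(\cdot,t)}{L^2(\Omega)^d}^2 \dt \ge \alpha_* T \, \Norm{\nabla \uh}{L^2(\QT)^d}^2.
\end{equation*}

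The crux — and the only step I would flag as requiring care — is that the middle term vanishes. Here I would observe that each local form $\ahK(\cdot,\cdot)$ is a bilinear form on $\Vk(K)$, being the sum of $(\PPiO{k-1}{K}\nabla\cdot, \PPiO{k-1}{K}\nabla\cdot)_K$ and $\saK((\Id-\PiN{k}{K})\cdot, (\Id-\PiN{k}{K})\cdot)$, both bilinear on $\Vk(K)$. Hence Lemma~\ref{LEMMA::ORTHOGONALITY-STAB} applies verbatim with $s(\cdot,\cdot) = \ahK(\cdot,\cdot)$ and $\uh|_{\Kn} = \vh|_{\Kn} \in \Pp{r}(I_n) \otimes \Vk(K)$, giving $\int_{I_n} \ahK(\uh, (\Id - \Pi_r^t)(\varphi \uh)) \dt = 0$ for every $K \in \Th$ and every $n$; summing over $K \in \Th$ and over $n = 1, \ldots, N$ then shows $\aht(\uh, (\Id - \Pi_r^t)(\varphi \uh)) = 0$. (If one prefers an explicit argument, expanding $\PPiO{k-1}{K}\nabla \uh$ and $(\Id - \PiN{k}{K})\uh$ in tensor-product bases of $\Pp{r}(I_n) \otimes \Pp{k-1}(K)^d$ and $\Pp{r}(I_n) \otimes \Vk(K)$ reduces the claim to the $L^2(I_n)$-orthogonality of $(\Id - \Pi_r^t)g$ to $\Pp{r}(I_n)$, using that $\nabla$, $\PPiO{k-1}{K}$, $\PiN{k}{K}$ act in the spatial variable only.) Adding the three estimates and multiplying by $\nu$ yields $\nu\aht(\uh,\vh) \ge \nu\alpha_*(T + \theta)\Norm{\nabla\uh}{L^2(\QT)^d}^2$. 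I do not anticipate a genuine obstacle: the computation is essentially algebraic, and the only point deserving attention is recognizing that the orthogonality of Lemma~\ref{LEMMA::ORTHOGONALITY-STAB} covers the polynomial-projection part of $\ahK$ and not merely the stabilization part.
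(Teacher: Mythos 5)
Your proposal is correct and follows essentially the same route as the paper: the same decomposition of $\vh$ into $\varphi \uh - (\Id - \Pi_r^t)(\varphi \uh) + \theta \uh$, the coercivity bound~\eqref{EQN::STAB-aK} together with $\varphi \ge T$ for the first and third terms, and the vanishing of the middle term via Lemma~\ref{LEMMA::ORTHOGONALITY-STAB} and the orthogonality of $\Pi_r^t$. Your observation that Lemma~\ref{LEMMA::ORTHOGONALITY-STAB} applies verbatim to the whole local form $\ahK(\cdot,\cdot)$, not just its stabilization part, is a correct and slightly tidier packaging of the same argument.
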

\begin{proof}
Let~$\uh \in \Vht$. By the stability property in~\eqref{EQN::STAB-aK} of the bilinear form~$\ah(\cdot, \cdot)$, and bound~\eqref{EQN::BOUNDS-VARPHI-1} {for~$\varphi$}, we have
\begin{align}
\nonumber
\nu \aht(\uh, \vh) & = \nu \aht(\uh, \varphi \uh) - \nu \aht(\uh, (\Id - \Pi_r^t) (\varphi \uh)) + \theta \nu \aht(\uh, \uh) \\
\label{EQN::AUX-BOUND-AT}
& \geq (T + \theta) \alpha_* \nu \Norm{\nabla \uh}{L^2(\QT)^d}^2 - \nu \aht(\uh, (\Id - \Pi_r^t)(\varphi \uh)).
\end{align}
The last term on the right-hand side of~\eqref{EQN::AUX-BOUND-AT} vanishes due to the orthogonality properties of~$\Pi_r^t$ and Lemma~\ref{LEMMA::ORTHOGONALITY-STAB}, which completes the proof.
\end{proof}

%%%%%%%%%%%%%%%%%%%%%%%%%%%%%%%%%%%%%%
\begin{lemma}
\label{LEMMA::BOUND-BT}
Under the notation and assumptions of Lemma~\ref{LEMMA::BOUND-VH-UH}, the following bound holds:
\begin{equation}
\label{EQN::BOUND-BT}
\bht(\uh, \vh) \geq - \delta_t C_*^2 \Norm{\uh}{L^2(\QT)}^2,
\end{equation}
with~$\delta_t = \delta C_0 \Norm{\dpt \bbeta}{L^{\infty}(\QT)^d}$, and~$C_*$ as in the mild condition~\eqref{EQN::TAU-H}.
\end{lemma}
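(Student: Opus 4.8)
The plan is to exploit the skew-symmetry of $\bht(\cdot,\cdot)$ together with the fact that the exponential weight $\varphi=\varphi(t)$ depends on time only, so that the ``principal part'' of $\bht(\uh,\vh)$ vanishes identically and only a small residual, due to the time-oscillation of~$\bbeta$, survives; this residual is then absorbed into $\Norm{\uh}{L^2(\QT)}^2$ thanks to the mild condition~\eqref{EQN::TAU-H}.

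First I would record the two cancellations. Since $\bht(\cdot,\cdot)$ is skew-symmetric we have $\bht(\uh,\uh)=0$, hence $\bht(\uh,\theta\uh)=0$. Moreover, for each fixed $t$ the scalar factor $\varphi(t)$ may be pulled out of the discrete advection form, so that $\bh(\uh,\varphi\uh)=\varphi\,\bh(\uh,\uh)=\bh(\varphi\uh,\uh)$ and therefore $\bht(\uh,\varphi\uh)=0$ as well. Writing $\Pi_r^t(\varphi\uh)=\varphi\uh-(\Id-\Pi_r^t)(\varphi\uh)$ and setting $w:=(\Id-\Pi_r^t)(\varphi\uh)$, this reduces the claim to $\bht(\uh,\vh)=-\bht(\uh,w)$, so it suffices to bound $|\bht(\uh,w)|$ by $\delta_t C_*^2\Norm{\uh}{L^2(\QT)}^2$.

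Next I would split the advective field on each slab as $\bbeta=\Pi_0^t\bbeta+(\bbeta-\Pi_0^t\bbeta)$. On $I_n$ the average $\Pi_0^t\bbeta$ is a purely spatial function, so $(u,v)\mapsto\sum_{K\in\Th}\big(\Pi_0^t\bbeta\cdot\PPiO{k}{K}\nabla u,\,\PiO{k}{K}v\big)_K$ is a bilinear form on $\Vh$. Since the projections $\PiO{k}{K}$ and $\PPiO{k}{K}\nabla$ act only in space, they commute with $\Pi_r^t$, so $\PiO{k}{K}w=(\Id-\Pi_r^t)(\varphi\,\PiO{k}{K}\uh)$ and $\PPiO{k}{K}\nabla w=(\Id-\Pi_r^t)(\varphi\,\PPiO{k}{K}\nabla\uh)$ are $L^2(I_n)$-orthogonal to $\Pp{r}(I_n)$, while the remaining factors are polynomials of degree $\le r$ in time; hence Lemma~\ref{LEMMA::ORTHOGONALITY-STAB} (applied also to the transposed form) annihilates the $\Pi_0^t\bbeta$-contribution to both $\bh(\uh,w)$ and $\bh(w,\uh)$. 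What is left, summed over $K$ and $n$, is controlled by terms of the form
\[
\int_{I_n}\Big|\big((\bbeta-\Pi_0^t\bbeta)\cdot\PPiO{k}{K}\nabla\uh,\,\PiO{k}{K}w\big)_K\Big|\dt
\quad\text{and}\quad
\int_{I_n}\Big|\big((\bbeta-\Pi_0^t\bbeta)\cdot\PPiO{k}{K}\nabla w,\,\PiO{k}{K}\uh\big)_K\Big|\dt,
\]
which I would estimate by Cauchy--Schwarz using: $\Norm{\bbeta-\Pi_0^t\bbeta}{L^\infty(\Kn)^d}\le C_0\tau_n\Norm{\dpt\bbeta}{L^\infty(\Kn)^d}$ from~\eqref{EQN::BETA-ESTIMATE}; the $L^2$-stability of $\PiO{k}{K}$ and $\PPiO{k}{K}$; the approximation bound~\eqref{EQN::APPROX-I-PI-1}, giving $\Norm{\PiO{k}{K}w}{L^2(\Kn)}\le C_S\tau_n\Norm{\uh}{L^2(\Kn)}$ and similarly $\Norm{\PPiO{k}{K}\nabla w}{L^2(\Kn)^d}\le C_S\tau_n\Cinv\hK^{-1}\Norm{\uh}{L^2(\Kn)}$ after invoking the inverse estimate~\eqref{EQN::INVERSE-ESTIMATE-SPACE}; and again~\eqref{EQN::INVERSE-ESTIMATE-SPACE} for $\Norm{\nabla\uh}{L^2(\Kn)^d}\le\Cinv\hK^{-1}\Norm{\uh}{L^2(\Kn)}$. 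Each term thus carries the factor $C_0 C_S\Cinv\Norm{\dpt\bbeta}{L^\infty(\Kn)^d}\,\tau_n^2/\hK$ in front of $\Norm{\uh}{L^2(\Kn)}^2$; by~\eqref{EQN::TAU-H} one has $\tau_n^2\le\tau^2\le C_*^2 h_{\min}\le C_*^2\hK$, so $\tau_n^2/\hK\le C_*^2$. Summing over $K$ and $n$, and using that the $\tfrac12$ in the definition of $\bht$ exactly cancels the factor $2$ coming from the two terms above, together with $\delta=C_S\Cinv$ and $\Norm{\dpt\bbeta}{L^\infty(\Kn)^d}\le\Norm{\dpt\bbeta}{L^\infty(\QT)^d}$, yields $|\bht(\uh,w)|\le\delta_t C_*^2\Norm{\uh}{L^2(\QT)}^2$, whence~\eqref{EQN::BOUND-BT}.

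The only slightly delicate point is the orthogonality step: one must check that $\Pi_0^t\bbeta$ genuinely defines a slab-wise bilinear form on the spatial space $\Vh$, so that Lemma~\ref{LEMMA::ORTHOGONALITY-STAB} (and its transpose) applies verbatim, and that the projected $w$-dependent factors remain orthogonal to $\Pp{r}(I_n)$ in time — which is automatic because $\PiO{k}{K}$ and $\PPiO{k}{K}\nabla$ are spatial operators. Everything else is a routine Cauchy--Schwarz/inverse-estimate chain; the essential quantitative mechanism is that the $O(\tau_n)$ smallness of $\bbeta-\Pi_0^t\bbeta$ multiplies the $O(\tau_n)$ smallness of $w$, beating the $O(\hK^{-1})$ blow-up of the spatial inverse estimate precisely because $\tau^2\lesssim h_{\min}$.
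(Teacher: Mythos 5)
Your proposal is correct and follows essentially the same route as the paper's proof: skew-symmetry kills the $\varphi\uh$ and $\theta\uh$ contributions, the slab-wise time average $\Pi_0^t\bbeta$ is removed by $L^2(I_n)$-orthogonality of $(\Id-\Pi_r^t)$ to $\Pp{r}(I_n)$, and the residual is bounded by combining~\eqref{EQN::BETA-ESTIMATE}, \eqref{EQN::APPROX-I-PI-1}, the inverse estimate~\eqref{EQN::INVERSE-ESTIMATE-SPACE}, and $\tau^2\le C_*^2 h_{\min}$. The constants and the cancellation of the factor $\tfrac12$ against the two terms match the paper's argument.
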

\begin{proof}
Let~$\uh \in \Vht$. Using the skew symmetry of the bilinear form~$\bht(\cdot, \cdot)$, and the commutativity of~$\PPiO{k}{K}$ and~$\Pi_r^t$,
we have
\begin{align}
\nonumber
\bht(\uh, \vh) & = \bht(\uh, \varphi \uh) - \bht(\uh, (\Id - \Pi_r^t) (\varphi \uh)) + \theta \bht(\uh, \uh) \\
\nonumber
& = -\bht(\uh, (\Id - \Pi_r^t)(\varphi \uh)) \\
\nonumber
& = -\frac12 \sum_{n = 1}^N \sum_{K \in \Th}
\Big[
\big(\bbeta \cdot \PPiO{k}{K} \nabla \uh, \ (\Id - \Pi_r^t) (\varphi \PiO{k}{K} \uh\big)_{\Kn} \\
\label{EQN::AUX-IDENTITY-bh-1}
& \qquad \qquad \quad - \big((\Id - \Pi_r^t) (\varphi \PPiO{k}{K} \nabla \uh),\ \bbeta \PiO{k}{K} \uh \big)_{\Kn}
\Big].
\end{align}
In particular, if~$\bbeta = \bbeta(\bx)$, then~$\bht(\uh, \vh) = 0$. Otherwise, since~$\Pi_0^t (\bbeta) \cdot \PPiO{k}{K} \nabla \uh \in \Pp{r}(\Tt; L^2(\Omega))$ and~$\Pi_0^t (\bbeta) \PiO{k}{K} \uh\in \Pp{r}(\Tt; L^2(\Omega)^d)$, using identity~\eqref{EQN::AUX-IDENTITY-bh-1} and the orthogonality properties of~$\Pi_r^t$, we obtain
\begin{equation}
\label{EQN::AUX-IDENTITY-bh-2}
\begin{split}
\bht(\uh, \vh) & = -\frac12 \sum_{n = 1}^N \sum_{K \in \Th}
\Big[
\big((\Id - \Pi_0^t)\bbeta \cdot \PPiO{k}{K} \nabla \uh, \ (\Id - \Pi_r^t) (\varphi \PiO{k}{K} \uh)\big)_{\Kn} \\
& \qquad \qquad \quad - \big((\Id - \Pi_r^t) (\varphi \PPiO{k}{K} \nabla \uh),\ (\Id - \Pi_0^t)\bbeta \PiO{k}{K} \uh\big)_{\Kn}
\Big].
\end{split}
\end{equation}

For any~$K \in \Th$ and~$n = 1, \ldots, N$, using the stability properties of~$\PPiO{k}{K}$, the local inverse estimate in~\eqref{EQN::INVERSE-ESTIMATE-SPACE}, estimate~\eqref{EQN::APPROX-I-PI-1}, error bound~\eqref{EQN::BETA-ESTIMATE} {for~$\bbeta$}, and the H\"older inequality, we get
\begin{align}
\nonumber
-\big((\Id - \Pi_0^t) & \bbeta \cdot \PPiO{k}{K} \nabla \uh, \ (\Id - \Pi_r^t) (\varphi \PiO{k}{K} \uh)\big)_{\Kn} \\
\nonumber
& \geq -\Norm{\bbeta - \Pi_0^t \bbeta}{L^{\infty}(\Kn)^d} \Norm{(\Id - \Pi_r^t)(\varphi \PiO{k}{K} \uh) \PPiO{k}{K} \nabla \uh}{L^1(\Kn)^d} \\
\nonumber
& \geq - \Norm{\bbeta - \Pi_0^t \bbeta}{L^{\infty}(\Kn)^d} \Norm{(\Id - \Pi_r^t)(\varphi \PiO{k}{K} \uh)}{L^2(\Kn)} \Norm{\PPiO{k}{K} \nabla \uh}{L^2(\Kn)^d} \\
\label{EQN::AUX-BOUND-bh}
& \geq -\delta_t \hK^{-1} \tau_n^2 \Norm{\uh}{L^2(\Kn)}^2,
\end{align}
where~$\delta_t = \delta C_0 \Norm{\dpt \bbeta}{L^{\infty}(\QT)^d}$.

The following bound can be proven in a similar way:
\begin{equation*}
-\big((\Id - \Pi_r^t) (\varphi \PPiO{k}{K} \nabla \uh),\ (\Id - \Pi_0^t)\bbeta \PiO{k}{K} \uh\big)_{\Kn} \geq -\delta_t \hK^{-1} \tau_n^2 \Norm{\uh}{L^2(\Kn)}^2,
\end{equation*}
which, combined with identity~\eqref{EQN::AUX-IDENTITY-bh-2}, the mild condition~\eqref{EQN::TAU-H} and bound~\eqref{EQN::AUX-BOUND-bh}, leads to~\eqref{EQN::BOUND-BT}.
\end{proof}

%%%%%%%%%%%%%%%%%%%%%%%%%%%%%%%%%%%%%%%%%%%%%%%
\begin{lemma}
\label{LEMMA::BOUND-MT}
Under the assumptions of Lemma~\ref{LEMMA::BOUND-VH-UH}, the following bound holds:
\begin{equation}
\label{EQN::BOUND-MT}
\mht(\uh, \vh) \geq \mu_* \bigg(\Big(\frac12
- \Big(\frac{\mu^*}{\mu_*}\Big)^2 \frac{C_S^2 \tau}{2\theta}\Big)\Norm{\uh}{L^2(\QT)}^2 + T \SemiNorm{\uh}{\sfJ}^2 \bigg),
\end{equation}
where~$\mu_*$ and~$\mu^*$ are the stability constants in~\eqref{EQN::STAB-smK}, and $C_S$ is the constant from Lemma~\ref{LEMMA::APPROXIMATION-I-PI}.
\end{lemma}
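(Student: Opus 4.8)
The plan is to test $\mht$ against $\vh$ after splitting $\vh = \varphi\uh - (\Id - \Pi_r^t)(\varphi\uh) + \theta\uh$, so that by bilinearity of the formula defining $\mht$,
\begin{equation*}
\mht(\uh,\vh) = \mht(\uh,\varphi\uh) - \mht\big(\uh,(\Id - \Pi_r^t)(\varphi\uh)\big) + \theta\,\mht(\uh,\uh).
\end{equation*}
For the first term, since $\mh(\cdot,\cdot)$ is symmetric we have $\mh(\dpt\uh,\varphi\uh) = \tfrac{\varphi}{2}\dpt\mh(\uh,\uh)$; integrating by parts in time on each $I_n$, telescoping the boundary values, and recombining with the time-jump and initial contributions of $\mht$, I expect the weighted discrete energy identity
\begin{equation*}
\mht(\uh,\varphi\uh) = \tfrac12\varphi(T)\,\mh\big(\uh(\cdot,t_N^-),\uh(\cdot,t_N^-)\big) + \tfrac12\varphi(0)\,\mh\big(\uh(\cdot,0),\uh(\cdot,0)\big) + \tfrac12\sum_{n=1}^{N-1}\varphi(\tn)\,\mh(\jump{\uh}_n,\jump{\uh}_n) - \tfrac12\sum_{n=1}^N\int_{I_n}\varphi'\,\mh(\uh,\uh)\dt .
\end{equation*}
Using $\varphi\ge T$ and $-\varphi'\ge 1$ on $[0,T]$ (c.f.~\eqref{EQN::BOUNDS-VARPHI}) together with the lower bound in~\eqref{EQN::STAB::mK}, this yields $\mht(\uh,\varphi\uh) \ge \mu_*\,T\,\SemiNorm{\uh}{\sfJ}^2 + \tfrac{\mu_*}{2}\Norm{\uh}{\QT}^2$ (here I abbreviate $\Norm{\cdot}{\QT}=\Norm{\cdot}{L^2(\QT)}$). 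The same identity with $\varphi\equiv 1$ gives $\mht(\uh,\uh) = \tfrac12\mh(\uh(\cdot,t_N^-),\uh(\cdot,t_N^-)) + \tfrac12\mh(\uh(\cdot,0),\uh(\cdot,0)) + \tfrac12\sum_n\mh(\jump{\uh}_n,\jump{\uh}_n) \ge \mu_*\SemiNorm{\uh}{\sfJ}^2$, a nonnegative quantity I keep in reserve to absorb the cross term.

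For the cross term, set $w := (\Id - \Pi_r^t)(\varphi\uh)$. In the formula defining $\mht(\uh,w)$ the volume part $\sum_n\int_{I_n}\mh(\dpt\uh,w)\dt$ vanishes: on each $K\times I_n$ one has $\dpt\uh|_K\in\Pp{r}(I_n)\otimes\Vk(K)$, so Lemma~\ref{LEMMA::ORTHOGONALITY-STAB} applied with $s=\mhK$ gives $\int_{I_n}\mhK(\dpt\uh|_K,(\Id-\Pi_r^t)(\varphi\,\uh|_K))\dt=0$, and summation over $K$ concludes. Hence only the time-jump and initial traces of $w$ remain,
\begin{equation*}
\mht(\uh,w) = -\sum_{n=1}^{N-1}\mh\big(\jump{\uh}_n,\,w^{(n+1)}(\cdot,\tn^+)\big) + \mh\big(\uh(\cdot,0),\,w^{(1)}(\cdot,0^+)\big).
\end{equation*}
Bounding $\mh(\cdot,\cdot)\le\mu^*\Norm{\cdot}{L^2(\Omega)}\Norm{\cdot}{L^2(\Omega)}$ (Cauchy--Schwarz plus~\eqref{EQN::STAB::mK}) and estimating the traces of $w$ by~\eqref{EQN::APPROX-I-PI-2} (applied on the slab $I_{n+1}$ for the jump terms, and on $I_1$ for the initial term), I obtain
\begin{equation*}
|\mht(\uh,w)| \le \mu^* C_S\Big(\sum_{n=1}^{N-1}\Norm{\jump{\uh}_n}{L^2(\Omega)}\,\tau_{n+1}^{1/2}\Norm{\uh}{L^2(I_{n+1};L^2(\Omega))} + \Norm{\uh(\cdot,0)}{L^2(\Omega)}\,\tau_1^{1/2}\Norm{\uh}{L^2(I_1;L^2(\Omega))}\Big).
\end{equation*}
A Young inequality with weight $\theta\mu_*$ on each summand splits the right-hand side into $\tfrac{\theta\mu_*}{2}\big(\sum_{n=1}^{N-1}\Norm{\jump{\uh}_n}{L^2(\Omega)}^2 + \Norm{\uh(\cdot,0)}{L^2(\Omega)}^2\big) + \tfrac{(\mu^*C_S)^2}{2\theta\mu_*}\sum_{m=1}^N\tau_m\Norm{\uh}{L^2(I_m;L^2(\Omega))}^2$, and since the time slabs partition $(0,T)$ the last sum is $\le\tau\,\Norm{\uh}{\QT}^2$.

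Collecting the three pieces, the jump/initial losses in $-\mht(\uh,w)$ are exactly cancelled by $\theta\mht(\uh,\uh)\ge\mu_*\theta\SemiNorm{\uh}{\sfJ}^2\ge\tfrac{\theta\mu_*}{2}\big(\sum_{n=1}^{N-1}\Norm{\jump{\uh}_n}{L^2(\Omega)}^2 + \Norm{\uh(\cdot,0)}{L^2(\Omega)}^2\big)$, so that $-\mht(\uh,w)+\theta\mht(\uh,\uh)\ge -\tfrac{(\mu^*)^2C_S^2\tau}{2\theta\mu_*}\Norm{\uh}{\QT}^2$; adding the bound for $\mht(\uh,\varphi\uh)$ produces~\eqref{EQN::BOUND-MT}. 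The main obstacle is precisely the cross term $\mht(\uh,w)$: one must notice that its volume contribution is annihilated by the $L^2$-in-time orthogonality of $w$, control the surviving boundary-in-time terms only through the trace estimate~\eqref{EQN::APPROX-I-PI-2}, and calibrate the Young inequality (weight $\theta\mu_*$) so that the jump losses are exactly soaked up by $\theta\mht(\uh,\uh)$ while the sole surviving degradation is the $O(\tau/\theta)$ term on the bulk $L^2$ norm; deriving the weighted discrete energy identity for $\mht(\uh,\varphi\uh)$ correctly (the integration by parts in time and the telescoping of the jumps) is the remaining delicate point.
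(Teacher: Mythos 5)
Your argument is correct and follows essentially the same route as the paper's proof: the same three-way splitting of $\mht(\uh,\vh)$ into $\mht(\uh,\varphi\uh)$, the cross term, and $\theta\mht(\uh,\uh)$; the weighted discrete energy identity for the first piece; Lemma~\ref{LEMMA::ORTHOGONALITY-STAB} to annihilate the volume part of the cross term; the trace estimate~\eqref{EQN::APPROX-I-PI-2} for the surviving jump and initial contributions; and a Young inequality calibrated so that $\theta\mht(\uh,\uh)$ absorbs the jump losses. One small remark: the inequality $-\varphi'\ge 1$ does not follow from the displayed bounds in~\eqref{EQN::BOUNDS-VARPHI} as you cite them (those read $-\varphi'\le 1$); it should instead be obtained from the exact identity $\varphi'=-\varphi/T$ together with $\varphi\ge T$, which is precisely how the paper argues in its bound for $M_1$.
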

\begin{proof}
Let~$\uh \in \Vht$. By adding and subtracting suitable terms, we have
\begin{align}
\nonumber
\mht(\uh, \vh)
& = \mht(\uh, \varphi \uh) - \mht(\uh, (\Id - \Pi_r^t) (\varphi \uh)) + \theta \mht(\uh, \uh) \\
\label{EQN::IDENTITY-MT}
& =: M_1 + M_2 + M_3.
\end{align}
We treat each term~$M_i$, $i = 1, 2, 3$, separately.

\paragraph{Bound for~$M_1$.} Since~$\varphi'(t) = -(1/T) \varphi(t)$ for all~$t \in [0, T]$, the following identity follows:
\begin{equation*}
\varphi \dpt(\uh^2) = (1/T) \uh^2 \varphi + \dpt(\varphi \uh^2),
\end{equation*}
which, together with the identity~$\frac12 \jump{\varphi w^2}_n - \jump{\varphi w}_n {w(\cdot, \tn^+)} = \frac12 \jump{\sqrt{\varphi} w}_n^2$, the symmetry and stability of the bilinear form~$\mh(\cdot, \cdot)$, and bound~\eqref{EQN::BOUNDS-VARPHI-1} {for~$\varphi$}, leads to
\begin{align}
\nonumber
M_1 & = \mht(\uh, \varphi \uh) \\
\nonumber
& = \frac12 \sum_{n = 1}^N \int_{I_n} \varphi(t) \frac{\d}{\dt}\mh\big(\uh, \uh\big) \dt - \sum_{n = 1}^{N - 1} \mh(\jump{\uh}_n, \varphi(\tn) \uh(\cdot, \tn^+)) \\
\nonumber
& \quad
+ \mh(\uh(\cdot, 0), \varphi(0) \uh(\cdot, 0)) \\
\nonumber
& = \frac{1}{2T} \sum_{n = 1}^N \int_{I_n} \varphi(t) \mh\big(\uh, \uh\big) \dt + \frac12\sum_{n = 1}^N \int_{I_n} \frac{\d}{\dt} \Big(\varphi(t) \mh\big(\uh, \uh\big) \Big)\dt \\
\nonumber
& \quad - \sum_{n = 1}^{N - 1} \mh\big(\jump{\uh}_n, \varphi(\tn) \uh(\cdot, \tn^+)\big)
+ \mh\big(\uh(\cdot, 0), \varphi(0) \uh(\cdot, 0)\big) \\
\nonumber
& \geq \frac{\mu_*}{2T} \Norm{\sqrt{\varphi} \uh}{L^2(\QT)}^2 + \frac12 \Big[\mh\big(\varphi(T)\uh(\cdot, T), \uh(\cdot, T)\big) \\
\nonumber
& \quad + \sum_{n = 1}^{N - 1} \jump{\varphi(\tn) \mh\big(\uh, \uh\big)}_n - \mh\big(\varphi(0)\uh(\cdot, 0), \uh(\cdot, 0)\big)\Big] \\
\nonumber
& \quad  - \sum_{n = 1}^{N - 1} \mh\big(\jump{\uh}_n, \varphi(\tn) \uh(\cdot, \tn^+)\big)
+ \mh\big(\varphi(0) \uh(\cdot, 0), \uh(\cdot, 0)\big)
\\
\nonumber
& = \frac{\mu_*}{2T} \Norm{\sqrt{\varphi} \uh}{L^2(\QT)}^2 + \frac12 \Big[\mh\big(\varphi(T)\uh(\cdot, T), \uh(\cdot, T)\big) \\
\nonumber
& \quad + \sum_{n = 1}^{N - 1}  \mh(\varphi(\tn) \jump{\uh}_n, \jump{\uh}_n) + \mh\big(\varphi(0)\uh(\cdot, 0), \uh(\cdot, 0)\big)\Big]
\\
\nonumber
& \geq \mu_* \Big(\frac{1}{2T} \Norm{\sqrt{\varphi} \uh}{L^2(\QT)}^2 + \SemiNorm{\sqrt{\varphi} \uh}{\sfJ}^2\Big)
\\
& \geq \mu_* \Big( \frac12 \Norm{\uh}{L^2(\QT)}^2 + T \SemiNorm{\uh}{\sfJ}^2\Big).
\label{EQN::BOUND-M1}
\end{align}

\paragraph{Bound for~$M_2$.} Using the orthogonality properties of~$\Pi_r^t$, Lemma~\ref{LEMMA::ORTHOGONALITY-STAB}, estimate~\eqref{EQN::APPROX-I-PI-2}, the local inverse estimate in~\eqref{EQN::INVERSE-ESTIMATE-TIME}, the symmetry and stability of the bilinear form~${\mh(\cdot, \cdot)}$,
and the Young inequality, we obtain
\begin{align}
\nonumber
M_2 & = -\mht(\uh, (\Id - \Pi_r^t) (\varphi \uh)) \\
\nonumber
& = -\sum_{n = 1}^N \sum_{K \in \Th} \Big[\big(\dpt \uh, (\Id - \Pi_r^t)(\varphi \uh)\big)_{\Kn} \\
\nonumber
& \quad  + \int_{I_n} \smK\Big((\Id - \PiO{k}{K}) \dpt \uh, (\Id - \Pi_r^t) \big(\varphi (\Id - \PiO{k}{K}) \uh\big) \Big) \dt \Big] \\
\nonumber
& \quad + \sum_{n = 1}^{N - 1} \mh\big(\jump{\uh}_n, (\Id - \Pi_r^t) (\varphi \uh)(\cdot, \tn^+)\big) - \mh\big(\uh(\cdot, 0), (\Id - \Pi_r^t)(\varphi \uh)(\cdot, 0)\big) \\
\nonumber
& = \sum_{n = 1}^{N - 1} \mh\big(\jump{\uh}_n, (\Id - \Pi_r^t) (\varphi \uh)(\cdot, \tn^+)\big) - \mh\big(\uh(\cdot, 0), (\Id - \Pi_r^t)(\varphi \uh)(\cdot, 0)\big) \\
\nonumber
& \geq - \mu^* \Big(\sum_{n = 1}^{N - 1} \Norm{\jump{\uh}_n}{L^2(\Omega)} \Norm{(\Id - \Pi_r^t)(\varphi \uh)}{L^2(\Sn)}  + \Norm{\uh}{L^2(\SO)} \Norm{(\Id - \Pi_r^t)(\varphi \uh)}{L^2(\SO)}\Big) \\
\label{EQN::BOUND-M2}
& \geq - \mu^* \bigg(
\theta \frac{\mu_*}{\mu^*}\SemiNorm{\uh}{\sfJ}^2 + \frac{C_S^2 \tau}{2 \theta} \cdot \frac{\mu^*}{\mu_*} \Norm{\uh}{L^2(\QT)}^2 \bigg).
\end{align}

\paragraph{Bound for~$M_3$.} Integration by parts {in time}, the symmetry and stability of the bilinear form~$\mht(\cdot, \cdot)$, and the identity~$\frac12 \jump{w^2}_n - \jump{w}_n
{w(\cdot, \tn^+)} = \frac12 \jump{w}_n^2$ yield
\begin{align}
\nonumber
M_3 & = \theta \mht(\uh, \uh) \\
\nonumber
 & =  \frac{\theta}{2} \sum_{n = 1}^N \int_{I_n} \frac{\d}{\dt} \mh(\uh, \uh) \dt - \theta \sum_{n = 1}^{N - 1} \mh\big(\jump{\uh}_n, \uh(\cdot, \tn^+)\big) + \theta \mh\big(\uh(\cdot, 0), \uh(\cdot, 0)\big)\\
\nonumber
 & = \frac{\theta}{2} \Big(\mh\big(\uh(\cdot, T), \uh(\cdot, T)\big) + \sum_{n = 1}^{N - 1} \jump{\mh\big(\uh, \uh\big)}_n - \mh\big(\uh(\cdot, 0), \uh(\cdot, 0)\big)\Big) \\
\nonumber
 & \quad - \theta \sum_{n = 1}^{N - 1} \mh\big(\jump{\uh}_n, \uh(\cdot, \tn^+)\big) + \theta \mh\big(\uh(\cdot, 0), \uh(\cdot, 0)\big)\\
\nonumber
& = \frac{\theta}{2} \Big( \mh\big(\uh(\cdot, T), \uh(\cdot, T)\big) + \sum_{n = 1}^{N-1} \mh\big(\jump{\uh}_n, \jump{\uh}_n\big) + \mh\big(\uh(\cdot, 0), \uh(\cdot, 0)\big)\Big)\\
\label{EQN::BOUND-M3}
& \geq \theta \mu_* \SemiNorm{\uh}{\sfJ}^2.
\end{align}
Combining identity~\eqref{EQN::IDENTITY-MT} with bounds~\eqref{EQN::BOUND-M1}, \eqref{EQN::BOUND-M2}, and~\eqref{EQN::BOUND-M3}, we get~\eqref{EQN::BOUND-MT}.
\end{proof}

%%%%%%%%%%%%%%%%%%%%%%%%%%%%%%%%%%%%%%%%%%%%
\begin{lemma}
\label{LEMMA::BOUND-ST}
Under the assumptions of Lemma~\ref{LEMMA::BOUND-VH-UH}, the following bound holds:
\begin{align}
\nonumber
\sht(\uh, \vh) & \geq \frac{\alpha_* T}{2} \SemiNorm{\uh}{\supg}^2  - \zeta \Big[\frac{h}{\betaQT}\big(\frac{1}{T} + \frac{1}{2\theta}\big) + \frac{1}{\alpha_* \theta} \big(C_S^2 \betaQT^{-1} h + \delta^2 \betaQT C_*^2 \big)\Big] \Norm{\uh}{L^2(\QT)}^2 \\
\label{EQN::BOUND-ST}
& \quad - \zeta\Big[e^2T + \bigg(\frac{\alpha_*}{2} + 1 \bigg)\theta\Big] \nu \Norm{\nabla \uh}{L^2(\QT)^d}^2,
\end{align}
with~$\delta = C_S \Cinv$, $\alpha_*$ is the stability constant in~\eqref{EQN::STAB-saK}, and~$C_*$ is the constant in the mild condition~\eqref{EQN::TAU-H}.
\end{lemma}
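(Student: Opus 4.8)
The plan is to follow the same template as the preceding Lemmas~\ref{LEMMA::BOUND-AT}--\ref{LEMMA::BOUND-MT}: decompose the test function as $\vh = \varphi\uh - (\Id-\Pi_r^t)(\varphi\uh) + \theta\uh$ and use the bilinearity of $\sht(\cdot,\cdot)$ in its second argument to write
\[
\sht(\uh,\vh) = \sht(\uh,\varphi\uh) - \sht\big(\uh,(\Id-\Pi_r^t)(\varphi\uh)\big) + \theta\,\sht(\uh,\uh) =: S_1 + S_2 + S_3 .
\]
All the work is done elementwise on each space--time prism $\Kn$ and then summed. The two structural facts I would use repeatedly are that the purely temporal weight $\varphi$ (and $\Pi_r^t$) commute with the spatial projectors $\PiO{k}{K}$, $\PPiO{k-1}{K}$, $\PiN{k}{K}$ and with $\dpt$, and that $\LhtK\uh = \tLhtK\uh - \nu\div\PPiO{k-1}{K}\nabla\uh$, so the only term distinguishing $\LhtK$ from $\tLhtK$ carries a factor $\nu$ and is the one that will feed the $\nu\Norm{\nabla\uh}{L^2(\QT)^d}^2$ group in~\eqref{EQN::BOUND-ST}.

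For $S_1$, the identities $\tLhtK(\varphi\uh) = \varphi\,\tLhtK\uh + \varphi'\PiO{k}{K}\uh$ and $(\Id-\PiN{k}{K})(\varphi\uh) = \varphi\,(\Id-\PiN{k}{K})\uh$ split $\shtKn(\uh,\varphi\uh)$ into a coercive part $\lambdaKn\int_{\Kn}\varphi\,|\tLhtK\uh|^2 + \betaKn^2\lambdaKn\int_{I_n}\varphi\,\saK\big((\Id-\PiN{k}{K})\uh,(\Id-\PiN{k}{K})\uh\big)\dt$ plus two cross terms, one with $\varphi'\PiO{k}{K}\uh$ and one coming from the $-\nu\div\PPiO{k-1}{K}\nabla\uh$ piece of $\LhtK$. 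The coercive part is bounded below by $T\lambdaKn\Norm{\tLhtK\uh}{L^2(\Kn)}^2 + T\alpha_*\lambdaKn\betaKn^2\Norm{\nabla(\Id-\PiN{k}{K})\uh}{L^2(\Kn)^d}^2$ using $\varphi\ge T$ from~\eqref{EQN::BOUNDS-VARPHI-1}, the stability~\eqref{EQN::STAB-saK} of $\saK$ (note $(\Id-\PiN{k}{K})\uh \in \ker(\PiN{k}{K})\cap\Vk(K)$ since $\Pp{k}(K)\subset\Vk(K)$), and $\alpha_*\le 1$; this is essentially $T$ times the local SUPG-seminorm. The cross terms I would bound with Cauchy--Schwarz and Young, using $|\varphi'|\le 1$, $\Norm{\PiO{k}{K}\uh}{L^2(\Kn)}\le\Norm{\uh}{L^2(\Kn)}$, and $\Norm{\div\PPiO{k-1}{K}\nabla\uh}{L^2(\Kn)}\le\Cinv\hK^{-1}\Norm{\nabla\uh}{L^2(\Kn)^d}$ from Lemma~\ref{LEMMA::INVERSE-ESTIMATE}; here the choice~\eqref{EQN::CHOICE-LAMBDA} of $\lambdaKn$ is what converts $\nu^2\lambdaKn\Cinv^2\hK^{-2}$ into $\nu\zeta$ and $\lambdaKn\betaQT\hK^{-1}$ into $\zeta$, so that a fixed fraction of the $\lambdaKn\Norm{\tLhtK\uh}{L^2(\Kn)}^2$ mass absorbs the $\lambdaKn\Norm{\tLhtK\uh}{L^2(\Kn)}^2$ on the right of Young, leaving residues $-\zeta(\cdots)\nu\Norm{\nabla\uh}{L^2(\QT)^d}^2 - \zeta(\cdots)\Norm{\uh}{L^2(\QT)}^2$ with the advertised shape.

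For $S_2$, the stabilization part $\betaKn^2\lambdaKn\int_{I_n}\saK\big((\Id-\PiN{k}{K})\uh,(\Id-\Pi_r^t)(\varphi(\Id-\PiN{k}{K})\uh)\big)\dt$ vanishes by Lemma~\ref{LEMMA::ORTHOGONALITY-STAB} (with $s=\saK$ and the functions $(\Id-\PiN{k}{K})\uh\in\Pp{r}(I_n)\otimes\Vk(K)$), and the leftover $-\lambdaKn\big(\LhtK\uh,\tLhtK(\Id-\Pi_r^t)(\varphi\uh)\big)_{\Kn}$ is handled by Cauchy--Schwarz after estimating $\Norm{\tLhtK(\Id-\Pi_r^t)(\varphi\uh)}{L^2(\Kn)}$ via~\eqref{EQN::APPROX-I-PI-1} and~\eqref{EQN::APPROX-I-PI-Time-Der}, the inverse estimate~\eqref{EQN::INVERSE-ESTIMATE-SPACE}, and the mild condition~\eqref{EQN::TAU-H} so that $\tau_n\hK^{-1}\lesssim C_*\hK^{-1/2}$; splitting $\Norm{\LhtK\uh}{L^2(\Kn)}\le\Norm{\tLhtK\uh}{L^2(\Kn)} + \nu\Cinv\hK^{-1}\Norm{\nabla\uh}{L^2(\Kn)^d}$ again routes the diffusive piece into the $\nu$-group via~\eqref{EQN::CHOICE-LAMBDA}. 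For $S_3$, using $\LhtK\uh = \tLhtK\uh - \nu\div\PPiO{k-1}{K}\nabla\uh$ and~\eqref{EQN::STAB-saK} gives $S_3 \ge \theta\big(\lambdaKn\Norm{\tLhtK\uh}{L^2(\Kn)}^2 + \alpha_*\betaKn^2\lambdaKn\Norm{\nabla(\Id-\PiN{k}{K})\uh}{L^2(\Kn)^d}^2\big)$ (summed) minus a single cross term $\theta\nu\lambdaKn\big(\div\PPiO{k-1}{K}\nabla\uh,\tLhtK\uh\big)_{\Kn}$, again absorbed by Young and~\eqref{EQN::CHOICE-LAMBDA}. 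Collecting $S_1+S_2+S_3$, one retains at least $\tfrac{\alpha_*T}{2}$ of both the $\lambdaKn\Norm{\tLhtK\uh}{L^2(\Kn)}^2$ mass (available budget $\ge \tfrac{T}{2}+\theta$ for the Young parameters, since $\alpha_*\le 1$) and the gradient-jump mass (untouched by the cross terms, retained with coefficient $\ge T\alpha_*$), and sorts the remaining error terms into the $\Norm{\uh}{L^2(\QT)}^2$ and $\nu\Norm{\nabla\uh}{L^2(\QT)^d}^2$ groups, producing~\eqref{EQN::BOUND-ST}.

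The main obstacle I expect is the bookkeeping of the $\nu$-carrying cross terms: each coupling of $\nu\div\PPiO{k-1}{K}\nabla\uh$ with $\tLhtK\uh$ or with $\PiO{k}{K}\uh$ must be split by Young's inequality with a parameter chosen of size $O(\zeta)$ (not $O(1)$), so that its diffusive half lands in the $\nu\Norm{\nabla\uh}{L^2(\QT)^d}^2$ budget with the correct $\zeta$-dependence while its complementary half stays small enough to be swallowed by the SUPG mass produced by $S_1$ and $S_3$; reconciling these two competing demands uniformly in $\nu$ — which is exactly what forces one to use both bounds in~\eqref{EQN::CHOICE-LAMBDA} rather than just one — is where the delicate constant-tracking lies. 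The rest is routine Cauchy--Schwarz/Young manipulation combined with the already-established approximation and inverse estimates.
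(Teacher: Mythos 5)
Your proposal follows the same route as the paper's proof: the identical decomposition $\sht(\uh,\vh)=\sht(\uh,\varphi\uh)-\sht(\uh,(\Id-\Pi_r^t)(\varphi\uh))+\theta\,\sht(\uh,\uh)$, the same use of Lemma~\ref{LEMMA::ORTHOGONALITY-STAB} to annihilate the $\saK$-part of the middle term, the same splitting $\LhtK=\tLhtK-\nu\div\PPiO{k-1}{K}\nabla(\cdot)$ with the inverse estimates of Lemma~\ref{LEMMA::INVERSE-ESTIMATE} and both branches of~\eqref{EQN::CHOICE-LAMBDA} routing the residues into the $\Norm{\uh}{L^2(\QT)}^2$ and $\nu\Norm{\nabla\uh}{L^2(\QT)^d}^2$ groups, and the same final accounting in which the SUPG mass lost in the middle term is compensated by the $\theta$-term, leaving the coefficient $\tfrac{\alpha_* T}{2}$. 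The argument is correct and matches the paper in all essential respects.
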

\begin{proof}
Let~$\uh \in \Vht$. By adding and subtracting suitable terms, we have
\begin{align}
\nonumber
\sht(\uh, \vh) & = \sht(\uh, \varphi \uh) - \sht(\uh, (\Id - \Pi_r^t) (\varphi \uh)) + \theta\sht(\uh, \uh) \\
\label{EQN::AUX-SUPG-IDENTITY}
& = : {J_1 + J_2 + J_3}.
\end{align}
We bound each term~${J}_i$, $i = 1, 2, 3$, separately.

\paragraph{Bound for~${J_1}$.} Using the triangle and the Young inequalities, the stability property in~\eqref{EQN::STAB-saK} of~$\saK(\cdot, \cdot)$, the inverse estimates in~\eqref{EQN::INVERSE-ESTIMATE-TIME} and~\eqref{EQN::INVERSE-ESTIMATE-DIVERGENCE}, bounds~\eqref{EQN::BOUNDS-VARPHI-1} and~\eqref{EQN::BOUNDS-VARPHI-2} {for~$\varphi$},
the stability properties of~$\PiO{k}{K}$ and~$\PPiO{k-1}{K}$, and assumption~\eqref{EQN::CHOICE-LAMBDA} on the choice of~$\lambdaKn$, we get
\begin{align}
\nonumber
{J_1} & = \sht(\uh, \varphi \uh) \\
\nonumber
%%%
& = \sum_{n = 1}^N \sum_{K \in \Th} \lambdaKn \Big[
\Big(\LhtK \uh, \tLhtK (\varphi \uh) \Big)_{\Kn} + \betaKn^2 \int_{I_n} \varphi \saK\big((\Id - \PiN{k}{K}) \uh, (\Id - \PiN{k}{K}) \uh\big) \dt \Big] \\
\nonumber
%%%
& = \sum_{n = 1}^N \sum_{K \in \Th} \Big[
\lambdaKn \Big(\LhtK \uh, \dpt (\varphi \PiO{k}{K} \uh) + \varphi \bbeta \cdot \PiO{k-1}{K} \nabla \uh \Big)_{\Kn} \\
\nonumber
& \quad + \betaKn^2 \lambdaKn \int_{I_n} \varphi \saK\big((\Id - \PiN{k}{K}) \uh, (\Id - \PiN{k}{K}) \uh\big) \dt \Big] \\
\nonumber
%%%
& {\geq} \sum_{n = 1}^N \sum_{K \in \Th}  \Big[
\lambdaKn \big(\tLhtK \uh, \varphi' \PiO{k}{K} \uh)_{\Kn} + \lambdaKn \Norm{\sqrt{\varphi} \tLhtK \uh}{L^2(\Kn)}^2 \\
\nonumber
& \quad + \check{\alpha} \betaKn^2 \lambdaKn
\Norm{\sqrt{\varphi} \nabla (\Id - \PiN{k}{K}) \uh}{L^2(\Kn)^d}^2  - \nu \lambdaKn \big(\div \PPiO{k-1}{K} \nabla \uh, \varphi' \PiO{k}{K} \uh + \varphi \tLhtK \uh \big)_{\Kn} \Big] \\
\nonumber
%%%
& \geq \sum_{n = 1}^N \sum_{K \in \Th} \Big[
\frac{T}{2} \lambdaKn \Norm{\tLhtK \uh}{L^2(\Kn)}^2 + \check{\alpha} T \lambdaKn \betaKn^2 \Norm{\nabla (\Id - \PiN{k}{K}) \uh}{L^2(\Kn)^d}^2  \\
\nonumber
& \quad - \lambdaKn\big(\frac{1}{T}+ \frac{1}{2\theta}\big) \Norm{\varphi' \PiO{k}{K} \uh}{L^2(\Kn)}^2 -\nu^2 \lambdaKn \Big(\frac{\theta}{2} + e^2T\Big) \Norm{\div \PPiO{k-1}{K} \nabla \uh}{L^2(\Kn)}^2 \Big] \\
\nonumber
& \geq \sum_{n = 1}^N \sum_{K \in \Th} \Big[
\frac{T}{2} \lambdaKn \Norm{\tLhtK \uh}{L^2(\Kn)}^2 + (\check{\alpha} T) \lambdaKn \betaKn^2 \Norm{\nabla (\Id - \PiN{k}{K}) \uh}{L^2(\Kn)^d}^2  \\
\nonumber
& \quad - \lambdaKn\big(\frac{1}{T} + \frac{1}{2\theta}\big) \Norm{\varphi' \PiO{k}{K} \uh}{L^2(\Kn)}^2 - \nu^2 \lambdaKn \Big(\frac{\theta}{2} + e^2T\Big) \Cinv^2 \hK^{-2} \Norm{\PPiO{k-1}{K} \nabla \uh}{L^2(\Kn)}^2 \Big] \\
\label{EQN::BOUND-T1}
& \geq \frac{T\alpha_*}{2} \SemiNorm{\uh}{\supg}^2
-  \frac{\zeta h}{\betaQT}\big(\frac{1}{T} + \frac{1}{2\theta}\big)
\Norm{\uh}{L^2(\QT)}^2 - \nu \zeta \Big(\frac{\theta}{2} + e^2 T\Big) \Norm{\nabla \uh}{L^2(\QT)^d}^2.
\end{align}

\paragraph{Bound for~${J_2}$.} The following identity follows from Lemma~\ref{LEMMA::ORTHOGONALITY-STAB}
\begin{align}
\nonumber
{J_2} & = -\sht(\uh, (\Id - \Pi_r^t)(\varphi \uh)) \\
\nonumber
& = -\sum_{n = 1}^N \sum_{K \in \Th}\Big[ \lambdaKn \big( \LhtK \uh, \tLhtK (\Id - \Pi_r^t)(\varphi \uh) \big)_{\Kn} \\
\nonumber
& \quad + \betaKn^2 \lambdaKn \int_{I_n} \saK\big((\Id - \PiN{k}{K}) \uh, (\Id - \Pi_r^t)(\varphi (\Id - \PiN{k}{K}) \uh)\big) \dt\Big] \\
\nonumber
& = -\sum_{n = 1}^N \sum_{K \in \Th} \lambdaKn \big( \tLhtK \uh - \nu \div \PPiO{k-1}{K} \nabla \uh, \tLhtK (\Id - \Pi_r^t)(\varphi \uh) \big)_{\Kn}.
\end{align}
Using the Cauchy--Schwarz, the triangle, and the Young inequalities, estimates~\eqref{EQN::APPROX-I-PI-1} and~\eqref{EQN::APPROX-I-PI-Time-Der},
the local inverse estimates in Lemma~\ref{LEMMA::INVERSE-ESTIMATE}, assumption~\eqref{EQN::CHOICE-LAMBDA} on the choice of~$\lambdaKn$, and condition~\eqref{EQN::TAU-H}, we get
\begin{align}
\nonumber
{J_2} & \geq - \sum_{n = 1}^N \sum_{K \in \Th} \bigg[\frac{\lambdaKn \alpha_*\theta}{2} \Norm{\tLhtK \uh}{L^2(\Kn)}^2 + \frac{\nu^2 \lambdaKn \alpha_* \theta}{2} \Norm{\div \PPiO{k-1}{K} \nabla \uh}{L^2(\Kn)}^2 \\
\nonumber
& \quad + \frac{2\lambdaKn}{\theta \alpha_*} \Big(\Norm{\dpt (\Id - \Pi_r^t)\Pi_{k}^{0,K}(\varphi \uh)}{L^2(\Kn)}^2  + \Norm{\bbeta \cdot (\Id - \Pi_r^t)(\varphi \PPiO{k-1}{K} \nabla \uh}{L^2(\Kn)}^2 \Big)\bigg] \\
\nonumber
& \geq - \sum_{n = 1}^N \sum_{K \in \Th} \bigg[\frac{\lambdaKn \alpha_*\theta}{2} \Norm{\tLhtK \uh}{L^2(\Kn)}^2 + \frac{\nu^2 \lambdaKn \alpha_* \theta}{2} \Cinv^2 \hK^{-2} \Norm{\nabla \uh}{L^2(\Kn)^d}^2 \\
\nonumber
& \quad + \frac{2\lambdaKn}{\theta \alpha_*} \Big(C_S^2 + \delta^2 \betaQT^2 \tau_n^2 \hK^{-2} \Big) \Norm{\uh}{L^2(\Kn)}^2 \bigg] \\
\nonumber
& \geq - \sum_{n = 1}^N \sum_{K \in \Th} \bigg[\frac{\lambdaKn \alpha_*\theta}{2} \Norm{\tLhtK \uh}{L^2(\Kn)}^2 + \frac{\nu \zeta \alpha_* \theta}{2} \Norm{\nabla \uh}{L^2(\Kn)^d}^2 \\
\nonumber
& \quad + \frac{2\zeta}{\theta \alpha_*} \Big(C_S^2 \betaQT^{-1} \hK + \delta^2 \betaQT \tau_n^2 \hK^{-1} \Big) \Norm{\uh}{L^2(\Kn)}^2 \bigg] \\
\label{EQN::BOUND-T2}
& \geq - \frac{\alpha_* \theta}{2} \SemiNorm{\uh}{\supg}^2 - \frac{\nu \zeta \alpha_* \theta}{2} \Norm{\nabla \uh}{L^2(\QT)^d}^2 - \frac{2\zeta}{\alpha_*\theta} \big(C_S^2\betaQT^{-1} h + \delta^2 \betaQT C_*^2\big) \Norm{\uh}{L^2(\QT)}^2,
\end{align}
with~$\delta = C_S \Cinv$.

\paragraph{Bound for~${J_3}$.} Similarly as for the bound for~${J_1}$, it can be shown that
\begin{equation}
\label{EQN::BOUND-T3}
{J_3} \geq \frac{\alpha_* \theta}{2} \SemiNorm{\uh}{\supg}^2 - \frac{\nu \zeta \theta}{2} \Norm{\nabla \uh}{L^2(\QT)^d}^2.
\end{equation}
Combining identity~\eqref{EQN::AUX-SUPG-IDENTITY} with bounds~\eqref{EQN::BOUND-T1}, \eqref{EQN::BOUND-T2}, and~\eqref{EQN::BOUND-T3}, we get~\eqref{EQN::BOUND-ST}
\end{proof}

We are now in a position to prove the stabiliy and well-posedness of the method.

\medskip\noindent
{\bf Proof of Theorem \ref{THM::FULLY-DISCRETE-STABILITY}.}
Let~$\uh \in \Vht$ and~$\vh := \Pi_r^t(\varphi \uh) + \theta \uh \in \Vht$, with~$\varphi$ defined in~\eqref{DEF::VARPHI} and the real~$\theta > 0$ to be fixed later.
From Lemmas~\ref{LEMMA::BOUND-AT}, \ref{LEMMA::BOUND-BT}, \ref{LEMMA::BOUND-MT}, and~\ref{LEMMA::BOUND-ST}, we get
\begin{align*}
\nonumber
\Bht(\uh, \vh) & \geq
\bigg[\mu_* \Big(\frac12 - \Big(\frac{\mu^*}{\mu_*}\Big)^2 \frac{C_S^2 \tau}{2\theta} \Big)
- \delta_t C_*^2  \\
\nonumber
& \qquad - \zeta \Big( \frac{h}{\betaQT}\Big(\frac{1}{T} + \frac{1}{2\theta}\Big) + \frac{2}{\theta \alpha_*} \big(C_S^2 \betaQT^{-1} h + \delta^2 \betaQT C_*^2 \big)\Big)\bigg] \Norm{\uh}{L^2(\QT)}^2
\\
\nonumber
& \quad + \mu_* T \SemiNorm{\uh}{\sfJ}^2 + \frac{\alpha_* T}{2}\SemiNorm{\uh}{\supg}^2 \\
& \quad + \bigg[
T \Big(\alpha_* - \zeta e^2 \Big)
+ \theta \Big(\alpha_* - \zeta\big(\frac{\alpha_*}{2} + 1\big)\Big) \bigg]
\nu \Norm{\nabla \uh}{L^2(\QT)^d}^2.
\end{align*}
Choosing~$\zeta$ and~$C_*$ small enough, and~$\theta$ large enough, we deduce that there exists a constant~$\eta^* > 0$ independent of~$h$, $\tau$, and~$\nu$ such that
\begin{equation}
\label{EQN::BOUND-Bht-lower}
\Bht(\uh, \vh) \geq \eta^* \TNorm{\uh}^2.
\end{equation}
which, combined with Lemma~\ref{LEMMA::BOUND-VH-UH}, completes the proof.

\medskip\noindent
{\bf Proof of Corollary \ref{COROL:1}.}
The result follows from the inf-sup stability estimate in Theorem~\ref{THM::FULLY-DISCRETE-STABILITY} and the following continuity bound for the linear functional~$\ellht(\cdot)$ in~\eqref{DEF::lht}: for all~$\vh \in \Vht$, it holds
\begin{equation*}
\begin{split}
|\ellht(\vh)| & \leq \sqrt{2} \big(\Norm{u_0}{L^2(\Omega)} + \Norm{f}{L^2(\QT)}\big) \Big(\Norm{\vh}{L^2(\QT)}^2  + \frac12 \Norm{\vh}{L^2(\SO)}^2 \\
& \quad + \sum_{n = 1}^N \sum_{K \in \Th} \lambdaKn^2 \Norm{\tLhtK \vh}{L^2(\Kn)}^2 \Big)^{\frac12} \leq \sqrt{2} (\Norm{u_0}{L^2(\Omega)} + \Norm{f}{L^2(\QT)}) \TNorm{\vh}.
\end{split}
\end{equation*}
\smallskip

\begin{remark}[Inf-sup constant for long-time simulations] The inf-sup stability constant~$\gamma_I$ is of the form
$$
\gamma_I = \frac{\eta_*}{\eta^*},
$$
where~$\eta^*$ is the constant in bound~\eqref{EQN::BOUND-Bht-lower} and~$\eta_*$ is an upper bound for the constant on the right-hand side of~\eqref{EQN::BOUND-VH-UH}.

Assuming that~$T \gg 1$ and
$$
\tau < \frac{1}{C_S^2} \Big(\frac{\mu^*}{\mu_*}\Big)^2, \quad  h < \betaQT \min\Big\{\frac{\alpha_*}{2 C_S^2},\ 1\Big\},  \quad C_*^2 < \min\Big\{\frac{\mu_*}{8 \delta_t},\ \frac{\alpha_*}{2 \delta^2 \betaQT} \Big\},
$$
$$
\zeta < \min\Big\{\frac{\alpha_* }{2e^2},\  \frac{\alpha_*}{\alpha_* + 2},\ \frac{\mu^* T}{32}\Big\}, \quad \theta = \max\Big\{4,\ \frac{T}{2}\Big\},
$$
we have that
$$\eta^* \geq \min\Big\{\frac{\mu_*}{8},\ \mu_* T,\ \alpha_* T\Big\} = \frac{\mu_*}{8} \quad \text{ and } \quad \eta_* = \mathcal{O}(T).$$
Therefore, the inf-sup stability constant $\gamma_I \sim T$.
\eremk
\end{remark}

% ---------------------------------------------------
%           CONVERGENCE ANALYSIS % +++
% ---------------------------------------------------
\section{Convergence analysis \label{SECT::CONVERGENCE}}

In the present section we prove Theorem \ref{THM::ERROR-ESTIMATES}, which will follow as a simplified case of the more general error bounds derived below.
We start by recalling some polynomial and VE approximation results. The Bramble-Hilbert lemma~(see e.g., \cite[Lemma 4.3.8]{Brenner-Scott:book}) implies the following approximation properties of the polynomial projections introduced in Section~\ref{SUBSECT::LOCAL-SPACES-PROJECTIONS}.

\begin{lemma}[Estimates for polynomial projections]
\label{LEMMA::BRAMBLE-HILBERT}
Under Assumption~\ref{A1}, for any element~$K \in \Omegah$ and any sufficiently smooth function~$\phi$ defined on~$K$, the following estimates hold:
\begin{subequations}
\begin{align}
\label{EQN::ESTIMATE-L2-SPACE}
\Norm{(\Id - \PiO{k}{K})\phi }{W^{m, p}(K)} & \lesssim \hK^{s - m} \SemiNorm{\phi}{W^{s, p}(K)} & & s, m \in \N, \ m \le s \le k + 1, \ p \in [1, +\infty],\\
\label{EQN::ESTIMATE-PI-NABLA}
\Norm{(\Id - \PiN{k}{K}) \phi}{H^m(K)} & \lesssim \hK^{s - m}|\phi|_{H^s(K)} && s, m \in \N,\ m \le s \le k + 1, \ s \geq 1.
\end{align}
\end{subequations}

Moreover, for any~$I_n \in \Tt$ and for any sufficiently smooth function~$\psi$ defined on~$I_n$, the~$L^2(\Tt)$-orthogonal projection in~$\Pp{r}(\Tt)$ of~$\psi$ satisfies
\begin{equation}
\label{EQN::ESTIMATE-L2-TIME}
\Norm{(\Id - \Pi_r^t)\psi}{H^{m}(I_n)} \lesssim \tau_n^{\ell - m} \SemiNorm{\psi}{H^{\ell}(I_n)} \qquad \ell, m \in \N,\ m \le \ell \le r + 1.
\end{equation}
\end{lemma}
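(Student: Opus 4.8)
The plan is to obtain all three estimates from classical Bramble--Hilbert/Dupont--Scott theory on star-shaped domains, combined with scaling and inverse inequalities; the only virtual-element-specific ingredients are the polynomial reproduction and the stability of the projectors~$\PiO{k}{K}$ and~$\PiN{k}{K}$. \emph{Key tool.} Fix~$K \in \Omegah$. By Assumption~\ref{A1}, $K$ is star-shaped with respect to a ball of radius at least~$\varrho \hK$, so its chunkiness parameter is bounded in terms of~$\varrho$, and the Dupont--Scott theory provides, for each sufficiently smooth~$\phi$, an averaged Taylor polynomial~$Q_k \phi \in \Pp{k}(K)$ with
\[
\Norm{\phi - Q_k \phi}{W^{m, p}(K)} \lesssim \hK^{s - m} \SemiNorm{\phi}{W^{s, p}(K)}, \qquad m \le s \le k + 1, \quad p \in [1, +\infty],
\]
with a constant depending only on~$d$, $k$, $p$, and~$\varrho$. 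I will also use: (i) the~$L^p(K)$-stability of the~$L^2(K)$-orthogonal projection, $\Norm{\PiO{k}{K} v}{L^p(K)} \lesssim \Norm{v}{L^p(K)}$, obtained by scaling~$K$ to a configuration of unit diameter (the ball condition keeping the geometry uniformly nondegenerate), invoking the equivalence of norms on the finite-dimensional space~$\Pp{k}$, and scaling back; and (ii) the bound~$\Norm{\PiN{k}{K} v}{H^1(K)} \lesssim \Norm{v}{H^1(K)}$, which follows from~$\SemiNorm{\PiN{k}{K} v}{H^1(K)} \le \SemiNorm{v}{H^1(K)}$ (immediate from the definition of~$\PiN{k}{K}$) together with the boundary mean-value constraint and a Poincar\'e inequality.

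\emph{Estimate for~$\PiO{k}{K}$.} Since~$\PiO{k}{K}$ reproduces~$\Pp{k}(K)$,
\[
(\Id - \PiO{k}{K})\phi = (\Id - \PiO{k}{K})(\phi - Q_k \phi) = (\phi - Q_k \phi) - \PiO{k}{K}(\phi - Q_k \phi).
\]
The first summand is bounded by the Dupont--Scott estimate above. The second is a polynomial of degree~$\le k$ on~$K$, so an inverse inequality on~$\Pp{k}(K)$ gives
\[
\Norm{\PiO{k}{K}(\phi - Q_k \phi)}{W^{m, p}(K)} \lesssim \hK^{-m} \Norm{\PiO{k}{K}(\phi - Q_k \phi)}{L^p(K)} \lesssim \hK^{-m} \Norm{\phi - Q_k \phi}{L^p(K)} \lesssim \hK^{s - m} \SemiNorm{\phi}{W^{s, p}(K)},
\]
using~$L^p$-stability and the Dupont--Scott estimate with~$m = 0$. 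Summing the two contributions yields~\eqref{EQN::ESTIMATE-L2-SPACE}.

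\emph{Estimates for~$\PiN{k}{K}$ and~$\Pi_r^t$.} The same splitting works with~$\PiN{k}{K}$ in place of~$\PiO{k}{K}$: writing~$(\Id - \PiN{k}{K})\phi = (\phi - Q_k \phi) - \PiN{k}{K}(\phi - Q_k \phi)$, the first term is controlled by Dupont--Scott (here the hypothesis~$s \ge 1$ guarantees~$\phi \in H^1(K)$ and that~$\SemiNorm{\phi}{H^s(K)}$ is the relevant seminorm), and the second, again a polynomial, by an inverse inequality together with the~$H^1$-stability of~$\PiN{k}{K}$ from the previous paragraph; this gives~\eqref{EQN::ESTIMATE-PI-NABLA}. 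Estimate~\eqref{EQN::ESTIMATE-L2-TIME} is the one-dimensional analogue of~\eqref{EQN::ESTIMATE-L2-SPACE}: on~$I_n$, which has length~$\tau_n$ and is trivially star-shaped, $\Pi_r^t$ restricts to the~$L^2(I_n)$-orthogonal projection onto~$\Pp{r}(I_n)$ and reproduces it, so repeating the argument verbatim in dimension one with~$\tau_n$ in the role of~$\hK$ and~$r$ in the role of~$k$ gives the claim.

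\emph{Main obstacle.} The only step that is not pure bookkeeping is establishing the scaling-invariant stability bounds (especially the~$L^p$-stability of the~$L^2$-projection) with constants that depend on the mesh solely through~$\varrho$; once those are available everything else is routine. Alternatively, these facts, as well as the final estimates themselves, are by now standard and can simply be quoted from the virtual element literature already referenced in the paper (e.g.\ the works cited for the stability terms and for VE interpolation).
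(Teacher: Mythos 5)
Your argument is correct and is precisely the standard Bramble--Hilbert/Dupont--Scott route that the paper itself invokes (it gives no proof beyond citing \cite[Lemma 4.3.8]{Brenner-Scott:book}): polynomial reproduction of the projector, the averaged Taylor polynomial estimate on star-shaped domains, and stability plus inverse inequalities for the projected polynomial part. Nothing to add.
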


The next lemma concerns the optimal approximation properties of the VE space~$\Vh$, see~\cite[Lemma~3.15 (for~2D) and \S5.2 (for~3D)]{Brenner-smallfaces} and \cite[Thm.~11]{Cangiani_Georgoulis_Pryer_Sutton:2017} for more details; see instead \cite{SERE-2} for the case of serendipity VE spaces.
\begin{lemma}[Approximation by VE functions]
\label{LEMMA::VE-APPROX}
Under Assumption~\ref{A1}, for any~$v \in H_0^1(\Omega) \cap H^{s+1}(\Omegah)$ $(0 < s \le k + 1)$, there exists~$\vI \in \Vh$ such that, for all~$K \in \Omegah$, it holds
\begin{equation}
\label{EQN::VE-INTERPOLATION}
\Norm{v - \vI}{L^2(K)} + \hK \Norm{\nabla(v - \vI)}{L^2(K)^d} \lesssim \hK^{s+1}|v|_{H^{s+1}(K)}.
\end{equation}
\end{lemma}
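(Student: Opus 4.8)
The plan is to use the classical ``polynomial bridge'' argument for virtual element interpolation, combined with a local stability estimate for the nodal interpolation operator on polytopal elements; this is essentially the route of the cited references, so I would keep the technical core at the level of a quotation.

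\emph{Construction of $\vI$.} First I would define $\vI \in \Vh$ element by element by prescribing its degrees of freedom (the sets \ref{DOF-VERTICES-2D}--\ref{DOF-BULK-2D} if $d=2$, and \ref{DOF-VERTICES-3D}--\ref{DOF-BULK-3D} if $d=3$) to coincide with those of $v$. All the vertex/edge/face DoFs are shared between neighbouring elements, so the resulting piecewise function is $C^0$, hence in $H^1(\Omega)$; since $v|_{\partial\Omega}=0$ the boundary DoFs vanish and $\vI \in H_0^1(\Omega)$, so indeed $\vI \in \Vh$. In the low-regularity range (notably $d=3$ with $s\le \tfrac12$, where pointwise vertex values are not controlled by $|v|_{H^{s+1}}$) I would replace the pointwise DoFs by a Scott--Zhang/Cl\'ement-type local averaging as in \cite{Brenner-smallfaces,Cangiani_Georgoulis_Pryer_Sutton:2017}; this modification is purely local and does not affect the estimates below.

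\emph{Polynomial bridge.} Fix $K \in \Omegah$ and let $q_K \in \Pp{k}(K)$ be a polynomial realising the optimal approximation of $v$ on $K$, e.g.\ $q_K = \PiN{k}{K} v$ or an averaged Taylor polynomial, so that by Lemma~\ref{LEMMA::BRAMBLE-HILBERT} (cf.\ \eqref{EQN::ESTIMATE-PI-NABLA})
\[
\Norm{v - q_K}{L^2(K)} + \hK \Norm{\nabla(v - q_K)}{L^2(K)^d} \lesssim \hK^{s+1}\SemiNorm{v}{H^{s+1}(K)}.
\]
Since $\Pp{k}(K) \subset \Vk(K)$ and the local interpolation operator is the identity on $\Vk(K)$ (each element of $\Vk(K)$ being determined by its DoFs, by unisolvence), we have $(q_K)_{\mathcal I} = q_K$ on $K$, whence $v - \vI = (v - q_K) - (v - q_K)_{\mathcal I}$ on $K$. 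Thus it remains to bound the virtual interpolant of $w := v - q_K$.

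\emph{Local stability and conclusion.} The missing ingredient is the local stability bound
\[
\Norm{w_{\mathcal I}}{L^2(K)} + \hK \Norm{\nabla w_{\mathcal I}}{L^2(K)^d} \lesssim \Norm{w}{L^2(\widetilde K)} + \hK \SemiNorm{w}{H^1(\widetilde K)},
\]
with $\widetilde K = K$ for pointwise DoFs and $\widetilde K$ a small neighbourhood of $K$ in the averaging case. This follows from (i) trace and inverse inequalities on the sub-meshes of $\partial K$, whose shape-regularity constants depend only on $\varrho$ thanks to Assumptions~\ref{A1}--\ref{A3}; (ii) a Poincar\'e-type control of a function in $\Vk(K)$ by its DoFs, using $\Delta w_{\mathcal I}\in\Pp{k}(K)$ together with the enhancement constraint, which ties the moments against $\mathbb{M}_{k-1}(K)\cup\mathbb{M}_k(K)$ to $\PiN{k}{K}w_{\mathcal I}$; and (iii) standard scaling to $\hK$. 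Applying this with $w = v-q_K$ and combining with the Bramble--Hilbert estimate above yields \eqref{EQN::VE-INTERPOLATION} on each $K$. The delicate point of the whole argument is step (ii) together with obtaining constants in (i) that are robust with respect to the number and relative size of the facets of $K$; this is exactly what is established in \cite[Lemma~3.15 and \S5.2]{Brenner-smallfaces} (and in \cite{Cangiani_Georgoulis_Pryer_Sutton:2017}, resp.\ \cite{SERE-2} for the serendipity spaces), so in the write-up I would invoke those results rather than reproduce the lengthy trace/scaling analysis.
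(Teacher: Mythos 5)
Your proposal is correct and follows essentially the same route as the paper: the paper itself does not prove this lemma but quotes it directly from \cite{Brenner-smallfaces} and \cite{Cangiani_Georgoulis_Pryer_Sutton:2017} (and \cite{SERE-2} for serendipity spaces), which is precisely where your sketch also places the technical core (polynomial reproduction plus local stability of the DoF interpolant under Assumptions~\ref{A1}--\ref{A3}). Your outline of the standard ``polynomial bridge'' argument is a faithful summary of what those references establish, so no gap to report.
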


The last ingredients are a standard trace inequality in one dimension, a stability bound for~$\Pi_r^t$ in the~$H^1(I_n)$-seminorm, and a scaled Poincar\'e--Friedrichs inequality on polytopes.
\begin{lemma}[Trace inequality]
\label{LEMMA::TRACE-INEQ}
Let~$I_n \in \Tt$. For any~$\psi \in H^1(I_n)$, it holds
\begin{equation}
\label{EQN::TRACE-INEQ}
|\psi(\tnmo)|^2 + |\psi(\tn)|^2 \lesssim \tau_n^{-1} \Norm{\psi}{L^2(I_n)}^2 + \tau_n \Norm{\dpt \psi}{L^2(I_n)}^2.
\end{equation}
\end{lemma}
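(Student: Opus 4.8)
The plan is to reduce to a reference interval and then argue directly from the fundamental theorem of calculus; no mesh regularity is needed. First I would observe that, since $H^1(I_n) \hookrightarrow C^0(\overline{I_n})$ and both sides of the inequality are continuous with respect to the $H^1(I_n)$-norm, it suffices by density of $C^\infty(\overline{I_n})$ in $H^1(I_n)$ to establish the bound for smooth $\psi$. One may either map $I_n$ to $(0,1)$ via $\widehat\psi(\widehat t) := \psi(\tnmo + \tau_n \widehat t)$, invoke the dimensionless one-dimensional trace estimate $|\widehat\psi(0)|^2 + |\widehat\psi(1)|^2 \lesssim \Norm{\widehat\psi}{H^1(0,1)}^2$, and scale back (the factors $\tau_n^{-1}$ and $\tau_n$ appearing precisely because $\ds\,\widehat t = \tau_n^{-1}\ds\, t$ and $\dpt \widehat\psi = \tau_n\,\dpt\psi$), or carry out the estimate on $I_n$ directly as follows.

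For smooth $\psi$ and any $t \in I_n$, the identity
\[
\psi(\tn)^2 = \psi(t)^2 + 2\int_t^{\tn} \psi(s)\,\dpt\psi(s) \ds
\]
holds; integrating it in $t$ over $I_n$ and dividing by $\tau_n$ gives
\[
\psi(\tn)^2 = \frac{1}{\tau_n}\Norm{\psi}{L^2(I_n)}^2 + \frac{2}{\tau_n}\int_{I_n}\Big(\int_t^{\tn}\psi(s)\,\dpt\psi(s)\ds\Big)\dt .
\]
Bounding the inner integral by the Cauchy--Schwarz inequality by $\Norm{\psi}{L^2(I_n)}\Norm{\dpt\psi}{L^2(I_n)}$ and then using Young's inequality $2ab \le \tau_n^{-1}a^2 + \tau_n b^2$, one obtains $\psi(\tn)^2 \lesssim \tau_n^{-1}\Norm{\psi}{L^2(I_n)}^2 + \tau_n\Norm{\dpt\psi}{L^2(I_n)}^2$. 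Repeating the argument with the identity anchored at $\tnmo$ instead of $\tn$ controls $\psi(\tnmo)^2$ in the same way, and summing the two bounds yields the claim.

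The argument is entirely routine; the only point that warrants a little attention is tracking the powers of $\tau_n$ through the scaling (equivalently, through the Young step) so that the hidden constant is genuinely independent of $n$ — which is automatic here because every step is dimensionally homogeneous in $\tau_n$.
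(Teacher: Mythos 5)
Your proof is correct. The paper states this lemma without proof, treating it as a standard one-dimensional trace inequality, so there is no argument in the text to compare against; both of your routes (scaling to the reference interval $(0,1)$, or the direct identity $\psi(\tn)^2 = \psi(t)^2 + 2\int_t^{\tn}\psi\,\dpt\psi\,\ds$ averaged over $I_n$ followed by Cauchy--Schwarz and Young with weight $\tau_n$) are the standard ways to obtain \eqref{EQN::TRACE-INEQ}, and the powers of $\tau_n$ come out exactly as claimed with a constant independent of $n$.
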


\begin{lemma}[Stability of~$\Pi_r^t$]
\label{LEMMA::STAB-PI-TIME-H1}
Let~$I_n \in \Tt$. For any~$\psi \in H^1(I_n)$, the following bound holds:
\begin{equation*}
\Norm{\dpt \Pi_r^t \psi}{L^2(I_n)} \lesssim \Norm{\dpt \psi}{L^2(I_n)}.
\end{equation*}
\end{lemma}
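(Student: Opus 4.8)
The plan is to exploit two facts: that $\Pi_r^t$ reproduces constants, and that it is a contraction in $L^2(I_n)$. First I would note that, since $\Pp{0}(I_n) \subset \Pp{r}(I_n)$, for the mean value $\bar\psi := \tau_n^{-1} \int_{I_n} \psi \, \dt$ we have $\Pi_r^t \bar\psi = \bar\psi$, hence $\dpt \Pi_r^t \psi = \dpt \Pi_r^t(\psi - \bar\psi)$ on $I_n$. The point of subtracting the mean is that $\psi - \bar\psi$ is controlled by $\dpt \psi$ via a Poincar\'e--Wirtinger inequality with the correct scaling in $\tau_n$.

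Next, since $\Pi_r^t(\psi - \bar\psi) \in \Pp{r}(I_n)$ is a polynomial in time, I would apply the scalar-in-time inverse estimate (the one-variable analogue of~\eqref{EQN::INVERSE-ESTIMATE-TIME}) to obtain $\Norm{\dpt \Pi_r^t(\psi - \bar\psi)}{L^2(I_n)} \lesssim \tau_n^{-1} \Norm{\Pi_r^t(\psi - \bar\psi)}{L^2(I_n)}$, with constant depending only on $r$. Then the $L^2(I_n)$-stability (contractivity) of the orthogonal projection gives $\Norm{\Pi_r^t(\psi - \bar\psi)}{L^2(I_n)} \le \Norm{\psi - \bar\psi}{L^2(I_n)}$, and the Poincar\'e--Wirtinger inequality on the interval $I_n$ yields $\Norm{\psi - \bar\psi}{L^2(I_n)} \lesssim \tau_n \Norm{\dpt \psi}{L^2(I_n)}$, again with a constant independent of $\tau_n$ (both the inverse-estimate and the Poincar\'e constants being scale-invariant, as one checks by mapping $I_n$ to a reference interval). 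Chaining these three bounds, the factors $\tau_n^{-1}$ and $\tau_n$ cancel and we arrive at $\Norm{\dpt \Pi_r^t \psi}{L^2(I_n)} \lesssim \Norm{\dpt \psi}{L^2(I_n)}$, which is the claim.

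There is no genuine obstacle here; the only point requiring a word of care is the independence of all constants from $\tau_n$, which I would dispatch either by the explicit scaling argument to the reference interval $(0,1)$ or simply by invoking the already-established inverse estimate~\eqref{EQN::INVERSE-ESTIMATE-TIME} of Lemma~\ref{LEMMA::INVERSE-ESTIMATE}, whose constant is uniform in $\tau_n$ by construction.
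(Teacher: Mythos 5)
Your proposal is correct and follows essentially the same route as the paper: the mean value $\bar\psi$ is precisely $\Pi_0^t\psi$, so your chain (subtract the constant mode, apply the inverse estimate~\eqref{EQN::INVERSE-ESTIMATE-TIME}, use $L^2$-contractivity of $\Pi_r^t$, and conclude with Poincar\'e--Wirtinger) is exactly the paper's argument with $\Pi_0^t\psi$ written as $\bar\psi$. No gaps; the remark on scale-invariance of the constants is the right point to flag and is handled the same way in the paper.
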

\begin{proof}
Let~$I_n \in \Tt$ and~$\psi \in H^1(I_n)$. Using the polynomial inverse estimate~\eqref{EQN::INVERSE-ESTIMATE-TIME}, the stability properties of~$\Pi_r^t$, and the standard Poincar\'e inequality, we get
\begin{equation*}
\begin{split}
\Norm{\dpt \Pi_r^t \psi}{L^2(I_n)} & = \Norm{\dpt (\Pi_r^t \psi - \Pi_0^t\psi)}{L^2(I_n)} \lesssim \tau_n^{-1} \Norm{\Pi_r^t (\psi - \Pi_0^t \psi)}{L^2(I_n)} \\
& \lesssim \tau_n^{-1}\Norm{(\Id - \Pi_0^t)\psi}{L^2(I_n)} \lesssim \Norm{\dpt \psi}{L^2(I_n)},
\end{split}
\end{equation*}
which completes the proof.
\end{proof}

\begin{lemma}[Scaled Poincar\'e--Friedrichs inequality (see e.g., {\cite[Lemma 2.2]{Chen_Huang:2018}})]
\label{LEMMA::POINCARE-Friedrichs}
Under Assumption~\ref{ASM::MESH-REGULARITY}, for any element~$K \in \Th$ and~$v \in H^1(K)$, it holds
\begin{equation*}
\Norm{(\Id - \PiN{k}{K}) v}{L^2(K)} \lesssim \hK \Norm{\nabla (\Id - \PiN{k}{K}) v}{L^2(K)^d}.
\end{equation*}
\end{lemma}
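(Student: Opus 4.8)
The plan is to reduce the claim to classical Poincar\'e-type inequalities on polytopes, exploiting only the defining boundary constraint of~$\PiN{k}{K}$ together with the mesh regularity in Assumption~\ref{ASM::MESH-REGULARITY}.

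First I would set $w := (\Id - \PiN{k}{K}) v \in H^1(K)$ and record the single structural property that will actually be used: by the very definition of the operator~$\PiN{k}{K}$ (the second equation in its defining local problem), $w$ has vanishing boundary average, that is, $\int_{\partial K} w \dS = 0$. Note that neither the gradient-orthogonality condition nor any polynomial degree larger than~$0$ plays a role here; only the boundary normalisation matters, so the statement is in fact a Poincar\'e--Friedrichs inequality for an arbitrary $H^1(K)$ function with zero boundary average.

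Next I would invoke the scaled Poincar\'e--Wirtinger inequality together with a scaled trace inequality, both of which hold on~$K$ with constants depending only on the regularity parameter~$\varrho$ thanks to Assumption~\ref{A1} (star-shapedness with respect to a ball of radius comparable to~$\hK$): writing $\bar w := |K|^{-1}\int_K w \dx$, one has $\Norm{w - \bar w}{L^2(K)} \lesssim \hK \Norm{\nabla w}{L^2(K)^d}$ and $\Norm{w - \bar w}{L^2(\partial K)}^2 \lesssim \hK^{-1}\Norm{w - \bar w}{L^2(K)}^2 + \hK \Norm{\nabla w}{L^2(K)^d}^2$. Combining these with $\int_{\partial K} w \dS = 0$, which gives $|\partial K|\,|\bar w| = \big|\int_{\partial K}(\bar w - w)\dS\big| \le |\partial K|^{1/2}\Norm{w - \bar w}{L^2(\partial K)}$, and using $|K| \simeq \hK^d$ and $|\partial K| \simeq \hK^{d-1}$ (again a consequence of Assumption~\ref{ASM::MESH-REGULARITY}, which bounds the number of facets), a short chain of estimates yields $\Norm{\bar w}{L^2(K)} \lesssim \hK \Norm{\nabla w}{L^2(K)^d}$, whence $\Norm{w}{L^2(K)} \lesssim \hK \Norm{\nabla w}{L^2(K)^d}$ by the triangle inequality; since $\nabla w = \nabla(\Id - \PiN{k}{K})v$, this is the assertion. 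Alternatively, one may appeal directly to a Poincar\'e--Friedrichs inequality for $H^1(K)$ functions with zero boundary average on star-shaped domains, as in~\cite[Lemma~2.2]{Chen_Huang:2018}, which bypasses the trace step.

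The only genuine obstacle is ensuring that the Poincar\'e and trace constants are uniform over the whole mesh family, i.e. independent of~$K$ and~$\hK$; since polytopal elements need not be convex, a crude diameter bound does not suffice, and this is precisely where Assumption~\ref{A1} enters, through the standard scaling argument that maps~$K$ to a configuration of unit diameter which is star-shaped with respect to a ball of radius at least~$\varrho$, a geometric class on which these constants are known to be uniformly bounded. All the remaining manipulations are elementary rearrangements of the displayed inequalities.
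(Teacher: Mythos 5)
Your proposal is correct. The paper does not actually prove this lemma --- it is stated with a citation to \cite[Lemma 2.2]{Chen_Huang:2018} and used as a known tool --- so there is no in-paper argument to compare against; your write-up supplies the standard proof that the cited reference encapsulates. The key observations are all in place: the only property of $\PiN{k}{K}$ that matters is the boundary normalisation $\int_{\partial K} w \dS = 0$ for $w = (\Id - \PiN{k}{K})v$; the Poincar\'e--Wirtinger and scaled trace inequalities hold with constants depending only on $\varrho$ by the scaling-to-unit-diameter argument under Assumption~\ref{A1}; and the chain $|K|^{1/2}|\partial K|^{-1/2} \simeq \hK^{1/2}$ together with $\Norm{w - \bar w}{L^2(\partial K)} \lesssim \hK^{1/2}\Norm{\nabla w}{L^2(K)^d}$ gives $\Norm{\bar w}{L^2(K)} \lesssim \hK \Norm{\nabla w}{L^2(K)^d}$, whence the claim by the triangle inequality. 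The only step worth making explicit is the lower bound $|\partial K| \gtrsim \hK^{d-1}$, which you correctly attribute to the mesh regularity: star-shapedness with respect to a ball of radius $\varrho\hK$ forces the boundary measure to be at least that of the sphere $\partial \BK$, so the constant is uniform over the mesh family. Everything else is as you say.
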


In the forthcoming convergence analysis, estimates~\eqref{EQN::ESTIMATE-L2-SPACE}, \eqref{EQN::ESTIMATE-PI-NABLA}, and~\eqref{EQN::VE-INTERPOLATION}, as well as the scaled Poincar\'e--Friedrichs inequality in Lemma~\ref{LEMMA::POINCARE-Friedrichs}, are applied pointwise in time, whereas estimate~\eqref{EQN::ESTIMATE-L2-TIME} and the trace inequality~\eqref{EQN::TRACE-INEQ} are applied pointwise in space.

% ----------------------------------------------------
%              CONTINUOUS SUPG FORMULATION
% ----------------------------------------------------
\subsection{Some preliminary assumptions and notations}
Henceforth, we assume that the solution~$u$ to the continuous weak formulation~\eqref{EQN::CONTINUOUS-WEAK-FORMULATION} has the following parabolic regularity:
\begin{equation}\label{EQN::PARABOLIC-REGULARITY}
u \in L^2(0, T; H^2(\Omega)) \cap L^{\infty}(0, T; H_0^1(\Omega)) \cap H^1(0, T; L^2(\Omega)).
\end{equation}
Consequently, a density argument can be used to show that
\begin{equation}
\label{EQN::SUPG-IDENTITY}
(\dpt u - \nu \Delta u + \bbeta \cdot \nabla u, v)_{\QT} = (f, v)_{\QT} \qquad \forall v \in L^2(\QT).
\end{equation}
For convenience, we also define the following operators:
\begin{alignat}{2}
\label{DEF::LK}
\LK u & := \dpt u - \nu \Delta u + \bbeta \cdot \nabla u, & \qquad \tLK v & := \dpt v + \bbeta \cdot \nabla v,
\end{alignat}
and the following bilinear forms and linear functionals:
\begin{alignat*}{3}
m^{\dpt}(u, v) & := (\dpt u, v)_{\QT} + (u, v)_{\SO}, & \qquad a^T(u, v) & := \int_0^T a(u, v) \dt, \\
b^T(u, v) & := \int_0^T b(u, v) \dt, & \qquad s^{\supg}(u, v) & := \sum_{K \in \Th} \sum_{n = 1}^N \lambdaKn (\LK u, \tLK v)_{\Kn}, \\
\ell_f(v) & := (f, v)_{\QT}, & \ell_{f}^{\supg}(v) & := \sum_{K \in \Omegah} \sum_{n = 1}^N (f, \lambdaKn \tLK v)_{\Kn}, \\
\ell_{u_0}(v) & := (u_0, v)_{\Omega}.
\end{alignat*}

\begin{remark}[Parabolic regularity]
The parabolic regularity assumption~\eqref{EQN::PARABOLIC-REGULARITY} holds, for instance, if~$\Omega$ is convex, compatibility of the initial and boundary conditions is satisfied (i.e., $u_0 \in H_0^1(\Omega)$), and~$\bbeta$ does not depend on~$t$ (see~\cite[Thm.~5 in Ch.~7.1]{Evans:2022} for smooth domains, which can be extended to convex domains using the results in~\cite[Ch.~3]{Grisvard:2011})
\eremk
\end{remark}

% ----------------------------------------------------
%              A PRIORI ERROR ESTIMATES
% ----------------------------------------------------
\subsection{\emph{A priori} error estimates\label{SUBSECT::A-PRIORI-ERROR}}
{Let
the solution~$u$ to the {continuous} weak formulation~\eqref{EQN::CONTINUOUS-WEAK-FORMULATION} satisfy the parabolic regularity~\eqref{EQN::PARABOLIC-REGULARITY}},
and let~$\uh \in \Vht$ be the solution to the {SUPG-stabilized time-DG VEM} formulation~\eqref{EQN::FULLY-DISCRETE-SCHEME}. We define~$\uIt := \Pi_r^t \uI$, and the following error functions:
\begin{equation*}
\eIt = u - \uIt, \quad \ePiO = u - \gPiO{k} u, \quad \ePiN = u - \gPiN{k} u, \quad \eh = \uIt - \uh.
\end{equation*}

\begin{proposition}[\emph{A priori} error bounds]
\label{PROP::A-PRIORI-BOUNDS}
Let Assumptions~\ref{ASM::MESH-REGULARITY} and~\ref{TheAssumption} hold, then the following bound holds:
\begin{equation}
\label{EQN::A-PRIORI-BOUND}
\begin{split}
\TNorm{u - \uh} \le \TNorm{\eIt} + \gamma_I \Bigg(& \sup_{\substack{\vh \in \Vht \setminus \{0\}, \\
\TNorm{\vh} = 1}} \chil(\vh) + \sup_{\substack{\vh \in \Vht \setminus \{0\}, \\
\TNorm{\vh} = 1}} \chisupg(\vh)  \\
& + \sup_{\substack{\vh \in \Vht \setminus \{0\}, \\
\TNorm{\vh} = 1}} \chia(\vh) + \sup_{\substack{\vh \in \Vht \setminus \{0\}, \\
\TNorm{\vh} = 1}} \chimb(\vh) \Bigg),
\end{split}
\end{equation}
where
\begin{align*}
\chil(\vh) & :=
\ell_f(\vh) - \ell_f(\PiO{k}{K} \vh) + \ell_{u_0}(\vh) - \ell_{u_0}(\PiO{k}{K} \vh), \\
\chisupg(\vh) & :=
\ell_f^{\supg}(\vh) - \sht(\uIt, \vh),\\
\chia(\vh) & := \nu a^T(u, \vh) - \nu \aht(\uIt, \vh), \\
\chimb(\vh) & := m^{\dpt}(u, \vh) + b^T(u, \vh) - \mht(\uIt, \vh) - \bht(\uIt, \vh).\\
\end{align*}
\end{proposition}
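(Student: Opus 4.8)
The statement is a standard Strang-type / quasi-optimality estimate for an inf-sup stable discretization, so the plan is to combine the triangle inequality with the inf-sup bound of Theorem~\ref{THM::FULLY-DISCRETE-STABILITY} applied to the discrete error $\eh = \uIt - \uh \in \Vht$, and then to convert the resulting supremum of $\Bht(\eh,\cdot)$ into the four consistency functionals by inserting the continuous equation.

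First I would write $\TNorm{u-\uh} \le \TNorm{u-\uIt} + \TNorm{\eh} = \TNorm{\eIt} + \TNorm{\eh}$, so it only remains to bound $\TNorm{\eh}$. Since $\eh \in \Vht$, Theorem~\ref{THM::FULLY-DISCRETE-STABILITY} gives
\[
\TNorm{\eh} \le \gamma_I \sup_{\vh \in \Vht \setminus \{0\}} \frac{\Bht(\eh, \vh)}{\TNorm{\vh}}.
\]
Using bilinearity, $\Bht(\eh,\vh) = \Bht(\uIt,\vh) - \Bht(\uh,\vh)$, and the discrete equation~\eqref{EQN::FULLY-DISCRETE-SCHEME} replaces $\Bht(\uh,\vh)$ by $\ellht(\vh)$. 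Spelling out $\Bht = \mht + \nu\aht + \bht + \sht$ and $\ellht$ from~\eqref{DEF::lht}, I then want to recognize that the exact solution satisfies the same relations with the \emph{continuous} forms: the SUPG identity~\eqref{EQN::SUPG-IDENTITY} (valid under parabolic regularity~\eqref{EQN::PARABOLIC-REGULARITY}) gives $(\LK u, \tLK v)_{\QT} = (f,\tLK v)_{\QT}$, i.e. $s^{\supg}(u,\vh) + $ stabilization pieces match $\ell_f^{\supg}(\vh)$; the weak formulation~\eqref{EQN::CONTINUOUS-WEAK-FORMULATION} integrated in time, together with $u(\cdot,0)=u_0$ and continuity in time of $u$ (so all time-jumps of $u$ vanish and $\mht$ evaluated at $u$ collapses to $m^{\dpt}(u,\cdot)$), gives $m^{\dpt}(u,\vh) + \nu a^T(u,\vh) + b^T(u,\vh) = \ell_f(\vh) + \ell_{u_0}(\vh)$.

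The key step is then purely algebraic bookkeeping: add and subtract the exact-solution quantities so that $\Bht(\uIt,\vh) - \ellht(\vh)$ regroups as
\[
\big[\ell_f(\vh) - \ell_f(\PiO{k}{K}\vh) + \ell_{u_0}(\vh) - \ell_{u_0}(\PiO{k}{K}\vh)\big]
+ \big[\ell_f^{\supg}(\vh) - \sht(\uIt,\vh)\big]
+ \big[\nu a^T(u,\vh) - \nu\aht(\uIt,\vh)\big]
+ \big[m^{\dpt}(u,\vh) + b^T(u,\vh) - \mht(\uIt,\vh) - \bht(\uIt,\vh)\big],
\]
which are exactly $\chil(\vh)+\chisupg(\vh)+\chia(\vh)+\chimb(\vh)$. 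Dividing by $\TNorm{\vh}$, bounding each term by its own supremum over the unit sphere $\{\TNorm{\vh}=1\}$, and combining with the triangle inequality yields~\eqref{EQN::A-PRIORI-BOUND}. The one point needing genuine care — and the main obstacle — is the handling of the $\ellht$ term: note that in~\eqref{DEF::lht} the data $f$ and $u_0$ are tested against $\PiO{k}{K}\vh + \lambdaKn\tLhtK\vh$ and $\PiO{k}{K}\vh(\cdot,0)$ respectively, whereas the continuous identities naturally produce $(f,\vh)$, $(u_0,\vh(\cdot,0))$, and $(f,\lambdaKn\tLK\vh)$; the discrepancies $(f,(\PiO{k}{K}-\Id)\vh)$, $(u_0,(\PiO{k}{K}-\Id)\vh(\cdot,0))$ are precisely what is collected into $\chil$, and the mismatch between $\tLK$ (continuous) and $\tLhtK$ (projected) operators is absorbed into $\chisupg$ via $\ell_f^{\supg}(\vh) = \sum (f,\lambdaKn\tLhtK\vh)_{\Kn}$ — so I must make sure the definitions of $\ell_f^{\supg}$, $\chisupg$ etc.\ are used consistently with~\eqref{DEF::lht}. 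Everything else is bilinearity and the two consistency identities stated above.
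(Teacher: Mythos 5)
Your strategy coincides with the paper's own (one-line) proof: triangle inequality, the inf-sup bound of Theorem~\ref{THM::FULLY-DISCRETE-STABILITY} applied to $\eh=\uIt-\uh$, the Galerkin relation $\Bht(\eh,\vh)=\Bht(\uIt,\vh)-\ellht(\vh)$, the consistency identity $m^{\dpt}(u,\vh)+\nu a^T(u,\vh)+b^T(u,\vh)=\ell_f(\vh)+\ell_{u_0}(\vh)$, and regrouping; this is correct in substance, and you rightly isolate the treatment of $\ellht$ as the only delicate point.

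At exactly that point, however, your bookkeeping deviates from the paper's definitions: you assert $\ell_f^{\supg}(\vh)=\sum_{n,K}(f,\lambdaKn\tLhtK\vh)_{\Kn}$, whereas the paper defines $\ell_f^{\supg}$ with the \emph{unprojected} operator $\tLK\vh=\dpt\vh+\bbeta\cdot\nabla\vh$. With the paper's definition the regrouping does not close exactly: one finds
\begin{equation*}
\Bht(\eh,\vh)=\chil(\vh)-\chisupg(\vh)-\chia(\vh)-\chimb(\vh)+E(\vh),
\qquad
E(\vh):=\sum_{n=1}^N\sum_{K\in\Omegah}\lambdaKn\big(f,\ \dpt(\Id-\PiO{k}{K})\vh+\bbeta\cdot(\Id-\PPiO{k-1}{K})\nabla\vh\big)_{\Kn},
\end{equation*}
so $\chil$ carries the opposite sign to the other three terms and a residual $E$ survives. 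Neither issue is fatal: the sign is irrelevant because each supremum is taken over the symmetric set $\{\TNorm{\vh}=1\}$, and $E(\vh)$ is (after invoking $f=\LK u$ from~\eqref{EQN::SUPG-IDENTITY}) precisely the contribution $\chisupgtwo+\chisupgthree$ that the paper estimates inside $\chisupg$ in Lemma~\ref{LEMMA::ESTIMATE-CHISUPG}, and it vanishes identically in the finite element case where the projectors are identities. But to obtain the Proposition exactly as stated you must either adopt the $\tLhtK$-convention for $\ell_f^{\supg}$ consistently (which makes $E\equiv 0$ and your identity exact, at the price of modifying the first step of Lemma~\ref{LEMMA::ESTIMATE-CHISUPG}) or carry $E$ explicitly as an additional consistency term; your claim that everything ``regroups as exactly'' the four functionals tacitly switches between the two conventions.
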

\begin{proof}
The result follows easily from the triangle inequality, the inf-sup stability estimate in Theorem~\ref{THM::FULLY-DISCRETE-STABILITY}, the definition of the {SUPG-stabilized time-DG VEM} in~\eqref{EQN::FULLY-DISCRETE-SCHEME}, and observing that the continuos solution satisfies % +++++
$$
m^{\dpt}(u, \vh) + \nu a^T(u, \vh) + b^T(u, \vh) = \ell_f(\vh) + \ell_{u_0}(\vh)
\quad \forall \vh \in \Vht \, .
$$
\end{proof}

We now estimate each term on the right-hand side of~\eqref{EQN::A-PRIORI-BOUND}.

%%%%
\begin{lemma}[Estimate of~$\TNorm{\eIt}$]
\label{LEMMA::INTERPOLATION-ERROR-ESTIMATE}
Under Assumption~\ref{ASM::MESH-REGULARITY} on the mesh-regularity, Assumption~\ref{TheAssumption} on the choice of~$\lambdaKn$ and the relation of~$\tau$ and~$h_{\min}$, and Assumption~\ref{ASM::DATA} on the data of the problem, it holds
\begin{align}
\nonumber
\TNorm{\eIt} & \lesssim \sum_{n = 1}^N \sum_{K \in \Th} \bigg( \big(\tau_n^{2q + 1} + \lambdaKn \tau_n^{2q}\big)\Norm{u}{H^{q + 1}(I_n; L^2(K))}^2 + \lambdaKn \hK^{2s + 2} \Norm{\dpt u}{L^2(I_n; H^{s+1}(K))}^2\\
\nonumber
& \quad + \Big(\hK^{2s + 2} + \frac{\hK^{2s + 2}}{\tau_n} + \nu \hK^{2s} + \lambdaKn \betaKn^2 \hK^{2s}\Big) \Norm{u}{L^2(I_n; H^{s+1}(K))}^2 \\
\label{EQN::INTERPOLATION-ESTIMATE}
& \quad + \big(\nu + \lambdaKn \betaKn^2 \big) \tau_n^{2q + 2} \Norm{\nabla u}{H^{q + 1}(I_n; L^2(K)^d)}^2\bigg).
\end{align}
\end{lemma}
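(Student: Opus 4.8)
\emph{Plan of proof.} The idea is to use the splitting $\eIt = (\Id - \Pi_r^t)u + \Pi_r^t(u - \uI)$ and to estimate separately each of the four terms defining $\TNorm{\eIt}^2$ in~\eqref{EQN::ENERGY-NORM}. Throughout I would rely on three elementary facts: the spatial gradient commutes with $\Pi_r^t$ (which acts only in time); the VE interpolation commutes with $\dpt$, so that $\dpt(u - \uI) = \dpt u - (\dpt u)_{\mathcal{I}}$ and Lemma~\ref{LEMMA::VE-APPROX} applies to $\dpt u$, whose spatial regularity is granted by Assumption~\ref{ASM::DATA}; and the operators $\PiO{k}{K}$, $\PPiO{k-1}{K}$, $\Pi_r^t$, as well as $\nabla(\Id - \PiN{k}{K})$, are contractions in the relevant $L^2$-norms. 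Note carefully that $\dpt$ does \emph{not} commute with $\Pi_r^t$: whenever a time derivative falls on $\Pi_r^t(u - \uI)$ I will pass through the $H^1$-in-time stability of $\Pi_r^t$ (Lemma~\ref{LEMMA::STAB-PI-TIME-H1}).

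For the two ``bulk'' terms $\Norm{\eIt}{L^2(\QT)}^2$ and $\nu\Norm{\nabla\eIt}{L^2(\QT)^d}^2$ (in the latter using $\nabla\eIt = (\Id - \Pi_r^t)\nabla u + \Pi_r^t\nabla(u - \uI)$) I would apply the triangle inequality, bound the $(\Id - \Pi_r^t)$-part by the time-projection estimate~\eqref{EQN::ESTIMATE-L2-TIME} with $m = 0$, $\ell = q + 1$, and the $\Pi_r^t(\cdot)$-part by contractivity of $\Pi_r^t$ followed by the VE interpolation estimate~\eqref{EQN::VE-INTERPOLATION} applied pointwise in time; this yields the contributions $\tau_n^{2q+2}\Norm{u}{H^{q+1}(I_n;L^2(K))}^2$ (absorbed into the $\tau_n^{2q+1}$-term since $\tau_n\lesssim T$), $\hK^{2s+2}\Norm{u}{L^2(I_n;H^{s+1}(K))}^2$, $\nu\tau_n^{2q+2}\Norm{\nabla u}{H^{q+1}(I_n;L^2(K)^d)}^2$, and $\nu\hK^{2s}\Norm{u}{L^2(I_n;H^{s+1}(K))}^2$.

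The jump seminorm $\SemiNorm{\eIt}{\sfJ}^2$ is slightly more delicate. Since $u\in C^0([0,T];L^2(\Omega))$, one has $\jump{\eIt}_n = -\jump{\uIt}_n$, and I would bound each $\Norm{\jump{\eIt}_n}{L^2(\Omega)}$ --- and likewise $\Norm{\eIt}{L^2(\SO)}$ and $\Norm{\eIt}{L^2(\ST)}$ --- by the \emph{one-sided} time traces $\Norm{\eIt(\cdot,\tn^-)}{L^2(\Omega)} + \Norm{\eIt(\cdot,\tn^+)}{L^2(\Omega)}$ on the two adjacent slabs, so that no trace of $u$ itself is ever invoked. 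On a single interval, the endpoint trace of $(\Id - \Pi_r^t)u$ is controlled by the one-dimensional trace inequality (Lemma~\ref{LEMMA::TRACE-INEQ}) together with~\eqref{EQN::ESTIMATE-L2-TIME} for $m = 0$ and $m = 1$, giving $\tau_n^{2q+1}\Norm{u}{H^{q+1}(I_n;L^2(\Omega))}^2$; the endpoint trace of the piecewise-polynomial-in-time function $\Pi_r^t(u - \uI)$ is controlled by the elementary polynomial inverse estimate $|p(\tn)|^2\lesssim \tau_n^{-1}\Norm{p}{L^2(I_n)}^2$ followed by~\eqref{EQN::VE-INTERPOLATION}, giving $\tau_n^{-1}\hK^{2s+2}\Norm{u}{L^2(I_n;H^{s+1}(K))}^2$. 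Summing over $n$, each interval entering at most twice, produces the $\tau_n^{2q+1}\Norm{u}{H^{q+1}(I_n;L^2(K))}^2$ and $\hK^{2s+2}\tau_n^{-1}\Norm{u}{L^2(I_n;H^{s+1}(K))}^2$ terms.

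For the SUPG seminorm I would expand $\tLhtK\eIt = \PiO{k}{K}\dpt\eIt + \bbeta\cdot\PPiO{k-1}{K}\nabla\eIt$; the first summand is bounded by $\Norm{\dpt\eIt}{L^2(\Kn)}$, and, writing $\dpt\eIt = \dpt(\Id - \Pi_r^t)u + \dpt\Pi_r^t(u - \uI)$, its two pieces give $\lambdaKn\tau_n^{2q}\Norm{u}{H^{q+1}(I_n;L^2(K))}^2$ (via~\eqref{EQN::ESTIMATE-L2-TIME} with $m=1$) and $\lambdaKn\hK^{2s+2}\Norm{\dpt u}{L^2(I_n;H^{s+1}(K))}^2$ (via Lemma~\ref{LEMMA::STAB-PI-TIME-H1} and then~\eqref{EQN::VE-INTERPOLATION} applied to $\dpt u$). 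The convective summand and the stabilizer contribution $\lambdaKn\betaKn^2\Norm{\nabla(\Id - \PiN{k}{K})\eIt}{L^2(\Kn)^d}^2$ are both dominated by $\lambdaKn\betaKn^2\Norm{\nabla\eIt}{L^2(\Kn)^d}^2$ (using $\Norm{\bbeta}{L^\infty(\Kn)^d}\le\betaQT\lesssim\betaKn$ and contractivity of $\PPiO{k-1}{K}$ and $\nabla(\Id - \PiN{k}{K})$), which is the quantity already estimated for the $\nu$-term above and thus yields $\lambdaKn\betaKn^2\big(\tau_n^{2q+2}\Norm{\nabla u}{H^{q+1}(I_n;L^2(K)^d)}^2 + \hK^{2s}\Norm{u}{L^2(I_n;H^{s+1}(K))}^2\big)$. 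Collecting all contributions and summing over $K\in\Th$ and $n$ gives~\eqref{EQN::INTERPOLATION-ESTIMATE}. The whole argument is routine bookkeeping once the splitting is fixed; the only points requiring genuine care are the two flagged above --- treating the time jumps via one-sided traces (not via a trace of $u$), and never assuming that $\dpt$ commutes with the $L^2$-in-time projector $\Pi_r^t$.
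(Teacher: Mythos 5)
Your proposal is correct and follows essentially the same route as the paper's proof: the same splitting $\eIt = (\Id - \Pi_r^t)u + \Pi_r^t(u - \uI)$, the same term-by-term treatment of the four contributions to $\TNorm{\eIt}^2$, the trace inequality of Lemma~\ref{LEMMA::TRACE-INEQ} for the jump seminorm (the paper applies it to $\eIt$ directly and then splits, while you split first and use an endpoint inverse estimate for the polynomial piece --- an equivalent bookkeeping), and the $H^1(I_n)$-stability of $\Pi_r^t$ from Lemma~\ref{LEMMA::STAB-PI-TIME-H1} precisely where the time derivative hits $\Pi_r^t(u-\uI)$. The two points you flag as requiring care are exactly the ones the paper's argument hinges on, so nothing is missing.
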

\begin{proof}
By the definition of the energy norm~$\TNorm{\cdot}$ in~\eqref{EQN::ENERGY-NORM}, we have
\begin{equation}
\label{EQN::SPLIT-INTERPOLATION-ERROR}
\TNorm{\eIt}^2 = \Norm{\eIt}{L^2(\QT)}^2 + \SemiNorm{\eIt}{\sfJ}^2 + \nu \Norm{\nabla \eIt}{L^2(\QT)^d}^2 + \SemiNorm{\eIt}{\supg}^2.
\end{equation}
We treat each term on the right-hand side of~\eqref{EQN::SPLIT-INTERPOLATION-ERROR} separately.

%%%%
\paragraph{$\bullet$~Estimate of~$\Norm{\eIt}{L^2(\QT)}$.} Using the triangle inequality, the stability of~$\Pi_r^t$ and its approximation properties in Lemma~\ref{LEMMA::BRAMBLE-HILBERT}, and the VE interpolation estimate in Lemma~\ref{LEMMA::VE-APPROX}, for all~$K \in \Omegah$ and~$n \in \{1, \ldots, N\}$, we get
\begin{equation*}
\begin{split}
\Norm{\eIt}{L^2(\Kn)}^2 & \leq 2\Norm{u - \Pi_r^t u}{L^2(\Kn)}^2 + 2\Norm{\Pi_r^t(u - \uI)}{L^2(\Kn)}^2 \\
& \lesssim \tau_n^{2q + 2} \Norm{u}{H^{q + 1}(I_n; L^2(K))}^2 + \hK^{2s + 2} \Norm{u}{L^2(I_n; H^{s + 1}(K))}^2.
\end{split}
\end{equation*}

\paragraph{$\bullet$~Estimate of~$\SemiNorm{\eIt}{\sfJ}$.} Using the trace inequality in Lemma~\ref{LEMMA::TRACE-INEQ}, the stability of~$\Pi_r^t$ and its approximation properties in Lemma~\ref{LEMMA::BRAMBLE-HILBERT}, the inverse estimate in~\eqref{EQN::INVERSE-ESTIMATE-TIME}, and the VE interpolation estimate in Lemma~\ref{LEMMA::VE-APPROX}, we obtain
\begin{align*}
\nonumber
\SemiNorm{\eIt}{\sfJ}^2
& \lesssim \sum_{n = 1}^N \Big( \tau_n^{-1} \Norm{\eIt}{L^2(I_n; L^2(\Omega))}^2 + \tau_n \Norm{\dpt \eIt}{L^2(I_n; L^2(\Omega))}^2\Big) \\
\nonumber
& \lesssim \sum_{n = 1}^{N} \Big(\tau_n^{-1} \big(\Norm{u - \Pi_r^t u}{L^2(I_n; L^2(\Omega))}^2 + \Norm{\Pi_r^t(u - \uI)}{L^2(I_n; L^2(\Omega))}^2 \big)  \\
\nonumber
& \qquad + \tau_n \big(\Norm{\dpt(u - \Pi_r^t u)}{L^2(I_n; L^2(\Omega))}^2 + \Norm{\dpt(\Pi_r^t(u - \uI))}{L^2(I_n; L^2(\Omega))}^2 \big)\Big) \\
& \lesssim \sum_{n = 1}^N \Big(\tau_n^{2q + 1} \Norm{u}{H^{q + 1}(I_n; L^2(\Omega))}^2 + \sum_{K \in \Omegah} \frac{\hK^{2s + 2}}{\tau_n} \Norm{u}{L^2(I_n; H^{s+1}(K))}^2
\Big).
\end{align*}

%%%%
\paragraph{$\bullet$~Estimate of~$\nu \Norm{\nabla \eIt}{L^2(\QT)^d}$.} Using the triangle inequality, the stability of~$\Pi_r^t$ and its approximation properties in Lemma~\ref{LEMMA::BRAMBLE-HILBERT}, the commutativity of the spatial gradient operator~$\nabla$ and the~$L^2(\Tt)$-orthogonal projection~$\Pi_r^t$, and the VE interpolation estimate in Lemma~\ref{LEMMA::VE-APPROX}, for all~$K \in \Omegah$ and~$n \in \{1, \ldots, N\}$, we have
\begin{align}
\nonumber
\nu \Norm{\nabla \eIt}{L^2(\Kn)^d}^2 & \le 2\nu \Norm{\nabla u - \Pi_r^t \nabla u}{L^2(\Kn)^d}^2 + 2\nu \Norm{\Pi_r^t \nabla (u - \uI)}{L^2(\Kn)^d}^2 \\
\nonumber
& \lesssim \nu \tau_n^{2q + 2}\Norm{\nabla u}{H^{q + 1}(I_n; L^2(K)^d)}^2 + \nu \hK^{2s} \Norm{u}{L^2(I_n; H^{s+1}(K))}^2.
\end{align}

%%%
\paragraph{$\bullet$~Estimate of~$\SemiNorm{\eIt}{\supg}$.}
We bound this term using the triangle inequality, the commutativity of the first-order time derivative operator~$\dpt$ and the~$L^2(K)$-orthogonal projection operator~$\PiO{k}{K}$, the stability properties of~$\PiO{k}{K}$, $\PPiO{k-1}{K}$, $\PiN{k}{K}$, and~$\Pi_r^t$, the stability bound in Lemma~\ref{LEMMA::STAB-PI-TIME-H1}, the estimates for~$\Pi_r^t$ in Lemma~\ref{LEMMA::BRAMBLE-HILBERT}, and the VE interpolation estimate in Lemma~\ref{LEMMA::VE-APPROX}, as follows:
\begin{align*}
\nonumber
\SemiNorm{\eIt}{\Knsupg}^2 & = \lambdaKn \Norm{\dpt \PiO{k}{K} \eIt + \bbeta \cdot \PPiO{k-1}{K} \nabla \eIt}{L^2(\Kn)}^2 + \lambdaKn \betaKn^2 \Norm{\nabla (\Id - \PiN{k}{K}) \eIt}{L^2(\Kn)^d}^2 \\
\nonumber
& \lesssim \lambdaKn \Norm{\dpt \eIt}{L^2(\Kn)}^2 + \lambdaKn \betaKn^2 \Norm{\nabla \eIt}{L^2(\Kn)^d}^2 \\
\nonumber
& \lesssim \lambdaKn \big(\Norm{\dpt (\Id - \Pi_r^t)u}{L^2(\Kn)}^2 +  \Norm{\dpt \Pi_r^t (u - \uI)}{L^2(\Kn)}^2 \big) \\
\nonumber
& \quad + \lambdaKn \betaKn^2 \big(\Norm{(\Id - \Pi_r^t) \nabla u}{L^2(\Kn)^d}^2 + \Norm{\Pi_r^t \nabla (u - \uI)}{L^2(\Kn)^d}^2 \big) \\
\nonumber
& \lesssim \lambdaKn \big(\Norm{\dpt (\Id - \Pi_r^t)u}{L^2(\Kn)}^2 +  \Norm{\dpt (u - \uI)}{L^2(\Kn)}^2 \big) \\
\nonumber
& \quad + \lambdaKn \betaKn^2 \big(\Norm{(\Id - \Pi_r^t) \nabla u}{L^2(\Kn)^d}^2 + \Norm{ \nabla (u - \uI)}{L^2(\Kn)^d}^2 \big) \\
\nonumber
& \lesssim \lambdaKn \big(\tau_n^{2q} \Norm{u}{H^{q + 1}(I_n; L^2(K))}^2 + \hK^{2s + 2} \Norm{\dpt u}{L^2(I_n; H^{s + 1}(K))}^2 \big) \\
& \quad + \lambdaKn \betaKn^2 \big(\tau_n^{2q + 2} \Norm{\nabla u}{H^{q + 1}(I_n; L^2(K)^d)}^2 + \hK^{2s} \Norm{u}{L^2(I_n; H^{s+1}(K))}^2 \big).
\end{align*}

\paragraph{Conclusion.} Estimate~\eqref{EQN::INTERPOLATION-ESTIMATE} follows combining~\eqref{EQN::SPLIT-INTERPOLATION-ERROR} with the above four estimates.
\end{proof}

%%%
\begin{lemma}[Estimate of~$\chil(\vh)$]
\label{LEMMA::ESTIMATE-CHIL}
Under Assumption~\ref{ASM::MESH-REGULARITY} on the mesh-regularity, Assumption~\ref{TheAssumption} on the choice of~$\lambdaKn$ and the relation of~$\tau$ and~$h_{\min}$, and Assumption~\ref{ASM::DATA} on the data of the problem, for all~$\vh \in \Vht$ with~$\TNorm{\vh} = 1$, the term~$\chil(\vh)$ can be bounded as follows:
\begin{equation*}
\begin{split}
\chil(\vh) & \lesssim \bigg[\sum_{n = 1}^N \sum_{K \in \Omegah}
\min\Big\{\frac{1}{\lambdaKn \betaKn^2}, \frac{1}{\nu} \Big\} \hK^{2s + 4}
\Norm{f}{L^2(I_n; H^{s+1}(K))}^2
+ \sum_{K \in \Omegah} \hK^{2s
+ 2} \SemiNorm{u_0}{H^{s+1}(K)}^2\bigg]^{\frac12}.
\end{split}
\end{equation*}
\end{lemma}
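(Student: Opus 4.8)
The plan is to unfold the definition of $\chil(\vh)$, reading the abusive notation $\ell_f(\PiO{k}{K}\vh)$ and $\ell_{u_0}(\PiO{k}{K}\vh)$ as the element-wise sums $\sum_{n=1}^N\sum_{K\in\Omegah}(f,\PiO{k}{K}\vh)_{\Kn}$ and $\sum_{K\in\Omegah}(u_0,\PiO{k}{K}\vh(\cdot,0))_K$, so that
\[
\chil(\vh)=\sum_{n=1}^N\sum_{K\in\Omegah}\big(f,(\Id-\PiO{k}{K})\vh\big)_{\Kn}+\sum_{K\in\Omegah}\big(u_0,(\Id-\PiO{k}{K})\vh(\cdot,0)\big)_K,
\]
and then to bound the volume ($f$) term and the initial ($u_0$) term separately, in both cases exploiting the $L^2(K)$-orthogonality of $\PiO{k}{K}$ together with the approximation estimates of Lemma~\ref{LEMMA::BRAMBLE-HILBERT}.

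For the $f$-term I would first insert orthogonality pointwise in time: since $\PiO{k}{K}f\in\Pp{k}(K)$ is $L^2(K)$-orthogonal to $(\Id-\PiO{k}{K})\vh$, one has $(f,(\Id-\PiO{k}{K})\vh)_{\Kn}=((\Id-\PiO{k}{K})f,(\Id-\PiO{k}{K})\vh)_{\Kn}$, and Cauchy--Schwarz gives the product $\Norm{(\Id-\PiO{k}{K})f}{L^2(\Kn)}\,\Norm{(\Id-\PiO{k}{K})\vh}{L^2(\Kn)}$. The first factor is $\lesssim \hK^{s+1}\Norm{f}{L^2(I_n;H^{s+1}(K))}$ by~\eqref{EQN::ESTIMATE-L2-SPACE}. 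The key point is to route the second factor through a \emph{gradient} projection error rather than through $\Norm{\vh}{L^2(\Kn)}$: since $\PiO{k}{K}$ realizes the best $L^2(K)$-approximation in $\Pp{k}(K)$ we have $\Norm{(\Id-\PiO{k}{K})\vh}{L^2(K)}\le\Norm{(\Id-\PiN{k}{K})\vh}{L^2(K)}$, and the scaled Poincar\'e--Friedrichs inequality (Lemma~\ref{LEMMA::POINCARE-Friedrichs}) then yields $\Norm{(\Id-\PiO{k}{K})\vh}{L^2(\Kn)}\lesssim \hK\Norm{\nabla(\Id-\PiN{k}{K})\vh}{L^2(\Kn)^d}$. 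Altogether $(f,(\Id-\PiO{k}{K})\vh)_{\Kn}\lesssim \hK^{s+2}\Norm{f}{L^2(I_n;H^{s+1}(K))}\,\Norm{\nabla(\Id-\PiN{k}{K})\vh}{L^2(\Kn)^d}$.

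The $\min$ in the statement then appears from a weighted Cauchy--Schwarz over the index set $(n,K)$ with weight $c_{\Kn}:=\max\{\nu,\lambdaKn\betaKn^2\}$: the data side becomes $\sum_{n,K}\hK^{2s+4}\,c_{\Kn}^{-1}\Norm{f}{L^2(I_n;H^{s+1}(K))}^2=\sum_{n,K}\hK^{2s+4}\min\{1/\nu,1/(\lambdaKn\betaKn^2)\}\Norm{f}{L^2(I_n;H^{s+1}(K))}^2$, while the test side is $\sum_{n,K}c_{\Kn}\Norm{\nabla(\Id-\PiN{k}{K})\vh}{L^2(\Kn)^d}^2$, which I bound using $\max\{a,b\}\le a+b$, the fact that $\PiN{k}{K}$ is a contraction in the $\nabla$-seminorm, and the definition~\eqref{EQN::LOCAL-SUPG-NORM}, obtaining $\le \nu\Norm{\nabla\vh}{L^2(\QT)^d}^2+\SemiNorm{\vh}{\supg}^2\le 2\TNorm{\vh}^2$. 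For the $u_0$-term I would again use orthogonality to get $((\Id-\PiO{k}{K})u_0,(\Id-\PiO{k}{K})\vh(\cdot,0))_K$, estimate the first factor by $\hK^{s+1}\SemiNorm{u_0}{H^{s+1}(K)}$ via~\eqref{EQN::ESTIMATE-L2-SPACE}, bound the second simply by $\Norm{\vh(\cdot,0)}{L^2(K)}$ ($\PiO{k}{K}$ being an $L^2$-contraction), and then close with Cauchy--Schwarz over $K$ together with $\Norm{\vh(\cdot,0)}{L^2(\Omega)}^2=\Norm{\vh}{L^2(\SO)}^2\le 2\SemiNorm{\vh}{\sfJ}^2\le 2\TNorm{\vh}^2$. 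Adding the two contributions, using $\TNorm{\vh}=1$ and $\sqrt a+\sqrt b\le\sqrt{2(a+b)}$, gives the claimed bound.

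The only genuinely non-routine ingredient is the combination underlying the $\min$: choosing the weight $c_{\Kn}=\max\{\nu,\lambdaKn\betaKn^2\}$ and, crucially, estimating $\Norm{(\Id-\PiO{k}{K})\vh}{L^2}$ by the \emph{gradient} projection error $\hK\Norm{\nabla(\Id-\PiN{k}{K})\vh}{L^2}$, which is what simultaneously produces the optimal order $\hK^{2s+4}$ for the $f$-term and allows that term to be absorbed into either the diffusion contribution $\nu\Norm{\nabla\vh}{L^2(\QT)^d}^2$ or the SUPG contribution of $\TNorm{\vh}^2$. Everything else reduces to Cauchy--Schwarz and the already recalled approximation estimates.
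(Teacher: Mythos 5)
Your proposal is correct and follows essentially the same route as the paper: orthogonality of $\PiO{k}{K}$, the approximation estimate \eqref{EQN::ESTIMATE-L2-SPACE} for $f$ and $u_0$, the scaled Poincar\'e--Friedrichs inequality to convert the projection error of $\vh$ into $\hK\Norm{\nabla(\Id-\PiN{k}{K})\vh}{L^2(\Kn)^d}$, and a min-weighted Cauchy--Schwarz absorbing that factor into $\nu\Norm{\nabla\vh}{L^2(\QT)^d}^2$ or $\SemiNorm{\vh}{\supg}^2$. The only cosmetic difference is that you pass through $\Norm{(\Id-\PiO{k}{K})\vh}{L^2(K)}\le\Norm{(\Id-\PiN{k}{K})\vh}{L^2(K)}$ via best approximation, whereas the paper pairs $(\Id-\PiO{k}{K})f$ directly with $(\Id-\PiN{k}{K})\vh$ using orthogonality; both are valid and equivalent.
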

\begin{proof}
Using the orthogonality properties of~$\PiO{k}{K}$ and~$\PPiO{k-1}{K}$, we have
\begin{align}
\nonumber
\chil(\vh)
& =
\ell_f(\vh) - \ell_f(\PiO{k}{K} \vh) + \ell_{u_0}(\vh) - \ell_{u_0}(\PiO{k}{K} \vh) \\
\nonumber
& = \sum_{n = 1}^N \sum_{K \in \Omegah} \big((\Id - \PiO{k}{K})f, (\Id - \PiN{k}{K})\vh)_{\Kn}
+ \sum_{K \in \Omegah} ( {(\Id - \PiO{k}{K}) u_0}, \vh(\cdot, 0))_{K} \\
\label{EQN::IDENTITY-CHIL}
& =: \sum_{n = 1}^N \sum_{K \in \Omegah}
 \chiFone^{\Kn}
 + \sum_{K \in \Omegah} \chiUo^K.
\end{align}

Using the scaled Poincar\'e--Friedrichs inequality in Lemma~\ref{LEMMA::POINCARE-Friedrichs}, the approximation properties of~$\PiO{k}{K}$ from Lemma~\ref{LEMMA::BRAMBLE-HILBERT}, and the definition of~$\SemiNorm{\cdot}{\Knsupg}$ in~\eqref{EQN::LOCAL-SUPG-NORM}, for all~$K \in \Omegah$ and~$n \in \{1, \ldots, N\}$, we get
\begin{align*}
\chiFone^{\Kn} & = \big((\Id - \PiO{k}{K}) f, (\Id - \PiN{k}{K})\vh\big)_{\Kn}
\lesssim \hK \Norm{(\Id - \PiO{k}{K}) f}{L^2(\Kn)} \Norm{\nabla (\Id - \PiN{k}{K}) \vh}{L^2(\Kn)^d} \\
& \lesssim \hK^{s+2} \Norm{f}{L^2(I_n; H^{s+1}(K))} \min\bigg\{\frac{\SemiNorm{\vh}{\Knsupg}}{\lambdaKn^{1/2} \betaKn}, \Norm{\nabla \vh}{L^2(\Kn)^d}\bigg\},
\end{align*}
which, combined with the Cauchy--Schwarz inequality and the definition of the energy norm~$\TNorm{\cdot}$ in~\eqref{EQN::ENERGY-NORM}, implies
\begin{equation}
\label{EQN::CHI-F1}
\sum_{n = 1}^N \sum_{K \in \Omegah} \chiFone^{\Kn} \lesssim
\Big(\sum_{n = 1}^N \sum_{K \in \Omegah} \hK^{2s+4} \min\Big\{\frac{1}{\lambdaKn \betaKn^2}, \frac{1}{\nu}\Big\}\Norm{f}{L^2(I_n; H^{s+1}(K))}^2 \Big)^{\frac12} \TNorm{\vh}.
\end{equation}

Finally, using estimate~\eqref{EQN::ESTIMATE-L2-SPACE} for~$\PiO{k}{K}$ and the definition of the jump functional~$\SemiNorm{\cdot}{\sfJ}$ in~\eqref{EQN::JUMP-FUNCTIONAL}, we obtain
\begin{equation}
\label{EQN::CHI-U0}
\sum_{K \in \Omegah} \chiUo^{K} \lesssim  \Big(\sum_{K \in \Omegah} \hK^{2s + 2} \SemiNorm{u_0}{H^{s + 1}(K)}^2 \Big)^{\frac12} \Norm{\vh}{L^2(\SO)} \lesssim \Big( \sum_{K \in \Omegah} \hK^{2s + 2} \SemiNorm{u_0}{H^{s + 1}(K)}^2 \Big)^{\frac12} \SemiNorm{\vh}{\sfJ}.
\end{equation}

The desired result follows combining identity~\eqref{EQN::IDENTITY-CHIL} with estimates~\eqref{EQN::CHI-F1}
and~\eqref{EQN::CHI-U0}.
\end{proof}

%%%
\begin{lemma}[Estimate of~$\chisupg(\vh)$]
\label{LEMMA::ESTIMATE-CHISUPG}
Under Assumption~\ref{ASM::MESH-REGULARITY} on the mesh-regularity, Assumption~\ref{TheAssumption} on the choice of~$\lambdaKn$ and the relation of~$\tau$ and~$h_{\min}$, and Assumption~\ref{ASM::DATA} on the data of the problem, for all~$\vh \in \Vht$ with~$\TNorm{\vh} = 1$, the term~$\chisupg(\vh)$ can be bounded as follows:
\begin{align*}
\chisupg(\vh) & \lesssim \bigg[\sum_{n = 1}^N \sum_{K \in \Omegah} \bigg(\frac{\lambdaKn \tau_n^{2q}}{\betaKn^2} \Norm{u}{H^{q+1}(I_n; L^2(K))}^2 + \frac{\lambdaKn \hK^{2s+2}}{\betaKn^2} \Norm{\dpt u}{L^2(I_n; H^{s+1}(K))}^2 \\
\nonumber
& \quad + \Big(
\frac{\nu^2 \lambdaKn \hK^{2s-2}}{\betaKn^2} + \lambdaKn(1 + \betaKn^2) \hK^{2s}\Big) \Norm{u}{L^2(I_n; H^{s+1}(K))}^2 \\
& \quad + \Big(\frac{\nu^2 \lambdaKn \tau_n^{2q+2}}{\hK^2 \betaKn^2} + \lambdaKn \tau_n^{2q+2}\Big) \Norm{\nabla u}{H^{q+1}(I_n; L^2(K)^d)}^2 + \frac{\lambdaKn \hK^{2s + 4}}{\tau_n^2 \betaKn^2} \Norm{f}{L^2(I_n; H^{s+1}(K))}^2 \\
& \quad +  \lambdaKn^2 \hK^{2s} \min\Big\{\frac{1}{\lambdaKn \betaKn^2}, \frac{1}{\nu}\Big\}\Norm{\bbeta}{L^{\infty}(I_n; W^{s, \infty}(K)^d)}^2 \Norm{f}{L^2(I_n; H^{s}(K))}^2 \bigg)\bigg]^{\frac12}.
\end{align*}
\end{lemma}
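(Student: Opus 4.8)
The plan is to start from the strong-form identity~\eqref{EQN::SUPG-IDENTITY}, which lets us replace $f$ by $\LK u$ inside $\ell_f^{\supg}$, so that $\ell_f^{\supg}(\vh)=\sum_{n=1}^N\sum_{K\in\Th}\lambdaKn(\LK u,\tLK\vh)_{\Kn}$. Adding and subtracting the projected operators then splits $\chisupg(\vh)=\ell_f^{\supg}(\vh)-\sht(\uIt,\vh)$ into three pieces: a \emph{test-consistency} term $T_A:=\sum_{n,K}\lambdaKn(\LK u,(\tLK-\tLhtK)\vh)_{\Kn}$, a \emph{trial-residual} term $T_B:=\sum_{n,K}\lambdaKn(\LK u-\LhtK\uIt,\tLhtK\vh)_{\Kn}$, and the \emph{symmetric stabilization} term $T_C:=\sum_{n,K}\betaKn^2\lambdaKn\int_{I_n}\saK((\Id-\PiN{k}{K})\uIt,(\Id-\PiN{k}{K})\vh)\,\dt$ (here $\sum_{n,K}$ abbreviates $\sum_{n=1}^N\sum_{K\in\Th}$). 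The recurring mechanism for all three is a local Cauchy--Schwarz inequality, absorbing the factor carrying $\vh$ by the appropriate component of the energy norm (pulling $\lambdaKn^{1/2}$ out of a pairing with $\tLhtK\vh$ via $\lambdaKn\Norm{\tLhtK\vh}{L^2(\Kn)}^2\le\SemiNorm{\vh}{\Knsupg}^2$, or $(\lambdaKn\betaKn^2)^{1/2}$ out of a pairing with $\nabla(\Id-\PiN{k}{K})\vh$ via~\eqref{EQN::LOCAL-SUPG-NORM}, or using $\nu^{1/2}\Norm{\nabla\vh}{L^2(\QT)^d}\le\TNorm{\vh}$), then bounding the remaining $u$-dependent factor by the approximation estimates of Lemmas~\ref{LEMMA::BRAMBLE-HILBERT} and~\ref{LEMMA::VE-APPROX}, and finishing with a Cauchy--Schwarz over the pairs $(n,K)$ together with $\TNorm{\vh}=1$.

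For $T_C$ I would use~\eqref{EQN::STAB-saK} for $\saK$, Cauchy--Schwarz pointwise in time, and~\eqref{EQN::LOCAL-SUPG-NORM} to reduce it to $\big(\sum_{n,K}\lambdaKn\betaKn^2\Norm{\nabla(\Id-\PiN{k}{K})\uIt}{L^2(\Kn)^d}^2\big)^{1/2}$; writing $\uIt=u-\eIt$ and invoking~\eqref{EQN::ESTIMATE-PI-NABLA} together with the interpolation bound for $\Norm{\nabla\eIt}{L^2(\Kn)^d}$ already established inside the proof of Lemma~\ref{LEMMA::INTERPOLATION-ERROR-ESTIMATE} produces the $\lambdaKn(1+\betaKn^2)\hK^{2s}$ and $\lambdaKn\tau_n^{2q+2}$ contributions. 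For $T_B$ I would pull out $\lambdaKn^{1/2}$ and reduce to $\sum_{n,K}\lambdaKn\Norm{\LK u-\LhtK\uIt}{L^2(\Kn)}^2$; expanding $\LK u-\LhtK\uIt=\dpt(u-\PiO{k}{K}\uIt)-\nu(\Delta u-\div\PPiO{k-1}{K}\nabla\uIt)+\bbeta\cdot(\nabla u-\PPiO{k-1}{K}\nabla\uIt)$, the time-derivative part is handled by $\dpt\PiO{k}{K}=\PiO{k}{K}\dpt$, Lemma~\ref{LEMMA::STAB-PI-TIME-H1}, and Lemma~\ref{LEMMA::BRAMBLE-HILBERT} (yielding the $\lambdaKn\tau_n^{2q}\Norm{u}{H^{q+1}(I_n;L^2(K))}^2$ and $\lambdaKn\hK^{2s+2}\Norm{\dpt u}{L^2(I_n;H^{s+1}(K))}^2$ terms), and the advective part analogously, producing the remaining $\lambdaKn\hK^{2s}$ and $\lambdaKn\tau_n^{2q+2}$ contributions.

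The step I expect to be the main obstacle is the diffusion contribution $\nu\,\div\PPiO{k-1}{K}\nabla\uIt$ in $\LhtK\uIt$: since $\PPiO{k-1}{K}$ is only an $L^2$-projection, $\div\PPiO{k-1}{K}\nabla\uIt$ is not directly controlled by the interpolation estimates for $u$, and a crude inverse estimate loses too many powers of $\hK$. The remedy is to insert the elliptic projection $\PiN{k}{K}u$: since $\nabla\PiN{k}{K}u\in\Pp{k-1}(K)^d$ one has $\PPiO{k-1}{K}\nabla\PiN{k}{K}u=\nabla\PiN{k}{K}u$, hence $\nabla\PiN{k}{K}u-\PPiO{k-1}{K}\nabla u=\PPiO{k-1}{K}\nabla(\PiN{k}{K}-\Id)u$; one then bounds $\Norm{\Delta u-\div\PPiO{k-1}{K}\nabla u}{L^2(K)}$ by the $H^2$-estimate~\eqref{EQN::ESTIMATE-PI-NABLA} for $(\Id-\PiN{k}{K})u$ plus the polynomial inverse estimate~\eqref{EQN::INVERSE-ESTIMATE-DIVERGENCE} applied to the polynomial difference $\nabla\PiN{k}{K}u-\PPiO{k-1}{K}\nabla u$, and absorbs the surplus power of $\nu$ against $\lambdaKn$ via $\nu\lambdaKn\le\zeta\hK^2\Cinv^{-2}$ from~\eqref{EQN::CHOICE-LAMBDA}; the $\uIt$--$u$ discrepancy is handled by $\nu\Norm{\div\PPiO{k-1}{K}\nabla\eIt}{L^2(\Kn)}\le\nu\Cinv\hK^{-1}\Norm{\nabla\eIt}{L^2(\Kn)^d}$ and again $\nu\lambdaKn\hK^{-2}\lesssim 1$. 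This is what produces the $\nu^2\lambdaKn\hK^{2s-2}/\betaKn^2$ and $\nu^2\lambdaKn\tau_n^{2q+2}/(\hK^2\betaKn^2)$ terms of the claimed bound.

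It remains to treat $T_A$. Writing $(\tLK-\tLhtK)\vh=(\Id-\PiO{k}{K})\dpt\vh+\bbeta\cdot(\Id-\PPiO{k-1}{K})\nabla\vh$ and pairing against $\LK u=f$, the $L^2(K)$-orthogonality of $\PiO{k}{K}$ and of $\PPiO{k-1}{K}$ replaces $f$ and $\bbeta f$ by their own projection errors; then~\eqref{EQN::ESTIMATE-L2-SPACE} (with the Leibniz rule for $\SemiNorm{\bbeta f}{H^s(K)}$ producing the $\Norm{\bbeta}{L^\infty(I_n;W^{s,\infty}(K)^d)}$ factor), the bound $\Norm{(\Id-\PPiO{k-1}{K})\nabla\vh}{L^2(\Kn)^d}\le\Norm{\nabla(\Id-\PiN{k}{K})\vh}{L^2(\Kn)^d}$ (optimality of $\PPiO{k-1}{K}$), and the time inverse estimate~\eqref{EQN::INVERSE-ESTIMATE-TIME} for $\Norm{\dpt\vh}{L^2(\Kn)}$ close this term; pairing the advective piece either with $\SemiNorm{\vh}{\Knsupg}$ (factor $(\lambdaKn\betaKn^2)^{1/2}$) or with $\nu^{1/2}\Norm{\nabla\vh}{L^2(\QT)^d}$ is what yields the two alternatives whose minimum appears in the last term of the statement, and pairing the time-derivative piece with $\Norm{\vh}{L^2(\QT)}$ gives the $\lambdaKn\hK^{2s+4}/(\tau_n^2\betaKn^2)\Norm{f}{L^2(I_n;H^{s+1}(K))}^2$ term. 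Collecting $T_A$, $T_B$, $T_C$, applying the final Cauchy--Schwarz over $(n,K)$, using $\TNorm{\vh}=1$, and invoking~\eqref{EQN::CHOICE-LAMBDA}--\eqref{EQN::TAU-H} to convert the spurious powers of $\nu$, $\hK$, and $\tau_n$ into the admissible ones yields the asserted bound.
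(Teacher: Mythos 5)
Your decomposition coincides with the paper's: your $T_B$ is the paper's $\chisupgone^{\Kn}$, your $T_A$ splits into the paper's $\chisupgtwo^{\Kn}$ (time part) and $\chisupgthree^{\Kn}$ (advective part), and your $T_C$ is $-\chisupgfour^{\Kn}$. The treatments of $T_B$, $T_C$, and the advective half of $T_A$ match the paper's line by line (in particular, your insertion of the elliptic projection $\PiN{k}{K}u$, together with the inverse estimate~\eqref{EQN::INVERSE-ESTIMATE-DIVERGENCE}, is a correct way to justify the bound $\Norm{\Delta u-\div\PPiO{k-1}{K}\nabla u}{L^2(K)}\lesssim \hK^{s-1}\SemiNorm{u}{H^{s+1}(K)}$ that the paper states without detail).

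The one step that does not deliver the stated estimate is your handling of the time-derivative half of $T_A$, namely $\chisupgtwo^{\Kn}=\lambdaKn\big((\Id-\PiO{k}{K})f,\dpt(\Id-\PiO{k}{K})\vh\big)_{\Kn}$. Pairing this with $\Norm{\vh}{L^2(\QT)}$ after the time inverse estimate produces
\begin{equation*}
\lambdaKn\,\tau_n^{-1}\Norm{(\Id-\PiO{k}{K})f}{L^2(\Kn)}\Norm{\vh}{L^2(\Kn)}
\ \leadsto\ \frac{\lambdaKn^2\hK^{2s+2}}{\tau_n^2}\Norm{f}{L^2(I_n;H^{s+1}(K))}^2
\end{equation*}
as squared contribution, which exceeds the claimed term $\lambdaKn\hK^{2s+4}\tau_n^{-2}\betaKn^{-2}\Norm{f}{L^2(I_n;H^{s+1}(K))}^2$ by the factor $\lambdaKn\betaKn^2/\hK^2$; with $\lambdaKn\simeq\hK/\betaQT$ (convection-dominated choice) this factor is of order $\hK^{-1}$ and is unbounded, so the lemma as stated is not recovered (your weaker bound would still suffice for Theorem~\ref{THM::ERROR-ESTIMATES} under $\tau\simeq h$, but not uniformly in $\tau/h$). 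The correct route, which is the paper's, is to keep the projection defect on the test function: apply the time inverse estimate~\eqref{EQN::INVERSE-ESTIMATE-TIME} to $(\Id-\PiO{k}{K})\vh$ rather than to $\vh$, then use $\Norm{(\Id-\PiO{k}{K})\vh}{L^2(K)}\le\Norm{(\Id-\PiN{k}{K})\vh}{L^2(K)}\lesssim\hK\Norm{\nabla(\Id-\PiN{k}{K})\vh}{L^2(K)^d}$ by best approximation and the scaled Poincar\'e--Friedrichs inequality of Lemma~\ref{LEMMA::POINCARE-Friedrichs}, and finally absorb $\Norm{\nabla(\Id-\PiN{k}{K})\vh}{L^2(\Kn)^d}\le\SemiNorm{\vh}{\Knsupg}/(\lambdaKn^{1/2}\betaKn)$. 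This yields the extra gain $\hK^2/(\lambdaKn\betaKn^2)$ that turns your term into the one claimed in the statement.
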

\begin{proof}
Adding and subtracting suitable terms, we get
\begin{align}
\nonumber
\chisupg(\vh) & =
\ell_f^{\supg}(\vh) - \sht(\uIt, \vh) \\
\nonumber
%%%
& = \sum_{n = 1}^N \sum_{K \in \Omegah} \Big(\lambdaKn (f, \tLK \vh)_{\Kn} - \lambdaKn (\LhtK \uIt, \tLhtK \vh)_{\Kn} \\
\nonumber
& {\quad - \lambdaKn \betaKn^2 \int_{I_n} \saK((\Id - \PiN{k}{K}) \uIt, (\Id - \PiN{k}{K}) \vh) \dt\Big) }\\
%%%
\nonumber
& = \sum_{n = 1}^N \sum_{K \in \Omegah} \Big(\lambdaKn (\LK u, \tLK \vh)_{\Kn} - \lambdaKn (\LhtK \uIt, \tLhtK \vh)_{\Kn} \\
\nonumber
& \quad - \lambdaKn \betaKn^2 \int_{I_n} \saK((\Id - \PiN{k}{K}) \uIt, (\Id - \PiN{k}{K}) \vh) \dt\Big) \\
\nonumber
%%%
& = \sum_{n = 1}^N \sum_{K \in \Omegah} \Big( \lambdaKn \big(\LK u - \LhtK \uIt, \tLhtK \vh\big)_{\Kn} +  \lambdaKn(\LK u, \dpt (\Id - \PiO{k}{K}) \vh)_{\Kn} \\
\nonumber
& \quad  + \lambdaKn(\LK u \bbeta, (\Id - \PPiO{k-1}{K}) \nabla \vh )_{\Kn} \\
\nonumber
& \quad - \lambdaKn \betaKn^2 \int_{I_n} \saK((\Id - \PiN{k}{K}) \uIt, (\Id - \PiN{k}{K}) \vh) \dt
\Big) \\
%%%
\label{EQN::CHISUPG-SPLIT}
& =: \sum_{n = 1}^N \sum_{K \in \Omegah} \big(\chisupgone^{\Kn} + \chisupgtwo^{\Kn} + \chisupgthree^{\Kn} + \chisupgfour^{\Kn}\big).
\end{align}
We bound each term~$\chisupgi$, $i = 1, \ldots, 4$, separately.

Using the Cauchy--Schwarz and the triangle inequalities, and the definition of~$\SemiNorm{\cdot}{\Knsupg}$ in~\eqref{EQN::LOCAL-SUPG-NORM}, we get
\begin{align}
\nonumber
\chisupgone^{\Kn} & = \lambdaKn (\LK u - \LhtK \uIt, \tLhtK \vh)_{\Kn} \\
\nonumber
& = \lambdaKn \big(\dpt (u - \PiO{k}{K} \uIt) - \nu \div (\nabla u - \PPiO{k-1}{K} \nabla \uIt) + \bbeta \cdot (\nabla u - \PPiO{k-1}{K} \nabla \uIt), \tLhtK \vh\big)_{\Kn} \\
\nonumber
& \le \frac{\lambdaKn^{1/2}}{\betaKn} \Big(\Norm{\dpt(u - \PiO{k}{K} \uIt)}{L^2(\Kn)} + \nu \Norm{\div(\nabla u - \PPiO{k-1}{K} \nabla \uIt)}{L^2(\Kn)} \\
\label{EQN::SUPG-1-SPLIT}
& \qquad + \Norm{\bbeta \cdot (\nabla u - \PPiO{k-1}{K} \nabla \uIt)}{L^2(\Kn)}\Big) \SemiNorm{\vh}{\Knsupg}.
\end{align}
We now focus on the local interpolation error terms on the right-hand side of~\eqref{EQN::SUPG-1-SPLIT}.
Using the triangle inequality, the commutativity of~$\PiO{k}{K}$ with the first-order time derivative operator~$\dpt$ and that of~$\dpt$ with the VE interpolant operator, the estimates for~$\Pi_r^t$ and~$\PiO{k}{K}$ in Lemma~\ref{LEMMA::BRAMBLE-HILBERT}, the stability of~$\Pi_r^t$ in Lemma~\ref{LEMMA::STAB-PI-TIME-H1}, and the VE interpolation estimate in Lemma~\ref{LEMMA::VE-APPROX}, it follows that
\begin{align}
\nonumber
\Norm{\dpt(u - \PiO{k}{K} \uIt)}{L^2(\Kn)} & \le
\Norm{(\Id - \PiO{k}{K}) \dpt u}{L^2(\Kn)} + \Norm{\PiO{k}{K} \dpt(\Id - \Pi_r^t) u}{L^2(\Kn)} \\
\nonumber
& \quad + \Norm{\PiO{k}{K} \dpt \Pi_r^t(u - \uI)}{L^2(\Kn)} \\
\nonumber
& \lesssim \Norm{(\Id - \PiO{k}{K}) \dpt u}{L^2(\Kn)} + \Norm{\dpt(\Id - \Pi_r^t) u}{L^2(\Kn)} + \Norm{\dpt (u - \uI)}{L^2(\Kn)} \\
\label{EQN::SUPG-1-1}
& \lesssim \hK^{s+1} \Norm{\dpt u}{L^2(I_n; H^{s+1}(K))} + \tau_n^q \Norm{u}{H^{q+1}(I_n; L^2(K))}.
\end{align}
As for the second term on the right-hand side of~\eqref{EQN::SUPG-1-SPLIT}, we use the triangle inequality, the estimates for~$\Pi_r^t$ and~$\PPiO{k-1}{K}$ in Lemma~\ref{LEMMA::BRAMBLE-HILBERT} and their stability properties, the commutativity of~$\Pi_r^t$ and the spatial divergence operator~$\div$, and the inverse estimates in~\eqref{EQN::INVERSE-ESTIMATE-TIME} and~\eqref{EQN::INVERSE-ESTIMATE-DIVERGENCE} to obtain
\begin{align}
\nonumber
\nu & \Norm{\div(\nabla u - \PPiO{k-1}{K} \nabla \uIt)}{L^2(\Kn)} \\
\nonumber
& \ \le \nu \Norm{\div(\nabla u - \PPiO{k-1}{K} \nabla u)}{L^2(\Kn)} + \nu \Norm{\div \PPiO{k-1}{K}(\nabla u - \Pi_r^t \nabla u)}{L^2(\Kn)} \\
\nonumber
& \quad  + \nu \Norm{\Pi_r^t \div \PPiO{k-1}{K} \nabla (u - \uI)}{L^2(\Kn)} \\
\label{EQN::SUPG-1-2}
& \ \lesssim \nu \hK^{s-1} \Norm{u}{L^2(I_n; H^{s+1}(K))} + \nu \frac{\tau^{q+1}}{\hK} \Norm{\nabla u}{H^{q+1}(I_n; L^2(K)^d)}.
\end{align}
Note that the negative power of $h_K$ could be avoided by asking more space regularity for~$u$, but such effort is not required since this term will be balanced by suitable factors in the final estimates.
The last term on the right-hand side of~\eqref{EQN::SUPG-1-SPLIT} can be bounded similarly as follows:
\begin{align}
\nonumber
& \Norm{\bbeta\cdot (\nabla u - \PPiO{k-1}{K} \nabla \uIt)}{L^2(\Kn)} \\
\nonumber
& \qquad \le \Norm{\bbeta \cdot (\nabla u - \Pi_r^t \nabla u)}{L^2(\Kn)} + \Norm{\bbeta \cdot \Pi_r^t(\nabla u - \PPiO{k-1}{K} \nabla u)}{L^2(\Kn)} + \Norm{\bbeta \cdot \Pi_r^t \PPiO{k-1}{K} (\nabla u  - \nabla u_I)}{L^2(\Kn)} \\
\label{EQN::SUPG-1-3}
& \qquad \lesssim \betaKn \tau_n^{q+1}\Norm{\nabla u}{H^{q+1}(I_n; L^2(K)^d)} + \betaKn \hK^{s} \Norm{u}{L^2(I_n; H^{s+1}(K))}.
\end{align}

Combining estimates~\eqref{EQN::SUPG-1-1}, \eqref{EQN::SUPG-1-2}, and~\eqref{EQN::SUPG-1-3} with identity~\eqref{EQN::SUPG-1-SPLIT}, we get
\begin{align}
\nonumber
\sum_{n = 1}^N \sum_{K \in \Omegah} \chisupgone^{\Kn}
& \lesssim \bigg[\sum_{n = 1}^N \sum_{K \in \Omegah} \bigg(\frac{\lambdaKn \tau_n^{2q}}{\betaKn^2} \Norm{u}{H^{q+1}(I_n; L^2(K))}^2 + \frac{\lambdaKn \hK^{2s+2}}{\betaKn^2} \Norm{\dpt u}{L^2(I_n; H^{s+1}(K))}^2 \\
\nonumber
& \quad + \Big(
\frac{\nu^2 \lambdaKn \hK^{2s-2}}{\betaKn^2} + \lambdaKn \hK^{2s}\Big) \Norm{u}{L^2(I_n; H^{s+1}(K))}^2 \\
\label{EQN::ESTIMATE-SUPG-1}
& \quad + \Big(\frac{\nu^2 \lambdaKn \tau_n^{2q+2}}{\hK^2 \betaKn^2} + \lambdaKn \tau_n^{2q+2}\Big) \Norm{\nabla u}{H^{q+1}(I_n; L^2(K)^d)}^2\bigg)\bigg]^{\frac12}.
\end{align}

Due to identity~\eqref{EQN::SUPG-IDENTITY}, we get
\begin{alignat*}{3}
\chisupgtwo^{\Kn} & = \lambdaKn (\LK u, \dpt(\Id - \PiO{k}{K}) \vh)_{\Kn} = \lambdaKn((\Id - \PiO{k}{K}) f, (\Id - \PiO{k}{K}) \dpt \vh)_{\Kn}, \\
\chisupgthree^{\Kn} & = \lambdaKn (\LK u \bbeta, (\Id - \PPiO{k-1}{K}) \nabla \vh)_{\Kn} = \lambdaKn ((\Id - \PPiO{k-1}{K}) (f \bbeta), (\Id - \PPiO{k-1}{K}) \nabla \vh)_{\Kn}.
\end{alignat*}
%hence, we can simply use estimates~\eqref{EQN::CHI-F2} and~\eqref{EQN::CHI-F3}.

Using the polynomial inverse estimate~\eqref{EQN::INVERSE-ESTIMATE-TIME}, the scaled Poincar\'e--Friedrichs inequality in Lemma~\ref{LEMMA::POINCARE-Friedrichs}, the approximation properties in Lemma~\ref{LEMMA::BRAMBLE-HILBERT} for~$\PiO{k}{K}$, and
the definition in~\eqref{EQN::LOCAL-SUPG-NORM} of~$\SemiNorm{\cdot}{\Knsupg}$, we have
\begin{align}
\nonumber
\sum_{n = 1}^N \sum_{K \in \Th} \chisupgtwo^{\Kn}
& = \sum_{n = 1}^N \sum_{K \in \Omegah} \lambdaKn\big((\Id - \PiO{k}{K})f, \dpt(\Id - {\PiO{k}{K}}) \vh\big)_{\Kn} \\
\nonumber
& \lesssim \sum_{n = 1}^N \sum_{K \in \Omegah} \frac{\lambdaKn}{\tau_n} \Norm{(\Id - \PiO{k}{K}) f}{L^2(\Kn)} \Norm{(\Id - \PiO{k}{K}) \vh}{L^2(\Kn)} \\
\nonumber
& \lesssim \sum_{n = 1}^N \sum_{K \in \Omegah} \frac{\lambdaKn \hK}{\tau_n} \Norm{(\Id - \PiO{k}{K}) f}{L^2(\Kn)} \Norm{\nabla (\Id - \PiO{k}{K}) \vh}{L^2(\Kn)^d}\\
\nonumber
& \lesssim \sum_{n = 1}^N \sum_{K \in \Th} \frac{\lambdaKn^{1/2} \hK}{\tau_n
\betaKn } \Norm{(\Id - \PiO{k}{K})f}{L^2(\Kn)} \SemiNorm{\vh}{\Knsupg} \\
\label{EQN::CHI-F2}
& \lesssim  \bigg(\sum_{n = 1}^N\sum_{K \in \Th} \frac{\lambdaKn \hK^{2s+4}}{\tau_n^2 \betaKn^2} \Norm{f}{L^2(I_n; H^{s+1}(K))}^2 \bigg)^{\frac12} \SemiNorm{\vh}{\supg}.
\end{align}

As for~$\chisupgthree^{\Kn}$, we use the Cauchy--Schwarz inequality, the approximation properties of~$\PPiO{k-1}{K}$, the fact that~$\nabla \PiN{k}{K} \vh \in \Pp{k-1}(K)^d$, the definition of~$\SemiNorm{\cdot}{\Knsupg}$ in~\eqref{EQN::LOCAL-SUPG-NORM}, and the H\"older inequality to get
\begin{align*}
\chisupgthree^{\Kn} & = \lambdaKn \big((\Id - \PPiO{k-1}{K})(f \bbeta), (\Id - \PPiO{k-1}{K}) \nabla \vh\big)_{\Kn} \\
& \lesssim \lambdaKn \Norm{(\Id - \PPiO{k}{K})(f \bbeta)}{L^2(\Kn)^d} \Norm{(\Id - \PPiO{k-1}{K}) \nabla \vh}{L^2(\Kn)^d} \\
& \lesssim
\lambdaKn \hK^s \Norm{\bbeta}{L^{\infty}(I_n; W^{s, \infty}(K)^d)} \Norm{f}{L^2(I_n; H^s(K))} \min\Big\{ \frac{\SemiNorm{\vh}{\Knsupg}}{\lambdaKn^{1/2} \betaKn}, \frac{\Norm{\nabla \vh}{L^2(\Kn)^d}}{\nu^{1/2}}\Big\}.
\end{align*}
The following estimate can be then obtained using the Cauchy--Schwarz inequality and the definition of the energy norm~$\TNorm{\cdot}$:
\begin{equation}
\label{EQN::CHI-F3}
\sum_{n = 1}^N \sum_{K \in \Omegah} \chisupgthree^{\Kn} \lesssim \Big( \sum_{n = 1}^N \sum_{K \in \Omegah} \lambdaKn^2 \hK^{2s} \min\Big\{\frac{1}{\lambdaKn \betaKn^2}, \frac{1}{\nu}\Big\}\Norm{\bbeta}{L^{\infty}(I_n; W^{s, \infty}(K)^d)}^2 \Norm{f}{L^2(I_n; H^{s}(K))}^2\Big)^{\frac12} \TNorm{\vh}.
\end{equation}

Finally, we estimate the term~$\chisupgfour^{\Kn}$. To do so, we use the stability bound of the bilinear form~$\saK(\cdot, \cdot)$ in~\eqref{EQN::STAB-saK}, the commutativity of~$\Pi_r^t$ and the spatial gradient operator~$\nabla$, the stability properties of~$\Pi_r^t$ and~$\PiN{k}{K}$, the triangle inequality, the definition of~$\SemiNorm{\cdot}{\Knsupg}$ in~\eqref{EQN::LOCAL-SUPG-NORM}, the estimate for~$\PiN{k}{K}$ in Lemma~\ref{LEMMA::BRAMBLE-HILBERT}, and the VE interpolation estimate in Lemma~\ref{LEMMA::VE-APPROX} to deduce
\begin{align*}
\chisupgfour^{\Kn} & = -\lambdaKn \betaKn^2 \int_{I_n} \saK((\Id - \PiN{k}{K}) \uIt, (\Id - \PiN{k}{K}) \vh) \dt \\
& \le \hat{\alpha} \lambdaKn \betaKn^2 \Norm{\Pi_r^t\nabla (\Id - \PiN{k}{K}) \uI}{L^2(\Kn)^d} \Norm{\nabla (\Id - \PiN{k}{K}) \vh}{L^2(\Kn)^d} \\
& \le \hat{\alpha} \lambdaKn^{1/2} \betaKn \big(
\Norm{\nabla (\uI - u)}{L^2(\Kn)^d} + \Norm{\nabla (\Id - \PiN{k}{K})u}{L^2(\Kn)^d} \\
& \quad + \Norm{\nabla \PiN{k}{K}(u - \uI)}{L^2(\Kn)^d}\big) \SemiNorm{\vh}{\Knsupg} \\
& \lesssim \lambdaKn^{1/2} \betaKn \hK^s \Norm{u}{L^2(I_n; H^{s+1}(K))}\SemiNorm{\vh}{\Knsupg}.
\end{align*}
The sum of the above local estimate over all the elements~$K \in \Omegah$ and the time steps~$n \in \{1, \ldots, N\}$ leads to
\begin{equation}
\label{EQN::ESTIMATE-SUPG-4}
\sum_{n = 1}^N \sum_{K \in \Omegah} \chisupgfour^{\Kn} \lesssim \bigg[\sum_{n = 1}^N \sum_{K \in \Omegah}  \bigg(\lambdaKn \betaKn^2 \hK^{2s} \Norm{u}{L^2(I_n; H^{s+1}(K))}^2 \bigg)\bigg]^{\frac12}.
\end{equation}

Combining identity~\eqref{EQN::CHISUPG-SPLIT} with estimates~\eqref{EQN::ESTIMATE-SUPG-1}, \eqref{EQN::CHI-F2}, \eqref{EQN::CHI-F3}, and~\eqref{EQN::ESTIMATE-SUPG-4}, the desired error bound is obtained.
\end{proof}

\begin{remark}[Alternative bound for $\chisupgtwo^{\Kn}$]\label{rem:alternative-1}
Whenever the mesh $\Omega_h$ is fixed and does not depend on the particular time interval (c.f. Remark \ref{rem:mesh-changing}) one could also handle $\chisupgtwo^{\Kn}$ in \eqref{EQN::CHI-F2} in a more efficient way, leading to a bound where the term $\tau_n^{-2}$ is substituted by the more favorable $\tau_n^{-1}$, provided that~$\dpt f$ is regular enough.
We detail the alternative bound here below.
Integration by parts in time, the commutativity of the first-order time derivative operator~$\dpt$ and~$\PiO{k}{K}$, and the regularity of~$f$ lead to the following identity:
\begin{align*}
\sum_{n = 1}^N \sum_{K \in \Th} \chisupgtwo^{\Kn}
& = \sum_{n = 1}^N \sum_{K \in \Omegah} \lambdaKn\big((\Id - \PiO{k}{K})f, \dpt(\Id - {\PiO{k}{K}}) \vh\big)_{0, \Kn} \\
& = - \sum_{n = 1}^N \sum_{K \in \Omegah} \lambdaKn \big((\Id - \PiO{k}{K}) \dpt f, (\Id - \PiO{k}{K}) \vh \big)_{0, \Kn} \\
& \quad + \sum_{K \in \Th} \lambdaKn \Big(\big((\Id - \PiO{k}{K}) f(\cdot, T), (\Id - \PiO{k}{K}) \vh(\cdot, T))_{0, K} \\
& \qquad \quad  + \sum_{n = 1}^{N - 1}\big((\Id - \PiO{k}{K}) f(\cdot, \tn), (\Id - \PiO{k}{K})\jump{\vh}_{n} \big)_{0, K} \\
& \qquad \quad - \big((\Id - \PiO{k}{K}) f(\cdot, 0), (\Id - \PiO{k}{K})\vh(\cdot, 0) \big)_{0, K} \Big),
\end{align*}
which, together with the Cauchy--Schwarz inequality, the trace inequality~\eqref{EQN::TRACE-INEQ}, the definition of the upwind-jump functional~$\SemiNorm{\cdot}{\sf J}$ in~\eqref{EQN::JUMP-FUNCTIONAL}, the stability properties of~$\PiO{k}{K}$, and its approximation properties in Lemma~\ref{LEMMA::BRAMBLE-HILBERT}, implies
\begin{align}
\nonumber
\sum_{n = 1}^N & \sum_{K \in \Omegah} \chisupgtwo^{\Kn} \\
\nonumber
& \lesssim \Big(\sum_{n = 1}^N \sum_{K \in \Th} \lambdaKn^2\Norm{(\Id - \PiO{k}{K})\dpt f}{L^2(\Kn)}^2 \Big)^{\frac12} \Norm{\vh}{L^2(\QT)} \\
\nonumber
& \quad + \bigg(\sum_{n = 1}^N\sum_{K \in \Th} \lambdaKn^2 \Big( \tau_n^{-1} \Norm{(\Id - \PiO{k}{K}) f}{L^2(\Kn)}^2 + \tau_n \Norm{(\Id - \PiO{k}{K}) \dpt f}{L^2(\Kn)}^2\Big) \bigg)^{\frac12} \SemiNorm{\vh}{\sf J} \\
\nonumber
& \lesssim \Big(\sum_{n = 1}^N \sum_{K \in \Th} \lambdaKn^2 \hK^{2s+2} \Norm{\dpt f}{L^2(I_n; H^{s+1}(K))}^2\Big)^{\frac12} \Norm{\vh}{L^2(\QT)} \\
\nonumber
& \quad + \bigg(\sum_{n = 1}^N\sum_{K \in \Th} \Big( \frac{\lambdaKn^2 \hK^{2s+2}}{\tau_n} \Norm{f}{L^2(I_n; H^{s+1}(K))}^2 + \tau_n \lambdaKn^2 \hK^{2s+2} \Norm{\dpt f}{L^2(I_n; H^{s+1}(K))}^2\Big) \bigg)^{\frac12} \SemiNorm{\vh}{\sf J}.
\end{align}
\eremk
\end{remark}

%%%
\begin{lemma}[Estimate of~$\chia(\vh)$]
\label{LEMMA::ESTIMATE-CHIA}
Under Assumption~\ref{ASM::MESH-REGULARITY} on the mesh-regularity, Assumption~\ref{TheAssumption} on the choice of~$\lambdaKn$ and the relation of~$\tau$ and~$h_{\min}$, and Assumption~\ref{ASM::DATA} on the data of the problem, for all~$\vh \in \Vht$ with~$\TNorm{\vh} = 1$, the term~$\chia(\vh)$ can be bounded as follows:
\begin{equation*}
\chia(\vh) \lesssim \bigg( \sum_{n = 1}^N \sum_{K \in \Omegah} \Big(\nu \hK^{2s} \Norm{u}{L^2(I_n; H^{s+1}(K))}^2 + \nu \tau_n^{2q + 2} \Norm{\nabla u}{H^{q+1}(I_n; L^2(K)^d)}^2 \Big)\bigg)^{\frac12}.
\end{equation*}
\end{lemma}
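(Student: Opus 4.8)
The plan is to exploit the orthogonality of the vector $L^2$-projection $\PPiO{k-1}{K}$ to recast $\chia(\vh)$ as a genuine consistency residual, then factor out $\sqrt{\nu}\,\Norm{\nabla\vh}{L^2(\QT)^d}\le\TNorm{\vh}=1$ by Cauchy--Schwarz, so that only interpolation-type quantities remain to be estimated. Concretely, for each $K\in\Omegah$ and $n\in\{1,\dots,N\}$, since $\PPiO{k-1}{K}\nabla\uIt\in\Pp{k-1}(K)^d$ is its own $L^2$-projection,
\[
\big(\nabla u,\nabla\vh\big)_{\Kn}-\big(\PPiO{k-1}{K}\nabla\uIt,\PPiO{k-1}{K}\nabla\vh\big)_{\Kn}
=\big(\nabla u-\PPiO{k-1}{K}\nabla\uIt,\ \nabla\vh\big)_{\Kn},
\]
whence
\[
\chia(\vh)=\nu\sum_{n=1}^{N}\sum_{K\in\Omegah}\Big[\big(\nabla u-\PPiO{k-1}{K}\nabla\uIt,\nabla\vh\big)_{\Kn}-\int_{I_n}\saK\big((\Id-\PiN{k}{K})\uIt,(\Id-\PiN{k}{K})\vh\big)\dt\Big].
\]
To the first summand I apply Cauchy--Schwarz in $L^2(\Kn)^d$; to the second I use the Cauchy--Schwarz inequality for the symmetric positive semidefinite form $\saK(\cdot,\cdot)$ together with its stability bound~\eqref{EQN::STAB-saK} — noting that $(\Id-\PiN{k}{K})\uIt$ and $(\Id-\PiN{k}{K})\vh$ lie in $\ker(\PiN{k}{K})\cap\Vk(K)$, because $\Pp{k}(K)\subset\Vk(K)$ and $\PiN{k}{K}$ is idempotent — and the elementary bound $\Norm{\nabla(\Id-\PiN{k}{K})\vh}{L^2(K)^d}\le\Norm{\nabla\vh}{L^2(K)^d}$. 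A further Cauchy--Schwarz over $(n,K)$ then gives
\[
\chia(\vh)\lesssim\Big(\sum_{n=1}^N\sum_{K\in\Omegah}\nu\big(\Norm{\nabla u-\PPiO{k-1}{K}\nabla\uIt}{L^2(\Kn)^d}^2+\Norm{\nabla(\Id-\PiN{k}{K})\uIt}{L^2(\Kn)^d}^2\big)\Big)^{\frac12}\sqrt{\nu}\,\Norm{\nabla\vh}{L^2(\QT)^d},
\]
and the last factor is $\le\TNorm{\vh}=1$.

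It then remains to estimate the two local interpolation terms. For the first, I would split $\nabla u-\PPiO{k-1}{K}\nabla\uIt=(\Id-\PPiO{k-1}{K})\nabla u+\PPiO{k-1}{K}\nabla\eIt$ and, using the $L^2$-stability of $\PPiO{k-1}{K}$, reduce to controlling $\Norm{(\Id-\PPiO{k-1}{K})\nabla u}{L^2(\Kn)^d}\lesssim\hK^s\Norm{u}{L^2(I_n;H^{s+1}(K))}$ (by~\eqref{EQN::ESTIMATE-L2-SPACE} applied componentwise, using $s\le k$) and $\Norm{\nabla\eIt}{L^2(\Kn)^d}$. For the latter I write $\nabla\eIt=(\Id-\Pi_r^t)\nabla u+\Pi_r^t\nabla(u-\uI)$, commuting $\nabla$ with $\Pi_r^t$, and invoke~\eqref{EQN::ESTIMATE-L2-TIME} together with the VE interpolation estimate~\eqref{EQN::VE-INTERPOLATION} to get $\Norm{\nabla\eIt}{L^2(\Kn)^d}\lesssim\tau_n^{q+1}\Norm{\nabla u}{H^{q+1}(I_n;L^2(K)^d)}+\hK^s\Norm{u}{L^2(I_n;H^{s+1}(K))}$. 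For the second local term I use the best-approximation property of the $H^1$-projection, $\Norm{\nabla(\Id-\PiN{k}{K})\uIt}{L^2(K)^d}\le\Norm{\nabla(\uIt-\PiN{k}{K}u)}{L^2(K)^d}\le\Norm{\nabla\eIt}{L^2(K)^d}+\Norm{\nabla(\Id-\PiN{k}{K})u}{L^2(K)^d}$, and close with~\eqref{EQN::ESTIMATE-PI-NABLA} and the bound just obtained for $\Norm{\nabla\eIt}{}$. Substituting back and summing over $(n,K)$ yields the stated estimate.

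Since this is in essence a standard polynomial-consistency and interpolation estimate — structurally a lighter version of the computation already carried out in Lemma~\ref{LEMMA::INTERPOLATION-ERROR-ESTIMATE} — I do not expect any genuine obstacle. The only points demanding care are the treatment of the stabilization contribution $\saK(\cdot,\cdot)$, where one must check that its arguments lie in the correct kernel space so that~\eqref{EQN::STAB-saK} and the Cauchy--Schwarz inequality for semidefinite bilinear forms are applicable, and the bookkeeping of the combined space--time approximation produced by composing the spatial VE interpolant $\uI$ with the $L^2$-in-time projection $\Pi_r^t$.
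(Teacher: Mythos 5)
Your proof is correct and leads to exactly the interpolation quantities (the $\hK^{s}$ and $\tau_n^{q+1}$ terms) that produce the stated bound; the only difference from the paper is the intermediate algebra. The paper pivots on the polynomial $\PiN{k}{K}u$: it writes $\nu a^T(u,\vh)-\nu\aht(\uIt,\vh)$ as $\nu(\nabla(\Id-\PiN{k}{K})u,\nabla\vh)_{\Kn}+\nu\,a_h^{K_n}(\PiN{k}{K}u-\uIt,\vh)$ using polynomial consistency, and then absorbs the stabilization contribution into the single continuity constant $\alpha^*$ of $\ahK$ via \eqref{EQN::STAB-aK}. You instead expand $\ahK$ directly, collapse $(\PPiO{k-1}{K}\nabla\uIt,\PPiO{k-1}{K}\nabla\vh)_{\Kn}=(\PPiO{k-1}{K}\nabla\uIt,\nabla\vh)_{\Kn}$ by $L^2$-orthogonality, and treat the residual $(\nabla u-\PPiO{k-1}{K}\nabla\uIt,\nabla\vh)_{\Kn}$ and the $\saK$ term separately, the latter via Cauchy--Schwarz for the semidefinite form together with \eqref{EQN::STAB-saK} and the best-approximation property of $\PiN{k}{K}$. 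Both routes are standard VEM consistency arguments and give the same rates; your kernel check for the arguments of $\saK$ and the reduction of $\Norm{\nabla(\Id-\PiN{k}{K})\uIt}{L^2(K)^d}$ through $\PiN{k}{K}u$ are exactly the points one must verify, and you handle them correctly.
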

\begin{proof}
Using the polynomial consistency of the bilinear form~$\aht(\cdot, \cdot)$, the stability bound in~\eqref{EQN::STAB-aK}, the triangle inequality, the commutativity of~$\Pi_r^t$ and the spatial gradient operator~$\nabla$, the stability of~$\Pi_r^t$, the estimates for~$\PiN{k}{K}$ in Lemma~\ref{LEMMA::BRAMBLE-HILBERT} and for~$\uI$ in Lemma~\ref{LEMMA::VE-APPROX}, and the Cauchy--Schwarz inequality, we obtain
\begin{align*}
\chia(\vh) & = \nu a^T(u, \vh) - \nu \aht(\uIt, \vh) \\
& = \sum_{n = 1}^N \sum_{K \in \Th} \nu \big((\nabla (\Id - \PiN{k}{K}) u, \nabla \vh)_{\Kn} + \ahKn(\PiN{k}{K} u - \uIt, \vh) \big) \\
& \le \sum_{n = 1}^N \sum_{K \in \Omegah} \nu \big(\Norm{\nabla \ePiN}{L^2(\Kn)^d} + \alpha^* \Norm{\nabla(\PiN{k}{K} u  - \uIt)}{L^2(\Kn)^d}\big)\Norm{\nabla \vh}{L^2(\Kn)^d} \\
& \le \sum_{n = 1}^N \sum_{K \in \Omegah} \nu \big((1 + \alpha^*) \Norm{\nabla \ePiN}{L^2(\Kn)^d} + \alpha^* \Norm{(\Id - \Pi_r^t)\nabla u}{L^2(\Kn)^d} \\
& \qquad + \alpha^*\Norm{\Pi_r^t\nabla(u - \uI)}{L^2(\Kn)^d}\big)\Norm{\nabla \vh}{L^2(\Kn)^d} \\
& \lesssim \bigg( \sum_{n = 1}^N \sum_{K \in \Omegah} \Big(\nu \hK^{2s} \Norm{u}{L^2(I_n; H^{s+1}(K))}^2 + \nu \tau^{2q + 2} \Norm{\nabla u}{H^{q+1}(I_n; L^2(K)^d)}^2 \Big)\bigg)^{\frac12} \TNorm{\vh},
\end{align*}
which completes the proof.
\end{proof}

%%%
\begin{lemma}[Estimate of~$\chimb(\vh)$]
\label{LEMMA::ESTIMATE-CHIMB}
Under Assumption~\ref{ASM::MESH-REGULARITY} on the mesh-regularity, Assumption~\ref{TheAssumption} on the choice of~$\lambdaKn$ and the relation of~$\tau$ and~$h_{\min}$, and Assumption~\ref{ASM::DATA} on the data of the problem, for all~$\vh \in \Vht$ with~$\TNorm{\vh} = 1$, the term~$\chimb(\vh)$ can be bounded as follows:
\begin{align*}
\chimb & \lesssim \bigg(\sum_{n = 1}^N \sum_{K \in \Omegah} \Big(
\Big(\Norm{\bbeta}{L^{\infty}(I_n; W^{1, \infty}(K)^d)}^2 \hK^{2s+2} + \frac{\hK^{2s+2}}{\lambdaKn \betaKn^2} \Norm{\bbeta}{L^{\infty}(I_n; W^{s+1}(K)^d)}^2 \\
& \quad + \frac{\hK^{2s+2}}{\lambdaKn}
+ \hK^{2s+2} \min\Big\{\frac{1}{\lambdaKn \betaKn^2}, \frac{1}{\nu}\Big\} \Norm{\bbeta}{L^{\infty}(I_n; W^{s+1, \infty}(K)^d)}^2 \\
& \quad + \frac{\hK^{2s+4}}{\lambdaKn \betaKn^2 \tau_n^2}  + \frac{\hK^{2s+4}}{\tau_n^2} \min\Big\{\frac{1}{\lambdaKn \betaKn^2}, \frac{1}{\nu}\Big\}\Big)\Norm{u}{L^2(I_n; H^{s+1}(K))}^2
\\
& \quad  + \Big(\frac{\tau_n^{2q+2}}{\lambdaKn} + \hK^2 \tau_n^{2q}\min\Big\{\frac{1}{\lambdaKn \betaKn^2}, \frac{1}{\nu}\Big\}  + \tau_n^{2q + 1} + \tau_n^{2q+2} \Norm{\bbeta}{L^{\infty}(I_n; W^{1 ,\infty}(K)^d)}^2\Big) \Norm{u}{H^{q+1}(I_n; L^2(K))}^2 \\
& \quad + \Big(\hK^2 \tau_n^{2q+2} \Norm{\bbeta}{L^{\infty}(I_n; W^{1, \infty}(K)^d)}^2 + \frac{\hK^2 \tau_n^{2q+2}}{\lambdaKn} \Big) \Norm{\nabla u}{H^{q+1}(I_n; L^2(K)^d)}^2  \Big) \bigg)^{\frac12}.
\end{align*}
\end{lemma}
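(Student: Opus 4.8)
The plan is to split $\chimb(\vh)$ into a mass/time-derivative part $\chi_{\mathsf m}(\vh):=m^{\dpt}(u,\vh)-\mht(\uIt,\vh)$ and an advective part $\chi_{\mathsf b}(\vh):=b^T(u,\vh)-\bht(\uIt,\vh)$, and to bound each by inserting, piece by piece, the nonconsistency errors of the VE bilinear forms and the interpolation error $u-\uIt$, always routing the resulting remainders into the appropriate component of the energy norm in~\eqref{EQN::ENERGY-NORM}. Two elementary ingredients are used throughout. First, since $u$ is continuous in time, $m^{\dpt}(u,\vh)$ carries no time jumps and $\jump{\uIt}_n=\jump{\uIt-\uI}_n$, so every jump contribution is genuinely an interpolation quantity. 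Second, for $v\in\Vh$ the $L^2(K)$-orthogonality of $\PiO{k}{K}$ yields the consistency identity $\mhK(w,v)-(w,v)_K=-\big((\Id-\PiO{k}{K})w,(\Id-\PiO{k}{K})v\big)_K+\smK\big((\Id-\PiO{k}{K})w,(\Id-\PiO{k}{K})v\big)$ and, similarly, $\big(g,(\Id-\PiO{k}{K})v\big)_K=\big((\Id-\PiO{k}{K})g,(\Id-\PiO{k}{K})v\big)_K$; together with the polynomial consistency of $\bh$ this means that every remainder ends up carrying one of the factors $(\Id-\PiO{k}{K})\vh$, $(\Id-\PPiO{k-1}{K})\nabla\vh$ or $\nabla(\Id-\PiN{k}{K})\vh$. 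The first is turned into $\hK\,\Norm{\nabla(\Id-\PiN{k}{K})\vh}{L^2(\Kn)^d}$ by the scaled Poincar\'e--Friedrichs inequality (Lemma~\ref{LEMMA::POINCARE-Friedrichs}), and $\Norm{\nabla(\Id-\PiN{k}{K})\vh}{L^2(\Kn)^d}$ is controlled by $\min\{(\lambdaKn\betaKn^2)^{-1/2},\nu^{-1/2}\}$ times the relevant part of $\TNorm{\vh}$ (see~\eqref{EQN::LOCAL-SUPG-NORM} and~\eqref{EQN::ENERGY-NORM}); this is exactly what produces the ubiquitous $\min\{(\lambdaKn\betaKn^2)^{-1},\nu^{-1}\}$ weights in the statement.

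For $\chi_{\mathsf m}$ the key manoeuvre is the treatment of the genuine time-derivative pairings. After writing $\mht(\uIt,\vh)$ from its definition and adding and subtracting $\mh(\dpt u,\cdot)$ and $(\dpt\uIt,\cdot)_\Omega$, one meets terms of the form $\sum_n\int_{I_n}(g,\dpt\vh)_\Omega\dt$ with $g$ an interpolation/consistency quantity. Rather than using the crude $\Norm{\dpt\vh}{L^2(\Kn)}\lesssim\tau_n^{-1}\Norm{\vh}{L^2(\Kn)}$, I would decompose $\dpt\vh=\tLhtK\vh-\bbeta\cdot\PPiO{k-1}{K}\nabla\vh+\dpt(\Id-\PiO{k}{K})\vh$: the first piece pairs with $\SemiNorm{\vh}{\Knsupg}$ (source of the $\lambdaKn^{-1}$ and $\tau_n^{2q+2}/\lambdaKn$ contributions), the second with $\nu^{1/2}\Norm{\nabla\vh}{L^2(\Kn)^d}$, and the third, after Lemma~\ref{LEMMA::POINCARE-Friedrichs}, the commutation of $\dpt$ with $\nabla$, and the polynomial inverse estimate in time~\eqref{EQN::INVERSE-ESTIMATE-TIME}, again pairs with $\min\{(\lambdaKn\betaKn^2)^{-1/2},\nu^{-1/2}\}\,\TNorm{\vh}$ at the price of one power of $\tau_n^{-1}$ (whence the $\hK^2\tau_n^{2q}$ and $\hK^{2s+4}/\tau_n^2$ terms, and, crucially, no spurious $\Norm{\dpt u}{}$ norm appears). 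The jump and initial-value contributions $\mh(\jump{\uIt}_n,\vh(\cdot,t_n^+))$ and $(u(\cdot,0)-\uIt(\cdot,0),\vh(\cdot,0))_\Omega$, plus their consistency twins, are bounded using $\jump{\uIt}_n=\jump{\uIt-\uI}_n$, the trace inequality (Lemma~\ref{LEMMA::TRACE-INEQ}), the stability of $\dpt\Pi_r^t$ (Lemma~\ref{LEMMA::STAB-PI-TIME-H1}), the approximation properties of $\Pi_r^t$ and of the VE interpolant (Lemmas~\ref{LEMMA::BRAMBLE-HILBERT} and~\ref{LEMMA::VE-APPROX}), the stability of $\mh$ (see~\eqref{EQN::STAB-smK}), and are then paired against $\SemiNorm{\vh}{\sfJ}$, with the remaining traces $\vh(\cdot,t_n^+)$ controlled by~\eqref{EQN::INVERSE-ESTIMATE-TIME}.

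For $\chi_{\mathsf b}$ I would use the skew-symmetric definition of $\bht$ and the polynomial consistency of $\bh$ to write $b^T(u,\vh)-\bht(u,\vh)$ through the approximation errors of $\bbeta\cdot\nabla u$, $u\,\bbeta$ and $\nabla u$ by polynomials, tested against $(\Id-\PiO{k}{K})\vh$ and $(\Id-\PPiO{k-1}{K})\nabla\vh$; estimating these products with Lemma~\ref{LEMMA::BRAMBLE-HILBERT} and the $W^{s+1,\infty}$-in-space regularity of $\bbeta$ in Assumption~\ref{ASM::DATA} produces the $\Norm{\bbeta}{L^{\infty}(I_n;W^{s+1,\infty}(K)^d)}$- and $\Norm{\bbeta}{L^{\infty}(I_n;W^{s+1}(K)^d)}$-weighted terms, while the pieces of the product error measured only in $H^1$ (or involving $\nabla(u-\uI)$) give the $\Norm{\bbeta}{L^{\infty}(I_n;W^{1,\infty}(K)^d)}$-weighted terms. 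The remaining piece $\bht(u-\uIt,\vh)$ is controlled by the interpolation estimates of Lemmas~\ref{LEMMA::BRAMBLE-HILBERT} and~\ref{LEMMA::VE-APPROX}, the $L^2$-stability of $\PPiO{k}{K}$, $\PiO{k}{K}$, $\PiN{k}{K}$ and $\Pi_r^t$, and the commutation of $\Pi_r^t$ with $\nabla$. Summing all local estimates, applying the Cauchy--Schwarz inequality over $n$ and $K$, and using $\TNorm{\vh}=1$ gives the asserted bound; some of these manipulations parallel those already used in the proof of Lemma~\ref{LEMMA::ESTIMATE-CHISUPG}.

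The main obstacle is the bookkeeping: there are a great many local remainders, and the delicate point is to pair each one with the right component of $\TNorm{\vh}$ --- the $\supg$-seminorm with its $\lambdaKn\betaKn^2$ weight, the $L^2(\QT)$-norm, the $\sfJ$-seminorm, or the $\nu$-weighted gradient norm --- so that no uncontrolled negative power of $\nu$, $\lambdaKn$, $\tau_n$ or $\hK$ survives. The truly non-routine step is the one flagged above: the pairings against $\dpt\vh$ (and the time-jump terms) carry no $\lambdaKn$ prefactor, so the crude time-inverse bound is not affordable and one must split $\dpt\vh$ as indicated and reroute through the $\supg$- and $\nu$-gradient components --- which is precisely what forces the slightly suboptimal $\hK^{2s+4}/\tau_n^2$ and $\tau_n^{2q+2}/\lambdaKn$ contributions in the statement.
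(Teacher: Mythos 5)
Your overall toolkit is the right one --- integration by parts in time, the $L^2$-orthogonality/consistency identities for $\mhK$ and $\bh$, the scaled Poincar\'e--Friedrichs inequality combined with the $\min\{(\lambdaKn\betaKn^2)^{-1},\nu^{-1}\}$ trick, and the trace inequality for the jump and terminal terms --- and several of your sub-estimates coincide with the paper's. But there is a genuine gap in the central step. By splitting $\chimb$ into a mass part and an advective part at the outset and bounding each separately, you lose a cancellation that the argument cannot do without. Concretely: after integration by parts in time, the mass part produces $-(u-\PiO{k}{K}\uIt,\dpt\PiO{k}{K}\vh)_{\Kn}$, while the advective part (via skew-symmetry) produces $-(u-\PiO{k}{K}\uIt,\bbeta\cdot\PPiO{k}{K}\nabla\vh)_{\Kn}$. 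Kept together, the interpolation error is tested against $\dpt\PiO{k}{K}\vh+\bbeta\cdot\PPiO{k}{K}\nabla\vh$, i.e.\ essentially $\tLhtK\vh$ plus the harmless remainder $\bbeta\cdot\PPiO{k}{K}(\Id-\PPiO{k-1}{K})\nabla\vh$; the first piece is controlled by $\lambdaKn^{-1/2}\SemiNorm{\vh}{\Knsupg}$ and the second by the $\min$ trick, which is exactly what produces the $\tau_n^{2q+2}/\lambdaKn$ and $\hK^{2s+2}/\lambdaKn$ terms in the statement (the paper's term $\chiVthree^{\Kn}$).

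Your plan instead writes $\dpt\vh=\tLhtK\vh-\bbeta\cdot\PPiO{k-1}{K}\nabla\vh+\dpt(\Id-\PiO{k}{K})\vh$ inside the mass part and proposes to pair the middle piece with $\nu^{1/2}\Norm{\nabla\vh}{L^2(\Kn)^d}$. Since $\betaKn\Norm{\nabla\vh}{L^2(\Kn)^d}$ is not controlled by any component of $\TNorm{\vh}$ except through $\nu^{-1/2}$ times the $\nu$-weighted gradient norm (the $\supg$-seminorm only sees $\nabla(\Id-\PiN{k}{K})\vh$, not the full gradient), this step contributes a term of size $\betaKn^2\nu^{-1}\Norm{u-\PiO{k}{K}\uIt}{L^2(\Kn)}^2\simeq\betaKn^2\nu^{-1}\big(\tau_n^{2q+2}+\hK^{2s+2}\big)(\cdots)$ to the squared bound; the same defect reappears in your separate treatment of $\bht(u-\uIt,\vh)$, whose skew part tests $\PiO{k}{K}(u-\uIt)$ against $\bbeta\cdot\PPiO{k}{K}\nabla\vh$. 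A bare $\nu^{-1}$ weight, not protected by a $\min$, is incompatible with the stated estimate and blows up precisely in the advection-dominated regime the lemma is designed for. The fix is not a refinement of the individual estimates but a reorganization: the two ``bad'' pairings are equal and opposite up to the $(\PPiO{k}{K}-\PPiO{k-1}{K})\nabla\vh$ correction, so only their difference may be estimated, never each one in absolute value. The rest of your outline (the consistency remainders, the jump and initial-value terms via the trace inequality, and the advective consistency terms carrying the $W^{s+1,\infty}$ norms of $\bbeta$) follows the paper's route and is sound.
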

\begin{proof}
Integrating by parts in time, and using the fact that $u$ is continuous in time and the flux--jump identity~$\phi(\cdot, \tn^+) \jump{\psi}_n + \psi(\cdot, \tn^-) \jump{\phi}_n = \jump{\phi \psi}_n$, we get the following identity:
\begin{align}
\nonumber
\chimb & = \sum_{n = 1}^{N} \sum_{K \in \Omegah} \Big( -(u, \dpt \vh)_{\Kn} + \int_{I_n} \! \mhK(\uIt, \dpt \vh) + (\bbeta \cdot \nabla u,  \vh)_{\Kn} \\
\nonumber
& \quad - \frac12 (\bbeta \cdot \PPiO{k}{K} \nabla \uIt, \PiO{k}{K} \vh)_{\Kn} + \frac12 (\PiO{k}{K} \uIt, \bbeta \cdot \PPiO{k}{K} \nabla \vh)_{\Kn}\Big) \\
\nonumber
& \quad + \Big((u, \vh)_{\ST} - \mh(\uIt(\cdot, T), \vh(\cdot, T))\Big) \\
\nonumber
& \quad + \sum_{n = 1}^{N - 1} \Big( (u(\cdot, \tn), \jump{\vh}_n)_{\Omega} - \mh(\uIt(\cdot, \tn^-), \jump{\vh}_n) \Big) \\
\label{EQN::CHIMB-SPLIT}
& =: \sum_{n = 1}^N \sum_{K \in \Th} \chiV^{\Kn} + \chiT + \sum_{n = 1}^{N - 1} \chiJ^n.
\end{align}

\paragraph{$\bullet$ Estimate of~$\chiV^{\Kn}$.}
We first consider the volume terms~$\chiV^{\Kn}$.
Using the definition of the bilinear form~$\mh(\cdot, \cdot)$, the definition of the operators~$\tLhtK$ and~$\tLK$, the skew-symmetry of the bilinear form~$b(\cdot, \cdot)$, and the orthogonality properties of~$\PiO{k}{K}$ and~$\PPiO{k}{K}$, we have
\begin{align*}
%%%
\sum_{n = 1}^N \sum_{K \in \Omegah} \chiV^{\Kn} & = \sum_{n = 1}^{N} \sum_{K \in \Omegah} \bigg( -(u, \dpt \vh)_{\Kn} + \int_{I_n} \! \mhK(\uIt, \dpt \vh) - (u,  \bbeta \cdot \nabla \vh)_{\Kn}  \\
& \quad - \frac12 (\bbeta \cdot \PPiO{k}{K} \nabla \uIt, \PiO{k}{K} \vh)_{\Kn} + \frac12 (\PiO{k}{K} \uIt, \bbeta \cdot \PPiO{k}{K} \nabla \vh)_{\Kn}\bigg)\\
%%%%
& = \sum_{n = 1}^{N} \sum_{K \in \Omegah} \bigg[-(u, \tLK \vh)_{\Kn} + (\PiO{k}{K}\uIt, \dpt \PiO{k}{K} \vh + \bbeta \cdot \PPiO{k}{K} \nabla \vh)_{\Kn} \\
& \quad + \int_{I_n} \smK((\Id - \PiO{k}{K}) \uIt, \dpt (\Id - \PiO{k}{K}) \vh) \dt \\
& \quad - \frac12 \Big((\bbeta \cdot \PPiO{k}{K} \nabla \uIt, \PiO{k}{K} \vh)_{\Kn}
+ (\PiO{k}{K} \uIt, \bbeta \cdot \PPiO{k}{K} \nabla \vh)_{\Kn} \Big) \bigg] \\
%%%
& = \sum_{n = 1}^{N} \sum_{K \in \Omegah} \bigg[-(u, \dpt(\Id - \PiO{k}{K}) \vh)_{\Kn}  - \big(u\bbeta, (\Id - \PPiO{k}{K}) \nabla \vh\big)_{\Kn}  \\
& \quad - (\PiO{k}{K}\uIt - u, \dpt \PiO{k}{K} \vh + \bbeta \cdot \PPiO{k}{K} \nabla \vh)_{\Kn} \\
& \quad + \int_{I_n} \smK((\Id - \PiO{k}{K}) \uIt, \dpt (\Id - \PiO{k}{K}) \vh) \dt  \\
& \quad - \frac12 \Big((\bbeta \cdot \PPiO{k}{K} \nabla \uIt, \PiO{k}{K} \vh)_{\Kn} + (\PiO{k}{K} \uIt, \bbeta \cdot \PPiO{k}{K} \nabla \vh)_{\Kn}  \Big) \bigg] \\
& = : -\sum_{n = 1}^N \sum_{K \in \Omegah} \Big( \chiVone^{\Kn} + \chiVtwo^{\Kn} + \chiVthree^{\Kn} + \chiVfour^{\Kn} + \frac12 \chiVfive^{\Kn}\Big).
\end{align*}

The terms~$\chiVone^{\Kn}$ and~$\chiVtwo^{\Kn}$ can be treated similarly to~$\chisupgtwo$ and~$\chisupgthree$ in Lemma~\ref{LEMMA::ESTIMATE-CHISUPG}, respectively.
The following estimates are then obtained:
\begin{align}
\nonumber
\sum_{n = 1}^N \sum_{K \in \Omegah} \chiVone^{\Kn} & =
\sum_{n = 1}^N \sum_{K \in \Omegah} \big(u, \dpt(\Id - \PiO{k}{K}) \vh \big)_{\Kn} \\
\label{EQN::CHIV-ONE}
& \lesssim \Big(\sum_{n = 1}^N \sum_{K \in \Th} \frac{\hK^{2s+4}}{\lambdaKn \betaKn^2 \tau_n^2
} \Norm{u}{L^2(I_n; H^{s+1}(K))}^2\Big)^{\frac12} \SemiNorm{\vh}{\supg},
\end{align}
%%%
\begin{align}
\nonumber
\sum_{n = 1}^N \sum_{K \in \Omegah} \chiVtwo^{\Kn} & = \sum_{n = 1}^N \sum_{K \in \Omegah} (u\bbeta, (\Id - \PPiO{k}{K}) \nabla \vh)_{\Kn}\\
\label{EQN::CHIV-TWO}
& \lesssim \Big(\sum_{n = 1}^N \sum_{K \in \Omegah} \hK^{2s + 2} \min\Big\{\frac{1}{\lambdaKn \betaKn^2}, \frac{1}{\nu} \Big\}\Norm{\bbeta}{L^{\infty}(I_n; W^{s + 1, \infty}(K)^d)}^2 \Norm{u}{L^2(I_n; H^{s+1}(K))}^2\Big)^{\frac12} \TNorm{\vh}.
\end{align}

The term~$\chiVthree^{\Kn}$ can be bounded using the definition of~$\SemiNorm{\cdot}{\Knsupg}$ in~\eqref{EQN::LOCAL-SUPG-NORM}, the Cauchy--Schwarz and the triangle inequalities, the estimates for~$\PiO{k}{K}$ and~$\Pi_r^t$ in Lemma~\ref{LEMMA::BRAMBLE-HILBERT}, and the VE interpolation estimate in Lemma~\ref{LEMMA::VE-APPROX}, as follows:
\begin{align}
\nonumber
\sum_{n = 1}^N \sum_{K \in \Omegah} \chiVthree^{\Kn} & = \sum_{n = 1}^N \sum_{K \in \Omegah} \Big( (\PiO{k}{K} \uIt - u, \tLhtK \vh)_{\Kn} + (\PiO{k}{K} \uIt - u, \bbeta \cdot \PPiO{k}{K} (\Id - \PPiO{k-1}{K}) \nabla \vh)_{\Kn}\Big)\\
\nonumber
& \le \sum_{n = 1}^N \sum_{K \in \Omegah} \lambdaKn^{-1/2}\Norm{\PiO{k}{K} \uIt - u}{L^2(\Kn)} \SemiNorm{\vh}{\Knsupg} \\
\nonumber
& \le \sum_{n = 1}^N \sum_{K \in \Omegah} \lambdaKn^{-1/2} \big(\Norm{\PiO{k}{K}(\uIt - u)}{L^2(\Kn)} + \Norm{\ePiO}{L^2(\Kn)} \big) \SemiNorm{\vh}{\Knsupg} \\
\label{EQN::CHIV-THREE}
& \lesssim \bigg(\sum_{n = 1}^N \sum_{K \in \Omegah} \Big(\frac{\tau_n^{2q+2}}{\lambdaKn} \Norm{u}{H^{q+1}(I_n; L^2(K))}^2 + \frac{\hK^{2s+2}}{\lambdaKn} \Norm{u}{L^2(I_n; H^{s+1}(K))}^2\Big) \bigg)^{\frac12} \SemiNorm{\vh}{\supg}.
\end{align}

As for the term~$\chiVfour^{\Kn}$, we use the stability bound~\eqref{EQN::STAB-smK}, the polynomial inverse estimate~\eqref{EQN::INVERSE-ESTIMATE-TIME}, the scaled Poincar\'e--Friedrichs inequality in Lemma~\ref{LEMMA::POINCARE-Friedrichs}, and similar steps as those used to estimate~$\chisupgthree$ in~\eqref{EQN::CHI-F3} to obtain the following estimate:
\begin{align}
\nonumber
\sum_{n = 1}^N \sum_{K \in \Omegah} \chiVfour^{\Kn} & = -\sum_{n = 1}^N \sum_{K \in \Omegah} \int_{I_n} \smK((\Id - \PiO{k}{K})\uIt, \dpt (\Id - \PiO{k}{K}) \vh) \dt \\
\nonumber
& \le \sum_{n = 1}^N \sum_{K \in \Omegah} \hat{\mu} \Norm{(\Id - \PiO{k}{K})\uIt}{L^2(\Kn)} \Norm{\dpt(\Id - \PiO{k}{K}) \vh}{L^2(\Kn)} \\
%%%
\nonumber
& \lesssim \sum_{n = 1}^N \sum_{K \in \Omegah} \frac{\mu^* \hK}{\tau_n} \Norm{(\Id - \PiO{k}{K}) \uIt}{L^2(\Kn)} \Norm{\nabla (\Id - \PiN{k}{K}) \vh}{L^2(\Kn)^d} \\
\nonumber
%%%%
& \lesssim \sum_{n = 1}^N \sum_{K \in \Omegah} \min\bigg\{\frac{\SemiNorm{\vh}{\Knsupg}}{\lambdaKn^{1/2} \betaKn}, \frac{\Norm{\nabla \vh}{L^2(\Kn)^d}}{\nu^{1/2}}\bigg\} \Big(\frac{\hK^{s+2}}{\tau_n} \Norm{u}{L^2(I_n; H^{s+1}(K))} \\
\nonumber
& \quad + \hK \tau_n^{q} \Norm{u}{H^{q+1}(I_n; L^2(K))} \Big) \\
\nonumber
%%%
& \lesssim \bigg(\sum_{n = 1}^N \sum_{K \in \Omegah} \min\Big\{\frac{1}{\lambdaKn \betaKn^2}, \frac{1}{\nu}\Big\} \Big(\frac{\hK^{2s+4}}{\tau_n^2} \Norm{u}{L^2(I_n; H^{s+1}(K))}^2 \\
\label{EQN::CHIV-FOUR}
& \quad + \hK^2 \tau_n^{2q} \Norm{u}{H^{q+1}(I_n; L^2(K))}^2 \Big) \bigg)^{\frac12} \TNorm{\vh}.
\end{align}

Adding and subtracting suitable terms, recalling the antisymmetry of the form $b(\cdot,\cdot)$ and using the orthogonality properties of~$\PiO{k}{K}$ and~$\PPiO{k}{K}$, the following identity can be obtained:
\begin{align}
\nonumber
\sum_{n = 1}^N \sum_{K \in \Omegah} \chiVfive^{\Kn}
%%%
& = \sum_{n = 1}^N \sum_{K \in \Omegah} \Big( (\bbeta \cdot \PPiO{k}{K} \nabla \uIt, \PiO{k}{K} \vh)_{\Kn} + (\PiO{k}{K} \uIt, \bbeta \cdot \PPiO{k}{K} \nabla \vh)_{\Kn}\Big) \\
\nonumber
& = \sum_{n = 1}^N \sum_{K \in \Omegah} \Big( (\bbeta \cdot (\PPiO{k}{K} - \Id) \nabla \uIt, \PiO{k}{K} \vh)_{\Kn} + (\bbeta \cdot \nabla \uIt, \vh)_{\Kn} \\
\nonumber
& \quad - (\bbeta \cdot \nabla \uIt, (\Id - \PiO{k}{K})\vh)_{\Kn} + ((\PiO{k}{K} - \Id) \uIt, \bbeta \cdot \PPiO{k}{K} \nabla \vh)_{\Kn} \\
\nonumber
& \quad - (\uIt, \bbeta \cdot (\Id - \PPiO{k}{K}) \nabla \vh)_{\Kn} + (\uIt, \bbeta \cdot \nabla \vh)_{\Kn} \Big) \\
\nonumber
%%%
& = -\sum_{n = 1}^N \sum_{K \in \Omegah} \Big( ((\Id - \PPiO{k}{K}) \nabla \uIt, (\Id -\PPiO{0}{K})\bbeta \PiO{k}{K} \vh)_{\Kn} \\
\nonumber
& \quad + ((\Id - \PiO{k}{K}) (\bbeta \cdot \nabla \uIt), (\Id - \PiO{k}{K})\vh)_{\Kn} \\
\nonumber
& \quad + ((\Id - \PiO{k}{K}) \uIt, (\Id - \PPiO{0}{K})\bbeta \cdot \PPiO{k}{K} \nabla \vh)_{\Kn} \\
\nonumber
& \quad + ((\Id - \PPiO{k}{K})(\uIt \bbeta), (\Id - \PPiO{k}{K}) \nabla \vh)_{\Kn}\Big)  \\
\label{EQN::CHIV-SPLIT}
& =: -\sum_{n = 1}^N \sum_{K \in \Omegah} \big(\Theta_1^{\Kn} + \Theta_2^{\Kn} + \Theta_3^{\Kn} + \Theta_4^{\Kn} \big).
\end{align}

The first term on the right-hand side of~\eqref{EQN::CHIV-SPLIT} can be estimated using the Cauchy--Schwarz inequality, the estimates for~$\PiO{k}{K}$, $\PiO{0}{K}$, and~$\Pi_r^t$ in Lemma~\ref{LEMMA::BRAMBLE-HILBERT} and their stability properties, and the VE interpolation estimate in Lemma~\ref{LEMMA::VE-APPROX}, as follows:
\begin{align}
\nonumber
\Theta_1^{\Kn} & = ((\Id - \PPiO{k}{K}) \nabla \uIt, (\Id - \PPiO{0}{K}) \bbeta \PiO{k}{K} \vh)_{\Kn} \\
\nonumber
& = ((\Id - \PPiO{k}{K}) \nabla u, (\Id - \PPiO{0}{K}) \bbeta \PiO{k}{K} \vh)_{\Kn} \\
\nonumber
& \quad + ((\Id - \PPiO{k}{K}) \nabla (u - \uIt), (\Id - \PPiO{0}{K}) \bbeta \PiO{k}{K} \vh)_{\Kn} \\
\nonumber
& \lesssim \hK\Norm{\bbeta}{L^{\infty}(I_n; W^{1, \infty}(K)^d)} \Big(\Norm{(\Id - \PPiO{k}{K})\nabla u}{L^2(\Kn)^d}  + \Norm{\nabla(u - \uIt)}{L^2(\Kn)^d} \Big) \Norm{\vh}{L^2(\Kn)} \\
\label{EQN::THETA-1}
& \lesssim \Norm{\bbeta}{L^{\infty}(I_n; W^{1, \infty}(K)^d)} \big(\hK^{s+1} \Norm{u}{L^2(I_n; H^{s+1}(K))} + \hK\tau_n^{q+1} \Norm{\nabla u}{H^{q+1}(I_n; L^2(K)^d)} \big)\Norm{\vh}{L^2(\Kn)}.
\end{align}

As for the second term on the right-hand side of~\eqref{EQN::CHIV-SPLIT}, we use the estimates for~$\PiO{k}{K}$ and~$\Pi_r^t$ in Lemma~\ref{LEMMA::BRAMBLE-HILBERT} and their stability properties, the VE interpolation estimate in Lemma~\ref{LEMMA::VE-APPROX}, and the scaled Poincar\'e--Friedrichs inequality in Lemma~\ref{LEMMA::POINCARE-Friedrichs}, to get
\begin{align}
\nonumber
\Theta_2^{\Kn} & = ((\Id - \PiO{k}{K})(\bbeta \cdot \nabla \uIt), (\Id - \PiO{k}{K}) \vh)_{\Kn} \\
\nonumber
%%%
& = ((\Id - \PiO{k}{K})(\bbeta \cdot \nabla u), (\Id - \PiO{k}{K}) \vh)_{\Kn} \\
\nonumber
& \quad + ((\Id - \PiO{k}{K})(\bbeta \cdot \nabla (\uIt - u)), (\Id - \PiO{k}{K}) \vh)_{\Kn} \\
%%%
\nonumber
& \lesssim \hK \Big(\hK^s\Norm{\bbeta}{L^{\infty}(I_n; W^{s, \infty}(K)^d)} \Norm{u}{L^2(I_n; H^{s+1}(K))} \\
\nonumber
& \quad + \betaKn \Norm{\nabla(u - \uIt)}{L^2(\Kn)^d}\Big) \Norm{\nabla (\Id - \PiN{k}{K}) \vh}{L^2(\Kn)^d} \\
\nonumber
%%%
& \lesssim \bigg(\frac{\hK^{s+1}}{\lambdaKn^{1/2} \betaKn} \Norm{\bbeta}{L^{\infty}(I_n; W^{s, \infty}(K)^d)} \Norm{u}{L^2(I_n; H^{s+1}(K))} + \frac{\hK \tau_n^{q+1}}{\lambdaKn^{1/2}} \Norm{\nabla u}{H^{q+1}(I_n; L^2(K)^d)} \\
\label{EQN::THETA-2}
& \quad + \frac{\hK^{s+1}}{\lambdaKn^{1/2}} \Norm{u}{L^2(I_n; H^{s+1}(K))}\bigg) \SemiNorm{\vh}{\Knsupg}.
\end{align}

Using the triangle inequality, the estimates for~$\PPiO{0}{K}$, $\PiO{k}{K}$, and~$\Pi_r^t$ in Lemma~\ref{LEMMA::BRAMBLE-HILBERT} and their stability properties, the VE interpolation estimate in Lemma~\ref{LEMMA::VE-APPROX}, and the VE inverse estimate~\eqref{EQN::INVERSE-ESTIMATE-SPACE}, we obtain the following estimate:
\begin{align}
\nonumber
\Theta_3^{\Kn} & =  ((\Id - \PiO{k}{K}) \uIt, (\Id - \PPiO{0}{K}) \bbeta \cdot \PPiO{k}{K} \nabla \vh)_{\Kn} \\
\nonumber
%%%
& \lesssim \hK \Norm{\bbeta}{L^{\infty}(I_n; W^{1, \infty}(K)^d)} \Norm{(\Id - \PiO{k}{K})\uIt}{L^2(\Kn)} \Norm{\nabla \vh}{L^2(\Kn)^d} \\
%%%
\nonumber
& \lesssim \Norm{\bbeta}{L^{\infty}(I_n; W^{1, \infty}(K)^d)} \Big(\Norm{u - \uIt}{L^2(\Kn)} + \Norm{(\Id - \PiO{k}{K}) u}{L^2(\Kn)} \\
\nonumber
& \quad + \Norm{\PiO{k}{K}(u - \uIt)}{L^2(\Kn)}\Big) \Norm{\vh}{L^2(\Kn)} \\
%%%
\label{EQN::THETA-3}
& \lesssim \Norm{\bbeta}{L^{\infty}(I_n; W^{1, \infty}(K)^d)} \Big(\tau_n^{q+1} \Norm{u}{H^{q+1}(I_n; L^2(K))} + \hK^{s+1} \Norm{u}{L^2(I_n; H^{s+1}(K))}\Big) \Norm{\vh}{L^2(\Kn)}.
\end{align}

Finally, using the estimates for~$\PPiO{k}{K}$ and~$\Pi_r^t$ in Lemma~\ref{LEMMA::BRAMBLE-HILBERT} and their stability properties, the Cauchy--Schwarz inequality, the definition of~$\SemiNorm{\cdot}{\Knsupg}$ in~\eqref{EQN::LOCAL-SUPG-NORM}, and the VE interpolation estimate in Lemma~\ref{LEMMA::VE-APPROX}, we have
\begin{align}
\nonumber
\Theta_4^{\Kn} & = ((\Id - \PPiO{k}{K}) (\uIt \bbeta), (\Id - \PPiO{k}{K}) \nabla \vh)_{\Kn} \\
\nonumber
%%%
& = ((\Id - \PPiO{k}{K})(u\bbeta), (\Id - \PPiO{k}{K}) \nabla \vh)_{\Kn} \\
\nonumber
& \quad - ((\Id - \PPiO{k}{K})((u - \uIt) \bbeta), (\Id - \PPiO{k}{K}) \nabla \vh)_{\Kn} \\
\nonumber
%%%
& \lesssim \Big(\hK^{s+1}\Norm{\bbeta}{L^{\infty}(I_n; W^{s+1, \infty}(K)^d)} \Norm{u}{L^2(I_n; H^{s+1}(K))} + \betaKn \Norm{u - \uIt}{L^2(\Kn)} \Big) \Norm{(\Id - \PPiO{k}{K}) \nabla \vh}{L^2(\Kn)^d)} \\
\nonumber
%%%
& \lesssim \bigg( \frac{\hK^{s+1}}{\lambdaKn^{1/2} \betaKn} \Norm{\bbeta}{L^{\infty}(I_n; W^{s+1, \infty}(K)^d)} \Norm{u}{L^2(I_n; H^{s+1}(K))}  \\
\label{EQN::THETA-4}
& \quad + \frac{\tau_n^{q+1}}{\lambdaKn^{1/2}} \Norm{u}{H^{q+1}(I_n; L^2(K))} + \frac{\hK^{s+1}}{\lambdaKn^{1/2}} \Norm{u}{L^2(I_n; H^{s+1}(K))}\bigg) \SemiNorm{\vh}{\Knsupg}.
\end{align}

Combining identity~\eqref{EQN::CHIV-SPLIT} with the local estimates~\eqref{EQN::THETA-1}, \eqref{EQN::THETA-2}, \eqref{EQN::THETA-3}, and~\eqref{EQN::THETA-4}, we get
\begin{align}
\nonumber
\sum_{n = 1}^N \sum_{K \in \Omegah} \chiVfive^{\Kn}
& \lesssim \bigg[\sum_{n = 1}^N \sum_{K \in \Omegah} \Big(
\Big(\Norm{\bbeta}{L^{\infty}(I_n; W^{1, \infty}(K)^d)}^2 \hK^{2s+2} + \frac{\hK^{2s+2}}{\lambdaKn \betaKn^2} \Norm{\bbeta}{L^{\infty}(I_n; W^{s+1}(K)^d)}^2 \\
\nonumber
& \quad + \frac{\hK^{2s+2}}{\lambdaKn}\Big)\Norm{u}{L^2(I_n; H^{s+1}(K))}^2 \\
\nonumber
& \quad + \Big(\hK^2 \tau_n^{2q+2} \Norm{\bbeta}{L^{\infty}(I_n; W^{1, \infty}(K)^d)}^2 + \frac{\hK^2 \tau_n^{2q+2}}{\lambdaKn} \Big) \Norm{\nabla u}{H^{q+1}(I_n; L^2(K)^d)}^2  \\
\label{EQN::CHIV-FIVE}
& \quad + \Big(\tau_n^{2q+2} \Norm{\bbeta}{L^{\infty}(I_n; W^{1 ,\infty}(K)^d)}^2 + \frac{\tau_n^{2q+2}}{\lambdaKn} \Big) \Norm{u}{H^{q+1}(I_n; L^2(K))}^2
\Big) \bigg]^{\frac12} \TNorm{\vh}.
\end{align}

\paragraph{Estimate of~$\chiT$ and~$\chiJ^n$.} Using the polynomial consistency of the bilinear form~$\mhK(\cdot, \cdot)$, the stability bound~\eqref{EQN::STAB::mK}, the trace inequality in Lemma~\ref{LEMMA::TRACE-INEQ}, the commutativity of~$\PiO{k}{K}$ and the VE interpolant with the first-order time derivative operator~$\dpt$, the Cauchy--Schwarz inequality, and the definition of the upwind-jump functional~$\SemiNorm{\cdot}{\sfJ}$ in~\eqref{EQN::JUMP-FUNCTIONAL}, we obtain the following estimate:

\begin{align}
\nonumber
\chiT + \sum_{n = 1}^{N - 1} \chiJ^n & = (u, \vh)_{\ST} - \mh(\uIt(\cdot, T), \vh(\cdot, T)) \\
\nonumber
& \quad + \sum_{n = 1}^{N - 1} \Big( (u(\cdot, \tn), \jump{\vh}_n)_{\Omega} - \mh(\uIt{}(\cdot, \tn^-), \jump{\vh}_n) \Big) \\
%%%
\nonumber
& = \sum_{K \in \Omegah} \Big( ((\Id - \PiO{k}{K}) u(\cdot, T), \vh(\cdot, T))_{K} + \mhK(\PiO{k}{K} u - \uIt(\cdot, T), \vh(\cdot, T)) \\
\nonumber
& \quad + \sum_{n = 1}^{N - 1} \sum_{K \in \Omegah} \Big( ((\Id - \PiO{k}{K}) u(\cdot, \tn), \jump{\vh}_n)_{K} + \mhK(\PiO{k}{K} u(\cdot, \tn) - \uIt(\cdot, \tn^-), \jump{\vh}_n) \Big) \\
\nonumber
%%%
& \lesssim \bigg(\sum_{n = 1}^N \sum_{K \in \Omegah} \Big(\tau_n^{-1} \Norm{(\Id - \PiO{k}{K}) u}{L^2(\Kn)}^2 + \tau_n \Norm{(\Id - \PiO{k}{K}) \dpt u}{L^2(\Kn)} \\
\nonumber
& \quad + \tau_n^{-1} (\mu^*)^2 \big(\Norm{(\Id - \PiO{k}{K})u}{L^2(\Kn)}^2 + \Norm{u - \uIt}{L^2(\Kn)}^2 \big) \\
\nonumber
& \quad + \tau_n (\mu^*)^2 \big(\Norm{(\Id - \PiO{k}{K}) \dpt u}{L^2(\Kn)}^2 + \Norm{\dpt(u - \uIt)}{L^2(\Kn)}^2\big)\Big) \bigg)^{\frac12} \SemiNorm{\vh}{\sfJ} \\
%%%
\nonumber
& \lesssim \bigg(\sum_{n = 1}^N \sum_{K \in \Omegah} \Big(\frac{\hK^{2s+2}}{\tau_n} \Norm{u}{L^2(I_n; H^{s+1}(K))}^2 + \tau_n \hK^{2s+2} \Norm{\dpt u}{L^2(I_n; H^{s+1}(K))}^2 \\
\label{EQN::CHITJ}
& \quad + \tau_n^{2q+1} \Norm{u}{H^{q+1}(I_n; L^2(K))}^2 \Big) \bigg)^{\frac12} \SemiNorm{\vh}{\sfJ}.
\end{align}
The desired result is then obtained by combining identity~\eqref{EQN::CHIMB-SPLIT} with estimates~\eqref{EQN::CHIV-ONE}, \eqref{EQN::CHIV-TWO}, \eqref{EQN::CHIV-THREE}, \eqref{EQN::CHIV-FOUR}, \eqref{EQN::CHIV-FIVE}, and~\eqref{EQN::CHITJ}.
\end{proof}

\begin{remark}[Alternative bound for $\chiVone^{\Kn}$]\label{rem:alternative-2}
Along the same lines as in Remark \ref{rem:alternative-1}, whenever the mesh $\Omega_h$ is fixed one could also handle $\chiVone^{\Kn}$ in \eqref{EQN::CHIV-ONE} in a more efficient way, leading to a bound where the term $\tau_n^{-2}$ is substituted by the more favorable $\tau_n^{-1}$, provided that~$\dpt u$ is regular enough. One obtains
\begin{align}
\nonumber
\sum_{n = 1}^N & \sum_{K \in \Omegah} \chiVone^{\Kn} =
\sum_{n = 1}^N \sum_{K \in \Omegah} \big(u, \dpt(\Id - \PiO{k}{K}) \vh \big)_{0, \Kn} \\
\nonumber
& \lesssim \Big(\sum_{n = 1}^N \sum_{K \in \Th} \hK^{2s+2} \Norm{\dpt u}{L^2(I_n; H^{s+1}(K))}^2\Big)^{\frac12} \Norm{\vh}{L^2(\QT)} \\
\nonumber
& \quad + \bigg(\sum_{n = 1}^N\sum_{K \in \Th} \Big( \frac{\hK^{2s+2}}{\tau_n} \Norm{u}{L^2(I_n; H^{s+1}(K))}^2 + \tau_n \hK^{2s+2} \Norm{\dpt u}{L^2(I_n; H^{s+1}(K))}^2\Big) \bigg)^{\frac12} \SemiNorm{\vh}{\sf J}.
\end{align}
\eremk
\end{remark}

\medskip\noindent
{\bf Proof of Theorem \ref{THM::ERROR-ESTIMATES}.}
Combining the \emph{a priori} error bound in Proposition~\ref{PROP::A-PRIORI-BOUNDS} with the estimates in Lemmas~\ref{LEMMA::INTERPOLATION-ERROR-ESTIMATE}, \ref{LEMMA::ESTIMATE-CHIL}, \ref{LEMMA::ESTIMATE-CHISUPG}, \ref{LEMMA::ESTIMATE-CHIA}, and~\ref{LEMMA::ESTIMATE-CHIMB} one obtains a \emph{general convergence result}, which underlines the different local contributions.
The proof of Theorem \ref{THM::ERROR-ESTIMATES} follows as a simplified case, simply by elaborating on the bounds using the specific theorem assumptions and finally dropping all the regularity terms.

%-------------------------------------------------------
\subsection{Avoiding degeneration of the error estimates for \texorpdfstring{$\tau \ll h$}{tau << h}}
\label{sec:no-tau-below}

Under suitable conditions, we are able to eliminate the terms where $\tau$ appears at the denominator in Theorem \ref{THM::ERROR-ESTIMATES}.
In the present section, we describe briefly the involved modifications. As a starting point, we obviously set ourselves within the range of assumptions outlined in Theorem~\ref{THM::ERROR-ESTIMATES}.

Differently from the previous part, see Remark \ref{rem:mesh-changing}, we now require that the mesh $\Omega_h$ is fixed and does not change from one time-slab to the next. This is quite natural as the ``transfer'' error induced by a change of mesh depends on the spatial meshsize~$h$, but the number of such occurrences grows as $\tau^{-1}$. Furthermore, we require the polynomial order in time $r \ge 1$.
Finally, we assume (mainly for simplicity of exposition) that the spatial mesh (family) is quasi-uniform.

The first key point is substituting the space--time projection operator introduced at the beginning of Section \ref{SUBSECT::A-PRIORI-ERROR} with the following approximant, for which we keep the same notation.
Given $u \in H^1(0, T; L^2(\Omega))$ the discrete function $\uIt \in \Vht$ is \emph{continuous in time} and defined by

$$
\left\{
\begin{aligned}
& m_h(\uIt(\cdot,t_n), v_h) = (u(\cdot,t_n),v_h)_{{0, \Omega}} \qquad \forall n \in \{0,1,..,N\}, \ \forall v_h \in \Vh, \\
& \int_{t_{n-1}}^{t_n} m_h(\partial_t \uIt(\cdot,t), (t-t_{n-1})^l v_h) {\dt} = \int_{t_{n-1}}^{t_n} (\partial_t u(\cdot,t), (t-t_{n-1})^l v_h){\dt} \\
& \qquad \qquad  \forall n \in \{1,..,N\}, \ \forall v_h \in \Vh , \forall l \in \{1,2,..,r-1\} \, .
\end{aligned}
\right.
$$
It can be checked that the above interpolant satisfies, for all $n=1,2,..,N$, the continuity properties
$$
\| \uIt \|_{L^2(I_n,L^2(\Omega))} \lesssim \| u \|_{L^2(I_n,L^2(\Omega))} + \tau_n \| \partial_t u \|_{L^2(I_n,L^2(\Omega))} \ {\text{ and }} \
 \| \partial_t \uIt \|_{L^2(I_n,L^2(\Omega))} \lesssim \| \partial_t u \|_{L^2(I_n,L^2(\Omega))},
$$
and also that optimal (in $h,\tau$) approximation error bounds hold for $(u - \uIt)$ in the various norms of interest needed in our analysis.

Thanks to the continuity of $\uIt$, the error term $\SemiNorm{\eIt}{\sfJ}^2$ in Lemma \ref{LEMMA::INTERPOLATION-ERROR-ESTIMATE} vanishes; note that such quantity was the only responsible for the $1/\tau_n$ term appearing in Lemma \ref{LEMMA::INTERPOLATION-ERROR-ESTIMATE}.
Furthermore, thanks to the peculiar definition of~$\uIt$ here above, the term~$\chiT + \sum_{n = 1}^{N - 1} \chiJ^n$ appearing in~\eqref{EQN::CHITJ} also vanishes.
In the simpler finite element case, which is included in our analysis as explained in Section~\ref{SUBSECT::VE-VERSIONS}, these are the only terms leading to negative powers of~$\tau$ in our {error estimate}.

In the more general case of virtual elements, we need to modify the scheme, adding the following additional stabilization term to the discrete form~$\Bht(\cdot, \cdot)$:
\begin{equation}\label{eq:new-term}
\sum_{K \in \Omega_h} \sum_{n=1}^N \lambdaKn \int_{I_n} \smK(\dpt (\Id - \PiO{k}{K}) \uh, \dpt (\Id - \PiO{k}{K}) \vh) \dt,
\end{equation}
which, quite trivially, leads to a stability bound controlling a stronger norm $\TNorm{\uh}^2$, c.f. \eqref{EQN::ENERGY-NORM}, now including also the term
\begin{equation}\label{eq:new-ctrl}
\sum_{K \in \Omega_h} \sum_{n=1}^N \lambdaKn \| \dpt (\Id - \PiO{k}{K}) \vh \|_{L^2(K_n)}^2.
\end{equation}
The control on the test function in the above norm allows us to deal with all the remaining ``bad'' terms in our convergence analysis, namely
$\sum_{n = 1}^N \sum_{K \in \Th} \chisupgtwo^{\Kn}$ in Lemma \ref{LEMMA::ESTIMATE-CHISUPG}, plus~$\sum_{n = 1}^N \sum_{K \in \Omegah} \chiVone^{\Kn}$ and $\sum_{n = 1}^N \sum_{K \in \Omegah} \chiVfour^{\Kn}$ in Lemma \ref{LEMMA::ESTIMATE-CHIMB}. Indeed, we can now avoid using the inverse estimate in time and simply exploit directly control on~\eqref{eq:new-ctrl}; the Cauchy--Schwarz inequality and standard manipulations yield the error terms
$$
\begin{aligned}
& \bullet \sum_{n = 1}^N \sum_{K \in \Omegah} \lambdaKn \| (\Id - \PiO{k}{K})f \|_{L^2(\Kn)}^2
\lesssim \, h^{2k+3} , \\
& \bullet \sum_{n = 1}^N \sum_{K \in \Omegah} \lambdaKn^{-1} \| (\Id - \PiO{k}{K}) u \|_{L^2(\Kn)}^2 \lesssim \, \max{\{\nu,h\}} \, h^{2k} \, , \\
& \bullet \sum_{n = 1}^N \sum_{K \in \Omegah} \int_{I_n} \lambdaKn^{-1} \smK((\Id - \PiO{k}{K})\uIt, (\Id - \PiO{k}{K})\uIt)
\lesssim \, \max{\{\nu,h\}} \, \big( h^{2k} + \tau^{2k} (\tau/h)^2 \big) \, .
\end{aligned}
$$
The bounds here above can be easily obtained by the same techniques used in the rest of this contribution and therefore we avoid showing the details.
Finally, it can be checked that term~\eqref{eq:new-term} is also of optimal order with respect to the interpolation error, as usual assuming sufficient regularity of the solution~$u$.

\begin{remark}[The case $r=0$]
In the case $r=0$, we cannot take a continuous-in-time interpolant, but we can choose the unique approximant that satisfies
$$
m_h(\uIt(\cdot,t_n), v_h) = (u(\cdot,t_n^{-}),v_h)_{{0, \Omega}} \qquad \forall n \in \{1,2,..,N\}, \ \forall v_h \in \Vh .
$$
It is immediate to check that this choice is still sufficient to make term~\eqref{EQN::CHITJ} vanish. In order to deal with
$\SemiNorm{\eIt}{\sfJ}^2$ in Lemma \ref{LEMMA::INTERPOLATION-ERROR-ESTIMATE}, we simply avoid such term in the interpolation estimates. Therefore, in order to avoid negative powers of $\tau$, the final error bound for the $r=0$ case will be in the weaker norm
$$
|\!|\!| \wh |\!|\!|_{\mathcal{E}, r=0}^2 := \Norm{\wh}{L^2(\ST)}^2 + \Norm{\wh}{L^2(\QT)}^2 + \nu \Norm{\nabla \wh}{L^2(\QT)^d}^2 + \SemiNorm{\wh}{\supg}^2
$$
where the jump terms have been excluded. Finally, note that in the~$r=0$ case there is no need to introduce~\eqref{eq:new-term} since all the associated terms are now vanishing.
\eremk
\end{remark}

% ----------------------------------------------------
%               NUMERICAL EXPERIMENTS
% ----------------------------------------------------
\section{Numerical tests}\label{sec:num}

In this section, we present some numerical results in three space dimensions
in order to (1) validate the theoretical derivations from the practical perspective, and evidence the effectiveness of the proposed stabilized scheme
compared to a non-stabilized approach. For the sake of efficiency, we make use of the serendipity version of VEM, see Section \ref{SUBSECT::VE-VERSIONS}.
We will refer to the stabilized method, i.e.,  the scheme described in~\eqref{EQN::FULLY-DISCRETE-SCHEME} as \stab{},
whereas the scheme \emph{without} the SUPG stabilization terms will be denoted as \nostab{}.
As described in the previous sections,
the \stab{} scheme depends on a set of parameters.
In all the numerical experiments of this section,
we will set these parameters as
$$
\zeta = 0.1,\quad \Cinv
\approx 10k^2 \quad \text{and} \quad \lambdaKn = \zeta \min\bigg\{\frac{\hK^2}{\nu \Cinv^2}, \frac{\hK}{\betaQT}\bigg\}\,.
$$
An (interpolatory) Lagrangian basis is used for the space~$\Pp{k}(I_n)$ for~$n = 1, \ldots, N$, so that the associated degrees of freedom (DoFs) are pointwise evaluations.
Finally, we use the classical \texttt{dofi-dofi} choice (see, e.g., \cite[\S4.6]{volley}) for the VE stabilization term.

The numerical experiments are organized as follows.
In Section~\ref{sec:num:conv}, we evaluate the convergence trend of the schemes \stab{} and \nostab{},
both in the convection- and the diffusion-dominated regimes.
Then, in Section~\ref{sec:num:bench}, we apply these two schemes to a benchmark problem similar to the \say{Three body movement} proposed in~\cite[\S5.1]{Ahmed_Matthies_Tobiska_Xie:2011}.

% ----------------------------------------------------
%                   CONVERGENCE
% ----------------------------------------------------
\subsection{A standard convergence test}\label{sec:num:conv}
In this section, we solve the time dependent convection-diffusion problem in the
space--time domain~$\QT = (0, 1)^3 \times (0, 1.5)$.
As transport advective field, we set the following space--time dependent function:
$$
\bbeta(x,\,y,\,z,\,t):=\left[\begin{array}{r}
     e^{t/2}\sin(\pi(x+y+2z))\\
     e^{t/2}\sin(\pi(x+y+2z))\\
    -e^{t/2}\sin(\pi(x+y+2z))
\end{array}\right]\,
$$
that increases exponentially with time.
We consider the following two values of the diffusive coefficients:
$$
\nu = 1 \quad \text{ and } \quad \nu = 10^{-10},
$$
so as to obtain a problem characterized by a diffusion- or a convection-dominated regime, respectively.
Then, the right-hand side will be properly modified according~$\nu$ so that the exact solution is
given by
$$
u(x,\,y,\,z,\,t) = e^{0.3t}\sin(\pi x)\cos(\pi y)\sin(\pi z)\,.
$$
In the proposed tests,
we consider a family of four spatial meshes with decreasing meshsize~$h$,
while the time domain is split in uniform intervals with time step~$\tau
\simeq h$.
In all the numerical experiments, the VE
approximation degree coincides with the time polynomial degree,
and we refer to it simply as~$k$.
Moreover, to test the robustness of the proposed method with respect to element distortion,
we consider two different types of spatial meshes:
\begin{itemize}
\item \texttt{cube}: structured meshes composed by cubes, see Figure~\ref{fig:meshes}(left panel);
\item \texttt{voro}: meshes composed by polyhedral elements that may have small edges or faces, see Figure~\ref{fig:meshes}(right panel).
\end{itemize}

\begin{figure}[!htb]
\centering
\begin{tabular}{ccc}
\includegraphics[width=0.31\textwidth]{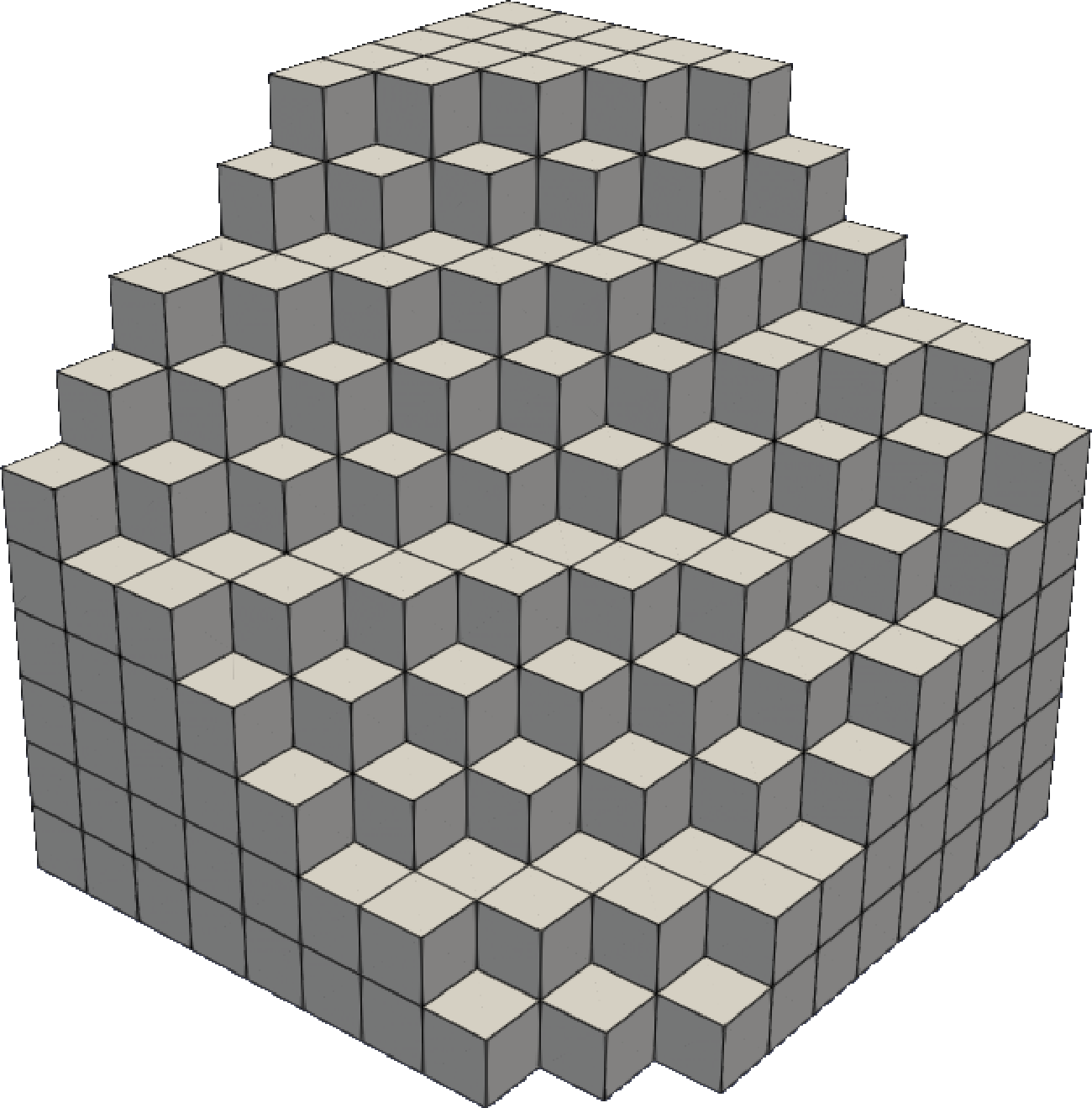} &\qquad\qquad&
\includegraphics[width=0.31\textwidth]{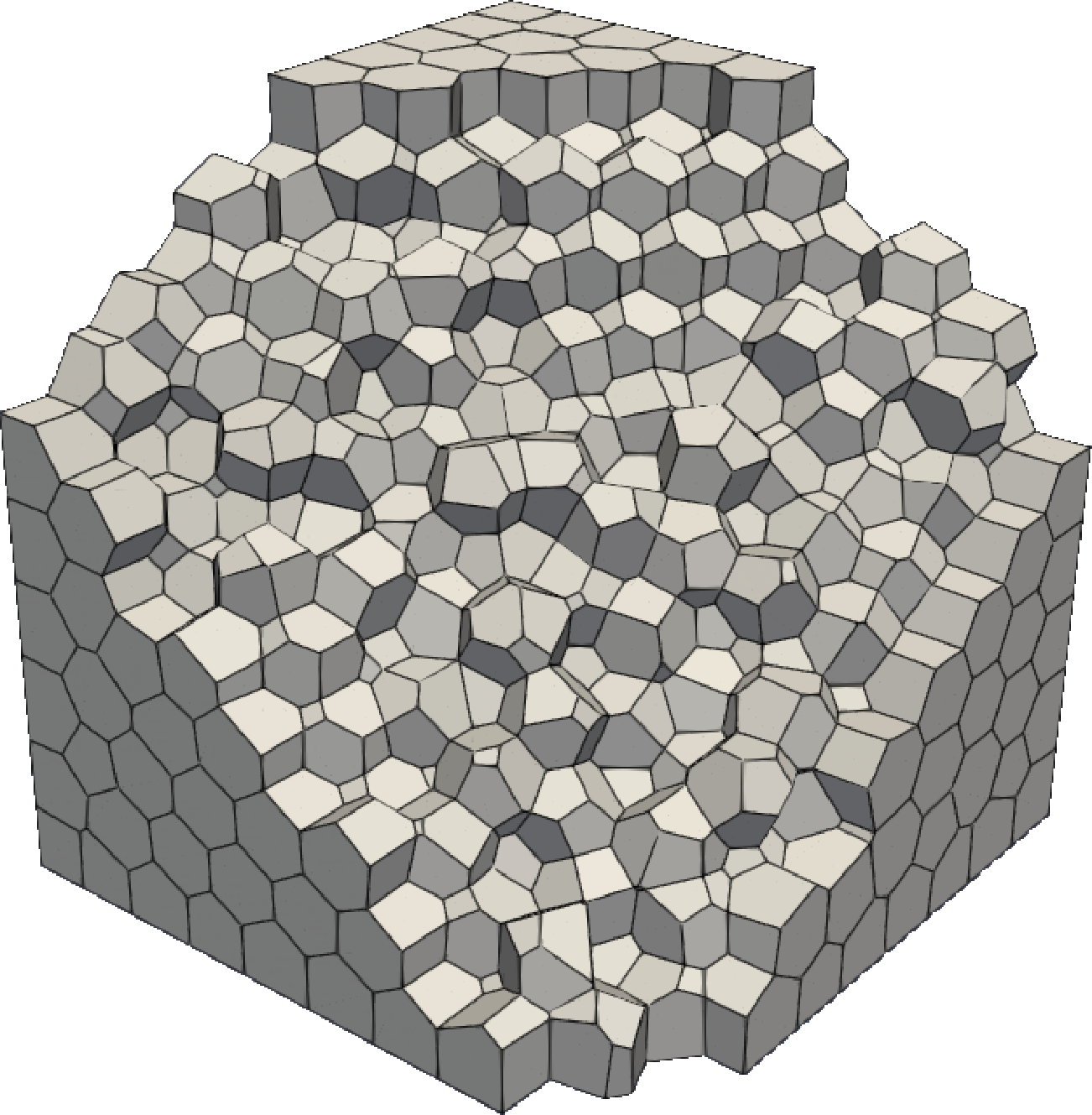} \\
\texttt{cube} & &\texttt{voro} \\
\end{tabular}
\caption{The interior of some spatial meshes of each type with approximately the same meshsize.}
\label{fig:meshes}
\end{figure}

For both schemes, we compute the following errors:
\begin{itemize}
\item the \textbf{$H^1$ seminorm error} at the final time
$$
e_{H^1}^T :=
\Big(\sum_{K\in\Omegah}
\Norm{\nabla(u - \Pi_k^\nabla \uh)(\cdot, T)}{L^2(K)^3}^2 \Big)^{\frac12};
$$
\item the \textbf{$L^2$ norm error} at final time
$$
e_{L^2}^T :=
\Big( \sum_{K \in \Omegah} \Norm{(u - \Pi_k^0 \uh)(\cdot, T)}{L^2(K)}^2 \Big)^{\frac12};
$$
\item the \textbf{$H^1$ norm error} on the space-time cylinder
$$
e_{H^1}^{Q_T} :=
\Big(\sum_{n = 1}^N \sum_{K \in \Th} \big(\Norm{u - \Pi_k^0 u}{L^2(\Kn)}^2 + \Norm{\nabla (u - \Pi_k^\nabla u)}{L^2(\Kn)^3}^2 \big) \Big)^{\frac12}.
$$
\end{itemize}
For the scheme \stab{}, under the above condition $\tau \simeq h$,
the following asymptotic behaviour is expected:
$$
e_{H^1}^T = \mathcal{O}\left(h^k\right), \qquad
e_{L^2}^T = \mathcal{O}\left(h^{k+1}\right),\qquad
e_{H^1}^{Q_T} = \mathcal{O} \left(h^k\right),
$$
in both the convection- and the diffusion-dominated regimes.
Furthermore, only for the solution~$\uh$ obtained with the scheme \stab{},
we compute also the following quantity:
$$
e_{\Vht} := \TNorm{\uh - u_I}\,,
$$
where~$u_I$ is the DoF-interpolant of the exact solution $u$.
According to Theorem~\ref{THM::ERROR-ESTIMATES},
the asymptotic behaviour of~$e_{\Vht}$ depends on the regime we are considering.
More specifically, in the convection-dominated regime, it decays as~$\mathcal{O}(h^{k+1/2})$,
whereas, in a diffusion dominated regime, it decays as~$\mathcal{O}(h^k)$.

In Figure~\ref{fig:conv:1}, we show the errors obtained for~$k=1$ and~$2$ in
the diffusion-dominated regime, i.e., for~$\nu = 1$.
The schemes \stab{} and \nostab{} have the expected convergence rates.
Moreover, for the same mesh and approximation degree, the absolute values of the errors obtained with \stab{} and \nostab{} are very close to each other.
This is a numerical evidence that the stabilization terms added in the \stab{} scheme do not affect the convergence rates in the diffusion-dominated regime.
In Figure~\ref{fig:conv:1}(fourth panel), we observe a superconvergence trend of
the error~$e_{\Vht}$ for \texttt{cube} meshes and~$k = 1$;
this may be due to the shape regularity of the mesh and
the fact that we are evaluating an error based only on DoF values.

\begin{figure}[!htb]
\centering
\begin{tabular}{ccc}
\includegraphics[width=0.45\textwidth]{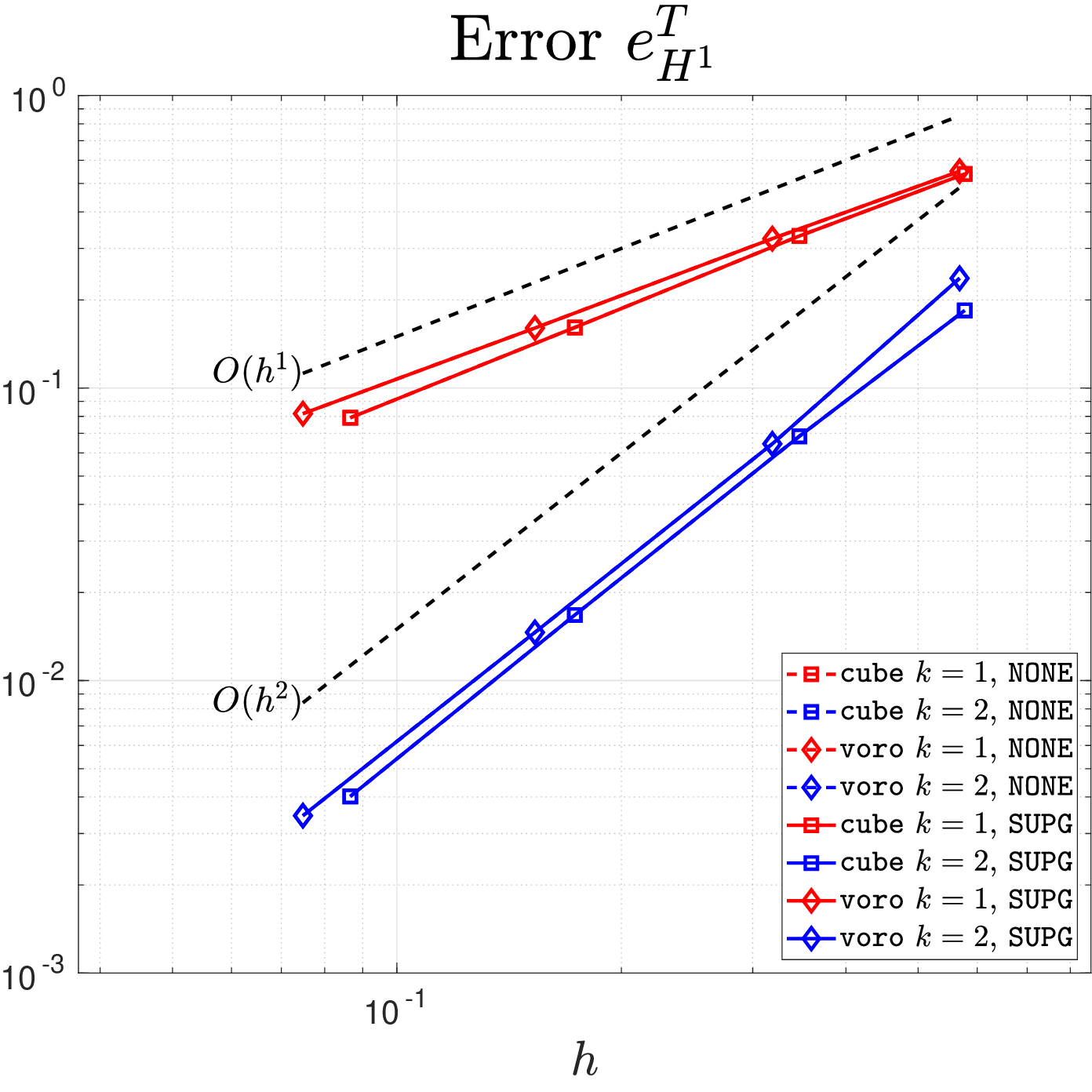} &\qquad&
\includegraphics[width=0.45\textwidth]{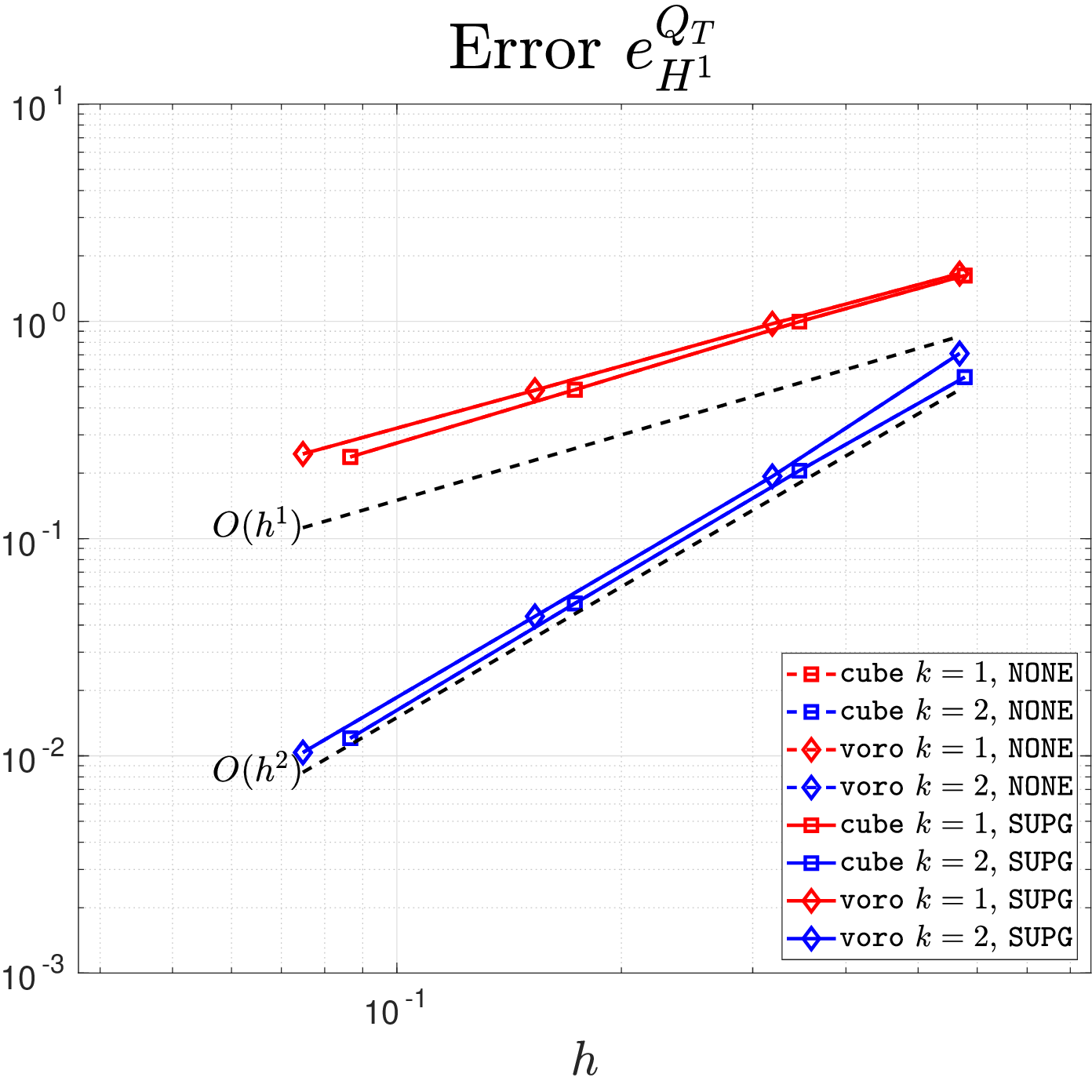} \\
\includegraphics[width=0.45\textwidth]{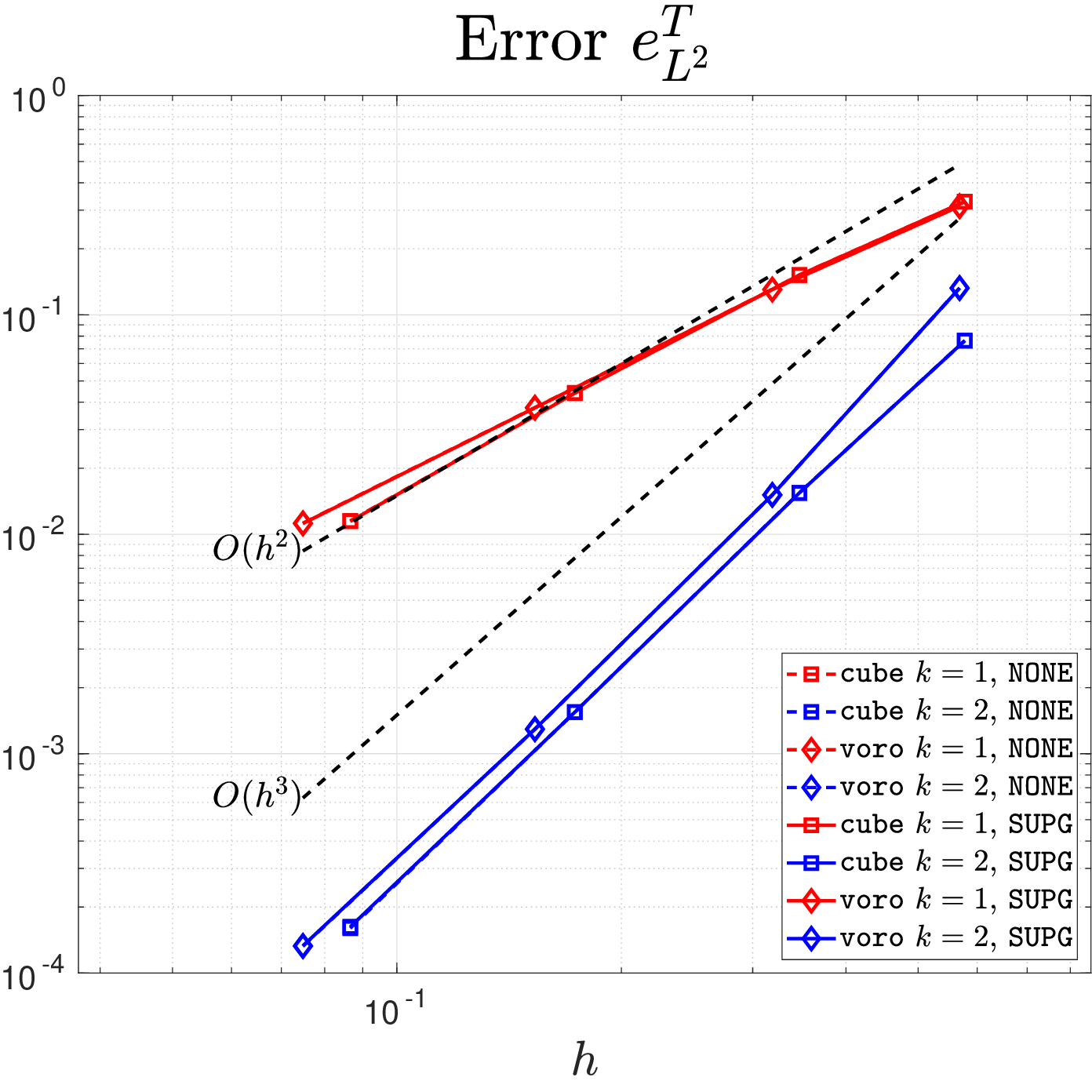} &\qquad&
\includegraphics[width=0.45\textwidth]{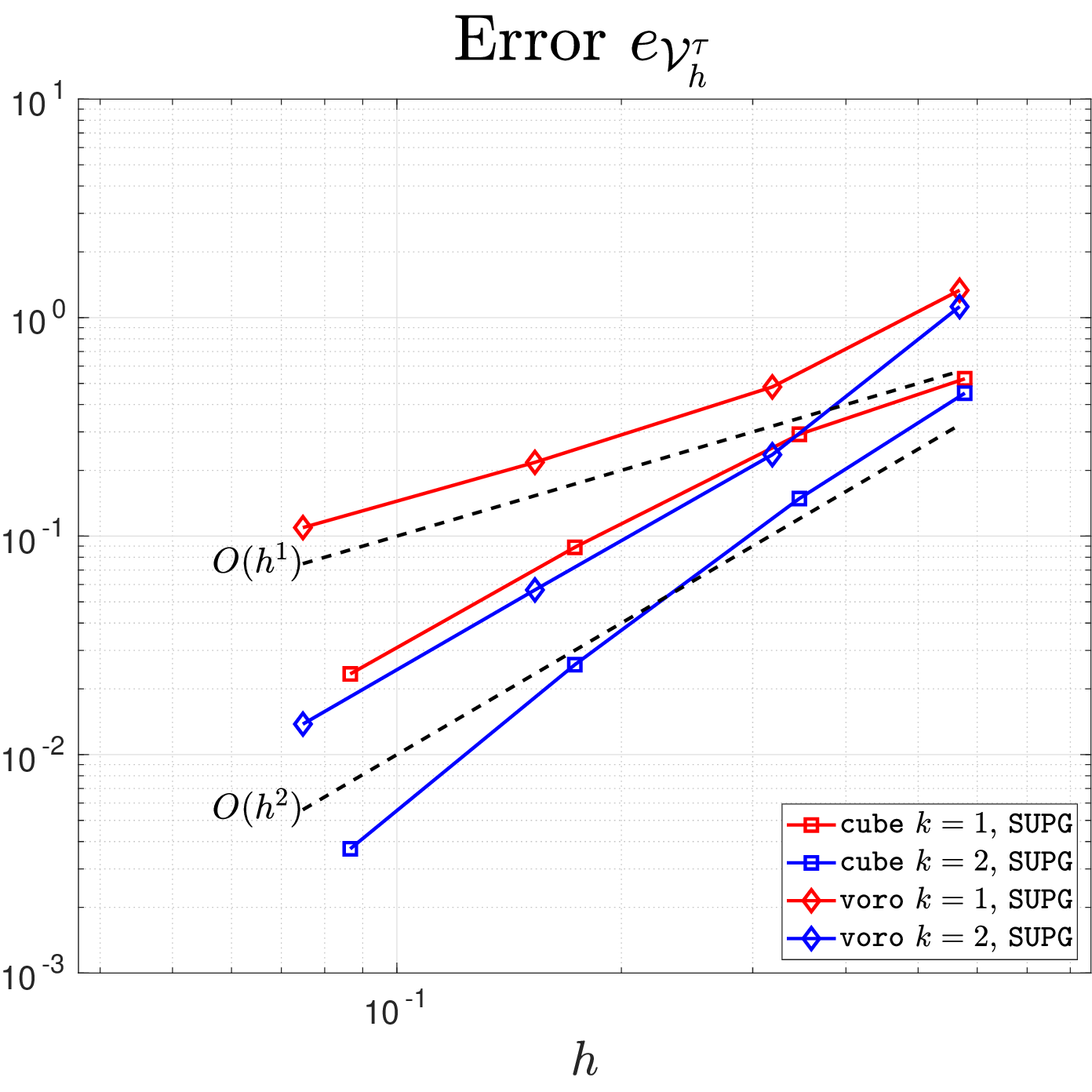} \\
\end{tabular}
\caption{Convergence Analysis: the trend of all errors
taken into account in the diffusion-dominated regime, i.e., for~$\nu=1$.}
\label{fig:conv:1}
\end{figure}

Now, we consider the convection-dominated regime.
In Figure~\ref{fig:conv:2}, we show the errors obtained for~$\nu = 10^{-10}$.
In this case, the results for the two approximation degrees considered are different.
Indeed, for~$k=1$, a similar behaviour is observed for the schemes \stab{} and \nostab{}.
More precisely, the convergence lines are close to each other for the errors~$e_{H^1}^T$ and $e_{H^1}^{Q_T}$,
whereas, for the error $e_{L^2}^T$, the error trend for the \nostab{} scheme is not optimal
in the last refinement step.
The advantage of using the \stab{} scheme becomes more evident for~$k=2$.
For \emph{all} errors and for \emph{both} type of meshes,
the convergence rates for the \nostab{} scheme degrade.
Moreover, if we compare the error $e_{\Vht}$ in Figure~\ref{fig:conv:1} and~\ref{fig:conv:2},
we obtain an additional $1/2$ in the convergence rates in the convection-dominated regime, which is in agreement with Theorem~\ref{THM::ERROR-ESTIMATES}.
This fact is more evident for the \texttt{voro} meshes,
which do not satisfy any shape regularity that may affect the trend of the error.

\begin{figure}[!htb]
\centering
\begin{tabular}{ccc}
\includegraphics[width=0.45\textwidth]{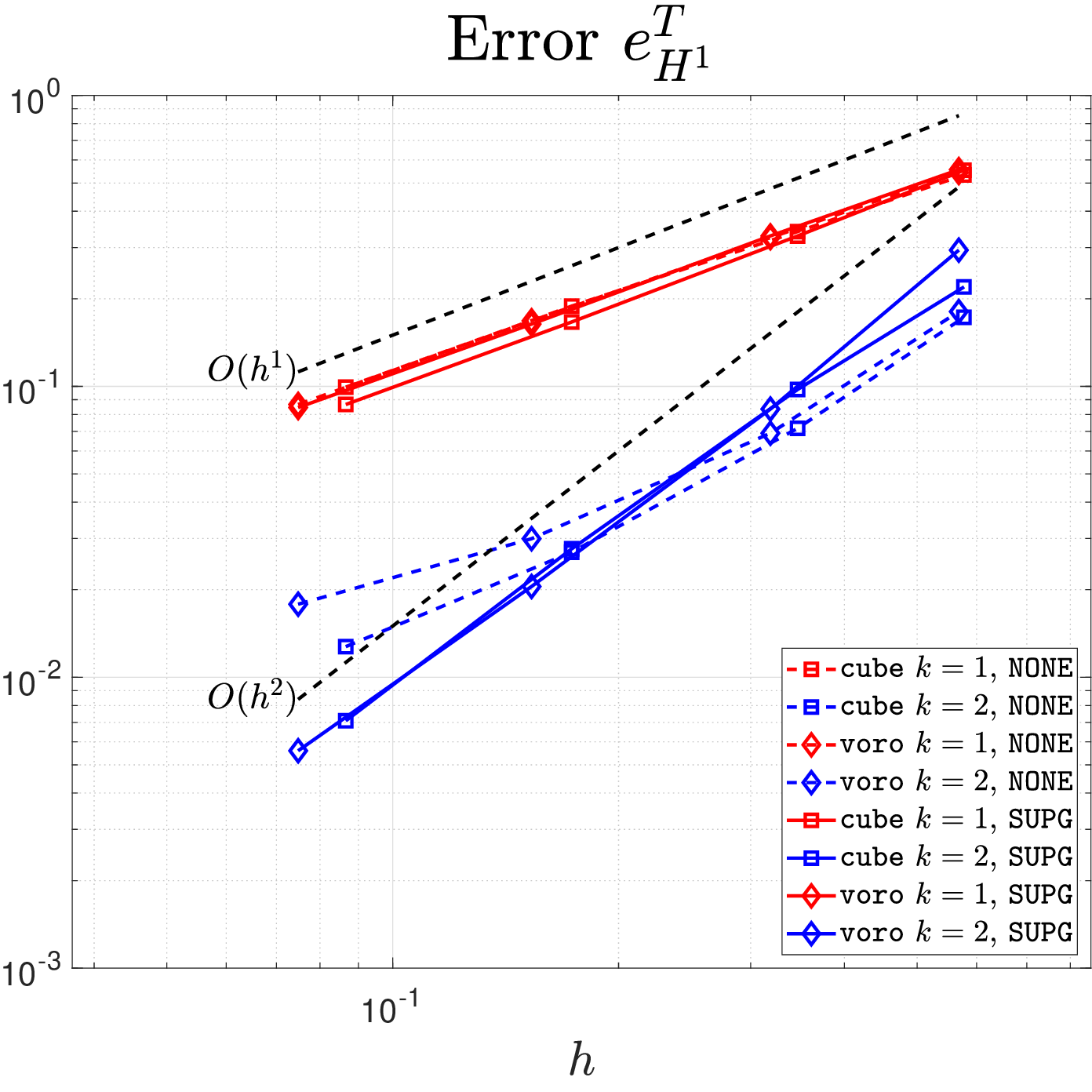} &\qquad&
\includegraphics[width=0.45\textwidth]{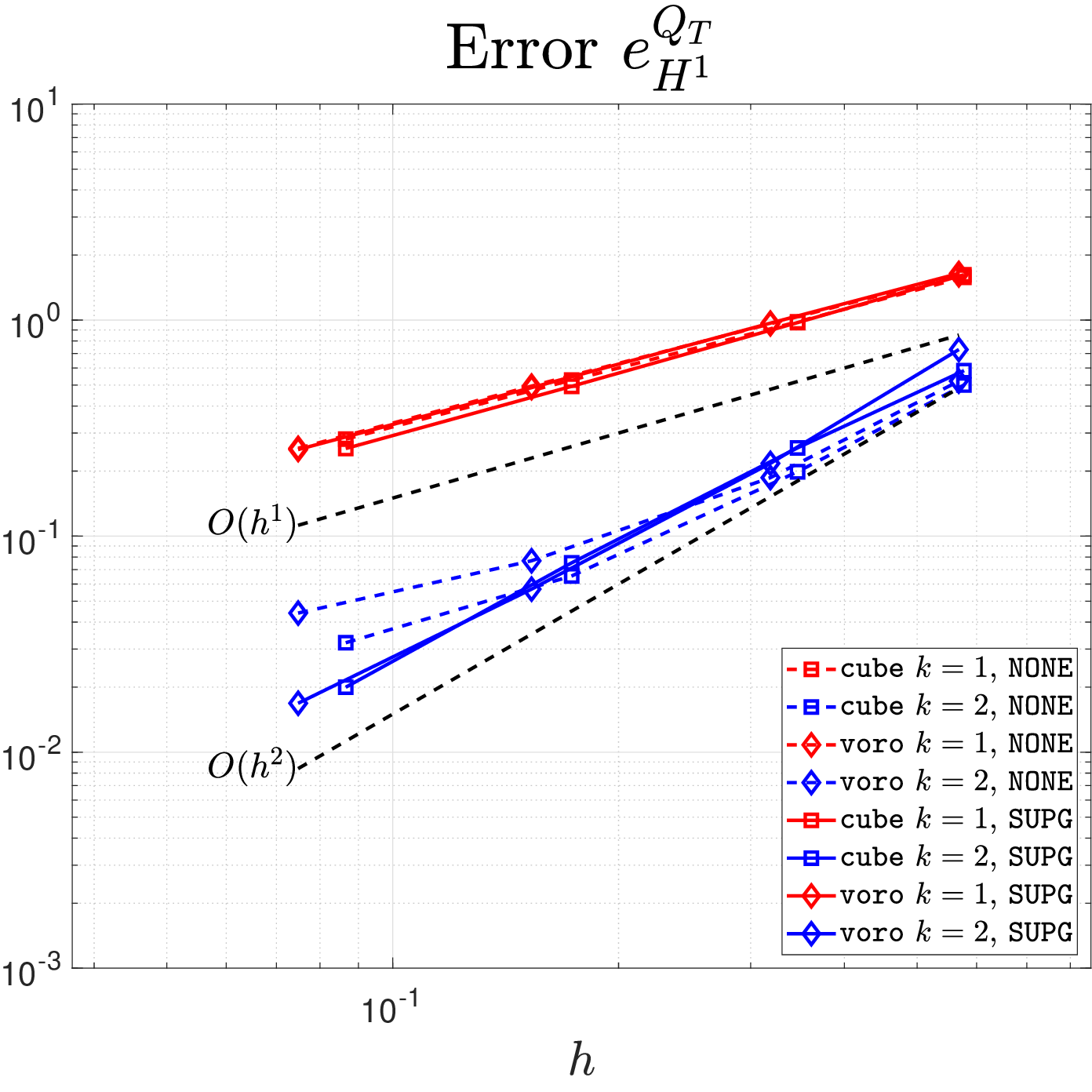} \\
\includegraphics[width=0.45\textwidth]{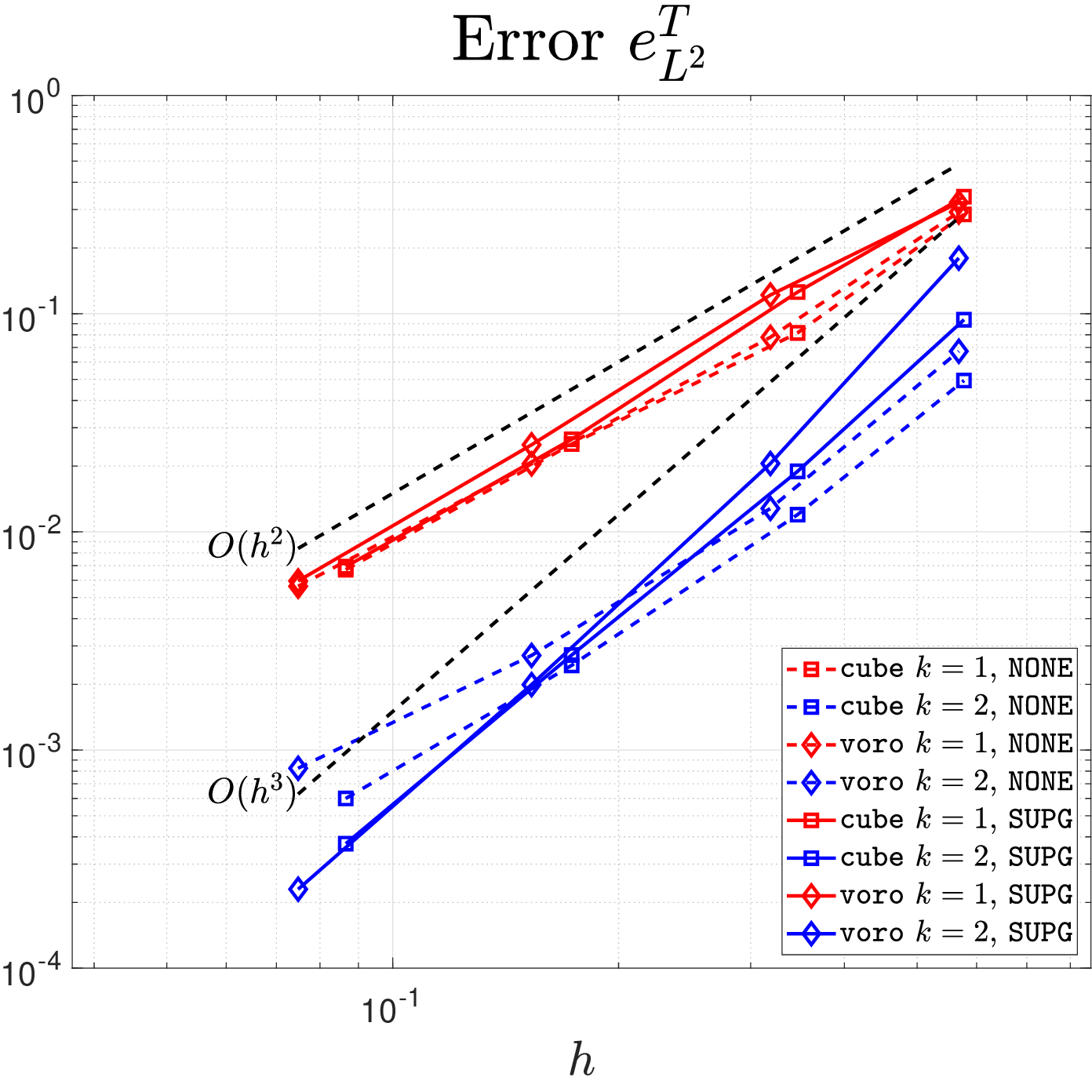} &\qquad&
\includegraphics[width=0.45\textwidth]{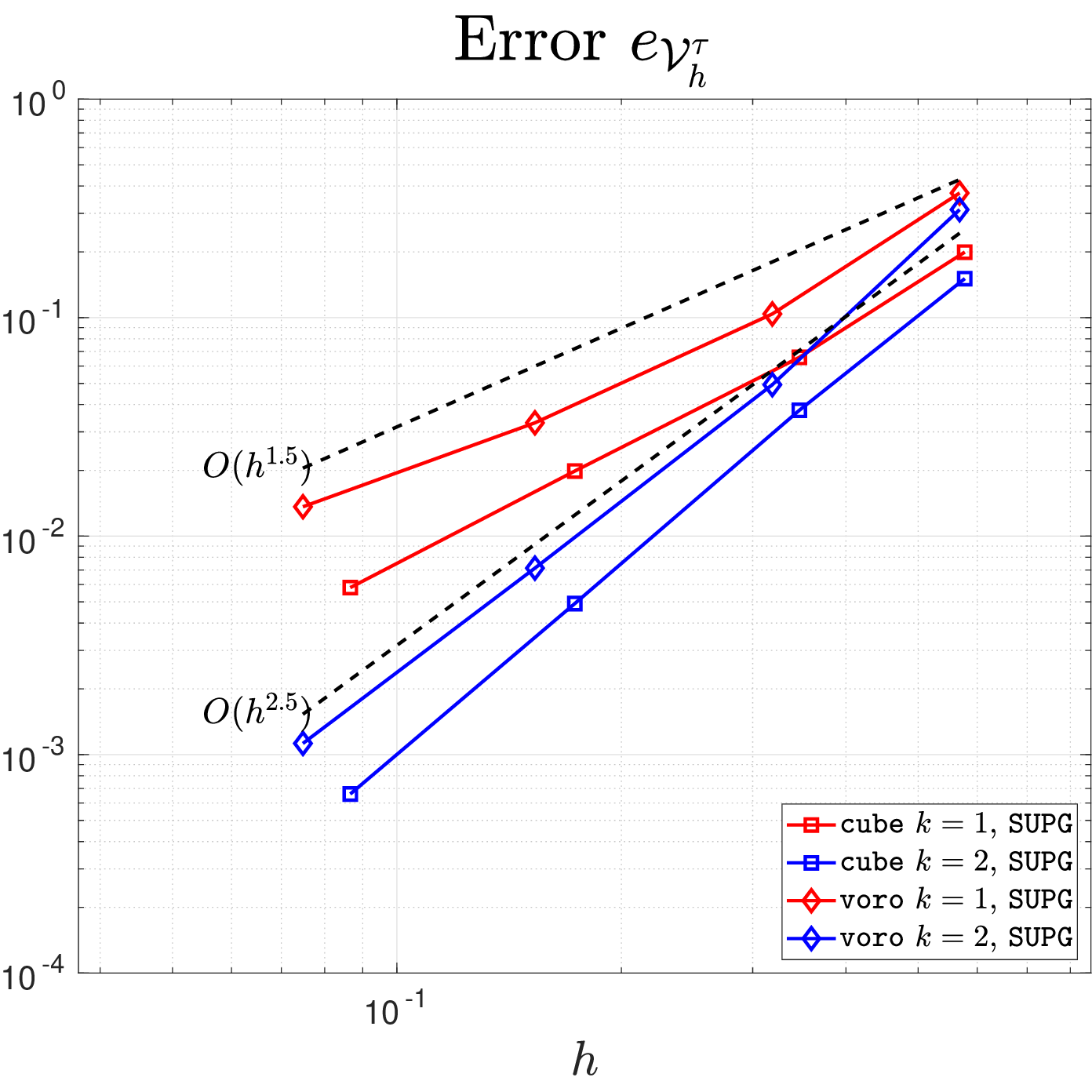} \\
\end{tabular}
\caption{Convergence Analysis: the trend of all errors
taken into account in the convection-dominated regime, i.e., for~$\nu=10^{-10}$.}
\label{fig:conv:2}
\end{figure}

% --------------------------------------------------
%                   BENCHMARK
% --------------------------------------------------
\subsection{A qualitative purely advective test}\label{sec:num:bench}
In this section, we make a qualitative assessment of the proposed scheme.
To achieve this goal,
we produce a benchmark problem similar to the bi-dimensional Example~2 of~\cite[\S5.1]{Ahmed_Matthies_Tobiska_Xie:2011}, here developed in three space dimensions.
In that work, the authors considered three disjoint bodies
subject to a rotating advection field and
set the diffusive coefficient to~$10^{-20}$
to mimic a transport problem.

Let~$\Omega=(0,\, 1)^3$ and
the initial condition be given by
$$
u_0(x,\,y,\,z) := \left\{\begin{array}{rl}
1 &\text{if}\:\sqrt{(x-0.25)^2+(y-0.50)^2+(z-0.50)^2} \leq 0.2\\
0 &\text{otherwise}
\end{array}\right.
$$
that represents a ball of radius $r=0.2$ centred in $C(0.25,\,0.5,\,0.5)$.
We
set~$f = 0$, $\nu = 10^{-20}$, and the advection field
$$
\bbeta(x,\,y,\,z,\,t) := \left[\begin{array}{c}
     0.5 - y \\
     x - 0.5\\
    0.0
\end{array}\right]\,.
$$
For these data, the ball is expected to rotate around the barycenter
of the unit cube and,
since the diffusive coefficient is close to zero,
it has to preserve its shape.

We compute the discrete solution for~$k = 1$,
a fine tetrahedral spatial mesh,
a fixed time step~$\tau = 10^{-1}$, and a final time~$T = 6$.

In Figure~\ref{fig:exe2}, we show some clips of the discrete solution at different times
for both the \nostab{} and the \stab{} schemes, respectively.
Such clips are obtained using the \say{clip} filter of Paraview~\cite{paraview}
where we associate to each mesh vertex the value of the discrete function $\uh$.
As a consequence, we do not see exact circles in such plots,
as the shape is affected by the aforementioned geometric interpolation and visualization procedure.

At~$t=0$, we plot the initial condition for both cases.
For~\emph{all} the other time instances shown in Figure~\ref{fig:exe2},
several spurious values of~$\uh$ appear in the interior of the domain~$\Omega$ with the \nostab{} scheme, i.e.,
the discrete solution~$\uh$ is not constantly zero outside the moving sphere.
Such instabilities increase with time, as observed in the results obtained at the final time~$T = 6$.

The \stab{} scheme does not exhibit these instabilities,
as the solution outside the sphere is more uniform and,
according to the colorbar, it is closer to zero then the \nostab{} discrete solution.
Furthermore, despite the effect of the Paraview interpolation,
the sphere seems more uniform and rounded.

This benchmark test highlights the importance of using a stabilized scheme even for $k=1$,
a point that was not
evident from the error plots of Section~\ref{sec:num:conv},
c.f. the errors of \nostab{} and \stab{} for $k=1$ in Figures \ref{fig:conv:1} and~\ref{fig:conv:2}.

\begin{figure}[!htb]
\centering
\begin{tabular}{ccc}
t=0.0 &\nostab{} &\stab{} \\
t=1.5&\includegraphics[width=0.34\textwidth]{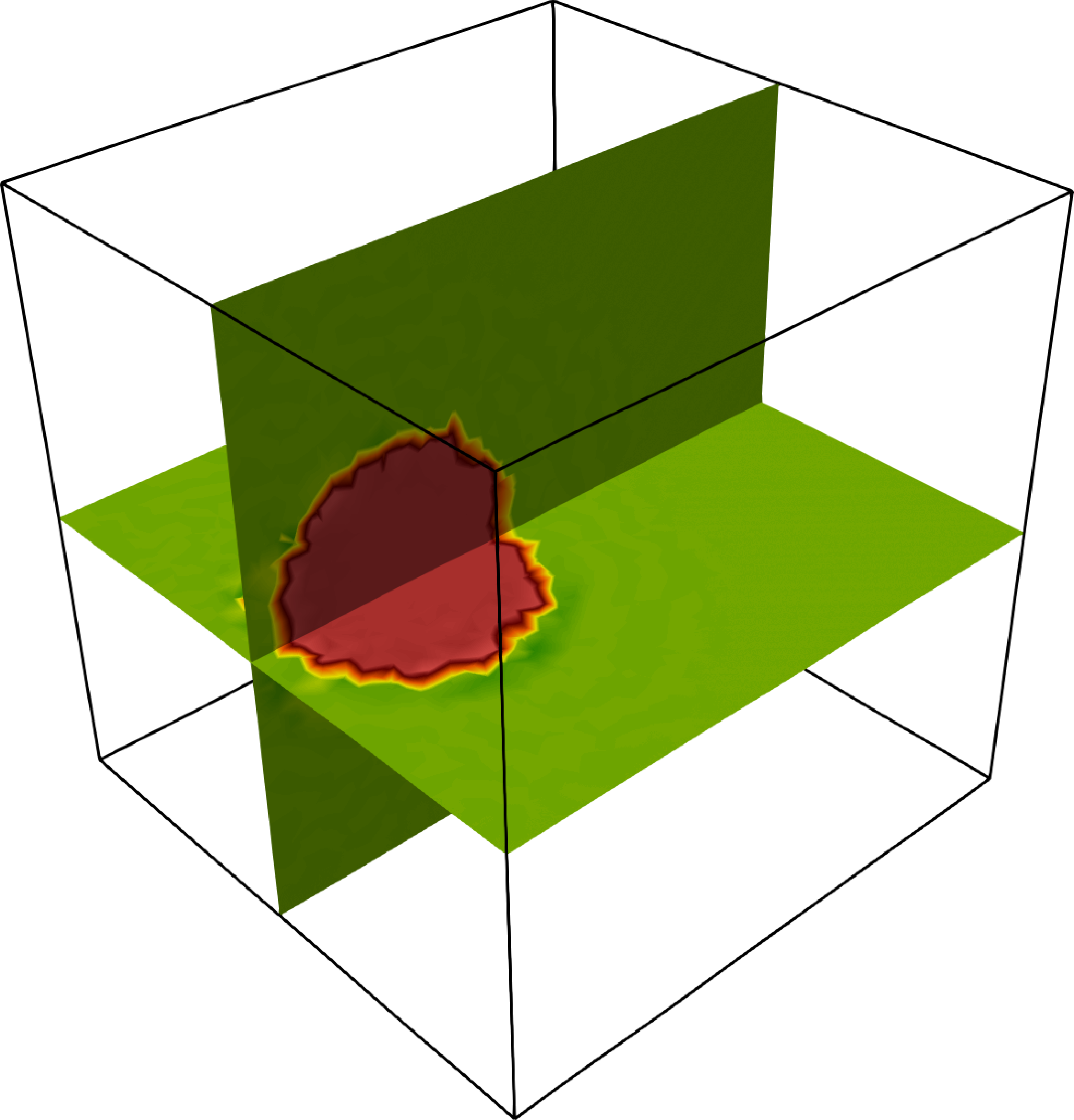}
&\includegraphics[width=0.34\textwidth]{imm/exe2New/exe2_SUPG_00.eps} \\
t=3.0&\includegraphics[width=0.34\textwidth]{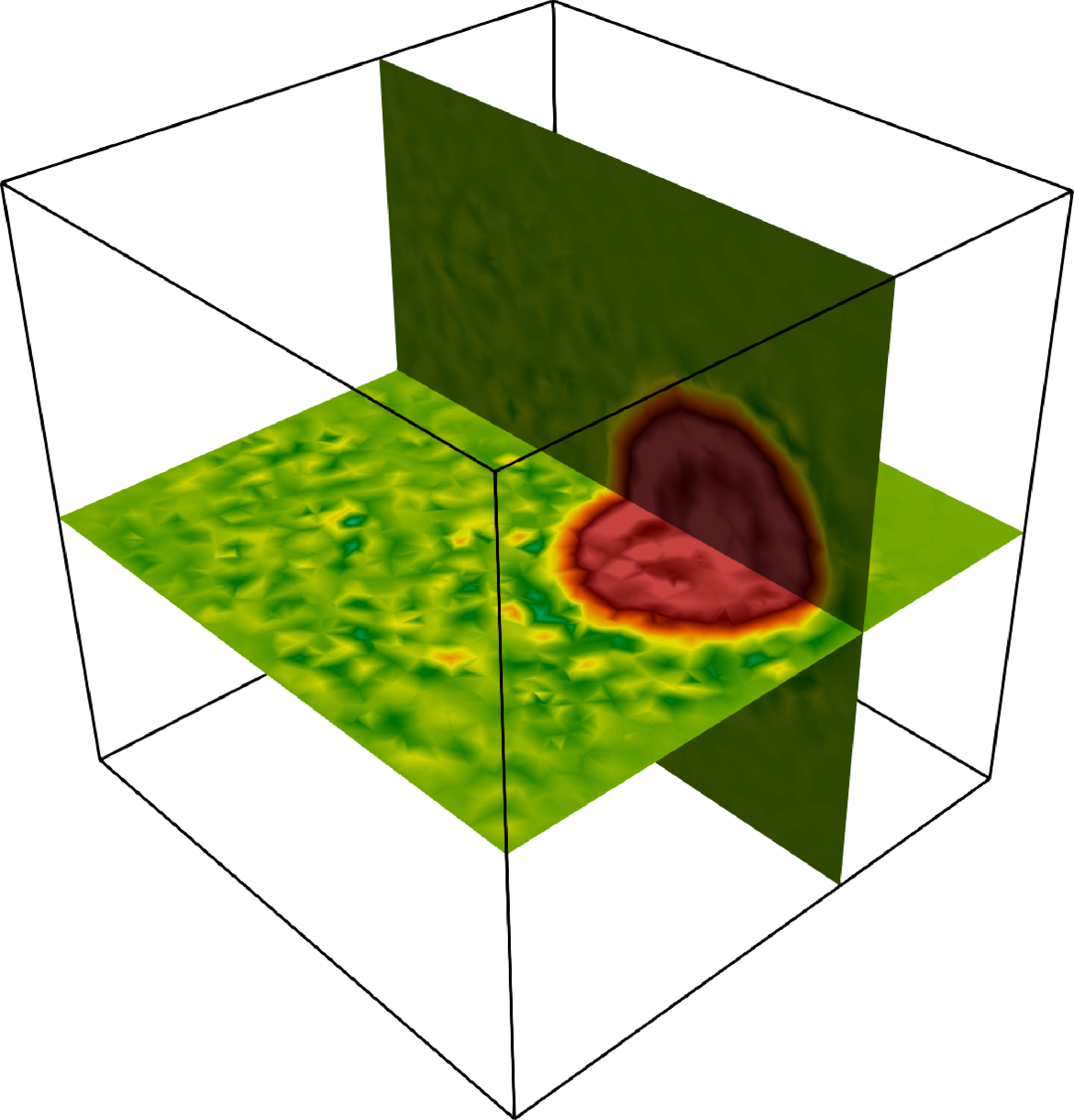}
&\includegraphics[width=0.34\textwidth]{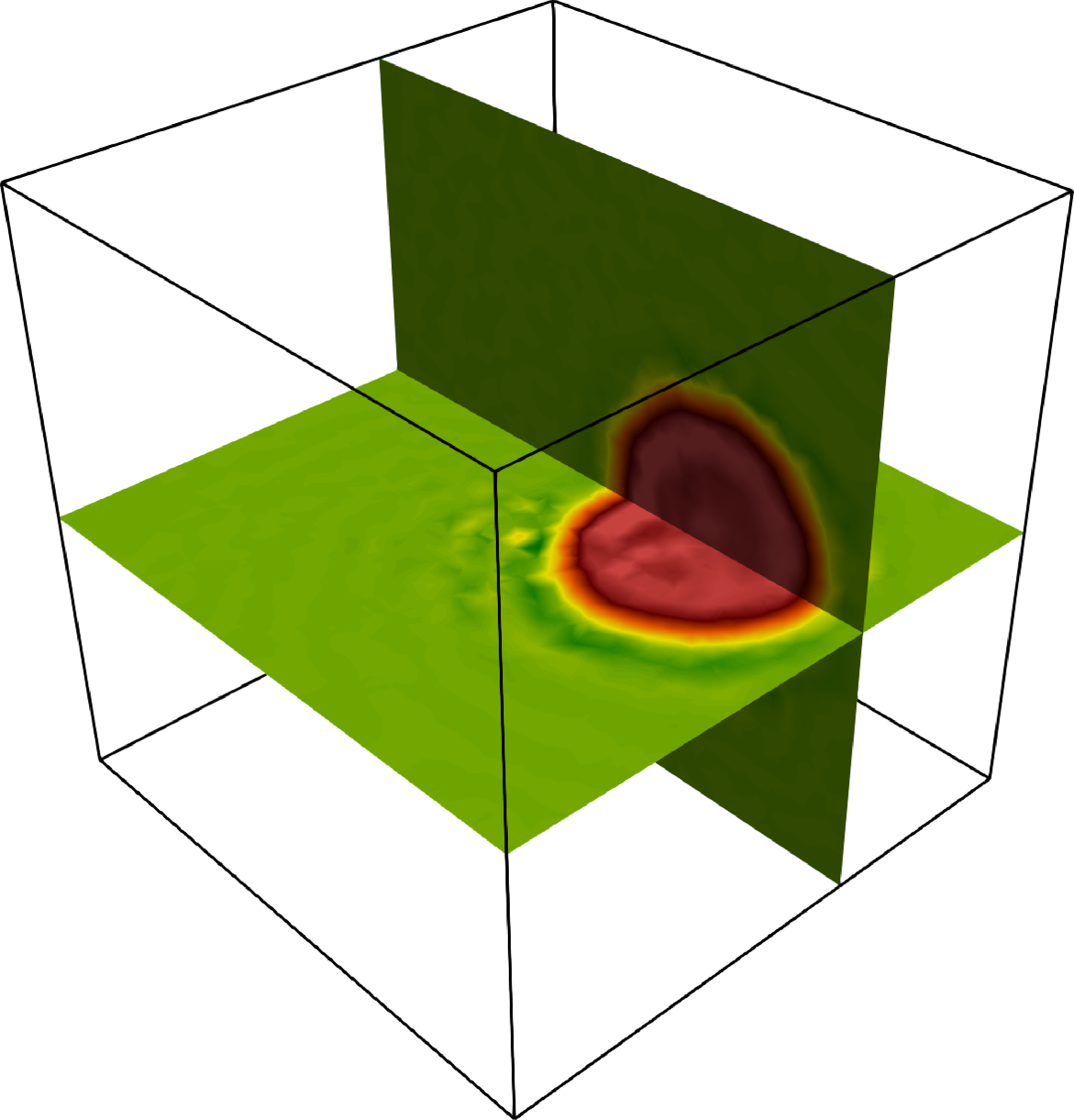} \\
t=6.0&\includegraphics[width=0.34\textwidth]{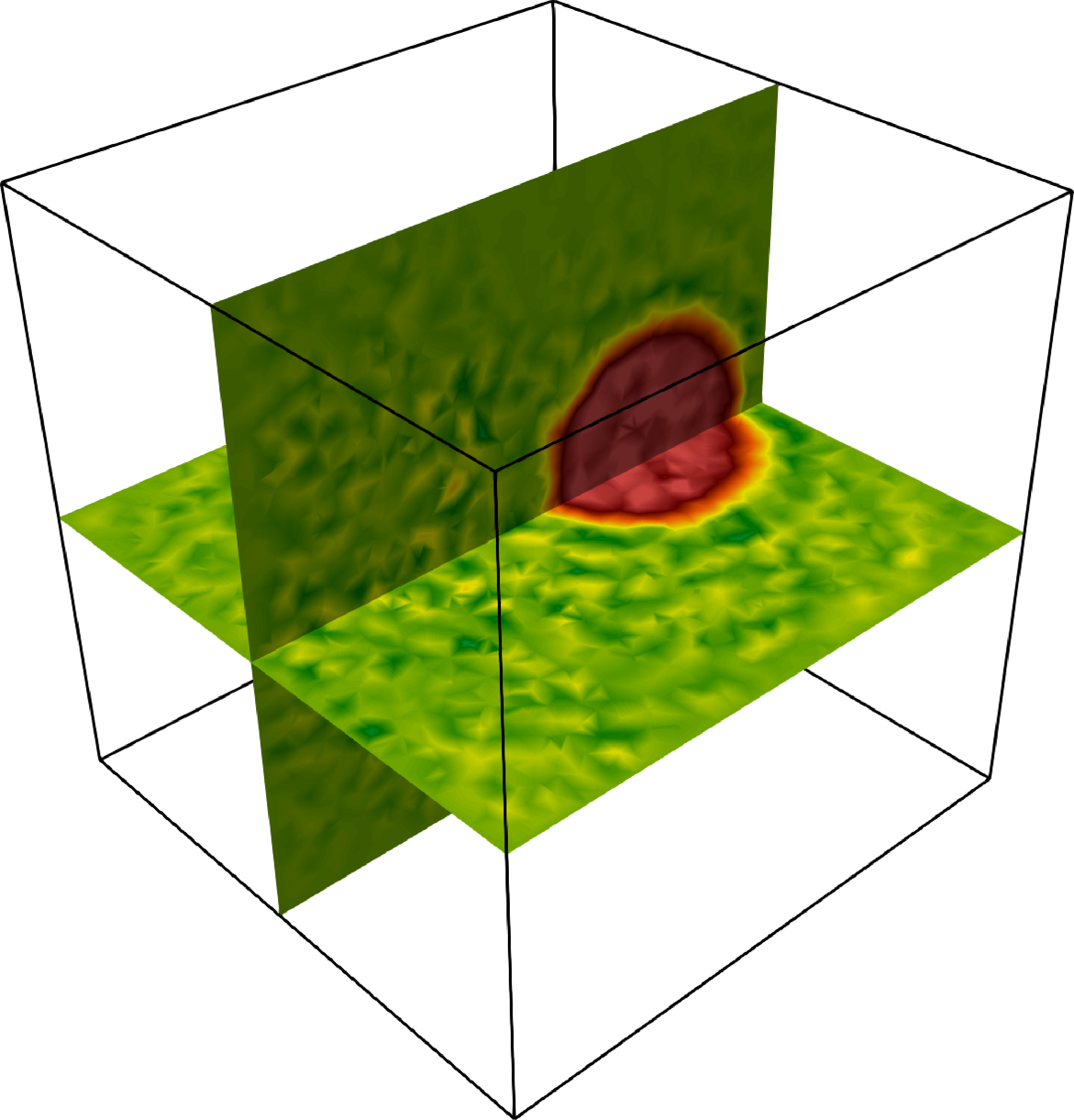}
&\includegraphics[width=0.34\textwidth]{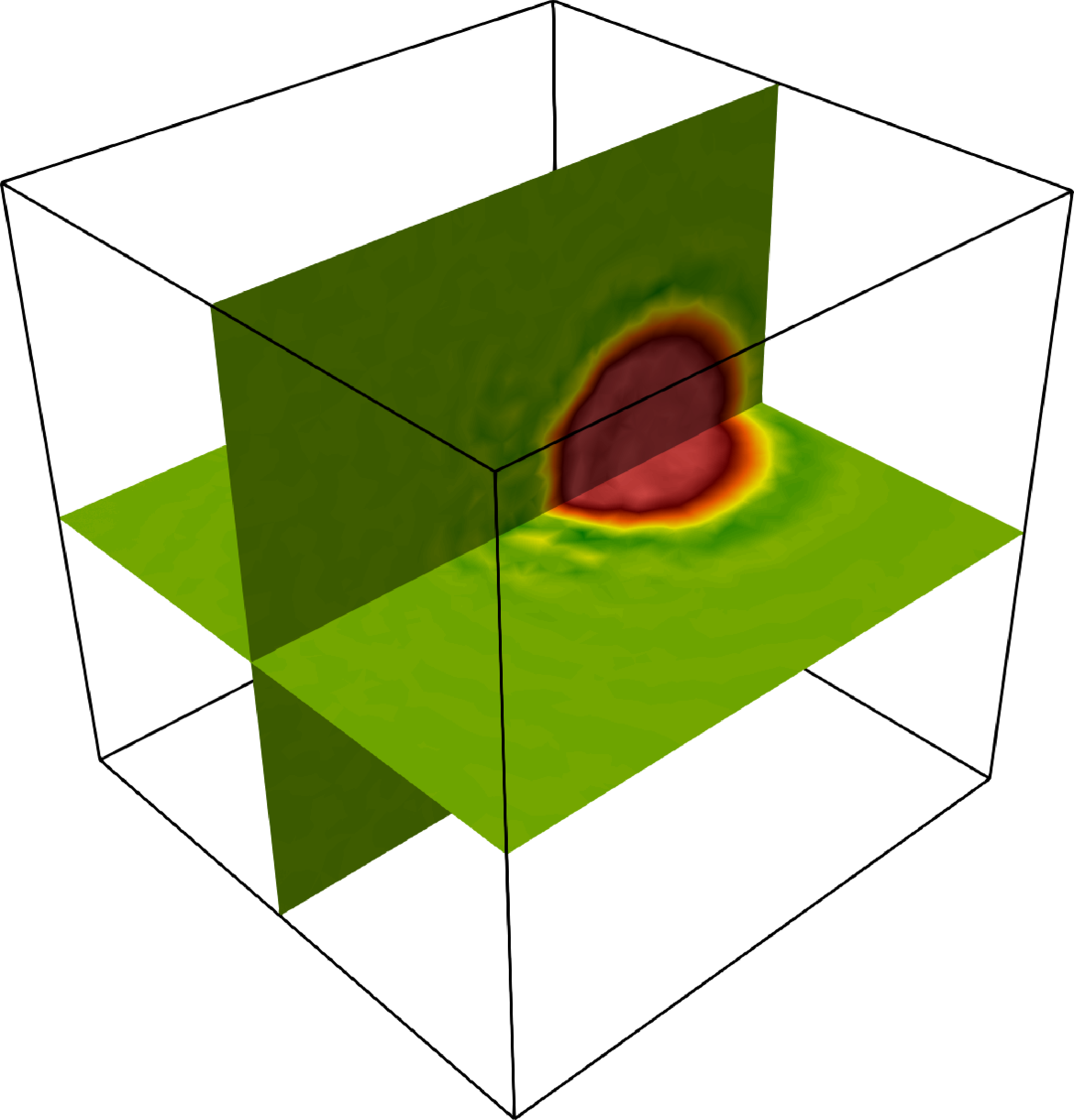} \\
&\includegraphics[width=0.34\textwidth]{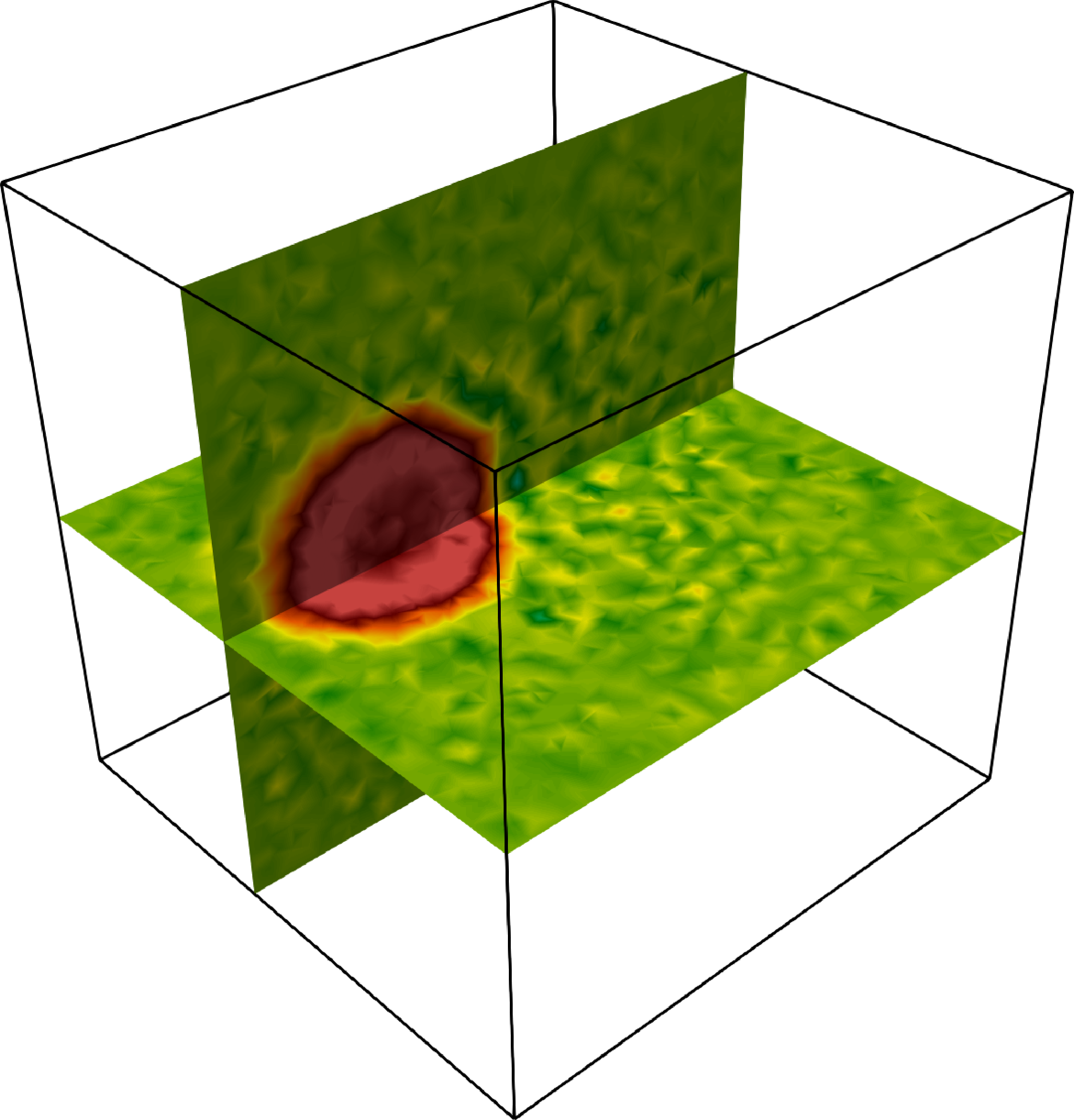}
&\includegraphics[width=0.34\textwidth]{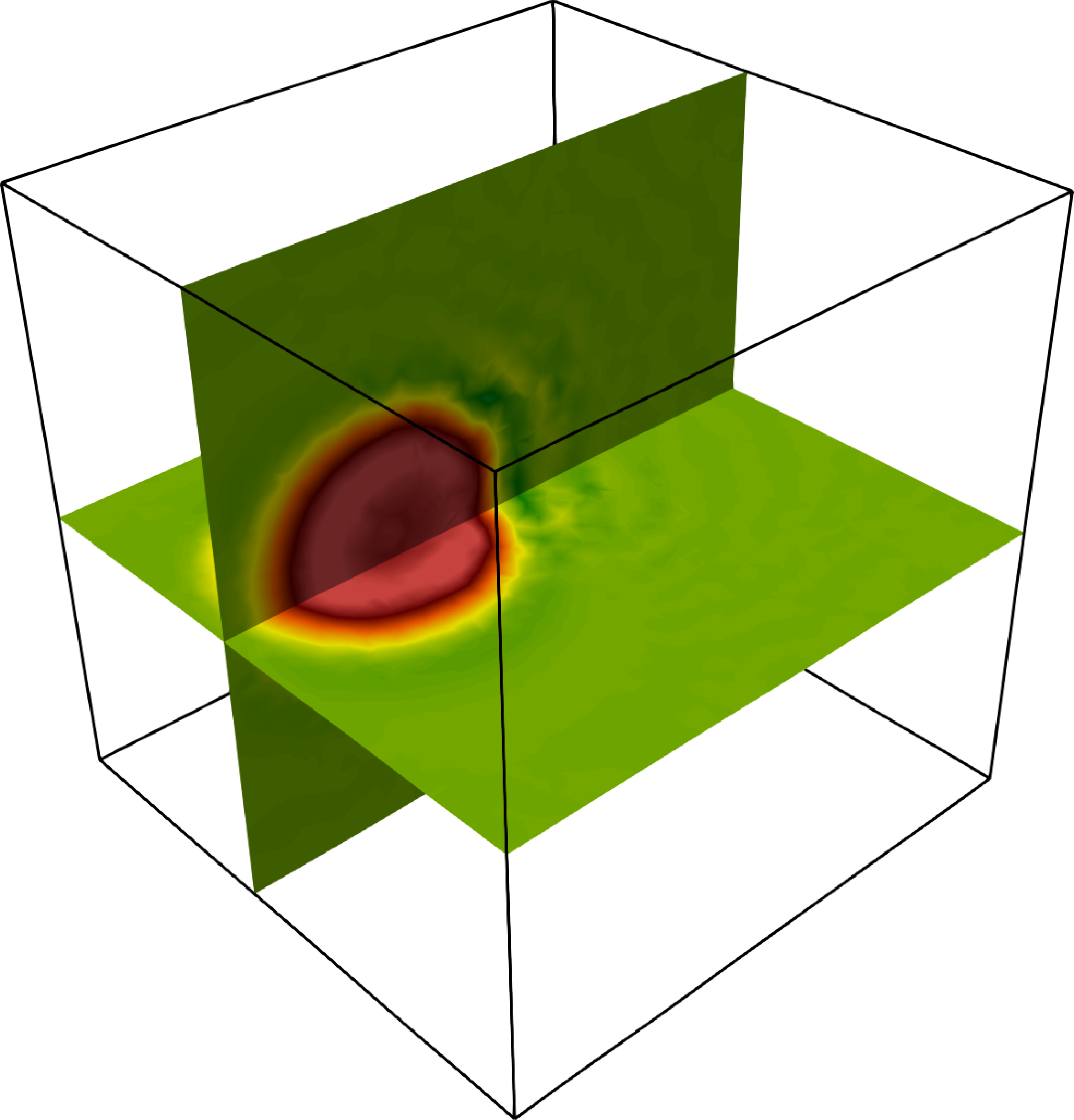} \\
&\multicolumn{2}{c}{\includegraphics[width=0.50\textwidth]{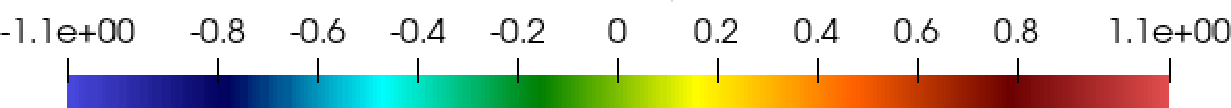}}\\
\end{tabular}
\caption{Benchmark problem: comparison between the discrete solutions obtained with the schemes \nostab{} and \stab{} at different time steps.}
\label{fig:exe2}
\end{figure}

% ----------------------------------------------------
%               CONCLUSIONS
% ----------------------------------------------------

\section{Conclusions\label{SECT::CONCLUSIONS}}
In this work, we considered a high-order SUPG-stabilized fully discrete scheme that combines finite or virtual element spatial discretizations with an upwind-DG time-stepping.
For this fully discrete scheme with finite element spatial discretizations, a robust analysis was missing in the literature. Moreover, this is the first work where a SUPG stabilization has been considered in a high-order-in-time fully discrete setting with virtual element spatial discretizations.

Using nonstandard test functions, we have shown that the method is inf-sup stable with respect to a norm involving an~$L^2(0, T; L^2(\Omega))$-term without requiring any transformation of the original problem.
Such a stability estimate is used to show that the method is robust and provides optimal convergence rates in the convection- and diffusion-dominated regimes.

We have presented some numerical experiments in~$(3 + 1)$-dimensions that show the robustness of the method, as well as the expected convergence rates of order~$\mathcal{O}(h^{k + \frac12})$ for the error in the energy norm, in the convection-dominated regime~$0 < \nu \ll 1$.

% ----------------------------------------------------
\section*{Acknowledgements}
 The first and second authors were partially funded by the European Union (ERC Synergy, NEMESIS, project number 101115663). Views and opinions expressed are however those of the authors only and do not necessarily reflect those of the European Union or the ERC Executive Agency.
The third author acknowledges support from the
Italian Ministry of University and Research through the project PRIN2020 ``Advanced polyhedral discretizations of heterogeneous PDEs for multiphysics problems", and from the INdAM-GNCS through the
project CUP E53C23001670001.

\paragraph{Declaration.} The authors declare no competing interests.

% ----------------------------------------------------
%               BIBLIOGRAPHY
% ----------------------------------------------------

\end{document}